\setlist{leftmargin=9mm}
\numberwithin{equation}{section}
\newcommand{\N}{\mathbb{N}}
\newcommand{\R}{\mathbb{R}}
\newcommand{\pnorm}[2]{\lVert #1\rVert_{#2}}
\newcommand{\bigpnorm}[2]{\big\lVert#1\big\rVert_{#2}}
\newcommand{\abs}[1]{\lvert#1\rvert}
\newcommand{\bigabs}[1]{\big\lvert#1\big\rvert}
\newcommand{\biggabs}[1]{\bigg\lvert#1\bigg\rvert}
\newcommand{\iprod}[2]{\langle#1,#2\rangle}
\newcommand{\bigiprod}[2]{\big\langle#1,#2\big\rangle}
\newcommand{\biggiprod}[2]{\bigg\langle#1,#2\bigg\rangle}
\renewcommand{\epsilon}{\varepsilon}
\renewcommand{\d}[1]{\mathrm{d}#1}
\newcommand{\bigop}{\mathcal{O}_{\mathbf{P}}}
\newcommand{\smallo}{\mathfrak{o}}
\newcommand{\bigo}{\mathcal{O}}
\renewcommand{\tilde}{\widetilde}
\DeclareMathOperator{\E}{\mathbb{E}}
\DeclareMathOperator{\Prob}{\mathbb{P}}
\DeclareMathOperator{\dv}{div}
\DeclareMathOperator{\tr}{tr}
\DeclareMathOperator{\var}{Var}
\DeclareMathOperator{\cov}{Cov}
\DeclareMathOperator{\op}{op}
\DeclareMathOperator{\err}{\texttt{err}}
\DeclareMathOperator{\seq}{\mathsf{seq}}
\DeclareMathOperator{\prox}{\mathsf{prox}}
\DeclareMathOperator{\rem}{Rem}
\DeclareMathOperator*{\argmin}{arg\,min\,}
\DeclareMathOperator{\arcosh}{arcosh}
\theoremstyle{definition}\newtheorem{problem}{Problem}[section]
\theoremstyle{definition}\newtheorem{definition}[problem]{Definition}
\theoremstyle{remark}
\theoremstyle{remark}\newtheorem{remark}{Remark}
\theoremstyle{definition}
\theoremstyle{plain}\newtheorem{theorem}[problem]{Theorem}
\theoremstyle{plain}
\theoremstyle{plain}\newtheorem{lemma}[problem]{Lemma}
\theoremstyle{plain}\newtheorem{proposition}[problem]{Proposition}
\theoremstyle{plain}\newtheorem{corollary}[problem]{Corollary}
\theoremstyle{plain}
	\def\MR#1{}
\begin{document}

\title[A leave-one-out approach to AMP]{A leave-one-out approach to approximate message passing}

\author[Z. Bao]{Zhigang Bao}

\address[Z. Bao]{
	Department of Mathematics, the University of Hong Kong, Hong Kong.
}
\email{zgbao@hku.hk}

\author[Q. Han]{Qiyang Han}

\address[Q. Han]{
	Department of Statistics, Rutgers University, Piscataway, NJ 08854, USA.
}
\email{qh85@stat.rutgers.edu}

\author[X. Xu]{Xiaocong Xu}

\address[X. Xu]{
	Data Sciences and Operations Department, University of Southern California, Los Angeles, CA 90007, USA.
}
\email{xuxiaoco@marshall.usc.edu}

\date{\today}

\keywords{approximate message passing, first-order iterative algorithm, leave-one-out, random matrix theory, regularized least squares, ridge regression, state evolution}
\subjclass[2000]{60E15, 60G15}

\begin{abstract}
Approximate message passing (AMP) has emerged both as a popular class of iterative algorithms and as a powerful analytic tool in a wide range of statistical estimation problems and statistical physics models.  A well established line of AMP theory proves Gaussian approximations for the empirical distributions of the AMP iterate in the high dimensional limit, under the GOE random matrix model and other rotational invariant ensembles.

This paper provides a non-asymptotic, leave-one-out representation for the AMP iterate that holds under a broad class of Gaussian random matrix models with general variance profiles. In contrast to the typical AMP theory that describes the first-order behavior for the empirical distributions of the AMP iterate via a low dimensional state evolution, our leave-one-out representation yields an intrinsically high dimensional state evolution formula, which provides a second-order, non-asymptotic characterization for the possibly heterogeneous, entrywise behavior of the AMP iterate under the prescribed random matrix models. 

To exemplify some distinct features of our AMP theory in applications, we analyze, in the context of regularized linear estimation, the precise stochastic behavior of the Ridge estimator for independent and non-identically distributed observations whose covariates exhibit general variance profiles. We find that its finite-sample distribution is characterized via a weighted Ridge estimator in a heterogeneous Gaussian sequence model. Notably, in contrast to the i.i.d. sampling scenario, the effective noise and regularization are now full dimensional vectors determined via a high dimensional system of equations.

Our leave-one-out method of proof differs significantly from the widely adopted conditioning approach for rotational invariant ensembles, and relies instead on an inductive method that utilizes almost solely integration-by-parts and concentration techniques.
\end{abstract}

\maketitle

\setcounter{tocdepth}{1}
\tableofcontents

\sloppy

\section{Introduction}

\subsection{Overview}

Approximate message passing (AMP), originated from statistical physics and graphical ideas \cite{thouless1977solution,koller2009probabilistic,montanari2012graphical}, has emerged as a popular class of first-order iterative algorithms that find diverse applications in both statistical estimation problems and probabilistic analyses of statistical physics models. Some non-exhaustive examples include vector/matrix estimation problems \cite{donoho2009message,rangan2011generalized,bayati2012lasso,krzakala2012probabilistic,deshpande2014information,schniter2015compressive,donoho2016high,kabashima2016phase,montanari2016nonnegative,deshpande2017asymptotic,lesieur2017constrained,fletcher2018iterative,barbier2019optimal,sur2019modern,montanari2021estimation,celentano2022fundamental,mondelli2022approximate}, and the analyses of spin glass and perceptron models \cite{bolthausen2014iterative,bolthausen2019morita,ding2019capacity,fan2021replica,bolthausen2022gardner,fan2022tap}. We refer the readers to \cite{feng2022unifying} for a recent survey on the AMP and more references can be found therein.

In its simplest form, with a symmetric  input matrix $A\in \R^{n\times n}$ distributed according to $\mathrm{GOE}(n)$, an initialization $z^{(0)} \in \R^n$ independent of $A$, and a sequence of possibly non-linear functions $\{\mathsf{F}_t: \R \to \R\}$,  the AMP algorithm iteratively generates $z^{(1)},z^{(2)},\ldots \in \R^n$, according to
\begin{align}\label{eqn:intro_AMP}
z^{(t+1)}\equiv A \mathsf{F}_t(z^{(t)})-b_t \mathsf{F}_{t-1}(z^{(t-1)}),\quad t=0,1,2,\ldots.
\end{align}
Here for a vector $z \in \R^n$, $\mathsf{F}_t(z)\in \R^n$ is understood as applied component-wise. The most crucial part in the AMP iterate (\ref{eqn:intro_AMP}) rests in the choice of the so-called Onsager correction scalar factor $b_t \approx n^{-1}\sum_{\ell \in [n]}\E \mathsf{F}_t'(z^{(t)}_\ell) \in \R$. For this choice of $b_t$, a common heuristic in the literature (cf. \cite{bayati2011dynamics,bayati2015universality}) postulates a similar distributional behavior of the AMP iterate $\{z^{(t)}\}$ to the iterate $\{\tilde{z}^{(t)}\}$ defined by
\begin{align}\label{eqn:intro_AMP_ind_A}
\tilde{z}^{(t+1)}= A^{(t)} \mathsf{F}_t(\tilde{z}^{(t)}),\quad t=0,1,2,\ldots,
\end{align}
where $\{A^{(t)}\}$ are i.i.d. copies of $A$. A well established line of AMP theory \cite{bayati2011dynamics,javanmard2013state,bayati2015universality,berthier2020state} makes this heuristic precise, by proving that in the high dimensional limit $n \to \infty$, the empirical distribution of the AMP iterate $\{z^{(t)}\}$ matches that of $\{\tilde{z}^{(t)}\}$, which is easily seen to be Gaussian with variance recursively characterized by the so-called \emph{state evolution}. More precisely, \cite{bayati2011dynamics} proves that for any fixed $t=0,1,2,\ldots$, and any good enough test function $\psi: \R \to \R$, it holds almost surely that
\begin{align}\label{eqn:intro_AMP_limit_dist}
\lim_{n\to \infty}\frac{1}{n}\sum_{k \in [n]} \psi \big(z_k^{(t+1)}\big)= \E \psi\big(\mathcal{N}(0,\sigma_{t+1}^2)\big),
\end{align}
where for $t\geq 1$, $\{\sigma_t\}\subset \R_{\geq 0}$ is defined via the one-dimensional state evolution
\begin{align}\label{eqn:intro_se}
\sigma_{t+1}^2 \equiv \E \mathsf{F}_t^2\big(\mathcal{N}(0,\sigma_t^2)\big),\quad \sigma_1^2\equiv \lim_{n\to \infty} \frac{\pnorm{\mathsf{F}_0(z^{(0)})}{}^2}{n}.
\end{align}

The method of proof for (\ref{eqn:intro_AMP_limit_dist})-(\ref{eqn:intro_se}) in \cite{bayati2011dynamics} is based on a conditioning technique initiated in \cite{bolthausen2014iterative}, which, in an extreme synthesis, iteratively computes the conditional distribution of $A=\mathrm{GOE}(n)$ conditioned on random linear constraints. This approach allows for generalizations to more complicated AMP iterates under rotationally invariant Gaussian matrices/tensors, with initialization $z^{(0)}$ that can be possibly correlated with $A$, cf. \cite{rangan2011generalized,javanmard2013state,berthier2020state,montanari2021estimation,elalaoui2021optimization,gerbelot2023graph}. This intimate connection to the rotational invariance property of $\mathrm{GOE}(n)$ facilitates adaptation of this technique to other random matrix ensembles with such invariance properties, cf. \cite{ma2017orthogonal,rangan2019vector,takeuchi2021bayes,fan2022approximate,zhong2024approximate}. Moreover, the conditioning technique is flexible enough to provide non-asymptotic versions of (\ref{eqn:intro_AMP_limit_dist}) and (\ref{eqn:intro_se}), cf.  \cite{rush2018finite,li2022non,cademartori2023non}.

On the other hand, the requisite rotational invariance property poses non-trivial technical challenges in generalizing (\ref{eqn:intro_AMP_limit_dist})-(\ref{eqn:intro_se}) to other random matrix ensembles without such properties. A simple test example in this regard is to provide a direct proof to (\ref{eqn:intro_AMP_limit_dist})-(\ref{eqn:intro_se}) for the classical mean 0 variance $1/n$ Wigner matrices. Even for this basic universality problem, existing direct proofs resort to the completely different method of moments \cite{bayati2015universality}, which inevitably involves combinatorial calculations for the highly complicated matrix polynomials appearing in the unfolded AMP iterates. See also \cite{chen2021universality} for another Gaussian interpolation approach with exact combinatorial calculations. Although conceptually simple and amenable to extensions to other random matrix models (cf. \cite{dudeja2023universality,wang2024universality}), the moment method is confined to provide weak convergence in distribution in a suitable limiting sense, and therefore falls short of capturing the non-asymptotic, algorithmic behavior of the AMP that are particularly important in its applications to statistical and signal processing problems.

The goal of this paper is to introduce a new, leave-one-out method to understand the finer-scale, non-asymptotic behavior of the AMP iterate. In particular, our method provides a direct representation of the AMP iterate $\{z^{(t)}\}$ in (\ref{eqn:intro_AMP}) in terms of a suitable version of (\ref{eqn:intro_AMP_ind_A}) via an associated leave-one-out AMP. As will be clear below, our representation holds for a large class of Gaussian random matrix models with  independent entries and general variance profiles, down to the level of each individual coordinates. Crucially, our leave-one-out method requires neither the rotational invariance property as in the conditioning technique, nor combinatorially complex calculations as in the method of moments. In view of the ubiquity of the leave-one-out idea in the broader context of random matrix theory, and its well-known technical robustness and flexibility compared to, e.g., the moment method, in various random matrix models \cite{bai2010spectral,benaych2016lectures,bao2017local}, we anticipate further developments of our method in analyzing the behavior of more complicated AMP algorithms beyond the random matrix model worked out in this paper.

\subsection{A leave-one-out representation of the AMP iterate}
For a Gaussian symmetric matrix $A$ with independent upper triangular entries, we show that each coordinate of the AMP iterate (\ref{eqn:intro_AMP}), where the Onsager correction term $b_t \mathsf{F}_{t-1}(z^{(t-1)})$ is now replaced by $b_t \circ \mathsf{F}_{t-1}(z^{(t-1)})$ with a vector $\big(b_{t,k}=\sum_{\ell \in [n]} \var(A_{k\ell})\E \big[\mathsf{F}_{t,\ell}'(z^{(t)}_\ell)|z^{(0)}\big]\big)_{k \in [n]}$, delivers an efficient representation via an associated leave-one-out AMP iterate. Specifically, for any $k \in [n]$, let $\{z^{(t)}_{[-k]}\}$ be the leave-one-out AMP iterate, starting from the same initialization and then subsequently updated by replacing $A$ in (\ref{eqn:intro_AMP}) via its leave-one-out version $A_{[-k]}$ that sets all elements in the $k$-th row and column to be $0$. We prove in Theorem \ref{thm:AMP_sym_loo} that for $t=0,1,2,\ldots$,
\begin{align}\label{eqn:intro_AMP_loo_representation}
z^{(t+1)}_k=\bigiprod{A_k}{\mathsf{F}_t(z^{(t)}_{[-k]})}+\bigop\Big({(\log n)^{c(1\vee t)^3}}\cdot n^{-1/2}\Big). 
\end{align}
While we have used $\bigop$ above for simplicity of presentation, the leave-one-out representation in (\ref{eqn:intro_AMP_loo_representation}) is intrinsically non-asymptotic, and holds for the prescribed random, symmetric Gaussian matrices $A$ with very general variance profiles, along with general separable nonlinear functions $\mathsf{F}_t: \R^n\to \R^n$, $\mathsf{F}_t(z)=\big(\mathsf{F}_{t,\ell}(z_\ell)\big)_{\ell \in [n]}$, whose components $\mathsf{F}_{t,\ell}:\R\to \R$ may differ significantly from each other. 

An immediate consequence of (\ref{eqn:intro_AMP_loo_representation}) is an efficient method to compute the non-asymptotic distribution of an individual coordinate $z_k^{(t+1)}\in \R$ at iterate $t+1$ via $z^{(t)}\in \R^n$ at iterate $t$. To proceed, as $A_k$ is independent of $\mathsf{F}_t (z_{[-k]}^{(t)})$, by first conditioning on $A_{[-k]}$ and then using concentration of the average along with the approximation $z^{(t)}\approx z^{(t)}_{[-k]}$ (in an $\ell_2$ sense), we are then naturally led to  $z_k^{(t+1)}\stackrel{d}{\approx} \mathcal{N}\big(0, \sum_{\ell \in [n]}\var(A_{k\ell}) \E\mathsf{F}_{t,\ell}^2(z_{\ell}^{(t)})\big)$. Formalizing these heuristics, we prove in Theorem \ref{thm:AMP_sym} that for $t=0,1,2,\ldots$,
\begin{align}\label{eqn:intro_AMP_entrywise_dist}
z_k^{(t+1)}\stackrel{d}{\approx}\mathcal{N}(0,\sigma_{t+1,k}^2), 
\end{align}
where for $t\geq 1$, $\{\sigma_{t}\}\subset \R_{\geq 0}^n$ are now vectors determined via the following, possibly high dimensional state evolution
\begin{align}\label{eqn:intro_highd_se}
\sigma_{t+1,k}^2\equiv \sum_{\ell \in [n]}\var(A_{k\ell})\cdot  \E\big[\mathsf{F}_{t,\ell}^2\big(\mathcal{N}(0,\sigma_{t,\ell}^2)\big)|z^{(0)}\big],\quad k \in [n],
\end{align}
with the initial condition $\sigma_{1,k}^2\equiv \sum_{\ell \in [n]}\var(A_{k\ell})\cdot  \mathsf{F}_{0,\ell}^2(z^{(0)}_\ell)$ for $k \in [n]$. 

The behavior of the AMP iterate---primarily in the formalism (\ref{eqn:intro_AMP_limit_dist})---under our random matrix models with general variance profiles, has been previously examined in \cite{bayati2015universality} via the method of moments, and a further adaptation of 
this moment method allowing for sparse matrices was performed in \cite{hachem2023approximate}. As mentioned above, the method of moments is designed for proving asymptotic weak convergence, and therefore does not capture the actual, non-asymptotic behavior of the AMP iterate, which is expected to exhibit different behavior across its coordinates as well as with respect to the problem dimension in our setting. As our theory (\ref{eqn:intro_AMP_entrywise_dist}) provides entrywise controls of $z^{(t+1)}$ on the order $1$ as opposed to a global $\ell_2$ norm control on the order $\bigo(\sqrt{n})$ via the standard formalism (\ref{eqn:intro_AMP_limit_dist}),  (\ref{eqn:intro_AMP_entrywise_dist})-(\ref{eqn:intro_highd_se}) can be naturally viewed as a non-asymptotic, second-order formula to (\ref{eqn:intro_AMP_limit_dist})-(\ref{eqn:intro_se}) that provides entrywise distributional characterizations for the AMP iterates in these heterogeneous situations.

\subsection{Proof techniques}

We now provide a technical glimpse into the core of our new leave-one-out method. Upon using the leave-one-out AMP, we show in Proposition \ref{prop:AMP_loo_decom} that $z^{(t+1)}_k-\bigiprod{A_k}{\mathsf{F}_t(z^{(t)}_{[-k]})}$ admits an exact error decomposition. This decomposition then essentially boils the problem down to controlling the normalized trace of a sequence of random matrices $M_{1}^{(t)},\ldots, M_{t}^{(t)} \in \R^{n\times n}$, defined via the following matrix recursion: initialized with $ M_{-1}^{(t)}=0_{n\times n}, M_0^{(t)}=\mathsf{D}^{(t)}$, for $s \in [t]$,
\begin{align}\label{eqn:intro_M_recursion}
M_{s}^{(t)}\equiv \big(M_{s-1}^{(t)} A - M_{s-2}^{(t)} \mathsf{B}_{t-s+1}\big) \mathsf{D}^{(t-s)},
\end{align}
where $\mathsf{D}^{(s)}$ is a random diagonal matrix that collects all elements $\{\mathsf{F}_{s,\ell}'(z^{(s)}_\ell )\}_{\ell \in [n]}$ (or some of its variants), and $\mathsf{B}_s$ is a diagonal matrix that collects those of $\{b_{s,\ell}\}_{\ell \in [n]}$. 

The underlying mechanism for the normalized trace $n^{-1}\tr(M_s^{(t)})$ to be small for $1\leq s\leq t$ is non-trivial. In the simplest case where $A=\mathrm{GOE}(n)$, and $\mathsf{F}_t$'s are identity maps, the matrix couple $(\mathsf{D}^{(s)},\mathsf{B}_s)$ becomes $\mathsf{D}^{(\cdot)}=\mathsf{B}_\cdot=I_n$, and therefore the cancellations in the trace calculations for $M_s^{(t)}$ can be directly attributed to the orthogonality of Chebyshev polynomials of the second kind with respect to the (scaled) semicircle density on $[-1,1]$ (see the discussion after the statement of Proposition \ref{prop:trace_M} for details). 

In the general case where the variance profile of $A$ is arbitrary and $\mathsf{F}_t$'s are general non-linear functions, there appears no obvious analogue to the above global reduction to special orthogonal polynomials. We instead prove the trace control $n^{-1}\E\tr(M_s^{(t)})=\bigo\big((\log n)^{ct^3}\cdot n^{-1/2}\big)$ via an inductive argument (cf. Proposition \ref{prop:trace_M}). At a high level, our inductive argument recursively reduces the degree of the complicated matrix polynomial in $n^{-1}\E\tr(M_s^{(t)})$ via repeated applications of Gaussian integration by parts to the recursion (\ref{eqn:intro_M_recursion}). The feasibility of this strategy relies on the fact that applying Gaussian integration by parts once to (\ref{eqn:intro_M_recursion}) results in a reduced matrix polynomial that shares a similar form with $\{M_\cdot^{(\cdot)}\}$, due to the special coupling of $(\mathsf{D}^{(s)},\mathsf{B}_s)$. Therefore, by repeating the procedure, the large complexity of the matrix polynomial in $n^{-1}\E\tr(M_s^{(t)})$ for a general pair $1\leq s\leq t$ can be recursively reduced all the way down to the base cases $1\leq s\leq t\leq 2$. The latter problem can then be handled via straightforward Gaussian concentration of linear and quadratic forms.

The key technical advantage of our leave-one-out method, as briefly described above, lies in its soft analytic nature that explores the Gaussianity of $A$ only via Gaussian integration by parts, along with some Gaussian concentration. It is now well understood that a large array of Gaussian random matrix results proved via such integration-by-parts techniques can be generalized to non-Gaussian settings including the sparse matrices and even the non-Gaussian matrices with correlated entries, using the method of cumulant expansion, cf. \cite{khorunzhy1996asymptotic,lytova2009central,he2017mesoscopic,lee2018local,erdos2019random}. We expect similar analytic techniques to be applicable in our setting as well. Notably, non-asymptotic universality results can also be established through an alternative comparison method; see, e.g., \cite{han2024entrywise} for such results in the context of a broader class of first-order algorithms. To maintain focus on the key proof techniques underlying our new leave-one-out method, we limit our discussion to the Gaussian setting, with extensions to other matrix ensembles to be explored in future work.

Finally, we note that while our leave-one-out method shares certain conceptual similarities with the `cavity method' in \cite{chen2021convergence} for AMP, the two approaches are technically different. The cavity method constructs a cavity iteration using self-avoiding walks, leaving out one new row (or column) of $A$ at each iteration. This construction facilitates an easy derivation of the cavity iteration's asymptotic Gaussian law. The (averaged) law of the AMP iterates are subsequently obtained by proving their proximity to the cavity iterations in the $\ell_2$ norm, via a second-moment calculation with exact combinatorial analysis. In contrast, our leave-one-out AMP iteration excludes only a single row (or column) across all iterations. While this construction does not provide an immediate distribution theory for the leave-one-out AMP, the approximation of our iteration in (\ref{eqn:intro_AMP_loo_representation}) is valid in the much stronger $\ell_\infty$ norm, leveraging a soft analytic method as outlined above.

\subsection{An application to regularized estimation}

 Apart from the aforementioned technical advantages, our new leave-one-out method also holds promise for a diverse range of direct applications through the leave-one-out AMP representation (\ref{eqn:intro_AMP_loo_representation}) and its consequential, non-asymptotic second-order theory (\ref{eqn:intro_AMP_entrywise_dist})-(\ref{eqn:intro_highd_se}).

Here, as an illustration of some new features resulting from the high dimensionality of the state evolution (\ref{eqn:intro_highd_se}), we focus on one specific example of Ridge regression in the context of regularized estimation. Suppose we observe the pairs of data $\{(A_i,Y_i)\}_{i \in [m]}$ from the linear model
\begin{align}\label{def:linear_model}
Y_i = A_i^\top \mu_0 +\xi_i,\quad i \in [m].
\end{align}
Here $A_i \in \R^n$, $\xi_i \in \R$ and $Y_i \in \R$ are the covariate vector, the measurement error, and the response associated with the sample $i \in [m]$, respectively. We are interested in estimation/recovery of the unknown signal $\mu_0\in \R^n$ via the following Ridge estimator \cite{hoerl1970ridge}: for a given tuning level $\lambda>0$,
\begin{align}\label{def:ls_estimator}
\hat{\mu}&\equiv \hat{\mu}(\lambda)\equiv \argmin_{\mu \in \R^n} \bigg\{\frac{1}{2}\sum_{i \in [m]} \big(Y_i-A_i^\top \mu\big)^2+ \frac{\lambda}{2}\pnorm{\mu}{}^2\bigg\}.
\end{align}
The Ridge estimator (\ref{def:ls_estimator}) has a long history in statistical applications, and its precise asymptotic properties have been investigated under the i.i.d. sampling setting from various aspects, cf. \cite{dicker2016ridge,dobriban2018high,wu2020optimal,hastie2022surprises,cheng2022dimension,han2023distribution}.

Using our AMP theory (\ref{eqn:intro_AMP_entrywise_dist})-(\ref{eqn:intro_highd_se}), we may describe the finite-sample distribution of $\hat{\mu}$ for independent, non-identically distributed (i.n.i.d.) observations with general variance profiles on the covariate $A_i$'s. Let $
y^{\seq}(\gamma^\ast)\equiv \mu_0+ \gamma^\ast \circ Z_n$, $ Z_n\sim \mathcal{N}(0,I_n)$,
be the (heteroscedastic) Gaussian sequence model with  `effective noise' $\gamma^\ast \in \R_{\geq 0}^n$. With the observation $y^{\seq}(\gamma^\ast)$, let the sequence Ridge estimator with `effective regularization' $\lambda \tau_{b^\ast} \in \R_{>0}^n$ be defined via
\begin{align}\label{eqn:ridge_seq_est}
\hat{\mu}^{\seq}(\gamma^\ast;\tau_{b^\ast})\equiv \argmin_{\mu \in \R^n}\bigg\{\frac{1}{2}\pnorm{y^{\seq}(\gamma^\ast)-\mu}{}^2+\frac{\lambda}{2}\sum_{j \in [n]} \tau_{b^\ast,j}\mu_j^2\bigg\}.
\end{align}
Here the crucial, vector-valued parameters $\gamma^\ast \in \R^n_{\geq 0}$ and $\tau_{b^\ast}\in \R^n_{>0}$ are determined implicitly via a high dimensional system of $m+n$ equations in Eqn.  (\ref{eqn:fpe_ridge}) ahead. Interestingly, under a certain transformation, we may recast the system of equations for $\tau_{b^\ast}$ into a specific quadratic vector equation that arises from the random matrix theory literature \cite{alt2017local,ajanki2019quadratic}. With the above notation, we prove in Theorem \ref{thm:ridge_dist} the distributional approximation $\hat{\mu}\stackrel{d}{\approx}\hat{\mu}^{\seq}(\gamma^\ast;\tau_{b^\ast})$, in the sense that for any good enough test function $\psi: \R^2 \to \R$, 
\begin{align}\label{eqn:ridge_seq_dist}
\frac{1}{n}\sum_{j \in [n]}\psi\big(\hat{\mu}_j,\mu_{0,j}\big)\approx \frac{1}{n}\sum_{j \in [n]} \E \psi\big(\hat{\mu}_j^{\seq}(\gamma^\ast;\tau_{b^\ast}),\mu_{0,j}\big)
\end{align}
holds with high probability.

The essential new feature of (\ref{eqn:ridge_seq_dist}) lies in the inherent high dimensionality of the effective noise $\gamma^\ast$ and regularization $\lambda \tau_{b^\ast}$ for the equivalent, sequence Ridge estimator $\hat{\mu}^{\seq}(\gamma^\ast;\tau_{b^\ast})$, when the sampling scheme of (\ref{def:linear_model}) is i.n.i.d. with a general variance profile on the covariate $A_i$'s.  In contrast, in the i.i.d. sampling scheme with either isotropic or anisotropic Gaussian designs, it is now well understood that variants of the formula (\ref{eqn:ridge_seq_dist}) can also be proved for a large class of regularized least squares estimators via Gaussian comparison methods, with a pair of \emph{scalar} effective noise and regularization determined via a system of two equations in two unknowns; see \cite{thrampoulidis2018precise,miolane2021distribution, han2023universality} for isotropic designs and \cite{celentano2023lasso,han2023distribution} for anisotropic designs. Unfortunately, these random process methods require i.i.d. sampling schemes in an essential way, and therefore do not generalize, at least in its current form, to general i.n.i.d sampling schemes. As such, while we have only verified the formula (\ref{eqn:ridge_seq_dist}) for the Ridge estimator (\ref{def:ls_estimator}) to avoid excessive technicalities unrelated to our main AMP theory, we anticipate our theory to be useful in proving similar formulae as in (\ref{eqn:ridge_seq_dist}) for a broad class of regularized least squares estimators under general i.n.i.d. sampling schemes that are otherwise beyond the reach of existing methods.

\subsection{Organization}
The rest of the paper is organized as follows. We provide formal statements in Section \ref{section:main_results} for the leave-one-out representation (\ref{eqn:intro_AMP_loo_representation}) and its consequences (\ref{eqn:intro_AMP_entrywise_dist})-(\ref{eqn:intro_highd_se}) (in a more general form) that hold for both the symmetric and asymmetric AMP algorithms. The application into regularized estimation for the Ridge estimator is detailed in Section \ref{section:regularized_ls}. We provide the main steps for the proof of the leave-one-out representation (\ref{eqn:intro_AMP_loo_representation}) in Section \ref{section:proof_AMP_loo}, with the proofs of many technical details deferred to Section \ref{section:proof_technical_results_proof_main_thm}. The proofs of all other results are contained in Sections \ref{section:proof_remaining_main}-\ref{section:proof_ridge} and the Appendix.

\subsection{Notation}
For any two integers $m,n \in \mathbb{Z}$, let $[m:n]\equiv \{m,m+1,\ldots,n\}$ when $m\leq n$ and $\emptyset$ otherwise. Let $[m:n)\equiv [m:n-1]$, $(m:n]\equiv [m+1:n]$, and we write $[n]\equiv [1:n]$. For $a,b \in \R$, $a\vee b\equiv \max\{a,b\}$ and $a\wedge b\equiv\min\{a,b\}$. For $a \in \R$, let $a_\pm \equiv (\pm a)\vee 0$.

For a vector $x \in \R^n$, let $\pnorm{x}{p}$ denote its $p$-norm $(0\leq p\leq \infty)$, and we simply write $\pnorm{x}{}\equiv\pnorm{x}{2}$. For vectors $x,y\in \R^n$, let $\iprod{x}{y}\equiv x^\top y =\sum_{i \in [n]} x_i y_i$. We often write $x^{-1} \equiv (x_i^{-1})_{i \in [n]} \in \R^n$, $x^2\equiv x\circ x = (x_i^2)_{i \in [n]} \in \R^n$ and $\mathfrak{D}_x\equiv \mathrm{diag}(x)\equiv (x_i\bm{1}_{i=j})_{i, j \in[n]} \in \R^{n\times n}$. We use $\{e_k\}$ to denote the canonical basis of the Euclidean space, whose dimension should be self-clear from the context. 

For a matrix $M \in \R^{m\times n}$, let $\pnorm{M}{\op},\pnorm{M}{F}$ denote the spectral and Frobenius norm of $M$, respectively. $I_n$ is reserved for an $n\times n$ identity matrix, written simply as $I$ (in the proofs) if no confusion arises. For two matrices $M,N \in \R^{m\times n}$ of the same size, let $M\circ N \equiv (M_{ij}N_{ij})\in \R^{m\times n}$ be their Hadamard product. For square matrices $M,N \in \R^{n\times n}$, let $\iprod{M}{N}\equiv \tr(M^\top N) =\sum_{i,j \in [n] } M_{ij}N_{ij}$.

We use $C_{x}$ to denote a generic constant that depends only on $x$, whose numeric value may change from line to line unless otherwise specified. $a\lesssim_{x} b$ and $a\gtrsim_x b$ mean $a\leq C_x b$ and $a\geq C_x b$, abbreviated as $a=\bigo_x(b), a=\Omega_x(b)$ respectively;  $a\asymp_x b$ means $a\lesssim_{x} b$ and $a\gtrsim_x b$, abbreviated as $a=\Theta_x(b)$. $\bigo$ and $\smallo$ (resp. $\mathcal{O}_{\mathbf{P}}$ and $\mathfrak{o}_{\mathbf{P}}$) denote the usual big and small O notation (resp. in probability). For a random variable $X$, we use $\Prob_X,\E_X$ (resp. $\Prob^X,\E^X$) to indicate that the probability and expectation are taken with respect to $X$ (resp. conditional on $X$).

For $\Lambda>0$ and $\mathfrak{p}\in \N$, a measurable map $f:\R^n \to \R$ is called \emph{$\Lambda$-pseudo-Lipschitz of order $\mathfrak{p}$} iff 
\begin{align}\label{cond:pseudo_lip}
\abs{f(x)-f(y)}\leq \Lambda\cdot  (1+\pnorm{x}{}+\pnorm{y}{})^{\mathfrak{p}-1}\cdot\pnorm{x-y}{},\quad \forall x,y \in \R^{n}.
\end{align}
Moreover, $f$ is called \emph{$\Lambda$-Lipschitz} iff $f$ is $\Lambda$-pseudo-Lipschitz of order $1$, and in this case we often write $\pnorm{f}{\mathrm{Lip}}\leq L$, where $\pnorm{f}{\mathrm{Lip}}\equiv \sup_{x\neq y} \abs{f(x)-f(y)}/\pnorm{x-y}{}$. For a univariate map $f:\R\to \R$, we write $f^{(q)}$ as the $q$-th derivative of $f$.

For a proper, closed convex function $f$ defined on $\R^n$, its \emph{proximal operator} $\prox_f(\cdot;\tau)$ for any $\tau>0$ is defined by $\prox_f(x;\tau)\equiv \argmin_{z \in \R^n} \big\{\frac{1}{2\tau}\pnorm{x-z}{}^2+f(z)\big\}$. 

\section{Main results}\label{section:main_results}

\subsection{Symmetric AMP}

Consider the general non-linear AMP
\begin{align}\label{def:AMP}
z^{(t+1)}&= A \mathsf{F}_t (z^{(t)})- b_{t}\circ \mathsf{F}_{t-1}(z^{(t-1)}),\quad t=0,1,2,\ldots,
\end{align}
where $\mathsf{F}_t: \R^n\to \R^n$ is `separable' in the sense that $\mathsf{F}_{t}(z)=(\mathsf{F}_{t,\ell}(z_\ell))_{\ell \in [n]}$. Here $z^{(-1)}=0_n, \mathsf{F}_{-1}\equiv 0$ and the initialization $z^{(0)}\in \R^n$ is either fixed or independent of $A$. We consider the following random matrix model for $A$: for some deterministic, symmetric matrix $V \in \R^{n\times n}_{\geq 0}$ with non-negative entries,
\begin{align*}
A\equiv V\circ G_n,\quad \{G_{n,ij}\}_{i\leq j}\stackrel{\textrm{i.i.d.}}{\sim} \mathcal{N}(0,1/n).
\end{align*}
For $t=1,2,\ldots$, the Onsager correction vector $b_t=(b_{t,k})_{k \in [n]}$ is defined via
\begin{align}
b_{t,k}&\equiv \frac{1}{n}\sum_{\ell \in [n]} V_{k\ell}^2 \E \big[\mathsf{F}_{t,\ell}' (z^{(t)}_\ell)|z^{(0)}\big],\quad k \in [n].
\end{align}
The Onsager correction vector $b_t$ here is chosen as deterministic to streamline the proof. The usual data-driven version of the AMP, with Onsager correction vectors computed as the empirical averages, can be handled with additional technicalities; see Theorem \ref{thm:AMP_oracle_data_diff} for a precise statement.

Consider the following leave-one-out AMP: for $k \in [n]$,
\begin{align}\label{def:AMP_loo}
z_{[-k]}^{(t+1)}&= A_{[-k]} \mathsf{F}_t (z_{[-k]}^{(t)})-b_{t}\circ\mathsf{F}_{t-1} (z_{[-k]}^{(t-1)}),\quad t=0,1,2,\ldots,
\end{align}
with the same $z^{(-1)}_{[-k]}=0_n$ and initialization $z^{(0)}_{[-k]}=z^{(0)}$. Here $A_{[-k]}\equiv (A_{ij}\bm{1}_{i,j\neq k})_{i,j \in [n]}$ is obtained by setting all elements in the $k$-th row and column of $A$ to be $0$, and keeping all other elements of $A$ unchanged. 

As the leave-one-out AMP iterate $\{z_{[-k]}^{(t)}\}$ still makes use of the randomness of almost the whole matrix $A$ except for its $k$-th row and column, we expect that $\{z_{[-k]}^{(t)}\}$ is close to the original AMP iterate $\{z^{(t)}\}$ except for their $k$-th coordinates. Actually, the following theorem shows that after one more step of iteration, the information on the $z^{(t+1)}_k$ can be extracted from $z_{[-k]}^{(t)}$ instead of using the original $z^{(t)}$.

\begin{theorem}\label{thm:AMP_sym_loo}
	Suppose the following hold.
	\begin{enumerate}
		\item[(S1)] $\max_{i,j \in [n]}\abs{V_{ij}}\leq K$ holds for some $K\geq 2$.
		\item[(S2)] $\mathsf{F}_{s,\ell} \in C^2(\R)$ for all $s \in [0:t], \ell \in [n]$, and there exists some $\Lambda\geq 2$ such that
		\begin{align*}
		\max_{s \in [0:t]}\max_{\ell \in [n]}\Big\{\abs{\mathsf{F}_{s,\ell}(0)}+\max_{q=1,2}\pnorm{\mathsf{F}_{s,\ell}^{(q)} }{\infty}\Big\}\leq \Lambda.
		\end{align*}
	\end{enumerate} 
   Then for any $D>0$, there exists some universal constant $c_0>0$ and another constant $c_1=c_1(D)>0$, such that with $\Prob(\cdot |z^{(0)})$-probability at least $1-c_1 n^{-D}$, uniformly in $k \in [n]$,
\begin{align*}
\bigabs{z^{(t+1)}_k-\bigiprod{A_k}{\mathsf{F}_t (z_{[-k]}^{(t)})} }&\leq \big(c_1 K\Lambda\log n\cdot (1+\pnorm{z^{(0)}}{\infty})\big)^{c_0 (1\vee t)^3}\cdot n^{-1/2}.
\end{align*}
\end{theorem}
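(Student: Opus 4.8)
The plan is to prove the statement by induction on $t$, and along the way to establish the \emph{exact} algebraic identity for the error $z^{(t+1)}_k - \bigiprod{A_k}{\mathsf{F}_t(z^{(t)}_{[-k]})}$ recorded in Proposition \ref{prop:AMP_loo_decom}. Starting from $z^{(t+1)}_k = \bigiprod{A_k}{\mathsf{F}_t(z^{(t)})} - b_{t,k}\mathsf{F}_{t-1,k}(z^{(t-1)}_k)$, I would Taylor-expand $\mathsf{F}_t(z^{(t)})$ around $\mathsf{F}_t(z^{(t)}_{[-k]})$ to first order, the $C^2$ bound in (S2) controlling the second-order remainder. The increment $z^{(t)}-z^{(t)}_{[-k]}$ is then unfolded through the recursions (\ref{def:AMP}) and (\ref{def:AMP_loo}): it splits into a rank-structured piece coming from $(A-A_{[-k]})\mathsf{F}_{t-1}(z^{(t-1)})$, which is supported on the $k$-th row and column and is precisely the ``self-interaction'' that the Onsager term $b_{t,k}\mathsf{F}_{t-1,k}(z^{(t-1)}_k)$ is built to cancel, plus a propagated piece of the form $A_{[-k]}\big(\mathsf{F}_{t-1}(z^{(t-1)})-\mathsf{F}_{t-1}(z^{(t-1)}_{[-k]})\big) - b_{t-1}\circ(\cdots)$ to which the inductive hypothesis applies. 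Iterating, the error is organized into: (a) linear and quadratic forms in the Gaussian vector $A_k$ against vectors built only from $z^{(\le t)}_{[-k]}$ — hence independent of $A_k$ — minus their conditional means; and (b) expressions of the form $\sum \mathsf{F}'_{s,\ell}(\cdots)\, e_\ell^\top M_s^{(t)} e_{\ell'}$ whose leading part is $n^{-1}\tr(M_s^{(t)})$ for the matrices $M_s^{(t)}$ generated by the recursion (\ref{eqn:intro_M_recursion}).

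The type (a) terms are handled by standard Gaussian concentration: a linear form $\bigiprod{A_k}{v}$ with $v$ independent of $A_k$ fluctuates on scale $\pnorm{v}{}\cdot n^{-1/2}$ up to $\log n$ factors, and a quadratic form $A_k^\top N A_k$ concentrates around $n^{-1}\tr(\mathfrak{D}_{V_{\cdot k}^2}N)$ by Hanson--Wright; together with a priori high-probability bounds $\pnorm{z^{(s)}}{\infty}, \pnorm{z^{(s)}_{[-k]}}{\infty}\lesssim (\log n)^{\bigo(s)}\cdot(1+\pnorm{z^{(0)}}{\infty})^{\bigo(1)}$ — proved by a parallel induction from (S1)--(S2) — these contribute at the claimed order. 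The cancellation of the diagonal/Onsager terms is exact at the level of conditional expectations, reducing to the concentration of $b_{t,k}$ around $A_{kk}\mathsf{F}'_{t,k}(\cdots)$-type quantities.

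The genuine core, and the step I expect to be the \emph{main obstacle}, is the trace estimate $n^{-1}\E\,\tr(M_s^{(t)}) = \bigo\big((\log n)^{c\,t^3}\cdot n^{-1/2}\big)$ for all $1\le s\le t$ (Proposition \ref{prop:trace_M}). In the $A=\mathrm{GOE}(n)$, identity-$\mathsf{F}_t$ case this is the orthogonality of Chebyshev polynomials of the second kind against the semicircle law, but for general variance profiles there is no such global structure and the cancellation must be produced directly. The strategy is an induction on the pair $(s,t)$ ordered by matrix-polynomial degree, in which one applies Gaussian integration by parts to one factor of $A$ inside $\tr(M_s^{(t)})$ using (\ref{eqn:intro_M_recursion}). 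The structural fact to verify is that a single IBP step expresses $n^{-1}\E\tr(M_s^{(t)})$ as a sum of terms of strictly smaller degree that again have the $\{M_\cdot^{(\cdot)}\}$-form — closing the induction — plus ``contracted'' terms in which the derivative hits an $A$ that is immediately summed against itself; by the defining coupling between $\mathsf{D}^{(s)}$ (collecting the $\mathsf{F}'_{s,\ell}$) and $\mathsf{B}_s$ (collecting $b_{s,\ell} = n^{-1}\sum_{\ell'}V_{\ell\ell'}^2\,\E[\mathsf{F}'_{s,\ell'}(z^{(s)}_{\ell'})\,|\,z^{(0)}]$), these contracted terms are exactly what $\mathsf{B}_s$ subtracts, so they cancel. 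The delicate bookkeeping is to track which of the many terms from iterated IBP are dangerous, confirm each is absorbed, and check that the surviving terms cost only an extra $(\log n)^{\bigo(t^2)}$ factor per level — yielding the $(\log n)^{c t^3}$ total — with the base cases $1\le s\le t\le 2$ reducing to direct Gaussian concentration of linear and quadratic forms. Assembling everything, one upgrades the $L^1$ trace bounds and the concentration estimates to a single $\Prob(\cdot\,|\,z^{(0)})$-high-probability statement, tracks the dependence of all constants on $K$, $\Lambda$ and $1+\pnorm{z^{(0)}}{\infty}$, and takes a union bound over $k\in[n]$, absorbing the factor $n$ into the $n^{-D}$ budget.
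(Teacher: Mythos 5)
Your proposal follows essentially the same route as the paper's proof: the exact leave-one-out error decomposition of Proposition \ref{prop:AMP_loo_decom}, a priori delocalization estimates for the iterates and their derivatives, Hanson--Wright reduction of the quadratic forms $\iprod{A_k}{\overline{M}^{(t)}_{\cdot,[-k]}A_k}$ to the normalized trace of $M^{(t)}_s$, and the key trace bound via recursive Gaussian integration by parts that lowers the polynomial degree until the base cases $1\le s\le t\le 2$, which are settled by concentration of linear and quadratic forms. The only presentational difference is bookkeeping: the paper's reduction alternates between $\{M^{(t)}_\cdot\}$ and the auxiliary reversed family $\{N^{(s)}_\cdot\}$ (Lemmas \ref{lem:trace_cal_M_to_N} and \ref{lem:trace_cal_N_to_M}), and the Onsager-type cancellation against $\mathsf{B}_\cdot$ is realized at the base cases (up to concentration) rather than exactly term-by-term at each IBP step, which is precisely the delicate accounting you flag.
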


The above theorem is proved in Section \ref{section:proof_AMP_loo}.

\begin{remark}
Theorem \ref{thm:AMP_sym_loo} can be formulated in a slightly more general leave-$k$-out form as follows. Let $\emptyset\neq \mathcal{P}\subset [n]$, and consider the following leave-$\mathcal{P}$-out AMP: starting from $z_{[-\mathcal{P}]}^{(-1)}=0_n$ and the same initialization $z^{(0)}_{[-\mathcal{P}]}=z^{(0)}$, let
		\begin{align*}
		z_{[-\mathcal{P}]}^{(t+1)}&= A_{[-\mathcal{P}]} \mathsf{F}_t (z_{[-\mathcal{P}]}^{(t)})-b_{t}\circ\mathsf{F}_{t-1} (z_{[-\mathcal{P}]}^{(t-1)}),\quad t=0,1,2,\ldots,
		\end{align*}
		where $A_{[-\mathcal{P}]}\equiv (A_{ij}\bm{1}_{i,j\notin \mathcal{P}})_{i,j \in [n]}$. Then under the same assumptions as in Theorem \ref{thm:AMP_sym_loo}, with $\Prob(\cdot|z^{(0)})$-probability at least $1-c_1 n^{-D}$, uniformly in $k \in \mathcal{P}$,
		\begin{align*}
		\bigabs{z^{(t+1)}_k-\bigiprod{A_k}{\mathsf{F}_t (z_{[-\mathcal{P}]}^{(t)})} }&\leq \big(c_1 K\Lambda\log n\cdot (1+\pnorm{z^{(0)}}{\infty})\big)^{c_0 (1\vee t)^3}\cdot n^{-1/2}.
		\end{align*}
		The constants $c_0,c_1$ may depend further on $\abs{\mathcal{P}}$. The above leave-$\mathcal{P}$-out representation may be used to obtain joint normal approximations for $(z_{k}^{(t+1)})_{k \in \mathcal{P}}\in \R^{\abs{\mathcal{P}} }$. To keep notation simple, below we only work with the case $\abs{\mathcal{P}}=1$. 
\end{remark}

As mentioned in the introduction, the leave-one-out representation in Theorem \ref{thm:AMP_sym_loo} provides a principled method to approximate the distribution for each coordinate of the AMP iterate $\{z^{(t)}\}$, in that $z_k^{(t+1)}$ is approximately a centered Gaussian random variable whose variance is completely determined by that of $\{z^{(t)}_{\ell}\}_{\ell \in [n]}$. The following high dimensional state evolution describes the entire covariance structure of $(z_k^{(t+1)},z_k^{(t)},\ldots,z_k^{(1)})$ for any $k \in [n]$.

\begin{definition}[\emph{State evolution for symmetric AMP}]\label{def:AMP_se}
Let $Z^{(1)},Z^{(2)},\ldots \in \R^n$ be a sequence of correlated $n$-dimensional centered Gaussian vectors with independent entries. The correlation structure along the iteration path is determined via the recursive updates: with $Z^{(0)}\equiv z^{(0)}$, for $s_1,s_2 \in \mathbb{Z}_{\geq 0}$,
\begin{align*}
\cov\big(Z^{(s_1+1)}_k,Z^{(s_2+1)}_k\big)\equiv \frac{1}{n}\sum_{\ell \in [n]} V_{k\ell}^2 \E \big[\mathsf{F}_{s_1,\ell}\big(Z^{(s_1)}_\ell\big) \mathsf{F}_{s_2,\ell}\big(Z^{(s_2)}_\ell\big)|z^{(0)}\big],\quad k \in [n]. 
\end{align*}
\end{definition}

The following theorem provides a formal distributional characterization for $(z^{(t+1)}_k,z^{(t)}_k,\ldots,z^{(1)}_k)\stackrel{d}{\approx} (Z^{(t+1)}_k,Z^{(t)}_k,\ldots,Z^{(1)}_k)$ for each $k \in [n]$; its proof can be found in Section \ref{subsection:proof_AMP_sym}.

\begin{theorem}\label{thm:AMP_sym}
	Suppose (S1)-(S2) hold for some $K,\Lambda\geq 2$. Fix any $\Lambda$-pseudo-Lipschitz function $\psi:\R^{t+1} \to \R$ of order $\mathfrak{p}\in \N$. Then there exist some universal constant $c_0>1$ and another constant $c_{\mathfrak{p}}>1$ depending on $\mathfrak{p}$ only, such that 
	\begin{align}\label{ineq:AMP_sym_1}
	&\max_{k \in [n]}\bigabs{\E \big[\psi\big( z^{(t+1)}_k,z^{(t)}_k,\ldots,z^{(1)}_k\big)|z^{(0)}\big]-\E \big[\psi \big(Z^{(t+1)}_k,Z^{(t)}_k,\ldots,Z^{(1)}_k\big)|z^{(0)}\big] }\nonumber\\
	&\qquad  \leq \big(K\Lambda\log n\cdot (1+\pnorm{z^{(0)}}{\infty})\big)^{c_{\mathfrak{p}} (1\vee t)^3}\cdot n^{-1/c_0^{1\vee t}}.
	\end{align}
	Moreover, for any $\Lambda$-pseudo-Lipschitz function $\psi_1:\R \to \R$ of order $\mathfrak{p}\in \N$,
	\begin{align}\label{ineq:AMP_sym_2}
	&\max_{k \in [n]}\bigabs{\E \big[\psi_1\big( z^{(t+1)}_k\big)|z^{(0)}\big]-\E \big[\psi_1 \big(Z^{(t+1)}_k\big)|z^{(0)}\big] }\nonumber\\
	&\leq  \big(K\Lambda\sigma_\ast^{[t+1],-1}\log n\cdot (1+\pnorm{z^{(0)}}{\infty})\big)^{c_{\mathfrak{p}} (1\vee t)^3} \cdot n^{-1/2}.
	\end{align}
	Here $\sigma_\ast^{[t+1]}\equiv  1\wedge \min\limits_{s \in [t+1]}\min\limits_{k \in [n]} \var^{1/2}(Z_{k}^{(s)}|z^{(0)})$.
\end{theorem}

An important feature of the above theorem is that the distributional characterization holds for \emph{individual} coordinates, as opposed to the typical AMP theory that holds for the average of all coordinates, cf. \cite{bayati2011dynamics,javanmard2013state,berthier2020state}. This non-asymptotic, entrywise distributional characterization is particularly relevant with the presence of a general variance profile matrix $V$, as the behavior of the AMP iterate $\{z^{(t)}\}$ may differ significantly across different coordinates and with respect to the problem dimension $n$.

\begin{remark}
The worsened bound in the first inequality (\ref{ineq:AMP_sym_1}) in Theorem \ref{thm:AMP_sym} arises from the inductive argument used to control the Frobenius norm error of the covariance matrices between the $n$-dimensional Gaussian vector and its leave-one-out counterpart in the AMP state evolution, without any apriori lower bounds on their smallest eigenvalues. Indeed, in typical applications where the AMP iterate $\{z^{(t)}\}$ is expected to converge as $t \to \infty$, as $z^{(t+1)}$ becomes increasingly indistinguishable from $z^{(t)}$, the smallest eigenvalue of $\cov\big((Z_k^{(s+1)})_{s\in [0:t]}\big)$ is expected to decay rapidly as $t$ grows. 
\end{remark}

The $C^2$ smoothness condition on $\{\mathsf{F}_t\}$ in (S2) can be substantially relaxed to a Lipschitz condition, when the main interest lies in the empirical distribution of the AMP iterate $\{z^{(t)}\}$. For technical reasons, in the following theorem (proved in Section \ref{subsection:proof_AMP_sym_avg}), we work with a modified AMP iterate $\{\overline{z}^{(t)}\}$ whose initialization remains the same $\overline{z}^{(0)}=z^{(0)}$, and the Onsager correction vectors $\{\overline{b}_{t,k}\equiv n^{-1}\sum_{\ell \in [n]} V_{k\ell}^2 \E [\mathsf{F}_{t,\ell}'(Z_\ell^{(t)})|z^{(0)}]\}_{k \in [n]}$ are now defined via the Gaussian random vectors $\{Z^{(\cdot)}\}$ in the state evolution in Definition \ref{def:AMP_se}, with $\{\mathsf{F}_{t,\cdot}'\}$'s taken as the weak derivatives of $\{\mathsf{F}_{t,\cdot}\}$.

\begin{theorem}\label{thm:AMP_sym_avg}
	Suppose (S1) holds and (S2) is replaced by the following:
	\begin{enumerate}
		\item[(SA2)] There exists some $\Lambda\geq 2$ such that
		\begin{align*}
		\max_{s \in [0:t]}\max_{\ell \in [n]}\Big\{\abs{\mathsf{F}_{s,\ell}(0)}+\pnorm{\mathsf{F}_{s,\ell} }{\mathrm{Lip}}\Big\}\leq \Lambda.
		\end{align*}
	\end{enumerate} 
	Fix a sequence of $\Lambda$-pseudo-Lipschitz functions $\{\psi_k: \R^{t+1}\to \R\}_{k \in [n]}$ of order $2$. Then for any $D>0$, there exist a universal constant $c_0>1$ and another constant $c_1=c_1(D)>0$, such that with $\Prob(\cdot|z^{(0)})$-probability at least $1- c_1 n^{-D}$,
	\begin{align*}
	&\biggabs{ \frac{1}{n}\sum_{k \in [n]} \Big(\psi_k\big( \overline{z}^{(t+1)}_{k},\overline{z}^{(t)}_{k},\ldots,\overline{z}^{(1)}_{k}\big)-  \E \big[\psi_k\big( Z^{(t+1)}_{k},Z^{(t)}_{k},\ldots,Z^{(1)}_{k}\big)|z^{(0)}\big]\Big) }\\
	&\leq \big(c_1K\Lambda\sigma_{*,\psi}^{-1} \log n\cdot (1+\pnorm{z^{(0)}}{\infty})\big)^{c_0 (1\vee t)^3}\cdot n^{-1/c_0^{1\vee t}}.
	\end{align*}
	Here $
	\sigma_{\ast,\psi}^2\equiv 1\wedge \min\limits_{ k \in [n]}\min\limits_{s \in S_{\psi_k} \cap \mathbb{Z}_{\geq 2} }  \var(Z^{(s-1)}_k|z^{(0)})$, where $S_{\psi_k}$ is the minimum-sized subset of $[t+1]$ so that $\psi_k(z)$ depends on $z$ only via $z_{S_{\psi_k}}$. 
\end{theorem}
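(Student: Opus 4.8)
The plan is to reduce Theorem~\ref{thm:AMP_sym_avg} to the $C^2$ theory of Theorems~\ref{thm:AMP_sym_loo}--\ref{thm:AMP_sym} by a mollification argument, supplemented by a Gaussian concentration step that promotes the entrywise \emph{mean} comparison of Theorem~\ref{thm:AMP_sym} to a statement about the empirical average $n^{-1}\sum_{k}\psi(\overline z^{(t+1)}_k,\dots,\overline z^{(1)}_k)$. Fix a smoothing bandwidth $\delta\in(0,1)$, to be optimized at the end, and replace each Lipschitz $\mathsf{F}_{s,\ell}$ by its Gaussian mollification $\mathsf{F}^\delta_{s,\ell}$, so that $\mathsf{F}^\delta_{s,\ell}\in C^\infty(\R)$, $\abs{\mathsf{F}^\delta_{s,\ell}(0)}\lesssim\Lambda$, $\pnorm{\mathsf{F}^\delta_{s,\ell}-\mathsf{F}_{s,\ell}}{\infty}\lesssim\Lambda\delta$, $\pnorm{(\mathsf{F}^\delta_{s,\ell})'}{\infty}\lesssim\Lambda$ and $\pnorm{(\mathsf{F}^\delta_{s,\ell})''}{\infty}\lesssim\Lambda/\delta$. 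Write $\{Z^{(\cdot),\delta}\}$ for the state evolution (Definition~\ref{def:AMP_se}) of the mollified system, $\{z^{(t),\delta}\}$ for the AMP iterate (\ref{def:AMP}) driven by $\{\mathsf{F}^\delta\}$ with the $C^2$-type correction $b^\delta$, and $\{\overline z^{(t),\delta}\}$ for the corresponding modified iterate driven by $\{\mathsf{F}^\delta\}$ and the state-evolution-based correction $\overline b^\delta$. The smoothed system satisfies (S2) with constant $\Lambda^\delta\asymp\Lambda/\delta$, so Theorems~\ref{thm:AMP_sym_loo} and~\ref{thm:AMP_sym} apply verbatim to $\{z^{(t),\delta}\}$.

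The second step is a suite of stability estimates, all proved by a short induction on $s\leq t$ that peels off one multiplication by $A$ and one Onsager subtraction per step: (i) $n^{-1/2}\pnorm{\overline z^{(s),\delta}-\overline z^{(s)}}{}$ and $n^{-1/2}\pnorm{z^{(s),\delta}-\overline z^{(s),\delta}}{}$ are $\lesssim\delta\cdot\big(K\Lambda\log n\,(1+\pnorm{z^{(0)}}{\infty})\big)^{c(1\vee t)^3}$; (ii) $\max_k\bigabs{\var(Z^{(s),\delta}_k)-\var(Z^{(s)}_k)}\lesssim\delta\cdot(\cdots)$; and (iii) $\pnorm{\overline b^\delta_s-\overline b_s}{\infty}+\pnorm{b^\delta_s-\overline b^\delta_s}{\infty}\lesssim\delta\cdot(\cdots)$. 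The inductions use the deterministic operator-norm bound $\pnorm{A}{\op}=\pnorm{V\circ G_n}{\op}\lesssim K$ (valid off an event of $\Prob(\cdot|z^{(0)})$-probability $\leq c_1n^{-D}$), the uniform derivative bounds from (SA2)/(S2), the pointwise closeness $\pnorm{\mathsf{F}^\delta-\mathsf{F}}{\infty}\lesssim\Lambda\delta$, and the fact that $\sigma\mapsto\E[\mathsf{F}_{s,\ell}'(\mathcal{N}(0,\sigma^2))|z^{(0)}]$ (which governs $\overline b$) is Lipschitz \emph{only away from $\sigma=0$}, with modulus controlled by Gaussian integration by parts. The last point forces $\delta$ to be small relative to the relevant state-evolution standard deviations, which is exactly what the side condition of the theorem supplies. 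Since $\psi$ is pseudo-Lipschitz of order $2$, (i) turns $\ell_2$-closeness of $\{\overline z^{(\cdot),\delta}\}$ and $\{\overline z^{(\cdot)}\}$ into closeness of the two empirical $\psi$-averages, and (ii) together with the Gaussian Wasserstein identity $W_2(\mathcal{N}(0,a^2),\mathcal{N}(0,b^2))=\abs{a-b}$ turns it into closeness of $n^{-1}\sum_k\E[\psi(Z^{(\cdot),\delta}_k)|z^{(0)}]$ and $n^{-1}\sum_k\E[\psi(Z^{(\cdot)}_k)|z^{(0)}]$.

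The genuinely new probabilistic input is the control of $n^{-1}\sum_k\psi(\overline z^{(t+1),\delta}_k,\dots)-n^{-1}\sum_k\E[\psi(\overline z^{(t+1),\delta}_k,\dots)|z^{(0)}]$. Conditionally on $z^{(0)}$ the correction $\overline b^\delta$ is deterministic (an expectation over the state-evolution Gaussians), so $\overline z^{(\cdot),\delta}$ is a fixed $C^2$ function of the Gaussian vector $(G_{n,ij})_{i\leq j}$; differentiating the iteration in $G_n$ produces a matrix recursion of exactly the form (\ref{eqn:intro_M_recursion}), whose operator norm is bounded using $\pnorm{A}{\op}\lesssim K$, $\pnorm{(\mathsf{F}^\delta)'}{\infty}\lesssim\Lambda$ and $\pnorm{(\mathsf{F}^\delta)''}{\infty}\lesssim\Lambda/\delta$. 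Hence $(G_{n,ij})_{i\leq j}\mapsto n^{-1}\sum_k\psi(\overline z^{(t+1),\delta}_k,\dots)$ is $L$-Lipschitz with $L\lesssim n^{-1/2}\delta^{-C(1\vee t)}\big(K\Lambda\log n\,(1+\pnorm{z^{(0)}}{\infty})\big)^{c(1\vee t)^3}$, and the Gaussian concentration inequality for Lipschitz functions gives a deviation of order $L\sqrt{\log n}$ with $\Prob(\cdot|z^{(0)})$-probability $\geq 1-c_1 n^{-D}$. For the conditional mean, Theorem~\ref{thm:AMP_sym} applied to $\{z^{(t),\delta}\}$ (with $\Lambda^\delta\asymp\Lambda/\delta$) bounds $\max_k\bigabs{\E[\psi(z^{(t+1),\delta}_k,\dots)|z^{(0)}]-\E[\psi(Z^{(t+1),\delta}_k,\dots)|z^{(0)}]}$; averaging over $k$ and passing from $z^{(\cdot),\delta}$ to $\overline z^{(\cdot),\delta}$ via (i) (in mean, using pseudo-Lipschitzness) gives the required mean comparison.

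Collecting the bounds, the total error has the shape $\delta\cdot\big(K\Lambda\log n\,(1+\pnorm{z^{(0)}}{\infty})\big)^{c(1\vee t)^3}+\delta^{-C(1\vee t)^3}\cdot\big(K\Lambda\log n\,(1+\pnorm{z^{(0)}}{\infty})\big)^{c(1\vee t)^3}\cdot n^{-1/c_0^{1\vee t}}$, the first term the mollification bias and the second the price of the $\Lambda/\delta$ degradation of the constants in Theorems~\ref{thm:AMP_sym_loo}--\ref{thm:AMP_sym} and in $L$. Choosing $\delta=n^{-a}$ with $a$ a sufficiently small constant multiple of $c_0^{-(1\vee t)}$ balances the two terms, and relabeling constants yields the claimed bound; the hypothesis that this bound is $\leq(1\wedge\sigma_{\ast,\psi})^{c_0}$ is precisely what makes $\delta\ll\sigma_{\ast,\psi}$, legitimizing the Onsager perturbations in Step~2 and keeping the comparison $\overline z^{(s),\delta}_k\stackrel{d}{\approx}Z^{(s),\delta}_k$ informative along the possibly near-degenerate directions of $\cov\big((Z^{(s-1)}_k)_{s\in S_\psi}\big)$ flagged in the remark after Theorem~\ref{thm:AMP_sym}. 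I expect the main obstacle to be the bookkeeping in Steps~2--3: propagating the mollification error and the Lipschitz constant $L$ through $t$ AMP rounds while keeping the $1/\delta$-dependence at a fixed ($n$-independent) power so that the final optimization closes, and while preserving the $(\log n)^{c(1\vee t)^3}$ shape. This is an operator-norm analysis of the recursion (\ref{eqn:intro_M_recursion}), which is genuinely distinct from the trace-level analysis underlying Theorem~\ref{thm:AMP_sym_loo} and must be carried out separately.
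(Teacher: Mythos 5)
Your proposal follows essentially the same route as the paper: mollify the nonlinearities, run the $C^2$ theory (Theorems \ref{thm:AMP_sym_loo}--\ref{thm:AMP_sym}) on the smoothed system with constant of order $\Lambda/\delta$, compare smoothed and unsmoothed state evolutions, handle the Onsager corrections with the lower bound $\sigma_{\ast,\psi}$ entering exactly through the Stein-identity/variance step, conclude by Gaussian concentration, and optimize $\delta=n^{-a}$ with $a$ of order $c_0^{-t}$; this is the content of Lemma \ref{lem:smoothed_AMP_moment} combined with Lemma \ref{lem:gaussian_conc}.

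There is, however, one step whose claimed bound and method are wrong as stated. In your Step 2 you assert that $\pnorm{b^\delta_s-\overline b^\delta_s}{\infty}$ and $n^{-1/2}\pnorm{z^{(s),\delta}-\overline z^{(s),\delta}}{}$ are $\lesssim\delta$ and provable by a short induction that only peels off multiplications by $A$. The discrepancy between $b^\delta_{s,k}=n^{-1}\sum_\ell V_{k\ell}^2\,\E\big[\mathsf{F}'_{\delta;s,\ell}(z^{(s)}_{\delta;\ell})\,\big|\,z^{(0)}\big]$ and its state-evolution counterpart $\overline b^\delta_{s,k}$ is a \emph{distributional} comparison between the iterate $z^{(s)}_{\delta;\ell}$ and the Gaussian $Z^{(s)}_{\delta;\ell}$; it is not small in $\delta$ at all, and no operator-norm induction produces it. The only available control is Theorem \ref{thm:AMP_sym} applied to the smoothed iterate with test function $\mathsf{F}'_{\delta;s,\ell}$, whose Lipschitz constant is of order $\Lambda/\delta$, giving an error of order $\big(K\Lambda\delta^{-1}\log n\,(1+\pnorm{z^{(0)}}{\infty})\big)^{c t^3}n^{-1/c^{t}}$, which then propagates through the iteration to yield the same shape for $n^{-1/2}\pnorm{z^{(s),\delta}-\overline z^{(s),\delta}}{}$ (this is Step 2 of Lemma \ref{lem:smoothed_AMP_moment}). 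Your final error assembly already contains a term of exactly this shape, so the repair stays inside your framework and the $\delta$-optimization still closes, but the $O(\delta)$ labelling must be replaced by this appeal to the entrywise theorem. Two smaller points: the $O(\delta)$ rates in your (i)--(ii) also degrade to $\delta^{1/c^{t}}$, since each Gaussian comparison along the recursion loses a square root through $\pnorm{\Sigma_1^{1/2}-\Sigma_2^{1/2}}{F}\leq\sqrt{t}\,\pnorm{\Sigma_1-\Sigma_2}{F}^{1/2}$ (harmless for the final exponent); and for the concentration step the paper applies Lemma \ref{lem:gaussian_conc} directly to the unsmoothed modified iterate, using only Lipschitzness of $\mathsf{F}$ together with the truncated (high-probability) Lipschitz-constant device --- your route through $\overline z^{(\cdot),\delta}$ works, but it needs the same truncation since $\pnorm{A}{\op}\lesssim K$ only holds with high probability, and it does not actually require $\pnorm{\mathsf{F}''_\delta}{\infty}$, as differentiating the iterate in $G_n$ involves only first derivatives.
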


\begin{remark}
A few remarks are in order:
\begin{enumerate}
	\item The range of $t$ for both Theorems \ref{thm:AMP_sym} and \ref{thm:AMP_sym_avg} to be effective is probably sub-optimal. In the GOE case, the averaged distributional characterization for the AMP iterate $\{z^{(t)}\}$ (as in Theorem \ref{thm:AMP_sym_avg} above) is known to hold up to $t\ll \log n/\log\log n$, cf. \cite{rush2018finite}, and up to $t=\bigo\big(n/\mathrm{polylog }(n)\big)$ in the rank one spiked Wigner model, cf. \cite{li2022non,li2023approximate}. A different method of analysis in \cite{celentano2023local} proves the convergence of an AMP algorithm with spectral initialization as $t \to \infty$ in the $\mathbb{Z}_2$ synchronization model. It remains open to examine the validity of our theory, in particular in the regime $t\gg \log n$, under a general variance profile matrix $V$ and nonlinear $\{\mathsf{F}_t\}$'s.
	\item Since the results in \cite{li2022non} remain valid for much larger values of $t$, it is instructive to briefly compare their approach with ours. Their analysis can in fact  be viewed as a nontrivial generalization of the conditioning technique in \cite{bayati2011dynamics}, but it continues to make essential use of the rotational invariance property. A key technical contribution of \cite{li2022non} is the identification of precise conditions (e.g., \cite[Assumption 2]{li2022non}) under which the error terms remain small as $t$ increases. However, these conditions must be verified on a case-by-case basis---any general AMP theory cannot accommodate rapid growth in $t$, as even linear AMP may diverge quickly, for instance when $\mathsf{F} = 10\,\mathrm{id}$. In contrast, our leave-one-out approach avoids any reliance on the rotational invariance property of $A$ and applies more broadly to AMP with general variance profiles. In our more general setting, it remains unclear whether the iteration $t$ can grow rapidly. An interesting direction for future research is to identify specific conditions on the nonlinearities $\mathsf{F}$ that ensure stability even as $t$ becomes large.
    \item We compare Theorem \ref{thm:AMP_sym_avg} above to related results in the literature.
    \begin{enumerate}
        \item For $V$ satisfying $\max_{i \in [n]} \abs{n^{-1}\sum_{j \in [n]} V_{ij}^2- 1}\approx 0$, \cite[Theorem 2.4]{wang2024universality} obtained an asymptotic characterization for the AMP iterate that matches the GOE case with the scalar state evolution (\ref{eqn:intro_se}). For such $V$'s, the limiting spectrum of $A=V\circ G_n$ is the same semicircle law as in the GOE case, cf. \cite[Theorem 2.1]{erdos2012bulk}.
        \item When both $\{\mathsf{F}_t\}$'s and $\psi$ are polynomials, by applying the moment method to the matrix polynomials appearing in the unfolded AMP iterate, \cite[Theorem 3]{bayati2015universality} proved that the limiting behavior of $n^{-1}\sum_{k \in [n]} \psi\big( (\overline{z}^{(s)}_{k})_{s\in[t+1]}\big)$ is universal across a class of generalized Wigner ensembles; see \cite{hachem2023approximate} for further refinements to accommodate general $\{\mathsf{F}_t\}$'s and $\psi$, as well as possibly sparse $V$'s.
    \end{enumerate}
    Our Theorem \ref{thm:AMP_sym_avg} allows for complicated patterns of $V$ whose corresponding spectrum of $A$ can be significantly different from the semicircle law as in (a) (cf. \cite{ajanki2019quadratic}). Moreover, as mentioned in the introduction, our Theorem \ref{thm:AMP_sym_avg} is non-asymptotic in nature, whereas the moment method used in \cite{bayati2015universality,hachem2023approximate} as in (b) is intrinsically confined to obtain asymptotic results. 
    \item By consider the special univariate choice $\{\psi_k\equiv \psi_{1;k}:\R\to \R\}$ in Theorem \ref{thm:AMP_sym_avg}, we can also derive---similar to (\ref{ineq:AMP_sym_2})---an improved error bound of $n^{-1/c_0}$ for the average $n^{-1}\sum_{k \in [n]} \big(\psi_{1;k}( \overline{z}^{(t+1)}_{k})-  \E [\psi_{1;k}( Z^{(t+1)}_{k})|z^{(0)}]\big) $. However, since we are unaware of any significant applications for this improved bound, we omit these details for the sake of a cleaner presentation.
\end{enumerate}
\end{remark}

\subsection{Asymmetric AMP}
We consider an asymmetric version of the AMP, which is initialized with $(u^{(0)},v^{(0)})\in \R^m\times \R^n$ (here $u^{(0)} \in \R^m$ is a dummy variable with $\mathsf{G}_0\equiv 0$ introduced merely for notational consistency; the algorithm is actually initialized at $v^{(0)}$), and subsequently updated for $t=0,1,2,\ldots$, according to 
\begin{align}\label{def:AMP_asym}
\begin{cases}
u^{(t+1)} = A \mathsf{F}_{t}(v^{(t)})-b_t^{\mathsf{F}}\circ \mathsf{G}_{t}(u^{(t)}) \in \R^m,\\
v^{(t+1)}= A^\top \mathsf{G}_{t+1}(u^{(t+1)})-b_{t+1}^{\mathsf{G}}\circ\mathsf{F}_{t}(v^{(t)})\in \R^n.
\end{cases}
\end{align}
Here we consider the following random matrix model for $A\in \R^{m\times n}$: for some deterministic matrix $V \in \R^{m\times n}_{\geq 0}$ with non-negative entries,
\begin{align*}
A\equiv V\circ G_{m\times n},\quad \{(G_{m\times n})_{ij}\}_{i \in [m], j \in [n]}\stackrel{\textrm{i.i.d.}}{\sim} \mathcal{N}(0,1/m).
\end{align*}
The variance scaling $1/m$ is chosen in accordance to the AMP literature \cite{bayati2011dynamics,berthier2020state}. We also follow the same convention that the functions $\mathsf{F}_t= (\mathsf{F}_{t,\ell})_{\ell \in [n]}:\R^n \to \R^n$, $\mathsf{G}_t= (\mathsf{G}_{t,k})_{k \in [m]}:\R^m \to \R^m$ are understood as applied component-wise. For $t=1,2,\ldots$, the Onsager correction vectors $b_t^{\mathsf{F}} \in \R^m, b_t^{\mathsf{G}} \in \R^n$ are defined via
\begin{align}
\begin{cases}
b_{t,k}^{\mathsf{F}}\equiv \frac{1}{m}\sum_{\ell \in [n]} V_{k \ell}^2 \E \big[\mathsf{F}_{t,\ell}'(v^{(t)}_\ell)|v^{(0)}\big], &\quad k \in [m],\\
b_{t,\ell}^{\mathsf{G}}\equiv \frac{1}{m}\sum_{k \in [m]} V_{k\ell}^2 \E \big[\mathsf{G}_{t,k}'(u^{(t)}_k)|v^{(0)}\big], &\quad \ell \in [n].
\end{cases}
\end{align}
In the canonical applications of the asymmetric AMP (\ref{def:AMP_asym}) into high dimensional linear estimation problems, $m,n$ are understood as the sample size and the signal dimension, respectively. Moreover, $u^{(t+1)}\in \R^m$ is the residual vector, and $v^{(t+1)}\in \R^n$ is the AMP iterate for the statistical estimator of interest.

We shall first consider leave-one-out representation of $(u^{(t+1)},v^{(t+1)})\in \R^m\times \R^n$ in a similar vein to Theorem \ref{thm:AMP_sym_loo}. There are two leave-one-out versions associated with the asymmetric AMP (\ref{def:AMP_asym}):
\begin{itemize}
	\item (\emph{Leave-one-row-out version}): For $k \in [m]$, with the same initialization $(u^{(0)}_{[-k]},v^{(0)}_{[-k]})=(u^{(0)},v^{(0)})\in \R^m\times \R^n$, let for $t=0,1,2,\ldots$
	\begin{align}\label{def:AMP_asym_loo_row}
	\begin{cases}
	u^{(t+1)}_{[-k]} = A_{[-k],\cdot} \mathsf{F}_{t}(v^{(t)}_{[-k]})-b_t^{\mathsf{F}}\circ \mathsf{G}_{t}(u^{(t)}_{[-k]}) \in \R^m,\\
	v^{(t+1)}_{[-k]}= A_{[-k],\cdot}^\top \mathsf{G}_{t+1}(u^{(t+1)}_{[-k]})-b_{t+1}^{\mathsf{G}}\circ\mathsf{F}_{t}(v^{(t)}_{[-k]})\in \R^n.
	\end{cases}
	\end{align}
	\item (\emph{Leave-one-column-out version}): For all $\ell \in [n]$, with the same initialization $(u^{(0)}_{(\ell)},v^{(0)}_{(\ell)})=(u^{(0)},v^{(0)})\in \R^m\times \R^n$, let for $t=0,1,2,\ldots$
	\begin{align}\label{def:AMP_asym_loo_column}
	\begin{cases}
	u^{(t+1)}_{(-\ell)} = A_{\cdot,(-\ell)} \mathsf{F}_{t}(v^{(t)}_{(-\ell)})-b_t^{\mathsf{F}}\circ \mathsf{G}_{t}(u^{(t)}_{(-\ell)}) \in \R^m,\\
	v^{(t+1)}_{(-\ell)}= A_{\cdot,(-\ell)}^\top \mathsf{G}_{t+1}(u^{(t+1)}_{(-\ell)})-b_{t+1}^{\mathsf{G}}\circ\mathsf{F}_{t}(v^{(t)}_{(-\ell)})\in \R^n.
	\end{cases}
	\end{align}
\end{itemize}
Here for any $k \in [m]$ (resp. $\ell \in [n]$), $A_{[-k],\cdot}\equiv (A_{ij}\bm{1}_{i\neq k})_{i \in [m],j \in [n]}$ (resp. $A_{\cdot,(-\ell)}\equiv (A_{ij}\bm{1}_{j\neq \ell})_{i \in [m],j \in [n]}$) is obtained by setting all elements in the $k$-th row (resp. $\ell$-th column) of $A$ to be $0$ and keeping all other elements of $A$ unchanged.

In statistical language, (\ref{def:AMP_asym_loo_row}) and (\ref{def:AMP_asym_loo_column}) are obtained via the original AMP iterate (\ref{def:AMP_asym}) by discarding the $k$-th sample and the $\ell$-th predictor, respectively. 

\begin{theorem}\label{thm:AMP_asym_loo}
	Suppose the following hold for $\phi\equiv m/n$.
\begin{enumerate}
	\item[(S*1)] $(\phi^{-1}+1)^{1/2}\max_{k \in [m], \ell \in [n]}\abs{V_{k\ell}}\leq K$ holds for some $K\geq 2$.
	\item[(S*2)] $\mathsf{F}_{s,\ell}, \mathsf{G}_{s,k} \in C^2(\R)$ for all $s \in [0:t], k \in [m], \ell \in [n]$, and there exists some $\Lambda\geq 2$ such that
	\begin{align*}
	\max_{s \in [0:t]}\max_{k\in [m],\ell \in [n]}\Big\{\abs{\mathsf{F}_{s,\ell}(0)}+\abs{\mathsf{G}_{s,k}(0)}+\max_{q=1,2}\Big(\pnorm{\mathsf{F}_{s,\ell}^{(q)} }{\infty}+\pnorm{\mathsf{G}_{s,k}^{(q)} }{\infty}\Big)\Big\}\leq \Lambda.
	\end{align*}
\end{enumerate} 
  Then for any $D>0$, there exist some universal constant $c_0>0$ and another constant $c_1=c_1(D)>0$, such that with $\Prob(\cdot |v^{(0)})$-probability at least $1-c_1 n_\phi^{-D}$ where $n_\phi\equiv n(1+\phi)=m+n$, uniformly in $k \in [m], \ell \in [n]$,
\begin{align*}
&\bigabs{u^{(t+1)}_k-\bigiprod{A_{k\cdot}}{\mathsf{F}_t (v_{[-k]}^{(t)})} }\vee \bigabs{v^{(t+1)}_\ell-\bigiprod{A_{\cdot\ell}}{\mathsf{G}_{t+1} (u_{(-\ell)}^{(t+1)})} }\\
&\leq \big(c_1 K\Lambda\log n_\phi\cdot (1+\pnorm{v^{(0)}}{\infty})\big)^{c_0 (1\vee t)^3}\cdot n_\phi^{-1/2}.
\end{align*}
\end{theorem}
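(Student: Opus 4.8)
The plan is to obtain Theorem~\ref{thm:AMP_asym_loo} as a corollary of the symmetric statement Theorem~\ref{thm:AMP_sym_loo}, via the standard Hermitian dilation, so that no fundamentally new analysis is needed. First I would set $N\equiv n_\phi=m+n$ and introduce the symmetric matrix and variance profile
\begin{align*}
\mathcal{A}\equiv\begin{pmatrix}0_{m\times m}&A\\ A^\top&0_{n\times n}\end{pmatrix}\in\R^{N\times N},\qquad \mathcal{V}\equiv\sqrt{N/m}\cdot\begin{pmatrix}0_{m\times m}&V\\ V^\top&0_{n\times n}\end{pmatrix}\in\R^{N\times N}_{\geq 0}.
\end{align*}
Since the entries of $A=V\circ G_{m\times n}$ have variance $V_{k\ell}^2/m=\mathcal{V}_{k,m+\ell}^2/N$, one has $\mathcal{A}=\mathcal{V}\circ\mathcal{G}_N$ for a symmetric $\mathcal{G}_N$ whose upper-triangular entries are i.i.d.\ $\mathcal{N}(0,1/N)$ (the diagonal blocks of $\mathcal{V}$ vanish, so the corresponding entries of $\mathcal{G}_N$ are immaterial), and hypothesis (S*1) is precisely $\max_{i,j\in[N]}\abs{\mathcal{V}_{ij}}\leq K$, i.e.\ (S1) for $\mathcal{A}$.

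Next I would interleave $\{\mathsf{F}_t\}$ and $\{\mathsf{G}_t\}$ into a single separable family $\{\Phi_\tau:\R^N\to\R^N\}$ running at double speed: on the first $m$ coordinates set $\Phi_{2t}\equiv 0$ and let $\Phi_{2t+1}$ act coordinatewise by $\mathsf{G}_{t+1}$, while on the last $n$ coordinates let $\Phi_{2t}$ act by $\mathsf{F}_{t}$ and set $\Phi_{2t+1}\equiv 0$. A direct unwinding then shows that the symmetric AMP driven by $(\mathcal{A},\{\Phi_\tau\})$ and started at $w^{(0)}\equiv(u^{(0)},v^{(0)})$ --- with $u^{(0)}$ inert, since it only ever feeds $\mathsf{G}_0\equiv 0$, so one may take $u^{(0)}=0$ and thus $\pnorm{w^{(0)}}{\infty}=\pnorm{v^{(0)}}{\infty}$ --- reproduces the asymmetric iterate~(\ref{def:AMP_asym}) through the identification $w^{(2t)}=(\,\ast\,,v^{(t)})$, $w^{(2t+1)}=(u^{(t+1)},\,\ast\,)$, the $\ast$-blocks being irrelevant; and under the same identification the symmetric leave-one-out iterate $\{w^{(\tau)}_{[-k]}\}$ reproduces~(\ref{def:AMP_asym_loo_row}) when $k\leq m$ and~(\ref{def:AMP_asym_loo_column}) when $k>m$, since deleting the $k$-th row and column of $\mathcal{A}$ deletes exactly the $k$-th row of $A$ (if $k\leq m$) or the $(k-m)$-th column of $A$ (if $k>m$). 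The smoothness/boundedness hypothesis (S2) for $\{\Phi_\tau\}$ up to $\tau=2t+1$ follows from (S*2), the interleaved family involving only $\mathsf{F}_0,\dots,\mathsf{F}_t$ and $\mathsf{G}_1,\dots,\mathsf{G}_{t+1}$ (the last feeding $v^{(t+1)}$).

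The step I expect to require the most care --- indeed the only genuinely non-mechanical point --- is verifying that the symmetric Onsager vector, whose $k$-th entry is $N^{-1}\sum_{j\in[N]}\mathcal{V}_{kj}^2\,\E\big[\Phi_{\tau,j}'(w^{(\tau)}_j)\,\big|\,w^{(0)}\big]$, collapses to exactly the asymmetric corrections. Because $\mathcal{V}$ is block-off-diagonal with $\mathcal{V}_{k,m+\ell}^2=V_{k\ell}^2\cdot N/m$, the normalization $N^{-1}\cdot(N/m)=m^{-1}$ emerges, and this vector equals $b_t^{\mathsf{F}}$ on the first $m$ coordinates when $\tau=2t$, and $b_{t+1}^{\mathsf{G}}$ on the last $n$ coordinates when $\tau=2t+1$; this is precisely where the factor $(\phi^{-1}+1)^{1/2}=\sqrt{N/m}$ of (S*1) is consumed, so that the dilated recursion has exactly the Onsager corrections prescribed by the symmetric AMP with profile $\mathcal{V}$.

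Once this matching is in place, I would apply Theorem~\ref{thm:AMP_sym_loo} to $\{w^{(\tau)}\}$ at $\tau=2t$ with index $k\leq m$, which reads $\bigabs{u^{(t+1)}_k-\bigiprod{A_{k\cdot}}{\mathsf{F}_t(v^{(t)}_{[-k]})}}=\bigabs{w^{(2t+1)}_k-\bigiprod{\mathcal{A}_k}{\Phi_{2t}(w^{(2t)}_{[-k]})}}$ after noting that $\mathcal{A}_k$ is supported on the last $n$ coordinates with entries $(A_{k\ell})_\ell$, and at $\tau=2t+1$ with index $k=m+\ell$, which reads $\bigabs{v^{(t+1)}_\ell-\bigiprod{A_{\cdot\ell}}{\mathsf{G}_{t+1}(u^{(t+1)}_{(-\ell)})}}$; a union bound over the $N$ indices, the elementary estimate $(1\vee\tau)^3\lesssim(1\vee t)^3$ for $\tau\in\{2t,2t+1\}$ (absorbed into the exponent constant $c_0$), and the substitutions $N=n_\phi$, $\pnorm{w^{(0)}}{\infty}=\pnorm{v^{(0)}}{\infty}$, $\Prob(\cdot\,|\,w^{(0)})=\Prob(\cdot\,|\,v^{(0)})$ then yield the claim after a harmless adjustment of $D$.
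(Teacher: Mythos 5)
Your proposal is correct and is essentially the paper's own argument: the paper proves Theorem \ref{thm:AMP_asym_loo} by exactly this Hermitian-dilation reduction, embedding the asymmetric iterate into a symmetric AMP of dimension $m+n$ with profile $\overline{V}=(\phi^{-1}+1)^{1/2}\binom{0\ \ V}{V^\top\ 0}$, interleaving $\{\mathsf{F}_t\},\{\mathsf{G}_t\}$ at double speed, checking that the block Onsager vector collapses to $b_t^{\mathsf{F}},b_{t+1}^{\mathsf{G}}$, and then invoking Theorem \ref{thm:AMP_sym_loo}. Your additional bookkeeping (the leave-one-out row/column identification and the inert $u^{(0)}$) just makes explicit details the paper leaves to the reader.
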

Note that in the above theorem, we have not assumed apriori $m\asymp n$, although this will be usually required in concrete applications.

Similar to Theorems \ref{thm:AMP_sym_loo} and \ref{thm:AMP_sym}, the leave-one-out representation above also leads to distributional characterizations for $\{u_k^{(t+1)}\},\{v_\ell^{(t+1)}\}$ via a high dimensional state evolution, defined as follows.
\begin{definition}[\emph{State evolution for asymmetric AMP}]\label{def:AMP_asym_se}
Let $U^{(1)}, U^{(2)}, \ldots\in \R^m$ and $V^{(1)}, V^{(2)},\ldots\in \R^n$ be two sequences of correlated, centered Gaussian vectors with independent entries. The correlation structure for each sequence along the iteration is determined via the recursive updates: with $V^{(0)}\equiv v^{(0)}$, for $s_1,s_2\in \mathbb{Z}_{\geq 0}$ and $k \in [m]$, $\ell \in [n]$,
	\begin{align*}
	\begin{cases}
	\cov(U^{(s_1+1)}_k, U^{(s_2+1)}_k)\equiv \frac{1}{m}\sum_{\ell \in [n]} V_{k\ell}^2 \E \big[\mathsf{F}_{s_1,\ell}\big(V^{(s_1)}_\ell\big)\mathsf{F}_{s_2,\ell}\big(V^{(s_2)}_\ell\big)|v^{(0)}\big],\\
	\cov(V^{(s_1+1)}_\ell, V^{(s_2+1)}_\ell)\equiv \frac{1}{m}\sum_{k \in [m]} V_{k\ell}^2 \E \big[\mathsf{G}_{s_1+1,k}\big(U_k^{(s_1+1)}\big) \mathsf{G}_{s_2+1,k}\big(U_k^{(s_2+1)}\big)|v^{(0)}\big].
	\end{cases}
	\end{align*}
\end{definition}
We note that the above recursion does not  specify the correlation between the two sequences $(U^{(1)},U^{(2)},\ldots)$ and $(V^{(1)},V^{(2)},\ldots)$. 

The following theorem provides an analogue of Theorem \ref{thm:AMP_sym} for the asymmetric AMP iterate $\{u_k^{(t+1)}\},\{v_\ell^{(t+1)}\}$.

\begin{theorem}\label{thm:AMP_asym}
	Suppose (S*1)-(S*2) hold for some $K,\Lambda\geq 2$. Fix any $\Lambda$-pseudo-Lipschitz function $\psi:\R^{t+1} \to \R$ of order $\mathfrak{p}\in \N$. Then there exist some universal constant $c_0>1$ and another constant $c_{\mathfrak{p}}>1$ depending on $\mathfrak{p}$ only, such that
	\begin{align}
	&\max_{k \in [m]}\bigabs{\E \big[\psi\big( u^{(t+1)}_k,u^{(t)}_k,\ldots,u^{(1)}_k\big)|v^{(0)}\big]-\E \big[\psi \big(U^{(t+1)}_k,U^{(t)}_k,\ldots,U^{(1)}_k\big)|v^{(0)}\big] }\nonumber\\
	&\qquad + \max_{\ell \in [n]}\bigabs{\E \big[\psi\big( v^{(t+1)}_\ell,v^{(t)}_\ell,\ldots,v^{(1)}_\ell\big)|v^{(0)}\big]-\E \big[\psi \big(V^{(t+1)}_\ell,V^{(t)}_\ell,\ldots,V^{(1)}_\ell\big)|v^{(0)}\big] }\nonumber\\
	& \leq \big(K\Lambda\log n_\phi\cdot (1+\pnorm{v^{(0)}}{\infty})\big)^{c_{\mathfrak{p}} (1\vee t)^3}\cdot n_\phi^{-1/c_0^{1\vee t}}.
	\end{align}
	Moreover, for any $\Lambda$-pseudo-Lipschitz function $\psi_1:\R \to \R$ of order $\mathfrak{p}\in \N$,
	\begin{align}
	&\max_{k \in [m]}\bigabs{\E \big[\psi_1\big( u^{(t+1)}_k\big)|v^{(0)}\big]-\E \big[\psi_1 \big(U^{(t+1)}_k\big)|v^{(0)}\big] }\nonumber\\
	&\qquad + \max_{\ell \in [n]}\bigabs{\E \big[\psi_1\big( v^{(t+1)}_\ell\big)|v^{(0)}\big]-\E \big[\psi_1 \big(V^{(t+1)}_\ell\big)|v^{(0)}\big] }\nonumber\\
	& \leq \big(K\Lambda \sigma_\ast^{[t+1],-1}\log n_\phi\cdot (1+\pnorm{v^{(0)}}{\infty})\big)^{c_{\mathfrak{p}} (1\vee t)^3}\cdot n_\phi^{-1/2}.
	\end{align}
	Here $\sigma_\ast^{[t+1]}\equiv  1\wedge \min\limits_{s \in [t+1]}\min\limits_{k \in [m],\ell \in [n]} \var^{1/2}(U_{k}^{(s)}|z^{(0)})\wedge \var^{1/2}(V_{\ell}^{(s)}|z^{(0)})$.
\end{theorem}

Similar to Theorem \ref{thm:AMP_sym_avg}, we may relax the $C^2$ smoothness condition in (S*2) when looking at the behavior of the empirical distributions of the modified AMP iterate $\{\overline{u}^{(t)}\},\{\overline{v}^{(t)}\}$ (whose Onsager correction vectors $\{\overline{b}_t^{\mathsf{F}}\}, \{\overline{b}_t^{\mathsf{G}}\}$ are defined by the Gaussian random vectors $\{U^{(\cdot)}\},\{V^{(\cdot)}\}$ in the state evolution in Definition \ref{def:AMP_asym_se}).

\begin{theorem}\label{thm:AMP_asym_avg}
	Suppose (S*1) holds and (S*2) is replaced by the following:
\begin{enumerate}
	\item[(SA*2)] There exists some $\Lambda\geq 2$ such that
	\begin{align*}
	\max_{s \in [0:t]}\max_{k\in [m],\ell \in [n]}\Big\{\abs{\mathsf{F}_{s,\ell}(0)}+\abs{\mathsf{G}_{s,k}(0)}+\pnorm{\mathsf{F}_{s,\ell} }{\mathrm{Lip}}+\pnorm{\mathsf{G}_{s,k} }{\mathrm{Lip}}\Big\}\leq \Lambda.
	\end{align*}
\end{enumerate} 
Fix a sequence of $\Lambda$-pseudo-Lipschitz function $\{\psi_k: \R^{t+1}\to \R\}_{k \in [m\vee n]}$ of order $2$. Then for any $D>0$, there exists some universal constant $c_0>0$ and another constant $c_1=c_1(D)$ such that with $\Prob(\cdot |v^{(0)})$-probability at least $1-c_1 n_\phi^{-D}$,
\begin{align*}
&\biggabs{\frac{1}{n_\phi}\sum_{k \in [m]}\Big(\psi_k\big( \overline{u}^{(t+1)}_k,\overline{u}^{(t)}_k,\ldots,\overline{u}^{(1)}_k\big)-\E \big[\psi_k \big(U^{(t+1)}_k,U^{(t)}_k,\ldots,U^{(1)}_k\big)|v^{(0)}\big]\Big)}\\
&\qquad \vee \biggabs{ \frac{1}{n_\phi}\sum_{\ell \in [n]}  \Big(\psi_\ell\big( \overline{v}^{(t+1)}_\ell,\overline{v}^{(t)}_\ell,\ldots,\overline{v}^{(1)}_\ell\big)-\E \big[\psi_\ell \big(V^{(t+1)}_\ell,V^{(t)}_\ell,\ldots,V^{(1)}_\ell\big)|v^{(0)}\big]\Big)  }\\
&\leq \big(c_1K\Lambda\sigma_{*,\psi}^{-1} \log n_\phi\cdot (1+\pnorm{v^{(0)}}{\infty})\big)^{c_0 (1\vee t)^3}\cdot n_\phi^{-1/c_0^{1\vee t}}
\end{align*}
Here $
\sigma_{\ast,\psi}^2\equiv 1\wedge \min\limits_{ k \in [m],\ell \in [n]}\big\{\min\limits_{s \in S_{\psi_k}\cap \mathbb{Z}_{\geq 2}}  \var(U^{(s-1)}_k|v^{(0)})\wedge \min\limits_{s \in S_{\psi_\ell}\cap \mathbb{Z}_{\geq 2}}  \var(V^{(s-1)}_\ell|v^{(0)})\big\}$, with $S_{\psi_\cdot}$ defined in Theorem \ref{thm:AMP_sym_avg}.
\end{theorem}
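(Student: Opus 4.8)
The plan is to reduce Theorem \ref{thm:AMP_asym_avg} to the already-established $C^2$ version of the asymmetric theory — namely the entrywise bound in Theorem \ref{thm:AMP_asym} — by a mollification argument, in exact parallel to how one would prove Theorem \ref{thm:AMP_sym_avg} from Theorem \ref{thm:AMP_sym}. First I would fix a small smoothing scale $\delta>0$ (to be chosen as a suitable power of $n_\phi$ at the end) and replace each Lipschitz nonlinearity $\mathsf{F}_{s,\ell},\mathsf{G}_{s,k}$ by its Gaussian mollification $\mathsf{F}_{s,\ell}^\delta\equiv \mathsf{F}_{s,\ell}\ast \varphi_\delta$, $\mathsf{G}_{s,k}^\delta\equiv\mathsf{G}_{s,k}\ast\varphi_\delta$, where $\varphi_\delta$ is the density of $\mathcal{N}(0,\delta^2)$. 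The mollified functions are $C^\infty$, retain the same Lipschitz constant $\Lambda$, satisfy $\pnorm{\mathsf{F}_{s,\ell}^\delta - \mathsf{F}_{s,\ell}}{\infty}\lesssim \Lambda\delta$, and have $\pnorm{(\mathsf{F}_{s,\ell}^\delta)''}{\infty}\lesssim \Lambda/\delta$ (and likewise for $\mathsf{G}$), so that (S*2) holds for the mollified system with $\Lambda$ replaced by $\Lambda_\delta \equiv C\Lambda/\delta$. Applying Theorem \ref{thm:AMP_asym} (and the elementary upgrade from an entrywise expectation bound to a high-probability averaged bound, via the union bound together with a concentration estimate for $n_\phi^{-1}\sum_k \psi(\cdot)$ around its conditional mean — this last concentration is itself a consequence of the Lipschitz/pseudo-Lipschitz structure and the Gaussian concentration already used throughout Section \ref{section:proof_AMP_loo}) yields the desired averaged comparison for the mollified iterates $\{\overline{u}^{(t),\delta}\}$, $\{\overline{v}^{(t),\delta}\}$ against the state-evolution Gaussians $\{U^{(\cdot),\delta}\},\{V^{(\cdot),\delta}\}$ built from the mollified $\mathsf{F}^\delta,\mathsf{G}^\delta$.

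Next I would quantify the three perturbations introduced by mollification and show each is negligible. (i) \emph{Iterate perturbation:} an induction on $t$ shows $\pnorm{\overline{u}^{(t),\delta}-\overline{u}^{(t)}}{}\vee\pnorm{\overline{v}^{(t),\delta}-\overline{v}^{(t)}}{}$ is $\bigop$ of a polylog power times $\sqrt{n_\phi}\,\delta$; at each step, replacing $\mathsf{F}_s$ by $\mathsf{F}_s^\delta$ costs $\pnorm{(\mathsf{F}_s^\delta-\mathsf{F}_s)(v^{(s)})}{}\lesssim\Lambda\delta\sqrt n$, replacing the argument $v^{(s)}$ by $v^{(s),\delta}$ costs $\pnorm{A_{\cdot,(-\ell)}}{\op}\cdot\Lambda\pnorm{v^{(s)}-v^{(s),\delta}}{}$, and the operator norm of $A=V\circ G_{m\times n}$ is $\bigop(1)$ under (S*1) by a standard bound; the correction-term perturbations are controlled similarly using that $\{\overline{b}^{\mathsf F},\overline{b}^{\mathsf G}\}$ are bounded and stable under mollification. (ii) \emph{State-evolution perturbation:} an induction using Definition \ref{def:AMP_asym_se}, the bound $\pnorm{\mathsf{F}^\delta-\mathsf{F}}{\infty}\lesssim\Lambda\delta$, Lipschitzness in the covariance entries, and Gaussian moment bounds shows the covariances $\cov(U^{(\cdot),\delta}),\cov(V^{(\cdot),\delta})$ differ from $\cov(U^{(\cdot)}),\cov(V^{(\cdot)})$ by a polylog power times $\delta$, hence so do the test-function expectations after invoking pseudo-Lipschitzness of $\psi$ and a Gaussian-interpolation (Lindeberg-type) estimate for the two Gaussian laws — here the nondegeneracy hypothesis enters, since comparing two close Gaussian laws through a pseudo-Lipschitz $\psi$ requires a lower bound on the relevant variances, which is exactly the role of $\sigma_{\ast,\psi}$. (iii) \emph{Test-function perturbation on the empirical side:} since $\psi$ is $\Lambda$-pseudo-Lipschitz of order $2$, $|n_\phi^{-1}\sum_k(\psi(\overline u^{(\cdot),\delta}_k)-\psi(\overline u^{(\cdot)}_k))|\lesssim \Lambda\,n_\phi^{-1}(1+\pnorm{\overline u^{(\cdot),\delta}}{}+\pnorm{\overline u^{(\cdot)}}{})\cdot\pnorm{\overline u^{(\cdot),\delta}-\overline u^{(\cdot)}}{}$ by Cauchy--Schwarz, which is $\bigop$ of a polylog power times $\delta$ using (i) and the a priori $\ell_2$ bounds $\pnorm{\overline u^{(\cdot)}}{}\vee\pnorm{\overline v^{(\cdot)}}{}\lesssim \sqrt{n_\phi}\cdot(\text{polylog})$ that follow from (S*1), (SA*2) and $\pnorm{A}{\op}=\bigop(1)$.

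Combining the mollified averaged bound from Theorem \ref{thm:AMP_asym} (which carries the worsened exponent $n_\phi^{-1/c_0^{1\vee t}}$ and the $\Lambda_\delta^{c(1\vee t)^3}=(\Lambda/\delta)^{c(1\vee t)^3}$ prefactor) with the three $\bigop(\mathrm{polylog}\cdot\delta)$ perturbation bounds, and optimizing over $\delta$ — e.g. taking $\delta = n_\phi^{-1/(2 c_0^{1\vee t})}$ so that the $\delta^{-c(1\vee t)^3}$ blow-up is reabsorbed into the polylog-to-a-power prefactor while $\delta$ itself is dominated by the stated rate — yields the claimed bound, valid precisely when the right-hand side does not exceed $(1\wedge\sigma_{\ast,\psi})^{c_0}$, the proviso inherited from step (ii). I expect the main obstacle to be step (ii): propagating the mollification error cleanly through the \emph{nonlinear} state-evolution recursion of Definition \ref{def:AMP_asym_se} while simultaneously controlling the Gaussian-to-Gaussian comparison for $\psi$ in terms of $\sigma_{\ast,\psi}$, since degeneracy of $\cov((U^{(s+1)}_k)_s)$ or $\cov((V^{(s+1)}_\ell)_\ell)$ as $t$ grows (as flagged in the remark after Theorem \ref{thm:AMP_sym}) forces the careful bookkeeping that produces the nondegeneracy proviso; the asymmetric bookkeeping of two coupled sequences $U,V$ with the $m\leftrightarrow n$ index swap adds bulk but no essential new difficulty over the symmetric case.
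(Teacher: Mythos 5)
Your route is viable but genuinely different from the paper's. The paper proves Theorem \ref{thm:AMP_asym_avg} in one stroke by the standard symmetrization: embed $(u^{(t)},v^{(t)})$ into a symmetric AMP of dimension $m+n$ with block variance profile $\overline V$ and nonlinearities that vanish on one block, check that the correction vectors and state evolution match, and then invoke Theorem \ref{thm:AMP_sym_avg} directly; all the mollification work is thus done only once, in the symmetric setting (Lemma \ref{lem:smoothed_AMP_moment} and the Gaussian-concentration step). You instead redo the mollification at the asymmetric level, taking Theorem \ref{thm:AMP_asym} as input. This is legitimate and would work, but it buys nothing over the embedding and forces you to re-derive, for the two coupled sequences, the a priori iterate bounds, the smoothed-vs-unsmoothed iterate comparison, the smoothed state evolution, and the final concentration step (which, note, is plain Gaussian concentration of the empirical average via Lemma \ref{lem:gaussian_conc}; no union bound over coordinates is needed).

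Two concrete points in your sketch need repair. First, you place the nondegeneracy hypothesis in the wrong step: comparing two Gaussian laws with close covariances through a pseudo-Lipschitz $\psi$ requires no variance lower bound (this is exactly what Lemma \ref{lem:cov_sqrt_diff} delivers in the paper's Step 1 of Lemma \ref{lem:smoothed_AMP_moment}). Where $\sigma_{\ast,\psi}$ genuinely enters is the item you dismiss as ``the correction terms are stable under mollification'': the modified iterates $\overline u,\overline v$ use $\overline b^{\mathsf F},\overline b^{\mathsf G}$ built from \emph{weak} derivatives of merely Lipschitz $\mathsf F,\mathsf G$ evaluated at the state-evolution Gaussians, and to compare $\E[(\mathsf F^\delta_{t,\ell})'(V^{(t),\delta}_\ell)]$ with $\E[\mathsf F'_{t,\ell}(V^{(t)}_\ell)]$ one must pass through Stein's identity $\E \mathsf F'(Z)=\sigma^{-2}\E[Z\,\mathsf F(Z)]$, which is precisely where the lower bound on the relevant variances (hence the proviso in the statement) is consumed. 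Second, your choice $\delta=n_\phi^{-1/(2c_0^{1\vee t})}$ does not absorb the $\delta^{-c(1\vee t)^3}$ prefactor coming from applying Theorem \ref{thm:AMP_asym} with $\Lambda_\delta\asymp\Lambda/\delta$: that factor is a power of $n_\phi$, not a polylog, and with your $\delta$ it overwhelms $n_\phi^{-1/c_0^{1\vee t}}$ already for $t\ge 2$. You need $\delta=n_\phi^{-1/c_\ast^{1\vee t}}$ with $c_\ast$ a sufficiently large universal constant, paying for it by enlarging the constant $c_0$ in the final exponent $n_\phi^{-1/c_0^{1\vee t}}$ — exactly the balancing done in Step 4 of Lemma \ref{lem:smoothed_AMP_moment}. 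With these two fixes your argument goes through, though the paper's symmetrization remains the shorter path.
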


The proofs of Theorems \ref{thm:AMP_asym_loo}, \ref{thm:AMP_asym} and \ref{thm:AMP_asym_avg} for the asymmetric AMP (\ref{def:AMP_asym}) can be reduced to the symmetric case via a reduction scheme (cf. \cite{javanmard2013state,berthier2020state}). For sake of completeness, we provide some details for this reduction in Section \ref{subsection:proof_AMP_asym}.

\subsection{AMP with data-driven Onsager correction vectors}

\subsubsection{Symmetric case}

Consider the following data-driven AMP iterate:
\begin{align}\label{def:AMP_data}
\hat{z}^{(t+1)}&= A \mathsf{F}_t (\hat{z}^{(t)})- \hat{b}_{t}\circ \mathsf{F}_{t-1}(\hat{z}^{(t-1)}),\quad t=0,1,2,\ldots,
\end{align}
where $\mathsf{F}_t$ is defined the same as that in (\ref{def:AMP}). Here $\hat{z}^{(-1)} = 0_n$, $\mathsf{F}_{-1} \equiv 0$ and the initialization $\hat{z}^{(0)} = z^{(0)}$. For $t=1,2,\ldots$, the data-driven Onsager correction vector $\hat{b}_t=(\hat{b}_{t,k})_{k \in [n]}$ is defined via
\begin{align}\label{def:hat_b}
\hat{b}_{t,k}&\equiv \frac{1}{n}\sum_{\ell \in [n]} V_{k\ell}^2 \mathsf{F}_{t,\ell}' (\hat{z}^{(t)}_\ell),\quad k \in [n].
\end{align}
Compared to the oracle AMP iterate in (\ref{def:AMP}), the major difference here lies in the data-driven Onsager correction vectors $\{\hat{b}_t\}_{t\geq 0}$ in (\ref{def:hat_b}). 

The following theorem provides an $\ell_\infty$ control between the oracle AMP iterate (\ref{def:AMP}) and the data-driven AMP iterate (\ref{def:AMP_data}).

\begin{theorem}\label{thm:AMP_oracle_data_diff}
	Suppose the conditions in Theorem \ref{thm:AMP_sym_loo}	hold. Then for any $\vartheta \in (0,1/8)$ and  $D >0$, there exists some universal constant $c_0 >0$ and another constant $c_1 = c_1(\vartheta, D) >0$ such that  with $\Prob(\cdot |z^{(0)})$-probability at least $1-c_1 n^{-D}$,
	\begin{align*}
	\pnorm{\hat{z}^{(t+1)} - z^{(t+1)}}{\infty} \leq \big(c_1 K\Lambda \log n \cdot ( 1+ \pnorm{z^{(0)}}{\infty} )\big)^{c_0t^3} \cdot  n^{-1/4+\vartheta}.
	\end{align*} 
\end{theorem}

Combining the above theorem with Theorem \ref{thm:AMP_sym_loo}, we conclude that the data-driven AMP iterate $\hat{z}^{(t+1)}_k$ shares the same leave-one-out representation as $z^{(t+1)}_k$. Consequently, applying an identical state evolution analysis shows that Theorem \ref{thm:AMP_sym} remains valid when ${z}^{(\cdot)}_k$ is replaced by $\hat{z}^{(\cdot)}_k$. 

The proof of Theorem \ref{thm:AMP_oracle_data_diff} utilizes another non-trivial leave-one-out analysis, as it requires the strongest $\ell_\infty$ control rather than the relatively straightforward $\ell_2$ control; see Section \ref{subsection:proof_AMP_oracle_data_diff} for details. We believe this proof method can be adapted to obtain $\ell_\infty$ controls between different versions of AMPs. For instance, recall the AMP iterate $\{\bar{z}^{(t)}\}$ defined before Theorem \ref{thm:AMP_sym_avg} with the Onsager correction vectors $\{\overline{b}_{t,k}\equiv n^{-1}\sum_{\ell \in [n]} V_{k\ell}^2 \E [\mathsf{F}_{t,\ell}'(Z_\ell^{(t)})|z^{(0)}]\}_{k \in [n]}$. Then under the same assumptions as in Theorem \ref{thm:AMP_oracle_data_diff} and using the same notation, it is easy to prove that with $\Prob(\cdot |z^{(0)})$-probability at least $1-c_1 n^{-D}$,
\begin{align}\label{ineq:AMP_oracles_diff}
\pnorm{\bar{z}^{(t+1)} - z^{(t+1)}}{\infty} \leq \big(c_1 K\Lambda \sigma_\ast^{[t+1],-1} \log n \cdot ( 1+ \pnorm{z^{(0)}}{\infty} )\big)^{c_0t^3} \cdot n^{-1/4+\vartheta},
\end{align} 
where $\sigma_\ast^{[t+1]}$ is defined in Theorem \ref{thm:AMP_sym}. A sketch of proof for (\ref{ineq:AMP_oracles_diff}) is provided at the end of Section \ref{subsection:proof_AMP_oracle_data_diff}.

\subsubsection{Asymmetric case}
Consider the following data-driven AMP iterate: 
\begin{align}\label{def:AMP_asym_data}
\begin{cases}
\hat{u}^{(t+1)} = A \mathsf{F}_{t}(\hat{v}^{(t)})-\hat{b}_t^{\mathsf{F}}\circ \mathsf{G}_{t}(\hat{u}^{(t)}) \in \R^m,\\
\hat{v}^{(t+1)}= A^\top \mathsf{G}_{t+1}(\hat{u}^{(t+1)})-\hat{b}_{t+1}^{\mathsf{G}}\circ\mathsf{F}_{t}(\hat{v}^{(t)})\in \R^n,
\end{cases}
\end{align}
where $\mathsf{F}_t$ and $\mathsf{G}_t$ are defined the same as those in (\ref{def:AMP_asym}) and the initialization is given by $(\hat{u}^{(0)},\hat{v}^{(0)})= (u^{(0)}, v^{(0)})$. Recall that $\hat{u}^{(0)} \in \R^m$ is a dummy variable with $\mathsf{G}_0\equiv 0$, introduced merely for notational consistency. The algorithm is actually initialized at $\hat{v}^{(0)}$. For $t=1,2,\ldots$, the correction vectors $\hat{b}_t^{\mathsf{F}} \in \R^m, \hat{b}_t^{\mathsf{G}} \in \R^n$ are defined via
\begin{align}
\begin{cases}
\hat{b}_{t,k}^{\mathsf{F}}\equiv \frac{1}{m}\sum_{\ell \in [n]} V_{k \ell}^2 \mathsf{F}_{t,\ell}'(\hat{v}^{(t)}_\ell), &\quad k \in [m],\\
\hat{b}_{t,\ell}^{\mathsf{G}}\equiv \frac{1}{m}\sum_{k \in [m]} V_{k\ell}^2 \mathsf{G}_{t,k}'(\hat{u}^{(t)}_k), &\quad \ell \in [n].
\end{cases}
\end{align}
We formally state below an $\ell_\infty$-control on the difference between the oracle AMP iterate (\ref{def:AMP_asym}) and the data-driven AMP iterate (\ref{def:AMP_asym_data}). Recall that $n_\phi = m + n$.
\begin{theorem}
	Suppose the conditions in Theorem \ref{thm:AMP_asym_loo}	 hold. Then for any $\vartheta \in (0,1/8)$ and  $D >0$, there exists some universal constant $c_0 >0$ and another constant $c_1 = c_1(\vartheta, D) >0$ such that  with $\Prob(\cdot |v^{(0)})$-probability at least $1-c_1 n_\phi^{-D}$,
	\begin{align*}
		\pnorm{\hat{u}^{(t+1)} - u^{(t+1)}}{\infty} \vee \pnorm{\hat{v}^{(t+1)} - v^{(t+1)}}{\infty} \leq \big(c_1 K\Lambda \log n_\phi \cdot ( 1+ \pnorm{v^{(0)}}{\infty} )\big)^{c_0t^3} \cdot  n_\phi^{-1/4+\vartheta}.
	\end{align*} 
\end{theorem} 
The above theorem can be proven via Theorem \ref{thm:AMP_oracle_data_diff} using the same reduction scheme in Section \ref{subsection:proof_AMP_asym}. We omit these repetitive details.

\section{Applications to regularized estimation}\label{section:regularized_ls}

Recall the linear model (\ref{def:linear_model}) and the Ridge estimator $\hat{\mu}$ in (\ref{def:ls_estimator}). We write $A \in \R^{m\times n}$ as the design matrix that collects all $A_i$'s as its rows, and $\xi,Y \in \R^m$ as the error and response vector that collects all elements $\{\xi_i\}$ and $\{Y_i\}$. We will usually omit notational dependence on $\lambda>0$ for simplicity.

\subsection{The fixed point equation}
To describe our fixed point equation, let
\begin{align*}
\mathscr{V}_{m}\equiv m^{-1}V\circ V = \E A\circ A \in \R^{m\times n}
\end{align*}
be the variance profile matrix of $A$.  Consider the following system of (non-linear) equations in $(\gamma,b) \in \R^n_{\geq 0} \times [0,1)^m$: 
\begin{align}\label{eqn:fpe_ridge}
\begin{cases}
&\gamma^2 = \mathfrak{D}_{\tau_b}^2\mathscr{V}_m^\top \mathfrak{D}_{1_m-b}^2\big(\xi^2+ \mathscr{V}_m \mathfrak{D}_{\lambda \tau_b\circ (1_n+\lambda \tau_b)^{-1}}^2 \mu_0^2\big)\\
&\qquad\qquad +  \mathfrak{D}_{\tau_b}^2\mathscr{V}_m^\top \mathfrak{D}_{1_m-b}^2 \mathscr{V}_m \mathfrak{D}_{1_n+\lambda \tau_b}^{-2} \gamma^2 \in \R^n,\\
&b\circ (1_m-b)^{-1}= \mathscr{V}_m \big(\tau_b\circ (1_n+\lambda \tau_b)^{-1}\big) \in \R^m.
\end{cases}
\end{align}
Here $\tau_b\equiv (\mathscr{V}_m^\top (1_m-b))^{-1} \in \R^n$, and recall for a generic vector $x \in \R^n$, we write $x^{-1}=(x_i^{-1})_{i \in [n]}\in \R^n$, $x^2=(x_i^2)_{i \in [n]} \in \R^n$ and $\mathfrak{D}_x=(x_i\bm{1}_{i=j})_{i,j\in [n]}\in \R^{n\times n}$.

The following proposition proves the existence and uniqueness of the solution pair $(\gamma^\ast,b^\ast)$ to the high dimensional system of equations (\ref{eqn:fpe_ridge}).

\begin{proposition}\label{prop:ridge_fpe_exist_uniq}
	Suppose that $\pnorm{V_{k\cdot}}{},\pnorm{V_{\cdot\ell}}{}>0$ for all $k \in [m],\ell \in [n]$. Then there exists a unique pair $(\gamma^\ast,b^\ast) \in C(\R_{>0}\to \R^n_{\geq 0} \times [0,1)^m)$ that solves the system of equations (\ref{eqn:fpe_ridge}). 	
\end{proposition}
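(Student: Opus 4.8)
The plan is to recast the system \eqref{eqn:fpe_ridge} as a fixed point problem on a compact convex domain and apply a contraction/monotonicity argument. First I would eliminate $\gamma^2$ from the system: the first equation in \eqref{eqn:fpe_ridge} is linear in $\gamma^2$ once $b$ (hence $\tau_b$) is fixed, of the form $\gamma^2 = w(b) + \mathfrak{D}_{\tau_b}^2\mathscr{V}_m^\top \mathfrak{D}_{1_m-b}^2 \mathscr{V}_m \mathfrak{D}_{1_n+\lambda\tau_b}^{-2}\gamma^2$, where $w(b)\in\R^n_{\ge 0}$ is the first (source) term. Writing $T_b \equiv \mathfrak{D}_{\tau_b}^2\mathscr{V}_m^\top \mathfrak{D}_{1_m-b}^2 \mathscr{V}_m \mathfrak{D}_{1_n+\lambda\tau_b}^{-2}$, a nonnegative matrix, I would first check that the Perron eigenvalue of $T_b$ is strictly less than $1$ for every $b\in[0,1)^m$ — this should follow because the second equation forces $\mathscr{V}_m(\tau_b\circ(1_n+\lambda\tau_b)^{-1}) = b\circ(1_m-b)^{-1}$, and tracing through the bilinear structure shows $T_b$ acts sub-stochastically (the factor $\mathfrak{D}_{1_n+\lambda\tau_b}^{-2}$ with $\lambda>0$ provides the strict contraction, since without regularization the analogous operator would be exactly critical in the proportional regime). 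Hence $\gamma^2 = (I - T_b)^{-1} w(b) =: \Gamma(b)$ is well defined, nonnegative, and continuous in $b$.

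Next I would substitute $\Gamma(b)$ back and view the second equation as a fixed point equation for $b$ alone. The natural parametrization is via $\beta \equiv b\circ(1_m-b)^{-1}\in[0,\infty)^m$, equivalently $b = \beta\circ(1_m+\beta)^{-1}$, which is a smooth increasing bijection $[0,\infty)^m\to[0,1)^m$; then $1_m-b = (1_m+\beta)^{-1}$ and $\tau_b = (\mathscr{V}_m^\top(1_m+\beta)^{-1})^{-1}$. The map becomes $\beta \mapsto \Phi(\beta) \equiv \mathscr{V}_m\big(\tau_b(\beta)\circ(1_n+\lambda\tau_b(\beta))^{-1}\big)$. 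I would show $\Phi$ is a monotone (order-preserving) and concave-type map from $[0,\infty)^m$ to itself, and that it maps a large enough box $[0,R]^m$ into itself: as $\beta\to\infty$ componentwise, $\tau_b(\beta)$ stays bounded (since $\pnorm{V_{\cdot\ell}}{}>0$ keeps $\mathscr{V}_m^\top(1_m+\beta)^{-1}$ from blowing up in a controlled way) while $\tau_b/(1+\lambda\tau_b)\le 1/\lambda$, so $\Phi(\beta)$ is uniformly bounded. Existence then follows from Brouwer (or the Knaster–Tarski theorem, using monotonicity on the complete lattice $[0,\infty]^m$). For uniqueness I would exploit the standard trick for such "quadratic vector equations": if $\beta^{(1)}\le\beta^{(2)}$ are two fixed points one shows via strict concavity / the strict sub-criticality established above that $\Phi$ is a strict contraction in a suitable (e.g. Thompson or Hilbert projective) metric on the positive cone, forcing $\beta^{(1)}=\beta^{(2)}$; then $\gamma^\ast = \Gamma(b^\ast)$ is determined uniquely.

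Finally, continuity of $(\gamma^\ast,b^\ast)$ in $\lambda>0$: since all the maps $w(\cdot)$, $T_{(\cdot)}$, $\Phi$ depend continuously (indeed smoothly) on $\lambda$, and the fixed point is the unique zero of a map whose Jacobian is invertible (a byproduct of the strict-contraction estimate), the implicit function theorem gives $(\gamma^\ast(\lambda),b^\ast(\lambda))\in C(\R_{>0})$, with values in $\R^n_{\ge 0}\times[0,1)^m$ by construction. The main obstacle I anticipate is the strict sub-criticality claim $\rho(T_b)<1$ together with the self-map/uniqueness estimate for $\Phi$: making the bookkeeping of the several diagonal rescalings precise, and identifying exactly where the regularization $\lambda>0$ and the positivity hypotheses $\pnorm{V_{k\cdot}}{},\pnorm{V_{\cdot\ell}}{}>0$ enter to prevent degeneracy, is the delicate part; the connection to the quadratic vector equations of \cite{alt2017local,ajanki2019quadratic} suggests borrowing their cone-contraction machinery wholesale if a direct argument gets unwieldy.
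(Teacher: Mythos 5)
Your overall architecture has a concrete flaw: you propose to solve the first equation for \emph{arbitrary} $b\in[0,1)^m$ by inverting $I-T_b$, where $T_b=\mathfrak{D}_{\tau_b}^2\mathscr{V}_m^\top\mathfrak{D}_{1_m-b}^2\mathscr{V}_m\mathfrak{D}_{1_n+\lambda\tau_b}^{-2}$, and you assert $\rho(T_b)<1$ for every such $b$, with the strict contraction credited to the factor $\mathfrak{D}_{1_n+\lambda\tau_b}^{-2}$. This is false in general. Take $V=1_{m\times n}$ and $b=0$: then $\tau_0=1_n$ and (after the similarity transform by $\mathfrak{D}_{\tau_0}$) $T_0$ is the rank-one matrix with constant entries $1/(m(1+\lambda)^2)$, so $\rho(T_0)=n/(m(1+\lambda)^2)>1$ whenever $n>m(1+\lambda)^2$. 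The sub-criticality you need is \emph{not} a property of all $b\in[0,1)^m$; it holds only at the solution $b^\ast$ of the second equation, and its proof must use that equation. Indeed, since the second equation of \eqref{eqn:fpe_ridge} does not involve $\gamma$ at all, the system is triangular: one should first solve the self-contained $b$-equation, and only then treat the affine $\gamma^2$-equation at $b=b^\ast$, where the identity $\sum_{\ell}\frac{V_{k\ell}^2}{m}\frac{\tau_{b^\ast,\ell}}{1+\lambda\tau_{b^\ast,\ell}}=\frac{b^\ast_k}{1-b^\ast_k}$ together with $\tau_{b^\ast,\ell}\sum_k\frac{V_{k\ell}^2}{m}(1-b^\ast_k)=1$ yields an operator-norm bound $\max_k b^\ast_k<1$ in a weighted $\ell_\infty$ sense (this is exactly how the paper gets its contraction for the $\gamma$-part, after the change of variable $\zeta=\mathfrak{D}_{\tau_{b^\ast}}^{-1}\gamma^2$). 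Your ``substitute $\Gamma(b)$ back into the second equation'' step is also vacuous for the same reason: that equation never sees $\gamma$.

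For the $b$-equation itself, your reparametrization $\beta=b\circ(1_m-b)^{-1}$ and the map $\Phi(\beta)=\mathscr{V}_m(\tau_b(\beta)\circ(1_n+\lambda\tau_b(\beta))^{-1})$ are in the same spirit as the paper's transformation $\mathfrak{u}=\lambda^{-1}(1_m-b)$, and the box self-map bound $\Phi(\beta)\le\lambda^{-1}\mathscr{V}_m 1_n$ plus Brouwer does give existence for each fixed $\lambda$. However, uniqueness and continuity in $\lambda$ are precisely where the real work lies, and your plan defers it to ``borrowing the cone-contraction machinery wholesale.'' The paper carries this out explicitly: it proves a strict contraction with respect to the symmetric function $\mathfrak{d}_{\mathbb{R}}(x,y)=|x-y|^2/(xy)$ (via the three algebraic properties in its Lemma 6.1), and, because $\mathfrak{d}_{\mathbb{R}}$ is not a metric, passes through $\arcosh(1+\mathfrak{d}_{\mathbb{R}}/2)=|\log x-\log y|$ (the Thompson-type metric) to obtain a complete metric space in which the Picard iterates are Cauchy; uniqueness and uniformity over $\lambda$ in compact sets then come out of the same estimate, giving continuity without invoking the implicit function theorem. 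So: the $b$-part of your plan is a sketch of a workable route once the contraction estimate is actually proved, but the $\gamma$-part as stated would fail and must be reordered and re-proved using the fixed-point identity for $b^\ast$.
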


In fact, the above proposition proves a stronger statement in that the solution $\lambda \mapsto (\gamma^\ast(\lambda),b^\ast(\lambda))$ exists uniquely as a continuous function of $\lambda$. As a consequence, the `effective regularization' $\tau_{b^\ast(\cdot)}\equiv \big(\mathscr{V}_m^\top (1_m-b^\ast(\cdot))\big)^{-1}:\R_{>0}\to \R_{>0}^n$ (the precise meaning of this terminology will be clear in the next subsection) is locally bounded away from both $0$ and $\infty$.

It is worth noting that for the Gaussian i.i.d. design, i.e., $V=1_{m\times n}$ (or equivalently, $\mathscr{V}_m=1_{m\times n}/m$), any solution $(\gamma^\ast,b^\ast)$ to (\ref{eqn:fpe_ridge}) must take the form $(\gamma^\ast,b^\ast)=(\gamma^\ast_0 1_n,b^\ast_0 1_n)$ for some scalars $\gamma^\ast_0\geq 0, b^\ast_0\in [0,1)$. In this case, (\ref{eqn:fpe_ridge}) reduces to a system of two equations in two unknowns, and can be solved with a closed form by direct calculations (see, e.g., Eqn. (\ref{eqn:rs_iid_fpe}) below).

The proof of Proposition \ref{prop:ridge_fpe_exist_uniq} is fairly non-trivial due to the high dimensionality of the system of equations (\ref{eqn:fpe_ridge}). The key to the proof is to identify appropriate metric-like structures, under which certain transformations of the solutions $(\gamma^\ast,b^\ast)$ for (\ref{eqn:fpe_ridge}) are fixed points of a strict contraction mapping. More concretely:
\begin{itemize}
	\item For the first equation of (\ref{eqn:fpe_ridge}), with a suitable linear transformation of $\gamma$, we will prove strict contraction with respect to the $\ell_\infty$ metric;
	\item For the second equation of (\ref{eqn:fpe_ridge}), inspired by random matrix techniques that deal with vector-valued Stieljes transforms (see, e.g., \cite{alt2017local,ajanki2019quadratic} and references therein), under a suitable non-linear transformation of $b$, we will first prove strict contraction with respect to a weighted-$\ell_\infty$, symmetric function that does not strictly define a metric. We then use a hyperbolic transformation of this symmetric function that leads to a well-defined metric to induce the existence of a fixed point.	
\end{itemize}
The details of the arguments can be found in Section \ref{subsection:proof_ridge_fpe_exist_uniq}.

\subsection{Distribution of the Ridge estimator}

Recall $\hat{\mu}^{\seq}(\gamma^\ast;\tau_{b^\ast})$ defined in (\ref{eqn:ridge_seq_est}).
Let $\hat{R} \equiv Y - A\hat{\mu}$ be the residual and 
\begin{align*}
R^\ast \equiv\Big( (\mathfrak{D}_{1_m-b^\ast}^2\mathscr{V}_m\mathfrak{D}_{\tau_{b^\ast}})\mathfrak{D}_{1+\lambda\tau_{b^\ast}}^{-2}(\mathfrak{D}_{\lambda^2\tau_{b^\ast}}\mu_0^2  + \mathfrak{D}_{\tau_{b^\ast}}^{-1}(\gamma^\ast)^2 )\Big)^{1/2} \circ \mathsf{Z}_m + \mathfrak{D}_{1_m-b^\ast}\xi.
\end{align*}
be the `population' version of $\hat{R}$. Here $\mathsf{Z}_m \sim \mathcal{N}(0,I_m)$. The following theorem provides a formal description of (\ref{eqn:ridge_seq_dist}) and connects $\hat{R}$ and $R^\ast$. Recall that $\Prob^{\xi}$ and $\E^{\xi}$ represent the probability and expectation conditional on $\xi$, respectively.

\begin{theorem}\label{thm:ridge_dist}
	Suppose $1/K\leq m/n, \lambda \leq K$ for some $K\geq 2$. Further suppose $m^{-1/2}\big(\pnorm{V_{k\cdot}}{}\wedge  \pnorm{V_{\cdot\ell}}{}\big)\geq 1/K$, and  $V_{k\ell}\leq K$ holds for all $k \in [m],\ell \in [n]$. Fix any $\Lambda$-pseudo-Lipschitz function $\psi: \R^2\to \R$ and $\psi_0: \R\to \R$ of order $2$ with $\Lambda \geq 2$. Then for any $D>1$, there exists some constant $c_1=c_1(D,K,\Lambda)>0$, with $\Prob^\xi$-probability at least $1-c_1 n^{-D}$,
	\begin{align}\label{ineq:thm:ridge_dist}
	&\biggabs{\frac{1}{n}\sum_{j \in [n]} \psi\big(\hat{\mu}_j,\mu_{0,j}\big)-\frac{1}{n}\sum_{j \in [n]} \E^\xi \psi\big(\hat{\mu}^{\seq}_j(\gamma^\ast;\tau_{b^\ast}),\mu_{0,j}\big)}\nonumber\\
	&\qquad \vee \biggabs{\frac{1}{m}\sum_{i \in [m]} \psi_0\big(\hat{R}_i\big)-\frac{1}{m}\sum_{i \in [m]} \E^\xi \psi_0\big(R^\ast_i\big)}\leq c_1 L_{\mu_0,\xi} ^2\cdot e^{-(\log \log n)^{0.99}/c_1}.
	\end{align}
	Here $L_{\mu_0,\xi}\equiv  1 + n^{-1/2}(\pnorm{\mu_0}{}\vee \pnorm{\xi}{})$.
\end{theorem}

We note that here the error $\xi$ is treated as fixed in the above theorem. It is easy to generalize to the `usual' statistical setting with random $\xi$'s; we omit these details. 

For the Gaussian i.i.d. design case $V=1_{m\times n}$, distributional results as in Theorem \ref{thm:ridge_dist} can be proven via Gaussian comparison methods \cite{thrampoulidis2018precise,han2023universality,han2023distribution}. In this case, for more specific choices of $\psi$, for instance $\psi(\mu,\mu_0)\equiv \pnorm{\mu-\mu_0}{}^2$ (which corresponds to the $\ell_2$ error of $\hat{\mu}$), analysis of $n^{-1}\sum_{j \in [n]} \psi\big(\hat{\mu}_j,\mu_{0,j}\big)$ can also be appealed directly to existing random matrix methods (and therefore Gaussianity is typically not required); see e.g., \cite{dicker2016ridge,dobriban2018high,hastie2022surprises}.

Below we present some simulation for Theorem \ref{thm:ridge_dist}. In the simulation, we use $m=100,n=200$, $\mu_0=1_{n}$, and fix a variance profile matrix $V \in \R^{m\times n}$ and an error vector $\xi \in \R^m$, whose entries form a random draw from i.i.d. $\abs{\mathcal{N}(1,1)}$ and $\mathcal{N}(0,1)$ respectively. The simulation results are summarized as follows:
\begin{itemize}
	\item In the left panel of Figure \ref{fig:1}, we verify that the $\ell_2$ estimation error for the Ridge estimator $\hat{\mu}$ matches the prediction in Theorem \ref{thm:ridge_dist}, both under Gaussian and non-Gaussian designs with a general variance profile. Specifically, for non-Gaussian designs, we use (centered) Bernoulli distributions and Student's $t$-distribution with 10 degrees of freedom (denoted as $t(10)$).
	\item In the middle and right panels of Figure \ref{fig:1}, we plot the bias and variance for the single coordinate $\hat{\mu}_1$ versus their theoretical counterparts for $\hat{\mu}^{\seq}_1$. These quantities are seen to match well with each other in that $\E \hat{\mu}_1\approx \E \hat{\mu}_1^{\seq}$ and $\var(\hat{\mu}_1)\approx \var(\hat{\mu}_1^{\seq})$.
\end{itemize}
 We note here $\E \hat{\mu}_1$ and $\var(\hat{\mu}_1)$ are estimated by $\hat{\E} \hat{\mu}_1\equiv B^{-1}\sum_{b \in [B]} \hat{\mu}_1(b)$ and $\widehat{\var}(\hat{\mu}_1)\equiv (B-1)^{-1}\sum_{b \in [B]}(\hat{\mu}_1(b)-\hat{\E} \hat{\mu}_1)^2$ via Monte Carlo replicates $\{\hat{\mu}_1(b)\}_{b \in [B]}$. Moreover, the quantities $\E \hat{\mu}_1^{\seq} = {\mu_{0,j}}/(1+\lambda \tau_{b^\ast,1})$ and $\var(\hat{\mu}_1^{\seq}) = \big({\gamma_1^\ast}/(1+\lambda \tau_{b^\ast,1})\big)^2$ involve the the first coordinates of $\tau_{b^\ast}$ and $\gamma^\ast$, which can be efficiently computed using an iterative algorithm constructed in the proof of Proposition \ref{prop:ridge_fpe_exist_uniq}.

\begin{figure}[t]
	\begin{minipage}[t]{0.3\textwidth}
		\includegraphics[width=\textwidth]{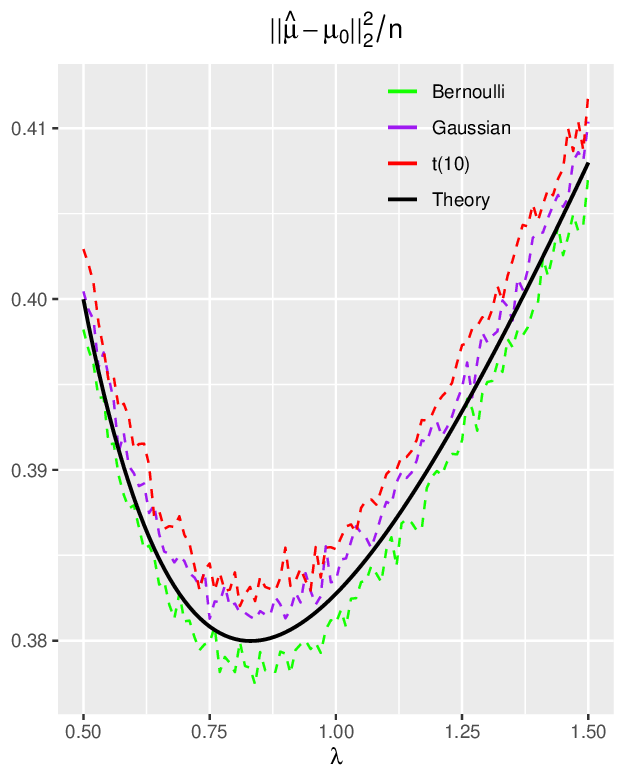}
	\end{minipage}
	\begin{minipage}[t]{0.3\textwidth}
		\includegraphics[width=\textwidth]{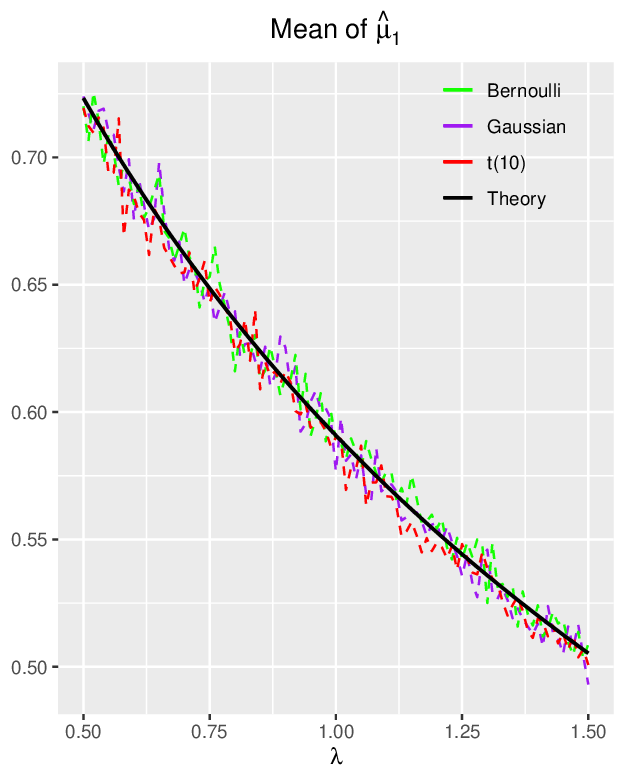}
	\end{minipage}
	\begin{minipage}[t]{0.3\textwidth}
	\includegraphics[width=\textwidth]{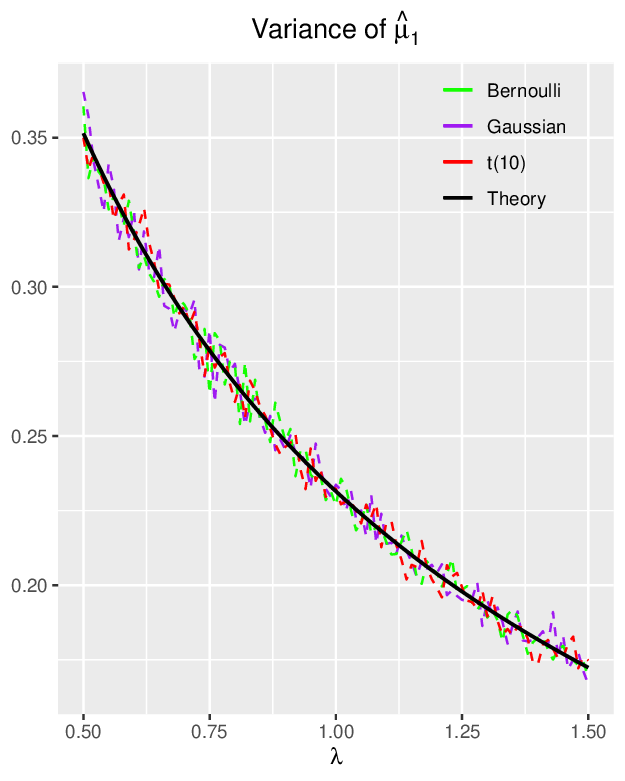}
\end{minipage}
	\caption{\emph{Left panel}: Empirical vs. theoretical $\ell_2$ estimation error of the Ridge estimator $\hat{\mu}$. Different colors represent different design matrix distributions. \emph{Middle and right panels}: Empirical vs. theoretical values of $\E \hat{\mu}_1$ and $\var(\hat{\mu}_1)$. \emph{Simulation parameters}: $m=100,n=200$ with $B=5000$ Monte Carlo repetitions. }
	\label{fig:1}
\end{figure}

\subsection{Main steps for the proof of Theorem \ref{thm:ridge_dist} via AMP}

We will prove Theorem \ref{thm:ridge_dist} by approximating the Ridge estimator $\hat{\mu}$ via an AMP iterate, whose distributional properties can be analyzed via the AMP theory developed in Section \ref{section:main_results}. This proof strategy is developed in \cite{bayati2012lasso} for the Lasso estimator, and in \cite{donoho2016high} for robust estimators, both under Gaussian i.i.d. designs (i.e. $V=1_{m\times n}$). The key technical difference in our setting is that we need to work with a suitable transformation of the random matrix $A$ in the constructed AMP iterate, due to the general variance profile matrix $V$. To this end, let
\begin{align*}
&A_{b^\ast}\equiv \mathfrak{D}^{1/2}_{1_m-b^\ast}A \mathfrak{D}_{\tau_{b^\ast}}^{1/2},\quad\xi_{b^\ast}\equiv \mathfrak{D}^{1/2}_{1_m-b^\ast}\xi,\quad \theta_{0;b^\ast}\equiv \mathfrak{D}_{\tau_{b^\ast}}^{-1/2} \mu_0,
\end{align*}
and let $\mathsf{f}_\eta:\R^n\to \R$ be defined by $\mathsf{f}_{\eta}(\mu)\equiv 2^{-1}\lambda\sum_{\ell \in [n]} \eta_\ell \mu_\ell^2$.
Consider the following iterative algorithm, initialized with $r^{(0)}=0_m, \theta^{(0)}=\theta_{0;b^\ast}$, and updated according to
\begin{align}\label{def:AMP_ls}
\begin{cases}
r^{(t+1)}\equiv A_{b^\ast}(\theta_{0;b^\ast}- \theta^{(t)})+\xi_{b^\ast}+ b^\ast\circ r^{(t)} \in \R^m,\\
\theta^{(t+1)}\equiv \prox_{\mathsf{f}_{\tau_{b^\ast} }} \big(\theta^{(t)}+A_{b^\ast}^\top r^{(t+1)}\big) \in \R^n. 
\end{cases}
\end{align}
Let $(\hat{r},\hat{\theta})$ be a stationary point of (\ref{def:AMP_ls}), $\mu^{(t+1)}\equiv \mathfrak{D}_{\tau_{b^\ast}}^{1/2} \theta^{(t+1)}$, and $R^{(t+1)}\equiv \mathfrak{D}_{1_m-b^\ast}^{1/2}r^{(t+1)}$.

The following proposition provides the precise connection between both (i) the stationary point of (\ref{def:AMP_ls}) to the pair of the residual $\hat{R}$ and the Ridge estimator $\hat{\mu}$, and (ii) (\ref{def:AMP_ls}) and the standard form of an asymmetric AMP iterate.

\begin{proposition}\label{prop:ridge_AMP}
	The following hold.
	\begin{enumerate}
		\item The stationary point $(\hat{r},\hat{\theta})$ of (\ref{def:AMP_ls}) satisfies $(\hat{R}, \hat{\mu})= (\mathfrak{D}_{1_m-b^\ast}^{1/2}\hat{r}, \mathfrak{D}_{\tau_{b^\ast}}^{1/2} \hat{\theta})$. 
		\item (\ref{def:AMP_ls}) can be recast into the standard asymmetric AMP iterate (\ref{def:AMP_asym}) via the following identification: with initialization $v^{(0)}=0$, for $t=0,1,2,\ldots$, the sequence $(u^{(t+1)},v^{(t+1)})\in \R^m\times \R^n$ is updated according to
		\begin{align}\label{def:AMP_standard_form_ls}
		\begin{cases}
		u^{(t+1)}=\xi_{b^\ast}-r^{(t+1)}, & v^{(t+1)}=\theta_{0;b^\ast}-\theta^{(t)}-A_{b^\ast}^\top r^{(t+1)},\\
		\mathsf{G}_t(u)=(u-\xi_{b^\ast})\bm{1}_{t\geq 1}, & \mathsf{F}_t(v)= \big(\prox_{\mathsf{f}_{ \tau_{b^\ast} }}(\theta_{0;b^\ast}-v)-\theta_{0;b^\ast}\big)\bm{1}_{t\geq 1}.
		\end{cases}
		\end{align}
		Moreover, we have 
		\begin{align*}
		\begin{cases}
		\mu^{(t+1)}=\prox_{\lambda\pnorm{\cdot}{}^2/2}(\mu_{0}-\mathfrak{D}_{\tau_{b^\ast}}^{1/2} v^{(t+1)};\tau_{b^\ast}),\\
		R^{(t+1)}=-\mathfrak{D}_{1_m-b^\ast}^{1/2}u^{(t+1)}+\mathfrak{D}_{1_m-b^\ast}\xi.
		\end{cases}	
		\end{align*}
	\end{enumerate}
	
\end{proposition}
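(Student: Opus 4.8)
The plan is to prove both parts by direct algebraic verification; the only structural input beyond bookkeeping is the pair of fixed point equations (\ref{eqn:fpe_ridge}), which is exactly what forces the correction vectors produced by the AMP formalism to coincide with the coefficients $-b^\ast$ and $1_n$ that are hard-wired into (\ref{def:AMP_ls}). Throughout I will use that $\mathsf{f}_\eta(\mu)=\tfrac{\lambda}{2}\sum_\ell \eta_\ell\mu_\ell^2$ is a separable convex quadratic, so its first-order optimality condition gives the closed form $\prox_{\mathsf{f}_\eta}(x)=\mathfrak{D}_{1_n+\lambda\eta}^{-1}x$, and the same one-line computation gives $\prox_{\lambda\pnorm{\cdot}{}^2/2}(x;\tau_{b^\ast})=\mathfrak{D}_{1_n+\lambda\tau_{b^\ast}}^{-1}x$.

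For part (1), I would write out the two stationarity relations of (\ref{def:AMP_ls}) at $(\hat{r},\hat{\theta})$. Moving $b^\ast\circ\hat{r}$ to the left in the $r$-relation and inverting the strictly positive diagonal $\mathfrak{D}_{1_m-b^\ast}$ expresses $\hat{r}$ in terms of $\hat{\theta}$, while the $\theta$-relation, via the closed form above, becomes $\lambda\tau_{b^\ast}\circ\hat{\theta}=A_{b^\ast}^\top\hat{r}$. Substituting $\hat{r}$ and expanding $A_{b^\ast}=\mathfrak{D}_{1_m-b^\ast}^{1/2}A\mathfrak{D}_{\tau_{b^\ast}}^{1/2}$, the $\mathfrak{D}_{1_m-b^\ast}$ factors cancel; then multiplying through by $\mathfrak{D}_{\tau_{b^\ast}}^{-1/2}$ (legitimate since $\tau_{b^\ast}>0$ by Proposition \ref{prop:ridge_fpe_exist_uniq}) and using $\mathfrak{D}_{\tau_{b^\ast}}^{1/2}\theta_{0;b^\ast}=\mu_0$, the identity collapses to the Ridge normal equation $(A^\top A+\lambda I_n)\big(\mathfrak{D}_{\tau_{b^\ast}}^{1/2}\hat{\theta}\big)=A^\top Y$ with $Y=A\mu_0+\xi$. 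As $\lambda>0$ makes $A^\top A+\lambda I_n$ invertible, this has the unique solution $\hat{\mu}$, whence $\hat{\mu}=\mathfrak{D}_{\tau_{b^\ast}}^{1/2}\hat{\theta}$.

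For part (2), I would first verify the two bookkeeping identities forced by the proposed identification, namely $\mathsf{G}_t(u^{(t)})=-r^{(t)}\bm{1}_{t\geq1}$ and $\mathsf{F}_t(v^{(t)})=(\theta^{(t)}-\theta_{0;b^\ast})\bm{1}_{t\geq1}$, the second because $\theta_{0;b^\ast}-v^{(t)}=\theta^{(t-1)}+A_{b^\ast}^\top r^{(t)}$ and $\prox_{\mathsf{f}_{\tau_{b^\ast}}}\!\big(\theta^{(t-1)}+A_{b^\ast}^\top r^{(t)}\big)=\theta^{(t)}$. Next I would compute the correction vectors: since $\mathsf{F}_{t,\ell}'\equiv-(1+\lambda\tau_{b^\ast,\ell})^{-1}\bm{1}_{t\geq1}$, $\mathsf{G}_{t,k}'\equiv\bm{1}_{t\geq1}$, and the variance profile of $A_{b^\ast}$ has entries $(1-b_k^\ast)V_{k\ell}^2\tau_{b^\ast,\ell}$, their defining formulas give $b_{t,k}^{\mathsf{F}}=-(1-b_k^\ast)\big[\mathscr{V}_m\big(\tau_{b^\ast}\circ(1_n+\lambda\tau_{b^\ast})^{-1}\big)\big]_k$ and $b_{t,\ell}^{\mathsf{G}}=\tau_{b^\ast,\ell}\big[\mathscr{V}_m^\top(1_m-b^\ast)\big]_\ell\,\bm{1}_{t\geq1}$; now the second line of (\ref{eqn:fpe_ridge}) collapses the former to $b_t^{\mathsf{F}}=-b^\ast$, and $\tau_{b^\ast}=(\mathscr{V}_m^\top(1_m-b^\ast))^{-1}$ collapses the latter to $b_t^{\mathsf{G}}=1_n\bm{1}_{t\geq1}$. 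Feeding these together with the bookkeeping identities into the two lines of (\ref{def:AMP_asym}) and matching term by term with (\ref{def:AMP_ls})---the case $t=0$ being immediate from $\theta^{(0)}=\theta_{0;b^\ast}$, $r^{(0)}=0$ and $\mathsf{F}_0=\mathsf{G}_0\equiv0$---yields the recast (\ref{def:AMP_standard_form_ls}). Lastly, from $\theta^{(t)}+A_{b^\ast}^\top r^{(t+1)}=\theta_{0;b^\ast}-v^{(t+1)}$ the $\theta$-update reads $\theta^{(t+1)}=\mathfrak{D}_{1_n+\lambda\tau_{b^\ast}}^{-1}(\theta_{0;b^\ast}-v^{(t+1)})$; left-multiplying by $\mathfrak{D}_{\tau_{b^\ast}}^{1/2}$, using $\mathfrak{D}_{\tau_{b^\ast}}^{1/2}\theta_{0;b^\ast}=\mu_0$, commuting the diagonal factors, and recalling $\prox_{\lambda\pnorm{\cdot}{}^2/2}(x;\tau_{b^\ast})=\mathfrak{D}_{1_n+\lambda\tau_{b^\ast}}^{-1}x$ gives $\mu^{(t+1)}=\prox_{\lambda\pnorm{\cdot}{}^2/2}\big(\mu_0-\mathfrak{D}_{\tau_{b^\ast}}^{1/2}v^{(t+1)};\tau_{b^\ast}\big)$.

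The individual computations are elementary, so the main obstacle is not conceptual but one of careful bookkeeping: tracking the variance rescalings through the conjugation $A\mapsto A_{b^\ast}=\mathfrak{D}_{1_m-b^\ast}^{1/2}A\mathfrak{D}_{\tau_{b^\ast}}^{1/2}$ so that the AMP-generated correction vectors $b_t^{\mathsf{F}}$ and $b_t^{\mathsf{G}}$ really do reduce to $-b^\ast$ and $1_n$ via (\ref{eqn:fpe_ridge}), and keeping the $\bm{1}_{t\geq1}$ indicators straight so that the recursion is matched both at the initialization and for $t\geq1$.
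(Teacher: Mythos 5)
Your proposal is correct and follows essentially the same route as the paper: part (1) by showing that the stationary point of (\ref{def:AMP_ls}) satisfies the Ridge optimality condition (you push through to the normal equation, while the paper compares the two prox fixed-point characterizations), and part (2) by verifying $\mathsf{F}_t(v^{(t)})=\theta^{(t)}-\theta_{0;b^\ast}$, $\mathsf{G}_t(u^{(t)})=-r^{(t)}$ and computing the correction vectors from the variance profile of $A_{b^\ast}$ together with the second line of (\ref{eqn:fpe_ridge}). Your sign bookkeeping is in fact the more careful one: since $\mathsf{F}_{t,\ell}'=-(1+\lambda\tau_{b^\ast,\ell})^{-1}$, the AMP-defined correction is $b_t^{\mathsf{F}}=-b^\ast$, which is precisely what the term-by-term match with (\ref{def:AMP_ls}) requires, whereas the paper's displayed computation omits this minus sign and records $b_t^{\mathsf{F}}=b^\ast$.
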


\begin{proof}
	\noindent (1). By the first-order optimality condition for $\hat{\mu}$, for any $\eta \in \R^n_{>0}$,
	\begin{align*}
	\hat{\mu} = \prox_{\mathsf{f}_{\eta} }\Big(\hat{\mu}+\mathfrak{D}_\eta A^\top \big(A(\mu_0-\hat{\mu})+\xi\big)\Big).
	\end{align*}
	On the other hand, a stationary point $\hat{\theta}$ of (\ref{def:AMP_ls}) satisfies
	\begin{align*}
	& \hat{\theta}= \prox_{\mathsf{f}_{\tau_{b^\ast}}} \Big[ \hat{\theta}+A_{b^\ast}^\top \mathfrak{D}_{1_m-b^\ast}^{-1} \Big(A_{b^\ast}\big( \theta_{0;b^\ast}- \hat{\theta}\big)+\xi_{b^\ast}\Big)\Big],\\
	\Leftrightarrow \quad & \mathfrak{D}_{\tau_{b^\ast}}^{1/2}\hat{\theta}= \prox_{\mathsf{f}_{\tau_{b^\ast} } }  \Big[ \mathfrak{D}_{\tau_{b^\ast}}^{1/2} \hat{\theta}+\mathfrak{D}_{\tau_{b^\ast}}^{1/2}A_{b^\ast}^\top \mathfrak{D}_{1_m-b^\ast}^{-1} \Big(A_{b^\ast}\big( \theta_{0;b^\ast}- \hat{\theta}\big)+\xi_{b^\ast}\Big)\Big],\\
	\Leftrightarrow \quad & \mathfrak{D}_{\tau_{b^\ast}}^{1/2} \hat{\theta} = \prox_{\mathsf{f}_{\tau_{b^\ast} } }  \Big[ \mathfrak{D}_{\tau_{b^\ast}}^{1/2} \hat{\theta}+\mathfrak{D}_{\tau_{b^\ast}} A^\top \Big(A\big( \mu_{0}-\mathfrak{D}_{\tau_{b^\ast}}^{1/2} \hat{\theta}\big)+\xi\Big)\Big].
	\end{align*}
	Comparing the above two equations leads to $\hat{\mu} = \mathfrak{D}_{\tau_{b^\ast}}^{1/2} \hat{\theta}$. A stationary point $\hat{r}$ of (\ref{def:AMP_ls}) satisfies
	\begin{align*}
	&\hat{r} = A_{b^\ast}(\theta_{0;b^\ast}- \hat{\theta})+\xi_{b^\ast}+ b^\ast\circ \hat{r}, \\
	\Leftrightarrow \quad & \mathfrak{D}_{1_m-b^\ast} \hat{r} = \mathfrak{D}_{1_m-b^\ast}^{1/2} A\mathfrak{D}_{\tau_{b^\ast}}^{1/2} (\mathfrak{D}_{\tau_{b^\ast}}^{-1/2}\mu_0- \hat{\theta})+\mathfrak{D}_{1_m-b^\ast}^{1/2}\xi, \\
	\Leftrightarrow \quad & \mathfrak{D}_{1_m-b^\ast}^{1/2} \hat{r} = A\mu_0 + \xi - A\hat{\mu} = Y-A\hat{\mu}  = \hat{R}.
	\end{align*}
	This completes the proof of (1).
	
	\noindent (2). From (\ref{def:AMP_standard_form_ls}), it is easily verified that for $t\geq 1$, $\mathsf{F}_t(v^{(t)})=\theta^{(t)}-\theta_{0;b^\ast}$ and $\mathsf{G}_t(u^{(t)})=-r^{(t)}$. Furthermore, the Onsager correction vector $b_{t}^{\mathsf{G}}= 1_n$, as for all $\ell \in [n]$, $b_{t,\ell}^{\mathsf{G}}=\sum_{k \in [m]} \E (A_{b^\ast})_{k\ell}^2=1$ due to the normalization $\tau_{b^\ast}$. Meanwhile, the other Onsager correction vector $b_{t}^{\mathsf{F}}=b^\ast$, as for $k \in [m]$,
	\begin{align*}
	b_{t,k}^{\mathsf{F}}= \sum_{\ell \in [n]} \E (A_{b^\ast})_{k\ell}^2 \big(1+\lambda \tau_{b^\ast,\ell}\big)^{-1} = \frac{1-b^\ast_k}{m}\sum_{\ell \in [n]} V_{k\ell}^2 \frac{ \tau_{b^\ast,\ell} }{1+\lambda \tau_{b^\ast,\ell}}=b^\ast_k,
	\end{align*}
	where the last identity uses the second equation of (\ref{eqn:fpe_ridge}). Due to the linearity of $\mathsf{F}_t,\mathsf{G}_t$, it makes no difference to use either $\{b_\cdot^{\mathsf{F}}, b_\cdot^{\mathsf{G}}\}$ or $\{\overline{b}_\cdot^{\mathsf{F}}, \overline{b}_\cdot^{\mathsf{G}}\}$.
\end{proof}

Next we shall quantify the $\ell_2$ error between the iterate $\theta^{(t)}$ and the stationary point $\hat{\theta}=\mathfrak{D}_{\tau_{b^\ast}}^{-1/2}\hat{\mu}$ in (\ref{def:AMP_ls}). 

\begin{proposition}\label{prop:ridge_theta_error}
	Suppose $1/K\leq m/n, \lambda\leq K$ for some $K\geq 2$. Further suppose $m^{-1/2}\big(\pnorm{V_{k\cdot}}{}\wedge  \pnorm{V_{\cdot\ell}}{}\big)\geq 1/K$, and  $V_{k\ell}\leq K$ holds for all $k \in [m],\ell \in [n]$. Then for any $D>1$, there exist some universal constant $c_0>0$ and another constant $c_1=c_1(D)>0$, such that for all $1\leq t\leq \log n/c_0$, with $\Prob^\xi(\cdot|\theta^{(0)})$-probability at least $1-c_1 n^{-D}$,
	\begin{align*}
	&n^{-1}\max\big\{\pnorm{\theta^{(t)}-\hat{\theta}}{}^2, \pnorm{r^{(t)}-\hat{r}}{}^2\big\}\\
	&\leq K^{c_0} \cdot n^{-1}\big(\pnorm{\mu_0}{}^2\vee \pnorm{\xi}{}^2\big)\cdot (1-K^{-c_0})^t+\big(K \log n\cdot (1+\pnorm{z^{(0)}}{\infty})\big)^{c_0 t^3}\cdot n^{-1/c_0^t}.
	\end{align*}
\end{proposition}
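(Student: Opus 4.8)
\textbf{Proof proposal for Proposition \ref{prop:ridge_theta_error}.}

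The plan is to establish geometric convergence of the AMP iterate $(\theta^{(t)}, r^{(t)})$ in (\ref{def:AMP_ls}) to its stationary point $(\hat\theta, \hat r)$ by a contraction argument, while controlling the AMP-specific fluctuation error via the state evolution machinery of Section \ref{section:main_results}. First I would observe, following the classical strategy of \cite{bayati2012lasso} for the Lasso, that the deterministic recursion defining $(\theta^{(t)}, r^{(t)})$ is \emph{almost} a contraction once one accounts for the Onsager correction: writing the error variables $\delta_\theta^{(t)} \equiv \theta^{(t)}-\hat\theta$ and $\delta_r^{(t)} \equiv r^{(t)}-\hat r$, one gets $\delta_r^{(t+1)} = -A_{b^\ast}\delta_\theta^{(t)} + b^\ast\circ \delta_r^{(t)}$ and, using that $\prox_{\mathsf f_{\tau_{b^\ast}}}$ is a diagonal affine contraction with per-coordinate factor $(1+\lambda\tau_{b^\ast,\ell})^{-1}$, $\delta_\theta^{(t+1)} = \mathfrak{D}_{(1_n+\lambda\tau_{b^\ast})^{-1}}\big(\delta_\theta^{(t)} + A_{b^\ast}^\top \delta_r^{(t+1)}\big)$. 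The key point is that the linear operator governing this two-block recursion has spectral radius bounded away from $1$; this is precisely where the fixed-point structure (\ref{eqn:fpe_ridge}) and the bounds $m^{-1/2}\pnorm{V_{k\cdot}}{}\wedge\pnorm{V_{\cdot\ell}}{}\geq 1/K$, $V_{k\ell}\le K$, $1/K\le m/n,\lambda\le K$ enter: they guarantee (via Proposition \ref{prop:ridge_fpe_exist_uniq} and its corollaries on $\tau_{b^\ast}$ being bounded away from $0$ and $\infty$) that the effective regularization is nondegenerate, so the contraction factor is $1 - K^{-c_0}$ for a universal $c_0$. This explains the first term $K^{c_0} n^{-1}(\pnorm{\mu_0}{}^2\vee\pnorm{\xi}{}^2)(1-K^{-c_0})^t$.

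However, this clean contraction holds only for the \emph{idealized} recursion in which the stationary point $(\hat r,\hat\theta)$ is an exact fixed point; the honest AMP iterate (\ref{def:AMP_ls}) is governed by the \emph{random} matrix $A_{b^\ast}$, and the quadratic forms $A_{b^\ast}^\top A_{b^\ast}$, $A_{b^\ast}A_{b^\ast}^\top$ appearing implicitly do not equal their expectations exactly. The second step is therefore to invoke Proposition \ref{prop:ridge_AMP}(2) to view (\ref{def:AMP_ls}) as a bona fide asymmetric AMP iterate (\ref{def:AMP_asym}), and then apply Theorem \ref{thm:AMP_asym_avg} (with a suitably chosen pseudo-Lipschitz $\psi$ of order $2$ — e.g. $\psi$ capturing $\pnorm{v^{(t+1)}-v^{(t)}}{}^2$ and $\pnorm{u^{(t+1)}-u^{(t)}}{}^2$) to certify that the empirical behavior of $(u^{(t)},v^{(t)})$ is, up to an error of the form $(K\log n\cdot(1+\pnorm{z^{(0)}}{\infty}))^{c_0 t^3}\cdot n^{-1/c_0^{t}}$, governed by the Gaussian state evolution of Definition \ref{def:AMP_asym_se}. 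The AMP approximation error is precisely the source of the second term in the claimed bound. One then argues that a sequence satisfying "contraction up to additive slack $\delta$" converges to within $O(\delta/(1-\text{rate}))$ of the fixed point, and since $1-\text{rate}\asymp K^{-c_0}$ the slack is only inflated by a $K^{c_0}$ factor, which is absorbed.

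The main obstacle I anticipate is \textbf{matching the two error terms across the two regimes of $t$} — for small $t$ the geometric term dominates but the AMP error is negligible; as $t$ grows toward $\log n/c_0$ the AMP error $(\ldots)^{c_0 t^3} n^{-1/c_0^t}$ balloons (the $n^{-1/c_0^t}$ factor degrades rapidly), so one must stop the iteration at the right place, and crucially one needs the state evolution variances $\var(U^{(s)}_k), \var(V^{(s)}_\ell)$ to stay bounded away from $0$ uniformly in $s\le t$ for Theorem \ref{thm:AMP_asym_avg} to be applicable (its conclusion is conditional on the RHS being $\le (1\wedge\sigma_{\ast,\psi})^{c_0}$). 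Establishing this nondegeneracy of the state evolution — showing $\sigma_{\ast,\psi}$ does not collapse along the first $\log n/c_0$ steps — requires propagating the fixed-point estimates into the SE recursion and is the delicate part; I would handle it by showing inductively that $(\var(U^{(s)}),\var(V^{(s)}))$ stays within a bounded neighborhood of the SE fixed point determined by $(\gamma^\ast,b^\ast)$, using the same contraction that drives the deterministic analysis. A secondary technical nuisance is controlling $\pnorm{A_{b^\ast}}{\op}$, $\pnorm{\theta^{(t)}}{}$, $\pnorm{r^{(t)}}{}$ and hence the growth of the pseudo-Lipschitz constants needed to feed into Theorem \ref{thm:AMP_asym_avg}; these are handled by standard operator-norm bounds for $V\circ G_{m\times n}$ together with the a priori contraction, on the high-probability event where everything is controlled.
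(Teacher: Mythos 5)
Your overall architecture (geometric decay plus an AMP/state-evolution fluctuation term, with Theorem \ref{thm:AMP_asym_avg} applied to successive differences) points in the right direction, but the load-bearing step is asserted rather than proved, and I do not believe it can be proved the way you state it. You claim that the exact error recursion
$\delta_r^{(t+1)}=-A_{b^\ast}\delta_\theta^{(t)}+b^\ast\circ\delta_r^{(t)}$, $\delta_\theta^{(t+1)}=\mathfrak{D}_{(1_n+\lambda\tau_{b^\ast})^{-1}}\big(\delta_\theta^{(t)}+A_{b^\ast}^\top\delta_r^{(t+1)}\big)$
is governed by a linear operator with spectral radius $\leq 1-K^{-c_0}$, and that this follows from the nondegeneracy of $\tau_{b^\ast}$ via Proposition \ref{prop:ridge_fpe_exist_uniq}. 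The nondegeneracy of the effective regularization gives you no such thing: substituting, the $\theta$-block is driven by $\mathfrak{D}_{(1_n+\lambda\tau_{b^\ast})^{-1}}(I-A_{b^\ast}^\top A_{b^\ast})$, and since the columns of $A_{b^\ast}$ are normalized to unit expected norm, $\pnorm{A_{b^\ast}}{\op}^2$ concentrates around $(1+\sqrt{n/m})^2$, so $\pnorm{I-A_{b^\ast}^\top A_{b^\ast}}{\op}$ is typically much larger than $1$ when $m\asymp n$, and the diagonal damping $(1+\lambda\tau_{b^\ast,\ell})^{-1}$ is only bounded away from $1$ by a constant. A contraction would thus have to come from a spectral-radius (or carefully chosen norm) bound for a non-normal \emph{random} block operator, which is a genuinely hard random-matrix statement that neither the fixed-point equations nor your outline supplies; this is exactly the kind of argument the paper avoids. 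Relatedly, your closing device "contraction up to additive slack" inherits this gap, and you never supply a mechanism converting small successive differences into closeness to the fixed point that does not rely on the contraction claim.

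The paper's proof is a two-line reduction to Lemmas \ref{lem:ridge_AMP_diff_l2} and \ref{lem:ridge_u_v_error}, which replace your contraction step by two different ideas. First, Lemma \ref{lem:ridge_AMP_diff_l2} uses the \emph{strong convexity} of the ridge cost: from the iteration one computes the gradient residual $\nabla\mathsf{L}(\mathfrak{D}_{\tau_{b^\ast}}^{1/2}\theta^{(t)})$ explicitly in terms of $r^{(t+1)}-r^{(t)}$ and $\theta^{(t+1)}-\theta^{(t)}$, and $\nabla^2\mathsf{L}\geq\lambda I_n$ then forces $\pnorm{\theta^{(t)}-\hat\theta}{}\lesssim K^{c}(1\vee\pnorm{A}{\op})(\pnorm{u^{(t+1)}-u^{(t)}}{}+\pnorm{v^{(t+1)}-v^{(t)}}{})$; the bound for $r^{(t)}-\hat r$ only uses the trivial diagonal contraction $\pnorm{b^\ast}{\infty}\leq 1-K^{-c}$. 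Second, Lemma \ref{lem:ridge_u_v_error} proves the geometric decay of the successive differences at the state-evolution level: both the variances $\var(V^{(t)}_\ell)$ and the lag-one covariances $\cov(V^{(t+1)}_\ell,V^{(t)}_\ell)$ satisfy the \emph{same} affine recursion, which is contractive by the argument in the proof of Proposition \ref{prop:ridge_fpe_exist_uniq}, so both converge to the same fixed point $\zeta_v^\ast$ at rate $(1-K^{-c})^t$ and hence $\E\pnorm{V^{(t+1)}-V^{(t)}}{}^2\lesssim K^{c}(\pnorm{\mu_0}{}^2\vee\pnorm{\xi}{}^2)(1-K^{-c})^t$; Theorem \ref{thm:AMP_asym_avg} then transfers this to the random iterates with the $(K\log n(1+\pnorm{z^{(0)}}{\infty}))^{c_0t^3}n^{-1/c_0^t}$ error. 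Your use of Theorem \ref{thm:AMP_asym_avg} with $\psi$ capturing $\pnorm{u^{(t+1)}-u^{(t)}}{}^2$, $\pnorm{v^{(t+1)}-v^{(t)}}{}^2$ is in the spirit of the second step, but without the covariance-recursion computation you have no geometric decay to transfer, and without the strong-convexity step you have no way to pass from successive differences to $\theta^{(t)}-\hat\theta$. To repair your proposal you would essentially have to reproduce these two lemmas.
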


An important step in the proof of Proposition \ref{prop:ridge_theta_error} is to relate the first equation of (\ref{eqn:fpe_ridge}) to the high dimensional state evolution in Definition \ref{def:AMP_asym_se}, which then allows applications of the joint distributional characterizations in Theorem \ref{thm:AMP_asym_avg}.

Under Gaussian i.i.d. designs, similar $\ell_2$ error controls between a suitably designed AMP iterate and a given regularized regression estimator of interest are obtained in several cases, typically in the high dimensional limit $m/n\to \phi \in (0,\infty)$ followed by $t\to \infty$. Examples along this line include the Lasso estimator \cite{bayati2012lasso}, robust estimators \cite{donoho2016high}, the SLOPE estimator \cite{bu2021algorithmic}, $\ell_1$-minimum norm interpolator \cite{li2021minimum} in the linear model, and the maximum likelihood estimator in logistic regression \cite{sur2019likelihood,sur2019modern}. Here we provide non-asymptotic $\ell_2$ error controls in the Ridge case under a general variance profile on the design matrix. 

With Proposition \ref{prop:ridge_theta_error} in hand, we may then use our AMP theory in Theorem \ref{thm:AMP_asym_avg} to derive the distribution of $\hat{\mu}$. The details of the proofs for both Proposition \ref{prop:ridge_theta_error} and Theorem \ref{thm:ridge_dist} can be found in Section \ref{subsection:proof_ridge_dist}.

\subsection{Conjectured fixed point equations for general regularized least squares}
The above method of analysis for the Ridge estimator $\hat{\mu}$ in (\ref{def:ls_estimator}) via our AMP theory suggests a broader paradigm for a general class of regularized least squares estimators. 

Let $\mathsf{f}:\R\to \R_{\geq 0}$ be a non-negative, proper and closed convex function, and let $\mathsf{f}_n(\mu)\equiv \sum_{j \in [n]} \mathsf{f}(\mu_j)$. We are interested in the behavior of the regularized least squares estimator $
\hat{\mu}_{\mathsf{f}}\in \argmin_{\mu \in \R^n} \big\{2^{-1}\pnorm{Y-A\mu}{}^2+\mathsf{f}_n(\mu)\big\}$. We conjecture that $\hat{\mu}_{\mathsf{f}}\stackrel{d}{\approx} \prox_{\mathsf{f}_n}(\mu_0+\gamma^\ast_{\mathsf{f}}\circ Z_n;\tau_{b^\ast_{\mathsf{f}}} )$ in a similar sense to Theorem \ref{thm:ridge_dist}, where $(\gamma^\ast_{\mathsf{f}},b^\ast_{\mathsf{f}}) \in \R_{\geq 0}^n\times [0,1)^m$ is now the solution to the following high dimensional system of equations (whenever exists uniquely and is suitably bounded):
\begin{align}\label{eqn:rs_general_fpe}
\begin{cases}
\gamma^2 =\mathfrak{D}_{\tau_b}^2\mathscr{V}_m^\top \mathfrak{D}_{1_m-b}^2\Big[\xi^2+ \mathscr{V}_m \E^\xi \big(\prox_{\mathsf{f}_n}(\mu_0+\gamma\circ Z_n;\tau_b)-\mu_0\big)^2\Big] \in \R^n,\\
b\circ (1_m-b)^{-1}= \mathscr{V}_m \mathfrak{D}_{\tau_b} \E^\xi \prox_{\mathsf{f}_n}'(\mu_0+\gamma\circ Z_n;\tau_b) \in \R^m.
\end{cases}
\end{align}
In the Gaussian i.i.d. design case $V=1_{m\times n}$ (i.e., $\mathscr{V}_m=1_{m\times n}/m$), (\ref{eqn:rs_general_fpe}) reduces to a system of two equations in two unknowns $(\gamma,\tau) \in \R_{\geq 0}\times [1,\infty)$: with $\sigma_\xi^2\equiv \pnorm{\xi}{}^2/m$,
\begin{align}\label{eqn:rs_iid_fpe}
\begin{cases}
\gamma^2 = \sigma_\xi^2+ \frac{1}{m} \E^\xi  \pnorm{\prox_{\mathsf{f}_n}(\mu_0+\gamma  Z_n;\tau)-\mu_0}{}^2,\\
1-1/\tau = \frac{1}{m} \E^\xi \dv_{\prox_{\mathsf{f}_n}(\cdot;\tau)}(\mu_0+\gamma  Z_n).
\end{cases}
\end{align}
The above system of equations integrate its limiting version in \cite{elkaroui2018impact,thrampoulidis2018precise}, the Lasso case in \cite{miolane2021distribution} with $\mathsf{f}_n(\mu)\equiv \lambda \pnorm{\mu}{1}$, and a class of general non-separable, non-smooth projection-type regularizers in \cite{han2023noisy}.

Due to the apparent complications in the analysis of the fixed point equations (\ref{eqn:rs_general_fpe}), it remains an outstanding open question to formally establish the prescribed distributional properties of $\hat{\mu}_{\mathsf{f}}$ via (\ref{eqn:rs_general_fpe}) for general regularizers $\mathsf{f}_n$.

\section{Proof of Theorem \ref{thm:AMP_sym_loo}: main steps}\label{section:proof_AMP_loo}

In the proofs, we write $\E[\cdot]=\E[\cdot|z^{(0)}]$ for simplicity. For notational simplicity, $t$'s appearing in the estimates are understood as $1\vee t$.

\subsection{Decomposition via leave-one-out AMP}

Recall the leave-one-out AMP $\{z_{[-k]}^{(t)}\}$ defined in (\ref{def:AMP_loo}). Let us define some further notation:
\begin{itemize}
	\item $\mathsf{B}_t\equiv \mathfrak{D}_{b_t}\in \R^{n\times n}$.
	\item $\Delta z_{[-k]}^{(t)}\equiv z^{(t)}-z_{[-k]}^{(t)}$.
	\item $z_{[-k]}^{(t)}(u) \equiv (1-u) z_{[-k]}^{(t)}+u  z^{(t)}$, where $u \in [0,1]$.
	\item $\Delta A_{[-k]}\equiv A-A_{[-k]}=e_k (A_k-A_{kk} e_k)^\top +A_k e_k^\top$.
	\item $(\mathsf{F}')^{(t)}\equiv \mathsf{F}'_t\big(z^{(t)}\big) \in \R^n$, $\mathsf{D}^{(t)}\equiv \mathfrak{D}_{(\mathsf{F}')^{(t)}}\in \R^{n\times n}$.
	\item $(\mathsf{F}')_{[-k]}^{(t)}\equiv  \mathsf{F}'_t\big(z_{[-k]}^{(t)}\big) \in \R^n$, $\mathsf{D}_{[-k]}^{(t)}\equiv \mathfrak{D}_{(\mathsf{F}')_{[-k]}^{(t)}}\in \R^{n\times n}$.
	\item $\overline{\mathsf{F}'}_{[-k]}^{(t)}\equiv \E_U \mathsf{F}'_t\big(z_{[-k]}^{(t)}(U)\big) \in \R^n$, $\overline{\mathsf{D}}_{[-k]}^{(t)}\equiv \mathfrak{D}_{\overline{\mathsf{F}'}_{[-k]}^{(t)}}\in \R^{n\times n}$, with $\E_U$ being the expectation taken over $U\sim \mathrm{Unif}[0,1]$ independent of $A$.
\end{itemize}
Using these notation, a first-order Taylor expansion yields that for $r=1,2,\ldots$,
\begin{align}\label{eqn:delta_z_recursion}
\Delta z_{[-k]}^{(r)}& = A_{[-k]} \overline{\mathsf{D}}_{[-k]}^{(r-1)} \Delta z_{[-k]}^{(r-1)}-\mathsf{B}_{r-1} \overline{\mathsf{D}}_{[-k]}^{(r-2)}\Delta z_{[-k]}^{(r-2)}+\Delta A_{[-k]} \mathsf{F}_{r-1}(z^{(r-1)}).
\end{align}
For any given $t$, we define a sequence of matrices $\overline{M}_{0,[-k]}^{(t)} ,\overline{M}_{1,[-k]}^{(t)},\ldots, \overline{M}_{t,[-k]}^{(t)}  \in \R^{n\times n}$ recursively as follows: let  $\overline{M}_{-1,[-k]}^{(t)} =0_{n\times n}$, $\overline{M}_{0,[-k]}^{(t)} \equiv \overline{\mathsf{D}}_{[-k]}^{(t)}$, and for $s \in [t]$, 
\begin{align}\label{def:M_bar_recursion}
\overline{M}_{s,[-k]}^{(t)} \equiv \big(\overline{M}_{s-1,[-k]}^{(t)}  A_{[-k]} - \overline{M}_{s-2,[-k]}^{(t)}  \mathsf{B}_{t-s+1}\big) \overline{\mathsf{D}}_{[-k]}^{(t-s)}. 
\end{align}
The next proposition provides an exact error decomposition for the leave-one-out representation in Theorem \ref{thm:AMP_sym_loo}, via the matrix sequence defined above. 
\begin{proposition}\label{prop:AMP_loo_decom}
	Suppose $\mathsf{F}_{s,\ell} \in C^1(\R)$ for all $s \in [0:t]$ and $\ell \in [n]$. Then for any $k \in [n]$,
	\begin{align*}
	z^{(t+1)}_k-\bigiprod{A_k}{\mathsf{F}_t (z_{[-k]}^{(t)})} = \sum_{s \in [t]} \rem_k^{(s)},
	\end{align*}
	where for $s \in [t]$,
	\begin{align*}
	\rem_k^{(s)}&\equiv \iprod{A_k}{ \overline{M}_{t-s,[-k]}^{(t)}  e_k}\cdot  \Big(z^{(s)}_k+b_{s-1,k} \mathsf{F}_{s-2,k}(z^{(s-2)}_k)-A_{kk}\mathsf{F}_{s-1,k}(z^{(s-1)}_k)\Big)\\
	&\qquad + \big(\iprod{A_k}{\overline{M}_{t-s,[-k]}^{(t)}  A_k }- b_{t,k}\bm{1}_{s=t}\big)\cdot \mathsf{F}_{s-1,k}(z^{(s-1)}_k).
	\end{align*}
	Further suppose (S1) in Theorem \ref{thm:AMP_sym_loo} holds for some $K\geq 2$, and (S2) is replaced by $\max_{s \in [0:t]}\max_{\ell \in [n]}\big\{\abs{\mathsf{F}_{s,\ell}(0)}+\pnorm{\mathsf{F}_{s,\ell}' }{\infty}\big\}\leq \Lambda$ for some $\Lambda\geq 2$. Then there exists some universal constant $c_0>0$ such that 
	\begin{align*}
	\bigabs{z^{(t+1)}_k-\bigiprod{A_k}{\mathsf{F}_t (z_{[-k]}^{(t)})}}&\leq (K\Lambda)^{c_0 t}\cdot (1+\abs{A_{kk}})\cdot \max_{s \in [0:t]}\big(1+\abs{z^{(s)}_k}\big)\\
	&\quad \times \Big(\max_{s \in [t]} \bigabs{ \big(\iprod{A_k}{\overline{M}_{t-s,[-k]}^{(t)}  A_k }- b_{t,k}\bm{1}_{s=t}\big)   }+\abs{A_{kk}}\Big).
	\end{align*}
\end{proposition}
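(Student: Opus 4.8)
The plan is to establish the exact error decomposition first, and then derive the quantitative bound from it by controlling each remainder term $\rem_k^{(s)}$.

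\medskip

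\noindent\textbf{Step 1: The exact decomposition via unrolling the leave-one-out recursion.} Starting from the defining recursion $z^{(t+1)}_k = \langle A_k, \mathsf{F}_t(z^{(t)})\rangle - b_{t,k}\mathsf{F}_{t-1,k}(z^{(t-1)}_k)$, I would write $\mathsf{F}_t(z^{(t)}) = \mathsf{F}_t(z^{(t)}_{[-k]}) + \overline{\mathsf{D}}^{(t)}_{[-k]}\Delta z^{(t)}_{[-k]}$ using the mean-value form of the Taylor expansion (this is exactly where the averaged derivative matrix $\overline{\mathsf{D}}^{(t)}_{[-k]}$ enters). Then $z^{(t+1)}_k - \langle A_k,\mathsf{F}_t(z^{(t)}_{[-k]})\rangle = \langle A_k, \overline{\mathsf{D}}^{(t)}_{[-k]}\Delta z^{(t)}_{[-k]}\rangle - b_{t,k}\mathsf{F}_{t-1,k}(z^{(t-1)}_k)$. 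Now I would iteratively substitute the recursion \eqref{eqn:delta_z_recursion} for $\Delta z^{(r)}_{[-k]}$, peeling off one factor of $A_{[-k]}$ at each stage, which is precisely what the matrix sequence $\overline{M}^{(t)}_{s,[-k]}$ in \eqref{def:M_bar_recursion} is designed to track: after $s$ substitutions, the ``$A_{[-k]}$-part'' of the expansion has accumulated into $\overline{M}^{(t)}_{s,[-k]}$, while each substitution also spits off a boundary term proportional to $\Delta A_{[-k]}\mathsf{F}_{r-1}(z^{(r-1)})$. Since $\Delta A_{[-k]} = e_k(A_k - A_{kk}e_k)^\top + A_k e_k^\top$ is rank-at-most-two and supported on row/column $k$, each boundary term collapses to a scalar expression involving $\langle A_k, \overline{M}^{(t)}_{t-s,[-k]}e_k\rangle$, $\langle A_k, \overline{M}^{(t)}_{t-s,[-k]}A_k\rangle$, $A_{kk}$, and the scalar quantities $z^{(s)}_k, \mathsf{F}_{s-1,k}(z^{(s-1)}_k)$; bookkeeping these, together with the final leftover $-b_{t,k}\mathsf{F}_{t-1,k}(z^{(t-1)}_k)$ absorbed into the $s=t$ term, yields exactly the claimed form of $\rem_k^{(s)}$. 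The recursion for $\overline{M}^{(t)}_{s,[-k]}$ with the subtracted $\overline{M}^{(t)}_{s-2,[-k]}\mathsf{B}_{t-s+1}$ term must match the Onsager correction structure in \eqref{eqn:delta_z_recursion}, so one needs to verify the indices line up; I expect a short induction on $s$ to close this.

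\medskip

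\noindent\textbf{Step 2: Crude a priori bounds.} Under (1)--(2), each $\overline{\mathsf{D}}^{(\cdot)}_{[-k]}$ has operator norm $\le \Lambda$, each $\mathsf{B}_\cdot$ has operator norm $\lesssim K^2\Lambda$ (from the definition of $b_{t,k}$ as a $\frac1n$-average of $V^2_{k\ell}$ times derivatives), and $\pnorm{A_{[-k]}}{\op}\lesssim K$ with overwhelming probability — but to keep the statement deterministic-looking I would instead carry $\pnorm{A}{\op}$ (or $\pnorm{A_{[-k]}}{\op}$) as an implicit quantity, or simply note $\pnorm{A_k}{}$ and $\pnorm{A_{[-k]}}{\op}$ appear. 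Actually, re-examining the target inequality, the bound is stated purely in terms of $A_{kk}$, $z^{(s)}_k$, and $\langle A_k,\overline{M}^{(t)}_{t-s,[-k]}A_k\rangle - b_{t,k}\bm 1_{s=t}$, so I do \emph{not} need operator-norm control of $A_{[-k]}$ here — that gets deferred to later sections. What I \emph{do} need is: (i) bound $\iprod{A_k}{\overline M^{(t)}_{t-s,[-k]}e_k}$ by $\abs{A_{kk}}$ times a factor, using that $\overline M^{(t)}_{t-s,[-k]}e_k$ only sees column $k$ of $A_{[-k]}$, which is zero, so in fact $\overline M^{(t)}_{t-s,[-k]}$ restricted through $e_k$ reduces to something controlled by the diagonal $\overline{\mathsf D}$'s only — this should give $\iprod{A_k}{\overline M^{(t)}_{t-s,[-k]}e_k} = \overline{\mathsf D}$-products $\times A_{kk}$, hence bounded by $(K\Lambda)^{c_0 t}\abs{A_{kk}}$; and (ii) note $z^{(s)}_k + b_{s-1,k}\mathsf{F}_{s-2,k}(z^{(s-2)}_k) - A_{kk}\mathsf{F}_{s-1,k}(z^{(s-1)}_k)$ is bounded by $(K\Lambda)^{c_0}\cdot(1+\abs{A_{kk}})\cdot\max_{s\in[0:t]}(1+\abs{z^{(s)}_k})$ using the Lipschitz/growth bounds in (2).

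\medskip

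\noindent\textbf{Step 3: Assembling the bound.} Combining: $\abs{\rem_k^{(s)}} \le \abs{\iprod{A_k}{\overline M^{(t)}_{t-s,[-k]}e_k}}\cdot(\text{scalar factor from (ii)}) + \abs{\iprod{A_k}{\overline M^{(t)}_{t-s,[-k]}A_k} - b_{t,k}\bm1_{s=t}}\cdot\abs{\mathsf F_{s-1,k}(z^{(s-1)}_k)}$. The first summand is $\le (K\Lambda)^{c_0 t}\abs{A_{kk}}\cdot(1+\abs{A_{kk}})\max_s(1+\abs{z^{(s)}_k})$; the second is $\le (K\Lambda)^{c_0}\max_s(1+\abs{z^{(s)}_k})\cdot\abs{\iprod{A_k}{\overline M^{(t)}_{t-s,[-k]}A_k}-b_{t,k}\bm1_{s=t}}$. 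Summing over $s\in[t]$ (absorbing the factor $t$ into $(K\Lambda)^{c_0t}$) and factoring out the common $(K\Lambda)^{c_0t}(1+\abs{A_{kk}})\max_{s\in[0:t]}(1+\abs{z^{(s)}_k})$, we land exactly on the displayed RHS, with the bracketed $\big(\max_{s\in[t]}\abs{\iprod{A_k}{\overline M^{(t)}_{t-s,[-k]}A_k}-b_{t,k}\bm1_{s=t}} + \abs{A_{kk}}\big)$.

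\medskip

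\noindent\textbf{Main obstacle.} The genuinely delicate part is Step 1 — getting the \emph{exact} decomposition with the right index shifts and verifying that the Onsager-type subtraction in \eqref{def:M_bar_recursion} (the $-\overline M^{(t)}_{s-2,[-k]}\mathsf B_{t-s+1}$ term) is precisely what is generated by unrolling \eqref{eqn:delta_z_recursion}, including that nothing is double-counted when $\Delta A_{[-k]}$ hits $\mathsf{F}_{r-1}(z^{(r-1)})$ versus when it hits the lower-order $\Delta z$ terms. One has to be careful that $\Delta A_{[-k]}\mathsf F_{r-1}(z^{(r-1)})$ produces \emph{both} an $e_k$-directed piece (giving the $\iprod{A_k}{\overline M e_k}$ terms, multiplied by the row-$k$ content $(A_k - A_{kk}e_k)^\top\mathsf F_{r-1}(z^{(r-1)})$, which after re-expanding $\mathsf F_{r-1}(z^{(r-1)}_\ell) = \mathsf F_{r-1}(z^{(r-1)}_{[-k],\ell}) + \ldots$ contributes the $z^{(s)}_k + b_{s-1,k}\mathsf F_{s-2,k} - A_{kk}\mathsf F_{s-1,k}$ combination via the recursion for $z^{(s)}_k$ itself) \emph{and} an $A_k$-directed piece (giving $\iprod{A_k}{\overline M A_k}$). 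Tracking these two channels through all $s$ levels simultaneously, while keeping the $\overline{\mathsf D}$ versus $\mathsf D$ distinction straight (the leave-one-out iterates use $\overline{\mathsf D}$ from the mean-value theorem, but the ``extracted'' scalar recursion for $z^{(s)}_k$ uses the true $\mathsf F$ values at $z^{(s-1)}_k$), is where the proof requires the most care; the rest is routine norm estimation.
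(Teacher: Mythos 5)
Your proposal follows essentially the same route as the paper's proof: expand $\mathsf{F}_t(z^{(t)})$ around $z^{(t)}_{[-k]}$ in mean-value form, unroll the recursion for $\Delta z^{(r)}_{[-k]}$ so that the $A_{[-k]}$-parts accumulate into $\overline{M}^{(t)}_{\cdot,[-k]}$ while each substitution emits a boundary term from the rank-two $\Delta A_{[-k]}$, and identify the scalar combination $z^{(s)}_k+b_{s-1,k}\mathsf{F}_{s-2,k}(z^{(s-2)}_k)-A_{kk}\mathsf{F}_{s-1,k}(z^{(s-1)}_k)$ directly from the AMP recursion for $z^{(s)}_k$ (no re-expansion of $\mathsf{F}_{s-1}$ at $z^{(s-1)}_{[-k]}$ is needed, contrary to the worry in your last paragraph). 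Your observation that $A_{[-k]}e_k=0$ forces $\overline{M}^{(t)}_{t-s,[-k]}e_k$ to reduce to diagonal $\overline{\mathsf{D}}$- and $\mathsf{B}$-factors times $e_k$, giving $\bigabs{\iprod{A_k}{\overline{M}^{(t)}_{t-s,[-k]}e_k}}\leq (K\Lambda)^{c_0t}\abs{A_{kk}}$, is exactly the paper's one-line justification of the final estimate, so the plan is correct and matches the paper's argument.
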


\begin{proof}
	First, using (\ref{eqn:delta_z_recursion}) for $r=t$,
	\begin{align*}
	z^{(t+1)}_k &= \bigiprod{ A \mathsf{F}_t(z^{(t)})-\mathsf{B}_{t} \mathsf{F}_{t-1}(z^{(t-1)}) }{e_k} \\
	&= \bigiprod{A_k}{\mathsf{F}_t (z_{[-k]}^{(t)})}+\bigiprod{A_k}{ \overline{M}_{0,[-k]}^{(t)} \Delta z_{[-k]}^{(t)} }- b_{t,k} \mathsf{F}_{t-1,k}(z^{(t-1)}_k)\\
	& = \bigiprod{A_k}{\mathsf{F}_t (z_{[-k]}^{(t)})}+ \bigiprod{A_k}{\overline{M}_{1,[-k]}^{(t)}  \Delta z^{(t-1)}_{[-k]} - \overline{M}_{0,[-k]}^{(t)}  \mathsf{B}_{t-1} \overline{\mathsf{D}}_{[-k]}^{(t-2)} \Delta z_{[-k]}^{(t-2)} }+\rem_k^{(t)}.
	\end{align*}
	Next, using (\ref{eqn:delta_z_recursion}) for $r=t-1$, 
	\begin{align*}
	&\bigiprod{A_k}{\overline{M}_{1,[-k]}^{(t)}  \Delta z^{(t-1)}_{[-k]} - \overline{M}_{0,[-k]}^{(t)}  \mathsf{B}_{t-1} \overline{\mathsf{D}}_{[-k]}^{(t-2)} \Delta z_{[-k]}^{(t-2)} }\\
	& = \bigiprod{A_k}{\overline{M}_{2,[-k]}^{(t)}  \Delta z^{(t-2)}_{[-k]} - \overline{M}_{1,[-k]}^{(t)}  \mathsf{B}_{t-2} \overline{\mathsf{D}}_{[-k]}^{(t-3)} \Delta z_{[-k]}^{(t-3)} }+ \rem_k^{(t-1)}.
	\end{align*}
	The recursion terminates when (\ref{eqn:delta_z_recursion}) is used until $r=1$, where by using the terminating condition $\Delta z_{[-k]}^{(0)}=0$, we arrive at
	\begin{align*}
	z^{(t+1)}_k & = \bigiprod{A_k}{\mathsf{F}_t (z_{[-k]}^{(t)})}+\rem_k^{(t)}+\cdots+ \rem_k^{(1)}.
	\end{align*}
	The claimed identity follows. The claimed estimate follows by noting that $\abs{\iprod{A_k}{ \overline{M}_{t-s,[-k]}^{(t)} e_k}}\leq (K\Lambda)^{2t}\abs{A_{kk}}$.  
\end{proof}

In view of the error bound in Proposition \ref{prop:AMP_loo_decom}, in the next two subsections we will then (i) provide coordinate-wise controls for the AMP iterate $\{z^{(t)}\}$, as well as (ii) compute the normalized trace of the matrix $\overline{M}_{s,[-k]}^{(t)}$ defined via the recursion (\ref{def:M_bar_recursion}), as $\overline{M}_{s,[-k]}^{(t)}$ and $A_k$ are approximately independent and thus the estimate of the quadratic forms such as $\langle A_k, \overline{M}_{t-s, [-k]}^{(t)} A_k\rangle$ boils down to the estimate of tracial quantities of $\overline{M}_{t-s, [-k]}^{(t)}$ via Hanson-Wright inequality.

\subsection{Some apriori estimates}

The following proposition provides delocalization estimates for the AMP iterate $\{z^{(t)}\}$.

\begin{proposition}\label{prop:AMP_iterate_infty}
	Suppose (S1) in Theorem \ref{thm:AMP_sym_loo} holds for some $K\geq 2$, and (S2) is replaced by $\max_{s \in [0:t]}\max_{\ell \in [n]}\big\{\abs{\mathsf{F}_{s,\ell}(0)}+\pnorm{\mathsf{F}_{s,\ell}' }{\infty}\big\}\leq \Lambda$ for some $\Lambda\geq 2$. Then there exists some universal constant $c_0>0$ such that for $x\geq 1$, 
	\begin{align*}
	\max_{k \in [n]}\Prob\Big(\pnorm{z^{(t)}}{\infty}\vee \pnorm{z^{(t)}-z^{(t)}_{[-k]}}{}\geq (K\Lambda)^{c_0 t} x\cdot (1+\pnorm{z^{(0)}}{\infty})\big| z^{(0)}\Big)\leq c_0\cdot tn e^{-(x^2\wedge n)/c_0}. 
	\end{align*}
	Moreover, for any $p\geq 1$, there exists some $C_p>0$ such that for $t\leq n/(C_p\log n)$,
	\begin{align*}
	\E^{1/p}\big[\pnorm{z^{(t)}}{\infty}^p|z^{(0)}\big]\vee \max_{k \in [n]}\E^{1/p}\big[\pnorm{z^{(t)}-z^{(t)}_{[-k]}}{}^p|z^{(0)}\big]\leq (K\Lambda)^{C_p t}\sqrt{\log n} \cdot (1+\pnorm{z^{(0)}}{\infty}).
	\end{align*}
\end{proposition}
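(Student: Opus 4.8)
The plan is to prove both estimates by a single induction on $t$, carrying along the iteration three quantities: the $\ell_\infty$-norm $\pnorm{z^{(s)}}{\infty}$, the $\ell_2$ leave-one-out discrepancy $\pnorm{z^{(s)}-z^{(s)}_{[-k]}}{}$, and---as an auxiliary quantity bounded \emph{without any dependence on $x$}---the normalized norm $n^{-1/2}\pnorm{z^{(s)}}{}$. The role of the last quantity is to prevent the dependence on $x$ from compounding through the recursion: the Gaussian concentration step below produces variances expressible through $n^{-1}\pnorm{z^{(s)}}{}^2$ (which is $x$-free) plus a term $n^{-1}\pnorm{z^{(s)}-z^{(s)}_{[-k]}}{}^2$ which does carry $x^2$ but is damped by the factor $n^{-1}$ and hence negligible in the relevant regime $1\le x\le \sqrt n$; outside this regime the target probability $e^{-n/c_0}$ is matched by a crude deterministic estimate. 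The base cases ($t\le 1$, where $z^{(1)}=A\mathsf{F}_0(z^{(0)})$ with $z^{(0)}$ independent of $A$) are immediate from Gaussian concentration of linear forms.

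First I would dispose of the auxiliary bound. Iterating $z^{(s+1)}=A\mathsf{F}_s(z^{(s)})-b_s\circ \mathsf{F}_{s-1}(z^{(s-1)})$ and using $\pnorm{A}{\op}\lesssim K$ with probability $\ge 1-e^{-cn}$ (Gaussian concentration of the operator norm together with the standard bound $\E\pnorm{A}{\op}\lesssim K$ for a variance profile bounded by $K$), the Lipschitz estimate $\pnorm{\mathsf{F}_s(z)}{}\le \sqrt 2\,\Lambda(\sqrt n+\pnorm{z}{})$, and $\pnorm{b_s}{\infty}\le K^2\Lambda$, one obtains $n^{-1/2}\pnorm{z^{(s+1)}}{}\lesssim K\Lambda\big(1+n^{-1/2}\pnorm{z^{(s)}}{}\big)+K^2\Lambda^2\big(1+n^{-1/2}\pnorm{z^{(s-1)}}{}\big)$, hence $n^{-1/2}\pnorm{z^{(s)}}{}\le (K\Lambda)^{c_0 s}(1+\pnorm{z^{(0)}}{\infty})$ on $\{\pnorm{A}{\op}\lesssim K\}$, with no $x$. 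Unrolling the same recursion on the complement event yields the crude deterministic fallback $\pnorm{z^{(t)}}{\infty}\le \pnorm{z^{(t)}}{}\le \sqrt n\big(C\Lambda\max(\pnorm{A}{\op},K^2\Lambda)\big)^t(1+\pnorm{z^{(0)}}{\infty})$.

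The heart of the argument is the inductive step for $\pnorm{z^{(t+1)}}{\infty}$, which exploits that $A_k$ (whose coordinates are independent Gaussians) is independent of $z^{(t)}_{[-k]}$ given $z^{(0)}$. Writing $z^{(t+1)}_k=\iprod{A_k}{\mathsf{F}_t(z^{(t)}_{[-k]})}+E_k$ with $E_k=\iprod{A_k}{\mathsf{F}_t(z^{(t)})-\mathsf{F}_t(z^{(t)}_{[-k]})}-b_{t,k}\mathsf{F}_{t-1,k}(z^{(t-1)}_k)$, the first term is, conditionally on $A_{[-k]}$ and $z^{(0)}$, centered Gaussian with variance $\sigma_k^2\le K^2 n^{-1}\pnorm{\mathsf{F}_t(z^{(t)}_{[-k]})}{}^2\lesssim K^2\Lambda^2\big(1+n^{-1}\pnorm{z^{(t)}}{}^2+n^{-1}\pnorm{z^{(t)}-z^{(t)}_{[-k]}}{}^2\big)$; by the inductive bounds on the other two quantities this is at most $(K\Lambda)^{c_0 t+O(1)}(1+\pnorm{z^{(0)}}{\infty})^2$ when $x\le \sqrt n$, crucially $x$-free, so a Gaussian tail at level $x$ gives $\bigabs{\iprod{A_k}{\mathsf{F}_t(z^{(t)}_{[-k]})}}\le (K\Lambda)^{c_0 t+O(1)}(1+\pnorm{z^{(0)}}{\infty})x$ with probability $\ge 1-2e^{-x^2/2}$. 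Meanwhile $\abs{E_k}\le \pnorm{A_k}{}\,\Lambda\,\pnorm{z^{(t)}-z^{(t)}_{[-k]}}{}+K^2\Lambda^2(1+\pnorm{z^{(t-1)}}{\infty})$, both pieces linear in $x$ via the inductive bounds and $\pnorm{A_k}{}\lesssim K$ (a $\chi^2$ tail). A union over $k\in[n]$, absorbing the $O(1)$ exponent increments into $c_0$, yields the $\ell_\infty$ bound at level $t+1$. The discrepancy bound is then obtained analogously from the recursion (\ref{eqn:delta_z_recursion}): one bounds $\pnorm{A_{[-k]}}{\op}\le \pnorm{A}{\op}\lesssim K$, $\pnorm{\overline{\mathsf{D}}^{(s)}_{[-k]}}{\op}\le \Lambda$, $\pnorm{\mathsf{B}_s}{\op}\le K^2\Lambda$, and handles the inhomogeneous term $\Delta A_{[-k]}\mathsf{F}_t(z^{(t)})=e_k\iprod{A_k-A_{kk}e_k}{\mathsf{F}_t(z^{(t)})}+A_k\mathsf{F}_{t,k}(z^{(t)}_k)$ using its rank-$\le 2$ structure, noting that $\iprod{A_k}{\mathsf{F}_t(z^{(t)})}=z^{(t+1)}_k+b_{t,k}\mathsf{F}_{t-1,k}(z^{(t-1)}_k)$ has just been controlled in the same step. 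The per-step failure probability is $\lesssim n(e^{-x^2/2}+e^{-cn})\lesssim n e^{-(x^2\wedge n)/c_0}$, so over $t$ steps $\lesssim tn e^{-(x^2\wedge n)/c_0}$.

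Finally, the $L^p$ bound follows from the tail estimate by integration: the main contribution is captured by splitting the integral at $x\asymp\sqrt{\log n}$ (where $ne^{-x^2/c_0}$ becomes $\ll 1$ for a sufficiently large implicit constant), while the contribution of $\{\pnorm{A}{\op}>CK\}$ is bounded by Cauchy--Schwarz against the deterministic fallback and the finite moments $\E\pnorm{A}{\op}^{q}\lesssim (CK)^{q}$; the restriction $t\le n/(C_p\log n)$ is precisely what makes $n^{O(pt)}e^{-cn/2}\le 1$ here. I expect the main obstacle to be exactly this bookkeeping---keeping the $x$-dependence linear rather than $x^{\Theta(t)}$, and the $(K\Lambda)$-loss exponential-in-$t$ rather than worse, through the iteration. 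The key structural point, which must be checked carefully at each step, is that the $x^2$-carrying term $n^{-1}\pnorm{z^{(t)}-z^{(t)}_{[-k]}}{}^2$ never escapes its $n^{-1}$ prefactor as long as $x\le\sqrt n$, which is exactly what the $x$-free $\ell_2$ auxiliary quantity is introduced to guarantee.
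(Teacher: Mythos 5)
Your proposal is correct and follows essentially the same route as the paper's proof: leave-one-out independence of $A_k$ from $z^{(t)}_{[-k]}$, a conditional Gaussian tail for the linear form $\iprod{A_k}{\mathsf{F}_t(z^{(t)}_{[-k]})}$ with an $x$-free variance proxy controlled through the operator norm of $A$ (resp. $A_{[-k]}$), a two-step recursion for the discrepancy driven by the rank-two perturbation $\Delta A_{[-k]}$, and integration of the tail against a crude operator-norm fallback (using $t\leq n/(C_p\log n)$) for the moment bounds. The only differences are organizational: the paper bounds $\pnorm{\Delta z^{(t+1)}_{[-k]}}{}$ first, using the $A_{[-k]}$-measurable control of $\pnorm{z^{(t)}_{[-k]}}{}$ and the deterministic scalar recursion for $z^{(t)}_{[-k],k}$, and then reads off the $\ell_\infty$ bound via $\abs{z^{(t+1)}_k}\leq \pnorm{\Delta z^{(t+1)}_{[-k]}}{}+\abs{z^{(t+1)}_{[-k],k}}$, whereas you run the two bounds in the opposite order within each step; both close the same induction.
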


For $i, j \in [n]$, let $\partial_{ij}=\partial/\partial A_{ij}$, and
\begin{align}\label{def:Delta_ij}
\Delta_{ij}\equiv 2^{-1}\big(1+\bm{1}_{i\neq j}\big)\big(e_ie_j^\top+e_je_i^\top\big)=\partial_{ij} A.
\end{align}
The following proposition provides delocalization estimates for $\{\partial_{ij} z^{(t)}\}$.

\begin{proposition}\label{prop:z_der_l2}
	Suppose (S1)-(S2) in Theorem \ref{thm:AMP_sym_loo} hold for some $K,\Lambda\geq 2$. For any $p\geq 1$, there exists some $C_p>0$ such that for $t\leq n/(C_p \log n)$,
	\begin{align*}
	\E^{1/p}\big[\pnorm{\partial_{ij} z^{(t)} }{}^p|z^{(0)}\big]&\leq (K\Lambda)^{C_p t}\sqrt{\log n} \cdot (1+\pnorm{z^{(0)}}{\infty}),
	\end{align*}
	and for $t\leq \log n/C_p$, 
	\begin{align*}
	\E^{1/p} \Big[\max_{i,j,\ell\in [n],\ell \neq i,j} \big(\sqrt{n} \abs{ \partial_{ij} z_{\ell}^{(t)} }\big)^p\big| z^{(0)}\Big] &\leq \big((K\Lambda)^t \log n \cdot (1+\pnorm{z^{(0)}}{\infty})\big)^{C_p t}.
	\end{align*}
\end{proposition}

Proofs for both of the above propositions can be found in Section \ref{section:proof_technical_results_proof_main_thm}.

\subsection{The trace control}

Consider a sequence of matrices $M_{0}^{(t)}, M_{1}^{(t)},\ldots, M_{t}^{(t)} \in \R^{n\times n}$ defined recursively as follows: $M_{-1}^{(t)}=0_{n\times n}$, $M_{0}^{(t)} =  \mathsf{D}^{(t)}$, and for $s \in [t]$,
\begin{align}\label{eqn:M_recursion}
M_{s}^{(t)}\equiv \big(M_{s-1}^{(t)} A - M_{s-2}^{(t)} \mathsf{B}_{t-s+1}\big) \mathsf{D}^{(t-s)}. 
\end{align}
The following proposition provides the key trace control for a generalized version of $M_s^{(t)}$.
\begin{proposition}\label{prop:trace_M}
	Suppose (S1)-(S2) in Theorem \ref{thm:AMP_sym_loo} hold for some $K,\Lambda\geq 2$. Then there exists some universal constant $c_0>0$ such that for $t\leq \log n/c_0$,
	\begin{align*}
	\max_{d_0 \in \{V_{[1:n]}^2, 1_n\}} \max_{s \in [t]} \frac{1}{n}\bigabs{\E \big[\tr(\mathfrak{D}_{d_0}M_s^{(t)})|z^{(0)}\big] } \leq \big(K\Lambda\log n\cdot (1+\pnorm{z^{(0)}}{\infty})\big)^{c_0 t^3} \cdot n^{-1/2}.
	\end{align*}
	
\end{proposition}

\subsubsection{The GOE and $\mathsf{F}_\cdot = \mathrm{id}$ case}
Before presenting the proof for the above proposition, let us examine the simplest possible case, where $V=1_{n\times n}$, and $\mathsf{F}_{t}=\mathrm{id}$ is the coordinate-wise identity map. In this case, $\mathsf{D}^{(\cdot)}=\mathsf{B}_\cdot= I_n$, and the matrix recursion (\ref{eqn:M_recursion}) reduces to 
\begin{align}\label{eqn:GOE_M_recursion}
M_s = 2M_{s-1} A_0- M_{s-2},\quad  M_0=I_n, \quad M_{-1} = 0_{n\times n},
\end{align}
where $A_0\equiv A/2$. Let $U_s(\cdot)$ be the Chebyshev polynomials of the second kind, defined recursively via
\begin{align}\label{eqn:GOE_chebyshev_poly}
U_s(x)=2x U_{s-1}(x)- U_{s-2}(x),\quad U_0(x)\equiv 1,\quad U_{-1}(x)\equiv 0.
\end{align}
The structural similarity between (\ref{eqn:GOE_M_recursion}) and (\ref{eqn:GOE_chebyshev_poly}) leads to the following  observation. Let $\mu_n\equiv n^{-1}\sum_{i \in [n]} \delta_{\lambda_i (A_0)}$ be the empirical spectral distribution of $A_0$. As $A_0$ is approximately (scaled) GOE, $\mu_n$ is close to the (scaled) semicircle law $\mu_\infty$ on $[-1,1]$: $\mu_n(\d x)\approx \mu_\infty(\d x) = (2/\pi)\sqrt{1-x^2}\,\d{x}$. Now using the fact that $\{U_s\}_{s \in \mathbb{Z}_{\geq 0}}$ are orthogonal polynomials in $L_2(\mu_\infty)$, we have for any $s\geq 1$,
\begin{align*}
\frac{1}{n} \tr (M_s) = \frac{1}{n}\sum_{i \in [n]} U_s\big(\lambda_i(A_0)\big) = \int_{-1}^1 U_s(x)\,\mu_n(\d x)\approx \int_{-1}^1 U_s(x)U_0(x)\,\mu_\infty(\d x) = 0.
\end{align*}
Unfortunately, the essential use of the linearity of $\mathsf{F}_t$ and the GOE property of $A$ in the above (rough) argument makes it less amenable to further generalizations to cover the general cases in Proposition \ref{prop:trace_M}. In the proof below, instead of estimating the trace of this complicated matrix polynomial at once, we eliminate the high complexity randomness in $M_s^{(t)}$ step by step via Gaussian integration by parts, and eventually arrive at simple linear and quadratic forms which can be estimated via Gaussian concentration directly.

\subsubsection{Some further notation and preliminary facts}  

For generic matrices $\mathsf{M}_0,\mathsf{M}_1,\mathsf{M}_2,\mathsf{M}_3 \in \R^{n\times n}$, we write 
\begin{align*}
\bigiprod{\mathsf{M}_1 \partial_{\cdot\cdot} \mathsf{M}_0 \mathsf{M}_2 }{\mathsf{M}_3}\equiv \sum_{i,j \in [n] } \big(\mathsf{M}_1 \partial_{ij} \mathsf{M}_0  \mathsf{M}_2\big)_{ij} (\mathsf{M}_{3})_{ij}.
\end{align*} 
Recall $\Delta_{ij}=\partial_{ij} A$ defined in (\ref{def:Delta_ij}). We have the derivative formula: for $i, j \in [n]$, and $r \in [t]$, 
\begin{align}\label{eqn:M_der_recursion}
\partial_{ij} M_r^{(t)}&=\big(\partial_{ij} M_{r-1}^{(t)} A - \partial_{ij} M_{r-2}^{(t)} \mathsf{B}_{t-r+1}\big)  \mathsf{D}^{(t-r)}+ M_{r-1}^{(t)}\Delta_{ij}  \mathsf{D}^{(t-r)}\nonumber\\
&\qquad + \big(M_{r-1}^{(t)} A - M_{r-2}^{(t)} \mathsf{B}_{t-r+1}\big) \mathfrak{D}_{\mathsf{F}^{(2)}_{t-r}(z^{(t-r)})}\mathfrak{D}_{\partial_{ij} z^{(t-r)}}.
\end{align}
For a fixed $s \in [t]$, we consider another sequence of matrices $N_0^{(s)},\ldots, N_s^{(s)} \in \R^{n\times n}$, defined recursively as follows: $N_{-1}^{(s)}\equiv 0_{n\times n}, N_0^{(s)}\equiv I_n$, and for $u \in [s]$,
\begin{align}\label{eqn:N_recursion}
N_u^{(s)}\equiv  A  \mathsf{D}^{(t-s+u)} N_{u-1}^{(s)} - \mathsf{B}_{t-s+u} \mathsf{D}^{(t-s+u-1)} N_{u-2}^{(s)}.
\end{align}
Then following derivative formula holds:
\begin{align}\label{eqn:N_der_recursion}
\partial_{ij} N_u^{(s)}&= \big(A  \mathsf{D}^{(t-s+u)} \partial_{ij} N_{u-1}^{(s)} - \mathsf{B}_{t-s+u}  \mathsf{D}^{(t-s+u-1)} \partial_{ij}N_{u-2}^{(s)}\big)+ \Delta_{ij} \mathsf{D}^{(t-s+u)} N_{u-1}^{(s)}\nonumber\\
&\qquad + A \mathfrak{D}_{\mathsf{F}^{(2)}_{t-s+u}(z^{(t-s+u)})}\mathfrak{D}_{\partial_{ij} z^{(t-s+u)}} N_{u-1}^{(s)} \nonumber\\
&\qquad- \mathsf{B}_{t-s+u}  \mathfrak{D}_{\mathsf{F}^{(2)}_{t-s+u-1}(z^{(t-s+u-1)})}\mathfrak{D}_{\partial_{ij} z^{(t-s+u-1)}} N_{u-2}^{(s)}.
\end{align}
The sequence of matrices $\{N_\cdot^{(\cdot)}\}$ can be viewed, in a certain sense, a reversed recursion to $\{M_\cdot^{(\cdot)}\}$, the role of which will be clear in the proof in the next subsection. 

The following estimates on $\{ M^{(\cdot)}_{\cdot} \}$ and $\{ N^{(\cdot)}_{\cdot} \}$ are proved in Section \ref{section:proof_technical_results_proof_main_thm}.
\begin{lemma}\label{lem:M_N_moment}
	Suppose (S1)-(S2) in Theorem \ref{thm:AMP_sym_loo} hold for some $K,\Lambda\geq 2$. There exists some universal $c_0>0$ such that for $t\leq n/(c_0\log n)$ and any $p \geq 1$,
	\begin{align}\label{ineq:trace_pop_M_N_moment}
	&n^{-1/2}\Big\{ \E^{1/p}\pnorm{M^{(t)}_\cdot}{F}^p\vee \E^{1/p}\pnorm{N^{(s)}_\cdot}{F}^p \Big\}\nonumber\\
	&\qquad \qquad + \big( \E^{1/p}\pnorm{M^{(t)}_\cdot}{\op}^p\vee \E^{1/p}\pnorm{N^{(s)}_\cdot}{\op}^p\big) \leq C_p (K\Lambda)^{c_0t}.
	\end{align}
	Here the constant $C_p>0$ may depend on $p$. Moreover, for any two symmetric matrices $A_1,A_2 \in \R^{n\times n}$,
	\begin{align}\label{ineq:trace_pop_M_N_difference}
	&\pnorm{M_\cdot^{(t)}(A_1)-M_\cdot^{(t)}(A_2)}{F} \vee \pnorm{N_\cdot^{(s)}(A_1)-N_\cdot^{(s)}(A_2)}{F}\nonumber\\
	& \leq \big(K\Lambda (1+\pnorm{z^{(0)}}{\infty}) (1+ \pnorm{A_1}{\op}+ \pnorm{A_2}{\op})\big)^{c_0 t^2}\cdot \sqrt{n}  \pnorm{A_1-A_2}{\op}.
	\end{align}
\end{lemma}

\subsubsection{Proof of Proposition \ref{prop:trace_M}}

First, for $3\leq s\leq t$, we show that the normalized trace of $M_s^{(t)}$ can be reduced to that of $\{N_\cdot^{(\cdot)}\}$, the most complicated one being $N_{s-2}^{(s)}$. The following formulation also allows for the base case $s=1,2$.

\begin{lemma}\label{lem:trace_cal_M_to_N}
	Suppose (S1)-(S2) in Theorem \ref{thm:AMP_sym_loo} hold for some $K,\Lambda\geq 2$. There exists some universal constant $c_0>0$ such that for any fixed $d_0 \in \R^n$ and an index set $\mathcal{T}\subset [-1:t]$ allowing for multiplicity, and any $1\leq s\leq t\leq \log n/c_0$, 
	\begin{align}\label{ineq:trace_pop_claim_1}
	&\frac{1}{n}\bigabs{\E \tr\big(\mathfrak{D}_{d_0}M_s^{(t)}\mathsf{D}^{(\mathcal{T})}\big)}\leq \big(K\Lambda\log n\cdot (1+\pnorm{z^{(0)}}{\infty})\big)^{c_0 t^2}\cdot n^{-1/2} \\
	& + (K\Lambda)^{c_0 t^2}\sum_{m\in [s-2]}\max_{ \substack{d_{[c_0 m]} \in \{V_{[1:n]}^2,b_{[0:t]},1_n\},\\ \mathcal{S}\in [-1:t]^{c_0 m} }}  \frac{1}{n}\bigabs{\E \tr\Big(\mathfrak{D}_{d_0\circ \otimes_{q \in [c_0 m]}d_q} N_{s-m-1}^{(s-m+1)} \mathsf{D}^{(\mathcal{T})} \mathsf{D}^{(\mathcal{S})}\Big)}.\nonumber
	\end{align}
	Here we write $\mathsf{D}^{(\mathcal{T})}\equiv \prod_{s \in \mathcal{T}}\mathsf{D}^{(s)}$ and $\mathsf{D}^{(-1)}\equiv I_{n}$ for notational convenience.
\end{lemma}
\begin{proof}
	\noindent (\textbf{Step 1}). First consider the case $s=1,2$. Using the recursion (\ref{eqn:M_recursion}),
	\begin{align*}
	M^{(t)}_1= \mathsf{D}^{(t)} A \mathsf{D}^{(t-1)},\quad M^{(t)}_2= \mathsf{D}^{(t)}\big(A \mathsf{D}^{(t-1)} A- \mathsf{B}_{t-1}\big) \mathsf{D}^{(t-2)}.
	\end{align*}
	For $s=1$, a trivial bound suffices: 
	\begin{align*}
	\frac{1}{n} \bigabs{\E \tr (M_1^{(t)})} \leq  \frac{1}{n}\sum_{i \in [n]} \E \bigabs{\mathsf{F}'_{t,i}(z^{(t)}_i)\mathsf{F}'_{t-1,i}(z^{(t-1)}_i) A_{ii}}\leq \frac{(K \Lambda)^c}{\sqrt{n}}.
	\end{align*}
	For $s=2$, note that
	\begin{align}\label{ineq:trace_pop_quad_1}
	\frac{1}{n} \bigabs{\E \tr (M_2^{(t)})} \leq \frac{\Lambda^2}{n}\sum_{k \in [n]} \E\abs{(A\mathsf{D}^{(t-1)}A)_{kk}-b_{t-1,k}}. 
	\end{align}
	For each $k \in [n]$,
	\begin{align}\label{ineq:trace_pop_quad_2}
	&\E \abs{(A\mathsf{D}^{(t-1)}A)_{kk}-b_{t-1,k}}\nonumber\\
	&\leq \E\biggabs{\sum_{j \in [n]} \big(A_{kj}^2-{V_{kj}^2}/{n}\big)  \mathsf{F}'_{t-1,j}(z^{(t-1)}_j)}+ \E\biggabs{\frac{1}{n}\sum_{j \in [n]} V_{kj}^2\big(\mathrm{id}-\E\big) \mathsf{F}'_{t-1,j}(z^{(t-1)}_j)}\nonumber\\
	&\equiv \mathfrak{Q}_1^{(t)}+  \mathfrak{Q}_2^{(t)}.
	\end{align}
	For $\mathfrak{Q}_1^{(t)}$, we have for $t\leq \log n/c$,
	\begin{align}\label{ineq:trace_pop_quad_3}
	\mathfrak{Q}_1^{(t)}&\leq \E \biggabs{\sum_{j \in [n]} \big(A_{kj}^2-{V_{kj}^2 }/{n}\big)  \big(\mathsf{F}'_{t-1,j}(z^{(t-1)}_j)- \mathsf{F}'_{t-1,j}(z^{(t-1)}_{[-k],j})\big)}\nonumber\\
	& \qquad + \E \biggabs{\sum_{j \in [n]} \big(A_{kj}^2-{V_{kj}^2}/{n}\big)  \mathsf{F}'_{t-1,j}(z^{(t-1)}_{[-k],j})}\nonumber\\
	&\leq (K\Lambda)^c \cdot n^{-1/2}\big(\E^{1/2}\pnorm{z^{(t-1)}-z^{(t-1)}_{[-k]}}{}^2+1\big)\nonumber\\
	&\leq  (K\Lambda)^{ct}(1+\pnorm{z^{(0)}}{\infty})\sqrt{\log n}\cdot n^{-1/2}.
	\end{align}
	Here in the last line we used Proposition \ref{prop:AMP_iterate_infty}.
	
	For $\mathfrak{Q}_2^{(t)}$, let $H_0(A)\equiv n^{-1}\sum_{j \in [n]} V_{kj}^2 \mathsf{F}'_{t-1,j}(z^{(t-1)}_j(A))$. Then for any two symmetric matrices $A_1,A_2 \in \R^{n\times n}$,
	\begin{align*}
	\abs{H_0(A_1)-H_0(A_2)}&\leq (K\Lambda)^c \cdot n^{-1/2} \pnorm{z^{(t-1)}(A_1)- z^{(t-1)}(A_2)}{}\\
	&\leq \big(K\Lambda (1+\pnorm{A_1}{\op}+\pnorm{A_2}{\op})\big)^{ct}\cdot \pnorm{A_1-A_2}{\op}.
	\end{align*}
	Moreover, 
	\begin{align*}
	\abs{H_0(A)}\leq (K\Lambda)^c\cdot \big(1+n^{-1/2}\pnorm{z^{(t-1)}}{}\big)\leq \big(K\Lambda  (1+\pnorm{A}{\op})\big)^{ct} (1+\pnorm{z^{(0)}}{\infty}).
	\end{align*}
	By applying the Gaussian concentration Lemma \ref{lem:gaussian_conc}, we have for $t\leq \log n/c$,
	\begin{align}\label{ineq:trace_pop_quad_4}
	\mathfrak{Q}_2^{(t)}&\leq \var^{1/2}(H_0(A))\leq (K\Lambda)^{ct}(1+\pnorm{z^{(0)}}{\infty})\log n\cdot n^{-1/2}.
	\end{align}
	Combining (\ref{ineq:trace_pop_quad_1})-(\ref{ineq:trace_pop_quad_4}), we have
	\begin{align*}
	\frac{1}{n} \bigabs{\E \tr (M_2^{(t)})} &\leq (K\Lambda)^{ct} (1+\pnorm{z^{(0)}}{\infty})\log n\cdot n^{-1/2}.
	\end{align*}
	The claimed, slightly more general form follows from minor modifications.

	\noindent (\textbf{Step 2}). Next assume $s\geq 3$. For $r \in [s-1]$, let
	\begin{align*}
	R^{(s)}_{r}&\equiv \frac{1}{n^2}\E\biggiprod{\mathsf{D}^{(t-s)} M_{r-1}^{(t)}\Delta_{\cdot\cdot} \mathsf{D}^{(t-r )} N_{s-1-r}^{(s)} }{V\circ V  }\\
	&+ \frac{1}{n^2}\E\biggiprod{\mathsf{D}^{(t-s)}   \big(M_{r-1}^{(t)} A - M_{r-2}^{(t)} \mathsf{B}_{t-r+1}\big) \mathfrak{D}_{\mathsf{F}^{(2)}_{t-r}(z^{(t-r)})}\mathfrak{D}_{\partial_{\cdot\cdot} z^{(t-r)}} N_{s-1-r}^{(s)} }{V\circ V  }.
	\end{align*}
	By Gaussian integration by parts,
	\begin{align*}
	\frac{1}{n}\E \tr(M_s^{(t)})&= \frac{1}{n}\sum_{i,j \in [n]} \E \mathsf{D}^{(t-s)}_{ii}  M_{s-1,ij}^{(t)} A_{ij}-\frac{1}{n}\E \tr\big( \mathsf{D}^{(t-s)} M_{s-2}^{(t)} \mathsf{B}_{t-s+1}\big)\\
	& = \frac{1}{n^2}\bigiprod{ \E\mathsf{D}^{(t-s)}  \partial_{\cdot\cdot} M_{s-1}^{(t)}  }{V\circ V }-\frac{1}{n}\E \tr\big(\mathsf{D}^{(t-s)} M_{s-2}^{(t)} \mathsf{B}_{t-s+1}\big)\\
	&\qquad + \frac{1}{n^2}\sum_{i,j \in [n] } \E  \mathsf{F}^{(2)}_{t-s,i}(z^{(t-s)}_{i})\partial_{ij} z^{(t-s)}_{i}  M_{s-1,ij}^{(t)} V_{ij}^2.
	\end{align*}
	Using (\ref{eqn:M_der_recursion}) with $r=s-1$, the first term above equals 
	\begin{align*}
	&\frac{1}{n^2} \E \biggiprod{ \mathsf{D}^{(t-s)} \Big(\partial_{\cdot\cdot} M_{s-2}^{(t)} N_1^{(s)} - \partial_{\cdot\cdot} M_{s-3}^{(t)}  \mathsf{B}_{t-s+2}  \mathsf{D}^{(t-s+1)} N_0^{(s)} \Big) }{V\circ V}+R^{(s)}_{s-1}.
	\end{align*}
	Using (\ref{eqn:M_der_recursion}) with $r=s-2$, the first term of the above display equals 
	\begin{align*}
	&\frac{1}{n^2} \E \biggiprod{\mathsf{D}^{(t-s)} \Big(\partial_{\cdot\cdot} M_{s-3}^{(t)} N_2^{(s)} - \partial_{\cdot\cdot} M_{s-4}^{(t)}  \mathsf{B}_{t-s+3}  \mathsf{D}^{(t-s+2)} N_1^{(s)} \Big) }{V\circ V}+R^{(s)}_{s-2}.
	\end{align*}
	Consequently, iterating this procedure until (\ref{eqn:M_der_recursion}) is applied with $r=1$, we obtain 
	\begin{align}\label{ineq:trace_pop_1}
	\frac{1}{n}\E \tr(M_s^{(t)})&=R^{(s)}_{s-1}+\cdots+R^{(s)}_{1} -\frac{1}{n}\E \tr\big(\mathsf{D}^{(t-s)} M_{s-2}^{(t)} \mathsf{B}_{t-s+1}\big)\nonumber\\
	&\qquad + \frac{1}{n^2}\sum_{i,j \in [n] } \E \mathsf{F}_{t-s,i}'(z_i^{(t-s)}) \mathsf{F}_{t,i}^{(2)}(z_i^{(t)}) \partial_{ij} z_i^{(t)} N_{s-1,ij}^{(s)} V_{ij}^2 \nonumber\\
	&\qquad + \frac{1}{n^2}\sum_{i,j \in [n]} \E  \mathsf{F}^{(2)}_{t-s,i}(z^{(t-s)}_{i})\partial_{ij} z^{(t-s)}_{i}  M_{s-1,ij}^{(t)} V_{ij}^2.
	\end{align}
	\noindent (\textbf{Step 2-(a)}). We provide a bound for the last two terms in (\ref{ineq:trace_pop_1}): using the moment estimates in (\ref{ineq:trace_pop_M_N_moment}) and Proposition \ref{prop:z_der_l2}, for $t\leq \log n/c$, 
	\begin{align}
	&\hbox{Last two terms of (\ref{ineq:trace_pop_1})}\nonumber\\
	&\leq (K\Lambda )^c\cdot \max_{u \in [0:t]}\E^{1/2} \max_{i,j \in [n]} \pnorm{\partial_{ij} z^{(u)}}{\infty}^2\cdot n^{-1}\big(\E^{1/2} \pnorm{M_{s-1}^{(t)}}{F}^2+ \E^{1/2} \pnorm{N_{s-1}^{(s)}}{F}^2\big)\nonumber\\
	&\leq \frac{(K\Lambda)^{c t}}{\sqrt{n}}\log^c n\cdot (1+\pnorm{z^{(0)}}{\infty}).
	\end{align}

	\noindent (\textbf{Step 2-(b)}). We provide a bound for $R_r^{(s)}$ for $r \in [s-1]$. Note that
	\begin{align}\label{ineq:trace_pop_2}
	\abs{R_r^{(s)}}&\leq \frac{1}{n^2}\biggabs{\E \sum_{i,j \in [n]} \mathsf{D}_{ii}^{(t-s)} M_{r-1,ii}^{(t)}  \mathsf{D}_{jj}^{(t-r)} N^{(s)}_{s-1-r,jj} V_{ij}^2 }\nonumber\\
	&\quad + \frac{1}{n^2}\biggabs{\E \sum_{i\neq j \in [n]}  \mathsf{D}_{ii}^{(t-s)} M_{r-1,ij}^{(t)}  \mathsf{D}_{ii}^{(t-r)} N^{(s)}_{s-1-r,ij} V_{ij}^2 }\nonumber\\
	&\quad + \biggabs{\frac{1}{n^2}\E \sum_{i,j,\ell} \mathsf{D}_{ii}^{(t-s)} \big(M_{r-1}^{(t)} A - M_{r-2}^{(t)} \mathsf{B}_{t-r+1}\big)_{i\ell} \mathsf{F}_{t-r,\ell}^{(2)}(z_{\ell}^{(t-r)}) \partial_{ij} z^{(t-r)}_{\ell} N_{s-1-r,\ell j}^{(s)}V_{ij}^2 }\nonumber\\
	&\equiv \mathfrak{R}_{r;1}^{(s)}+\mathfrak{R}_{r;2}^{(s)}+\mathfrak{R}_{r;3}^{(s)}.
	\end{align}
	We shall handle the three terms $\mathfrak{R}_{r;1}^{(s)},\mathfrak{R}_{r;2}^{(s)},\mathfrak{R}_{r;3}^{(s)}$ separately.

	\noindent \textbf{\emph{Term $\mathfrak{R}_{r;3}^{(s)}$}}: By writing $P_r^{(t)}\equiv M_{r-1}^{(t)} A - M_{r-2}^{(t)} \mathsf{B}_{t-r+1}$ for notational simplicity, 
	\begin{align*}
	\mathfrak{R}_{r;3}^{(s)}&\leq \frac{(K\Lambda)^c}{n^2} \E \sum_{i,j,\ell} \abs{P^{(t)}_{r,i\ell}}\abs{\partial_{ij} z_{\ell}^{(t-r)}}\abs{N_{s-1-r,\ell j}^{(s)}}\\
	&\leq \frac{(K\Lambda)^c}{n^2} \E \bigg[\max_{i,j\in [n], \ell \neq i,j} \abs{ \partial_{ij} z_{\ell}^{(t-r)} } \cdot n \pnorm{P_r^{(t)}}{F} \pnorm{N_{s-1-r}^{(s)}}{F}\\
	&\qquad + \max_{i,j\in [n] } \pnorm{\partial_{ij} z^{(t-r)} }{\infty}\cdot  n\Big(\pnorm{P^{(t)}_r}{\op}\pnorm{N_{s-1-r}^{(s)}}{F}+\pnorm{P^{(t)}_r}{F}\pnorm{N_{s-1-r}^{(s)}}{\op}   \Big) \bigg].
	\end{align*}
	Using the moment estimates in (\ref{ineq:trace_pop_M_N_moment}) and Proposition \ref{prop:z_der_l2}, for $t\leq \log n/c$,
	\begin{align}
	\mathfrak{R}_{r;3}^{(s)}&\leq  \big(K\Lambda\log n\cdot (1+\pnorm{z^{(0)}}{\infty})\big)^{c t^2}\cdot n^{-1/2}.
	\end{align}
	\noindent \textbf{\emph{Term $\mathfrak{R}_{r;2}^{(s)}$}}: By a simple Cauchy-Schwarz inequality and the moment estimates in (\ref{ineq:trace_pop_M_N_moment}), for $t\leq \log n/c$,
	\begin{align}\label{ineq:trace_pop_3}
	\mathfrak{R}_{r;2}^{(s)}\leq (K\Lambda)^2 n^{-2} \cdot\E^{1/2}\pnorm{M_{r-1}^{(t)}}{F}^2\cdot\E^{1/2}\pnorm{N_{s-1-r}^{(s)}}{F}^2\leq \frac{(K\Lambda)^{c t}}{n}.
	\end{align}
	\noindent \textbf{\emph{Term $\mathfrak{R}_{r;1}^{(s)}$} with $r=1$}: Note that
	\begin{align*}
	\mathfrak{R}_{1;1}^{(s)}&\leq \Lambda \E^{1/2} \pnorm{M_0^{(t)}}{\op}^2\cdot  \frac{1}{n} \max_{\ell \in [n]}\E^{1/2} \tr^2\Big(\mathfrak{D}_{V_\ell^2} N_{s-2}^{(s)}  \mathsf{D}^{(t-1)}\Big).
	\end{align*}
	Clearly $\E^{1/2} \pnorm{M_0^{(t)}}{\op}^2\leq \Lambda$. Let $H(A)\equiv n^{-1} \tr\big(\mathfrak{D}_{V_\ell^2} N_{s-2}^{(s)}  \mathsf{D}^{(t-1)}\big)$. Then using (\ref{ineq:trace_pop_M_N_difference}), for any two symmetric $A_1,A_2\in \R^{n\times n}$,
	\begin{align*}
	&\abs{H(A_1)-H(A_2)}\leq n^{-1}\bigabs{ \tr\Big(\mathfrak{D}_{V_\ell^2} \big(N_{s-2}^{(s)}(A_1)-N_{s-2}^{(s)}(A_2)\big)  \mathsf{D}^{(t-1)}(A_1)\Big) }\\
	&\qquad + n^{-1}\bigabs{ \tr\Big(\mathfrak{D}_{V_\ell^2} N_{s-2}^{(s)}(A_2)  \big(\mathsf{D}^{(t-1)}(A_1)-\mathsf{D}^{(t-1)}(A_2)\big)\Big)  }\\
	&\leq (K\Lambda)^c\cdot n^{-1/2}\pnorm{N^{(s)}_{s-2}(A_1)-N^{(s)}_{s-2}(A_2)}{F}\\
	&\qquad +(K\Lambda)^c\cdot \big(1+\pnorm{N^{(s)}_{s-2}(A_1)}{\op}+\pnorm{N^{(s)}_{s-2}(A_2)}{\op}\big)\cdot n^{-1/2}\pnorm{z^{(t-1)}(A_1)-z^{(t-1)}(A_2)}{}\\
	&\leq \big(K\Lambda (1+\pnorm{z^{(0)}}{\infty}) (1+\pnorm{A_1}{\op}+\pnorm{A_2}{\op})\big)^{ct^2}\cdot \pnorm{A_1-A_2}{\op}.
	\end{align*}
	Moreover,
	\begin{align*}
	\abs{H(A)}\leq (K\Lambda)^2 \pnorm{N^{(s)}_{s-2}}{\op}\leq \big(K\Lambda (1+\pnorm{A}{\op})\big)^{ct^2}.
	\end{align*}
	Now applying the Gaussian concentration Lemma \ref{lem:gaussian_conc}, we have
	\begin{align*}
	\var(H(A))\leq n^{-1}\big(K\Lambda \log n\cdot (1+\pnorm{z^{(0)}}{\infty}\big)^{ct^2}.
	\end{align*}
	Combining the above two displays, we have for $3\leq s\leq t\leq \log n/c$,
	\begin{align}\label{ineq:trace_pop_4}
	\mathfrak{R}_{1;1}^{(s)}&\leq \frac{\big(K\Lambda \log n\cdot (1+\pnorm{z^{(0)}}{\infty})\big)^{c t^2} }{\sqrt{n}} + (K\Lambda)^{ct}\max_{\ell \in [n]} \frac{1}{n}\bigabs{\E \tr\big(\mathfrak{D}_{V_\ell^2} N_{s-2}^{(s)} \mathsf{D}^{(t-1)}\big)}.
	\end{align}
	\noindent \textbf{\emph{Term $\mathfrak{R}_{r;1}^{(s)}$} with $r \in [2:s-1]$}: Note that
	\begin{align*}
	\mathfrak{R}_{r;1}^{(s)}& \leq \frac{1}{n^2} \biggabs{\E \sum_{j \in [n]}   \mathsf{D}_{jj}^{(t-r)} N^{(s)}_{s-1-r,jj}\cdot \tr\big(\mathfrak{D}_{V_j^2} M_{r-1}^{(t)}  \mathsf{D}^{(t-s)}\big) }\\
	&\leq \Lambda\E^{1/2}\pnorm{N^{(s)}_{s-1-r} }{\op}^2\cdot \frac{1}{n} \max_{\ell \in [n]}\E^{1/2} \tr^2\big(\mathfrak{D}_{V_\ell^2} M_{r-1}^{(t)}  \mathsf{D}^{(t-s)}\big).
	\end{align*}
	From here, we may use arguments similarly as above to conclude that, for $3\leq s\leq t\leq \log n/c$,
	\begin{align}\label{ineq:trace_pop_5}
	\mathfrak{R}_{r;1}^{(s)}&\leq \frac{\big(K\Lambda \log n(1+\pnorm{z^{(0)}}{\infty})\big)^{c t^2}}{\sqrt{n}}+ (K\Lambda)^{ct} \max_{\ell \in [n]} \frac{1}{n}\bigabs{\E \tr\big(\mathfrak{D}_{V_\ell^2} M_{r-1}^{(t)} \mathsf{D}^{(t-s)}\big)   }.
	\end{align}
	Combining (\ref{ineq:trace_pop_2})-(\ref{ineq:trace_pop_5}), for $3\leq s\leq t\leq \log n/c$, 
	\begin{align*}
	\abs{R_r^{(s)}}&\leq \big(K\Lambda\log n\cdot (1+\pnorm{z^{(0)}}{\infty})\big)^{c t^2}\cdot n^{-1/2}\\
	&\qquad +  (K\Lambda)^{ct}\max_{\ell \in [n]}  \frac{1}{n}\bigabs{\E \tr\big(\mathfrak{D}_{V_\ell^2} N_{s-2}^{(s)} \mathsf{D}^{(t-1)}\big)}  \bm{1}_{r=1}\\
	&\qquad + (K\Lambda)^{ct}\max_{\ell \in [n]}  \frac{1}{n}\bigabs{\E \tr\big(\mathfrak{D}_{V_\ell^2} M_{r-1}^{(t)} \mathsf{D}^{(t-s)}\big) } \bm{1}_{r \in [2:s-1]}.
	\end{align*}
	\noindent (\textbf{Step 2-(c)}). In view of (\ref{ineq:trace_pop_1}), we then conclude that for $3\leq s\leq t\leq \log n/c$, 
	\begin{align*}
	\frac{1}{n}\bigabs{\E \tr(M_s^{(t)})}&\leq \big(K\Lambda\log n\cdot (1+\pnorm{z^{(0)}}{\infty})\big)^{c t^2}\cdot n^{-1/2} \\
	&\qquad +(K\Lambda)^{ct} \max_{d \in \{V_{[1:n]}^2,b_{[0:t]}\}}  \frac{1}{n}\bigabs{\E \tr\big(\mathfrak{D}_{d} N_{s-2}^{(s)} \mathsf{D}^{(t-1)}\big)}\\
	&\qquad + (K\Lambda)^{ct}\sum_{r \in [s-2]}\max_{d \in \{V_{[1:n]}^2,b_{[0:t]}\}}  \frac{1}{n}\bigabs{\E \tr\big(\mathfrak{D}_{d} M_{r}^{(t)} \mathsf{D}^{(t-s)}\big) }.
	\end{align*}
	Now repeating the above arguments for the term $r=s-2,s-3,\ldots, r\geq 3$ in the first summation, it follows that for $3\leq s\leq t\leq \log n/c$, 
	\begin{align*}
	&\frac{1}{n}\bigabs{\E \tr(M_s^{(t)})}\leq \big(K\Lambda\log n\cdot (1+\pnorm{z^{(0)}}{\infty})\big)^{c t^2}\cdot n^{-1/2} \\
	& +  (K\Lambda)^{ct}\sum_{r =1,2}\max_{d \in \{V_{[1:n]}^2,b_{[0:t]}\}} \frac{1}{n}\bigabs{\E \tr\big(\mathfrak{D}_{d} M_{r}^{(t)} \mathsf{D}^{(t-s)}\big) }\\
	&+ (K\Lambda)^{ct^2}\sum_{m\in [s-2]}\max_{ \substack{d_{[cm]} \in \{V_{[1:n]}^2,b_{[0:t]},1_n\},\\ \mathcal{S}\in [-1:t]^{cm} }} \frac{1}{n}\bigabs{\E \tr\big(\mathfrak{D}_{\otimes_{q \in [cm]}d_q} N_{s-m-1}^{(s-m+1)}  \mathsf{D}^{(\mathcal{S})}\big)}.
	\end{align*}
	The first summation term can be assimilated into the first term by similar arguments in Step 1. This proves the desired estimate (\ref{ineq:trace_pop_claim_1}) (the slightly more general form claimed there follows from minor modifications).
\end{proof}

Next, for $3\leq u\leq s\leq t$, we show that the normalized trace of $N_u^{(s)}$ can be reduced back to those of $\{M_\cdot^{(s)}\}$, the most complicated one being $M^{(t-s+u)}_{u-2}$. Again the formulation below allows for $u=1,2$.

\begin{lemma}\label{lem:trace_cal_N_to_M}
	Suppose (S1)-(S2) in Theorem \ref{thm:AMP_sym_loo} hold for some $K,\Lambda\geq 2$. There exists some universal constant $c_0>0$ such that for any fixed $d_0 \in \R^n$ and an index set $\mathcal{T}\subset [-1:t]$ allowing for multiplicity, and any $1\leq u\leq s\leq t\leq \log n/c_0$, 
	\begin{align}\label{ineq:trace_pop_claim_2}
	&\frac{1}{n}\bigabs{\E \tr\big(\mathfrak{D}_{d_0}N_u^{(s)}\mathsf{D}^{(\mathcal{T})}\big)} \leq \big(K\Lambda\log n\cdot (1+\pnorm{z^{(0)}}{\infty})\big)^{c_0 t^2}\cdot n^{-1/2}  \\
	& + (K\Lambda)^{c_0 t^2}\sum_{m \in [u-2]}\max_{ \substack{d_{[c_0 m]} \in \{V_{[1:n]}^2, b_{[0:t]},1_n\},\\ \mathcal{S}\in [-1:t]^{c_0 m}} } \frac{1}{n}\biggabs{\E \tr\Big(\mathfrak{D}_{d_0\circ \otimes_{q \in [c_0 m]} d_q} M^{(t-s+u-m+1)}_{u-m-1}  \mathsf{D}^{(\mathcal{T})} \mathsf{D}^{(\mathcal{S})}\Big)}.\nonumber
	\end{align}
	Here recall $\mathsf{D}^{(\mathcal{T})}\equiv \prod_{s \in \mathcal{T}}\mathsf{D}^{(s)}$ and $\mathsf{D}^{(-1)}\equiv I_{n}$.
\end{lemma}

The proof of the above lemma follows largely that of Lemma \ref{lem:trace_cal_M_to_N}; some details can be found in Section \ref{section:proof_technical_results_proof_main_thm}. Lemmas \ref{lem:trace_cal_M_to_N} and \ref{lem:trace_cal_N_to_M} taken together, the normalized traces of $\{M_\cdot^{(t)}\}$ may be reduced to those of $\{M_\cdot^{(s)}\}$ for $s\leq t-2$, and therefore eventually to those of $\{M_\cdot^{(1)}, M_\cdot^{(2)}\}$. We make this precise below.

\begin{proof}[Proof of Proposition \ref{prop:trace_M}]
	For any $t \in \N$ and $k_\ell,k_u \in \mathbb{Z}_{\geq 0}$, let
	\begin{align*}
	\mathcal{M}_t(k_\ell,k_u)\equiv \max_{1\leq r\leq s \leq t}\max_{d_{[k_\ell]} \in \{V_{[1:n]}^2,b_{[0:t]},1_n\} }  \max_{w_{[k_u]} \in [-1:t]^{k_u} } \frac{1}{n}\bigabs{\E \tr\big(\mathfrak{D}_{\otimes_{q \in [k_\ell]} d_{q}} M_{r}^{(s)} \mathsf{D}^{(w_{[k_u]})}\big)}. 
	\end{align*}
	Here $\mathsf{D}^{(-1)}=I_n$, and we write $\mathsf{D}^{(w_{[k_u]})}\equiv \prod_{q \in [k_u]}\mathsf{D}^{(w_q)}$ for notational simplicity. Using (\ref{ineq:trace_pop_claim_1}) in Lemma \ref{lem:trace_cal_M_to_N} and (\ref{ineq:trace_pop_claim_2}) in Lemma \ref{lem:trace_cal_N_to_M}, with some calculations, there exists some universal constant $c_0>0$ such that
	\begin{align*}
	\mathcal{M}_t(k_\ell,k_u)&\leq \big(K\Lambda\log n\cdot (1+\pnorm{z^{(0)}}{\infty})\big)^{c_0 t^2}\cdot n^{-1/2}+(K\Lambda)^{c_0 t^2}\cdot \mathcal{M}_{t-2}(k_\ell+c_0 t,k_u+c_0 t).
	\end{align*}
	Iterating the bound with the following initial condition (which can be proved in a similar way as in Step 1 in Lemma \ref{lem:trace_cal_M_to_N}),
	\begin{align*}
	\mathcal{M}_1(k_\ell,k_u)\vee \mathcal{M}_2(k_\ell,k_u)\leq (K\Lambda)^{c_1(k_\ell+k_u)}\big(\log n\cdot (1+\pnorm{z^{(0)}}{\infty})\big)^{c_1}\cdot n^{-1/2},
	\end{align*}
	we conclude that 
	\begin{align*}
	\mathcal{M}_t(1,0)\leq \big(K\Lambda\log n\cdot (1+\pnorm{z^{(0)}}{\infty})\big)^{c_2 t^3}\cdot n^{-1/2},
	\end{align*}
	as desired.
\end{proof}

\subsection{Proof of Theorem \ref{thm:AMP_sym_loo}}

We shall now relate the estimate for the trace calculations in Proposition \ref{prop:trace_M} to the original sequence $\{\overline{M}_{\cdot,[-k]}^{(t)}\}$ defined in (\ref{def:M_bar_recursion}). To this end, consider an auxiliary sequence of matrices $M_{0,[-k]}^{(t)}, M_{1,[-k]}^{(t)},\ldots, M_{t,[-k]}^{(t)} \in \R^{n\times n}$, defined recursively as follows: $M_{-1,[-k]}^{(t)}=0_{n\times n}$, $M_{0,[-k]}^{(t)} =  \mathsf{D}_{[-k]}^{(t)}$, and for $s \in [t]$,
\begin{align}\label{eqn:M_recursion_loo}
M_{s,[-k]}^{(t)}\equiv \big(M_{s-1,[-k]}^{(t)} A_{[-k]} - M_{s-2,[-k]}^{(t)} \mathsf{B}_{t-s+1}\big) \mathsf{D}_{[-k]}^{(t-s)}. 
\end{align}
The sequence $\{M_{\cdot,[-k]}^{(t)}\}$ is different from $\{M_{\cdot}^{(t)}(V_{[-k]})\}$ in (\ref{eqn:M_recursion}) due to the use of $\mathsf{B}_{\cdot}$ rather than $\mathsf{B}_{\cdot,[-k]}\equiv \mathrm{diag}\big(\big\{n^{-1}\sum_{\ell \in [n]} V_{[-k],j\ell}^2 \E \mathsf{F}_{\cdot,\ell}'(z_{[-k],\ell}^{(\cdot)})\big\}_{j\in [n]}\big)$. However, their trace differences are negligible, as shown in the proof of the following result.

\begin{corollary}\label{cor:trace_M_loo}
	Suppose (S1)-(S2) in Theorem \ref{thm:AMP_sym_loo} hold for some $K,\Lambda\geq 2$. Then the trace estimate in Proposition \ref{prop:trace_M} also holds for $\{M_{\cdot,[-k]}^{(t)}\}$ defined in (\ref{eqn:M_recursion_loo}), i.e., there exists some universal constant $c_0>0$ such that for $t\leq \log n/c_0$,
	\begin{align*}
	\max_{d_0 \in \{V_{[1:n]}^2,1_n\} }\max_{s \in [t]} \frac{1}{n}\bigabs{\E \big[\tr(\mathfrak{D}_{d_0}M_{s,[-k]}^{(t)})|z^{(0)}\big] } \leq \big(K\Lambda\log n\cdot (1+\pnorm{z^{(0)}}{\infty})\big)^{c_0 t^3}\cdot n^{-1/2}.
	\end{align*}
\end{corollary}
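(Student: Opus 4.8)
\emph{Overall strategy.} The plan is to recognize the leave-one-out AMP $\{z^{(t)}_{[-k]}\}$ of (\ref{def:AMP_loo}) as, modulo an $n^{-1/2}$-perturbation of the correction vectors, an instance of the general AMP (\ref{def:AMP}) driven by $A_{[-k]}=V_{[-k]}\circ G_n$ with variance profile $V_{[-k]}\equiv(V_{ij}\bm{1}_{i,j\neq k})_{i,j\in[n]}$. Since $\max_{i,j}\bigabs{(V_{[-k]})_{ij}}\leq K$ and the nonlinearities are unchanged, hypotheses (S1)--(S2) hold for this matrix with the same constants $K,\Lambda$. Let $\{\check{z}^{(t)}\}$ denote the AMP (\ref{def:AMP}) driven by $A_{[-k]}$ together with its \emph{own} canonical correction vectors $\{b_{\cdot,[-k]}\}$, $b_{t,j,[-k]}\equiv n^{-1}\sum_{\ell\neq k}V_{j\ell}^2\,\E[\mathsf{F}'_{t,\ell}(\check{z}^{(t)}_\ell)|z^{(0)}]$, and let $\{\check{M}^{(t)}_s\}$ be the matrix recursion (\ref{eqn:M_recursion}) built from $A_{[-k]}$, $\check{\mathsf{D}}^{(\cdot)}\equiv\mathfrak{D}_{\mathsf{F}'_\cdot(\check{z}^{(\cdot)})}$ and $\mathsf{B}_{\cdot,[-k]}\equiv\mathfrak{D}_{b_{\cdot,[-k]}}$; that is, $\check{M}^{(t)}_s=M^{(t)}_s(V_{[-k]})$. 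Then Proposition \ref{prop:trace_M} applies verbatim to $\{\check{M}^{(t)}_s\}$ and yields the bound asserted in the corollary. The remaining task is to show that replacing $(\check{\mathsf{D}}^{(\cdot)},\mathsf{B}_{\cdot,[-k]})$ by $(\mathsf{D}^{(\cdot)}_{[-k]},\mathsf{B}_\cdot)$ — which is precisely the passage from $\{\check{M}^{(t)}_s\}$ to $\{M^{(t)}_{s,[-k]}\}$ — changes each normalized trace by at most the target error.

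\emph{Step 1: the two correction vectors are $n^{-1/2}$-close.} First I would estimate $\pnorm{b_t-b_{t,[-k]}}{\infty}$. Writing $b_{t,j}-b_{t,j,[-k]}=n^{-1}V_{jk}^2\,\E[\mathsf{F}'_{t,k}(z^{(t)}_k)]+n^{-1}\sum_{\ell\neq k}V_{j\ell}^2\,\E[\mathsf{F}'_{t,\ell}(z^{(t)}_\ell)-\mathsf{F}'_{t,\ell}(\check{z}^{(t)}_\ell)]$ and using that each $\mathsf{F}'_{t,\ell}$ is $\Lambda$-Lipschitz by (S2), Cauchy--Schwarz gives $\pnorm{b_t-b_{t,[-k]}}{\infty}\leq K^2\Lambda n^{-1}\big(1+\sqrt{n}\,\E^{1/2}[\pnorm{z^{(t)}-\check{z}^{(t)}}{}^2|z^{(0)}]\big)$. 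To close this I would run a Gronwall-type iteration: subtracting the recursions for $\{z^{(t)}_{[-k]}\}$ and $\{\check{z}^{(t)}\}$ (same driver $A_{[-k]}$, same initialization, differing only through $b_\cdot$ vs.\ $b_{\cdot,[-k]}$), bounding the source term $(b_{s-1}-b_{s-1,[-k]})\circ\mathsf{F}_{s-2}(z^{(s-2)}_{[-k]})$ via the Lipschitz property of $\mathsf{F}_\cdot$, the subgaussian tail of $\pnorm{A}{\op}$ (Lemma \ref{lem:A_op_subgaussian_tail}), and the $\ell_2$ leave-one-out delocalization of Proposition \ref{prop:AMP_iterate_infty} (which supplies $\E^{1/2}[\pnorm{z^{(t)}-z^{(t)}_{[-k]}}{}^2|z^{(0)}]\leq(K\Lambda)^{c_0t}\sqrt{\log n}(1+\pnorm{z^{(0)}}{\infty})$). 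This yields, for $t\leq\log n/c_0$,
\begin{align*}
\max_{u\leq t}\pnorm{b_u-b_{u,[-k]}}{\infty}&+n^{-1/2}\max_{u\leq t}\E^{1/2}\big[\pnorm{z^{(u)}_{[-k]}-\check{z}^{(u)}}{}^2\,\big|\,z^{(0)}\big]\\
&\leq\big(K\Lambda\log n\cdot(1+\pnorm{z^{(0)}}{\infty})\big)^{c_0 t}\cdot n^{-1/2},
\end{align*}
the gain of $n^{-1/2}$ coming from the explicit factor $n^{-1}$ in the definition of $b$.

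\emph{Step 2: transferring the trace control.} Both $\{M^{(t)}_{s,[-k]}\}$ and $\{\check{M}^{(t)}_s\}$ are driven by $A_{[-k]}$ and obey the recursion (\ref{eqn:M_recursion_loo}); subtracting them and iterating — exactly along the lines of the perturbation estimate (\ref{ineq:trace_pop_M_N_difference}), but now propagating the source term $\check{M}^{(t)}_{s-2}(\mathsf{B}_{t-s+1}-\mathsf{B}_{t-s+1,[-k]})\mathsf{D}^{(t-s)}_{[-k]}$ together with the terms carrying $\mathsf{D}^{(t-s)}_{[-k]}-\check{\mathsf{D}}^{(t-s)}$, and tracking the accumulated $\pnorm{A_{[-k]}}{\op}\leq\pnorm{A}{\op}$ factors — I expect
\begin{align*}
\pnorm{M^{(t)}_{s,[-k]}-\check{M}^{(t)}_s}{F}&\leq\big(K\Lambda(1+\pnorm{z^{(0)}}{\infty})(1+\pnorm{A}{\op})\big)^{c_0 t^2}\\
&\qquad\cdot\Big(\sqrt{n}\max_{u\leq t}\pnorm{b_u-b_{u,[-k]}}{\infty}+\max_{u\leq t}\pnorm{z^{(u)}_{[-k]}-\check{z}^{(u)}}{}\Big),
\end{align*}
the factor $\sqrt{n}$ next to $\pnorm{b_u-b_{u,[-k]}}{\infty}$ arising because $(\mathsf{B}-\mathsf{B}_{[-k]})\mathsf{D}^{(\cdot)}_{[-k]}$ is diagonal and contributes its Frobenius norm. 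Using $\frac{1}{n}\bigabs{\tr M}\leq n^{-1/2}\pnorm{M}{F}$, taking conditional expectations, applying Cauchy--Schwarz, absorbing the $\pnorm{A}{\op}$-moments by the subgaussian tail of Lemma \ref{lem:A_op_subgaussian_tail} (valid since $t\leq\log n/c_0$, so $t^2\ll n$), and inserting the Step-1 estimates, gives $\max_{s\in[t]}\frac{1}{n}\bigabs{\E[\tr(M^{(t)}_{s,[-k]})|z^{(0)}]-\E[\tr(\check{M}^{(t)}_s)|z^{(0)}]}\leq(K\Lambda\log n\cdot(1+\pnorm{z^{(0)}}{\infty}))^{c_0 t^2}\cdot n^{-1/2}$. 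Combined with Proposition \ref{prop:trace_M} applied to $\{\check{M}^{(t)}_s\}$, this proves the corollary.

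\emph{Main obstacle.} The substantive point is the bookkeeping in Step 2: one must verify that the $\Theta(n^{1/2})$ factor hidden inside $\pnorm{b_u-b_{u,[-k]}}{\infty}$ (and inside $\pnorm{z^{(u)}_{[-k]}-\check{z}^{(u)}}{}$) is not squandered when converting between operator-, Frobenius- and $\ell_\infty$-norms in the iterated recursion, and that the exponent growth stays at most $t^2$, hence does not worsen the $t^3$ already present in Proposition \ref{prop:trace_M}. Because the perturbation recursion here is structurally the same one analyzed for (\ref{ineq:trace_pop_M_N_difference}) — a Lipschitz-in-$A$/Lipschitz-in-$b$ stability estimate for $\{M^{(t)}_\cdot\}$ — this should be routine rather than conceptually new.
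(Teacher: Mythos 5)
Your proposal is correct, and it reaches the bound by a route that is structurally the same as the paper's (perturb the matrix recursion in Frobenius norm, use $n^{-1}\abs{\tr(\cdot)}\leq n^{-1/2}\pnorm{\cdot}{F}$, and invoke Proposition \ref{prop:trace_M} for a leave-one-out version) but with a different comparison object. The paper compares $\{M^{(t)}_{\cdot,[-k]}\}$ with the recursion built from the \emph{same} leave-one-out iterate $z^{(\cdot)}_{[-k]}$ and the matched correction $\mathsf{B}_{\cdot,[-k]}$ defined through $z^{(\cdot)}_{[-k]}$ and $V_{[-k]}$; then the $\mathsf{D}$ factors cancel exactly, the only source term is $\pnorm{\mathsf{B}_\cdot-\mathsf{B}_{\cdot,[-k]}}{F}\lesssim (K\Lambda)^c(1+\pnorm{z^{(\cdot)}-z^{(\cdot)}_{[-k]}}{})$, which Proposition \ref{prop:AMP_iterate_infty} controls directly, and Proposition \ref{prop:trace_M} is applied to that comparison sequence (its proof only needs the iterate in the $\mathsf{D}$'s to be driven by a bounded deterministic correction and to satisfy the delocalization estimates, which $z^{(\cdot)}_{[-k]}$ does). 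You instead compare with the fully canonical AMP $\check z^{(\cdot)}$ for the profile $V_{[-k]}$, so that Proposition \ref{prop:trace_M} applies literally verbatim; the price is your Step 1, an extra Gronwall-type stability estimate between $z^{(\cdot)}_{[-k]}$ and $\check z^{(\cdot)}$, and a perturbation in Step 2 that enters through both the $\mathsf{D}$ and $\mathsf{B}$ factors. Both routes are sound, and your accounting of where the $n^{-1/2}$ gain comes from (the explicit $n^{-1}$ in $b$, the $\sqrt n$ from the diagonal Frobenius norm, and the final $n^{-1/2}$ from the trace) matches the paper's. Two minor remarks: your Step 1 exponent $c_0t$ is optimistic — the polylog source compounds through the iteration and yields exponent of order $t^2$ — but this is immaterial since the target allows $c_0t^3$; and note your extra lemma is exactly what lets you avoid the (unstated) observation the paper relies on, namely that the proof of Proposition \ref{prop:trace_M} does not care which bounded correction vector was used to generate the iterate appearing in the $\mathsf{D}$'s.
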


More importantly, the sequence $\{M_{\cdot,[-k]}^{(t)}\}$ can be related to $\{\overline{M}_{\cdot,[-k]}^{(t)}\}$ in (\ref{def:M_bar_recursion}) (defined via a different multiplicative matrix factor $\mathsf{D}_{[-k]}^{(t-s)}$) in the quadratic form of interest in the leave-one-out decomposition in Proposition \ref{prop:AMP_loo_decom}. 

\begin{lemma}\label{lem:M_bar_loo_quad_diff}
	Suppose (S1)-(S2) in Theorem \ref{thm:AMP_sym_loo} hold for some $K,\Lambda\geq 2$. Then for any $D>0$, there exist some universal constant $c_0>0$ and another constant $c_1=c_1(D)>0$ such that for $0\leq s\leq t\leq \log n/c_0$, with $\Prob(\cdot|z^{(0)})$-probability at least $1-c_1 n^{-D}$,
	\begin{align*}
	\bigabs{\iprod{A_k}{\overline{M}_{s,[-k]}^{(t)}  A_k }-\iprod{A_k}{{M}_{s,[-k]}^{(t)}  A_k }}\leq c_1 (K\Lambda)^{c_0 t}\log^{c_0} n\cdot (1+\pnorm{z^{(0)}}{\infty})\cdot n^{-1/2}.
	\end{align*}
\end{lemma}

The proofs of both Corollary \ref{cor:trace_M_loo} and Lemma \ref{lem:M_bar_loo_quad_diff} above can be found in Section \ref{section:proof_technical_results_proof_main_thm}. We are now finally in a position to prove Theorem \ref{thm:AMP_sym_loo}.

\begin{proof}[Proof of Theorem \ref{thm:AMP_sym_loo}]
	By Propositions \ref{prop:AMP_loo_decom}, \ref{prop:AMP_iterate_infty} and Lemma \ref{lem:M_bar_loo_quad_diff}, for $t\leq \log n/c$, with probability at least $1-cn^{-D}$, 
	\begin{align*}
	\bigabs{z^{(t+1)}_k-\bigiprod{A_k}{\mathsf{F}_t (z_{[-k]}^{(t)})}}&\leq (K\Lambda)^{ct}\log n\cdot (1+\pnorm{z^{(0)}}{\infty})\\
	&\quad \times \Big(\max_{s \in [t]} \bigabs{ \iprod{A_k}{M_{t-s,[-k]}^{(t)}  A_k }- b_{t,k}\bm{1}_{s=t}   }+n^{-1/2}\Big).
	\end{align*}
	For $s\leq t-1$, by Hanson-Wright inequality and Corollary \ref{cor:trace_M_loo}, for $t\leq \log n/c$, with probability at least $1-cn^{-D}$, 
	\begin{align*}
	\max_{s \in [t-1]} \bigabs{\iprod{A_k}{M_{t-s,[-k]}^{(t)}  A_k }   }\leq \big(K\Lambda\log n\cdot (1+\pnorm{z^{(0)}}{\infty})\big)^{c t^3}\cdot n^{-1/2}.
	\end{align*}
	For $s=t$, by Hanson-Wright inequality and using Proposition \ref{prop:AMP_iterate_infty} again, for $t\leq \log n/c$, with probability at least $1-cn^{-D}$, 
	\begin{align*}
	\bigabs{ \iprod{A_k}{M_{0,[-k]}^{(t)}  A_k }- b_{t,k} }&\leq \frac{1}{n}\sum_{\ell \in [n]} V_{k\ell}^2 \bigabs{\E \mathsf{F}'_{t,\ell}(z^{(t)}_{[-k],\ell})- \E \mathsf{F}'_{t,\ell}(z^{(t)}_\ell)}+ \frac{(K\Lambda)^c \log n}{\sqrt{n}}\\
	&\leq (K\Lambda)^{ct}\log^c n\cdot (1+\pnorm{z^{(0)}}{\infty})\cdot n^{-1/2}. 
	\end{align*}
	Combining the above three displays proves that for $t\leq \log n/c$, with probability at least $1-cn^{-D}$, 
	\begin{align}\label{ineq:AMP_sym_proof_loo_represent}
	\bigabs{z^{(t+1)}_k-\bigiprod{A_k}{\mathsf{F}_t (z_{[-k]}^{(t)})} }&\leq \big(K\Lambda\log n\cdot (1+\pnorm{z^{(0)}}{\infty})\big)^{c t^3}\cdot n^{-1/2}.
	\end{align}
	The proof is complete. 
\end{proof}

\section{Proofs of technical results in Section \ref{section:proof_AMP_loo}}\label{section:proof_technical_results_proof_main_thm}

\subsection{Proof of Proposition \ref{prop:AMP_iterate_infty}}

Using the AMP recursion (\ref{def:AMP}), we have
\begin{align*}
\pnorm{\Delta z_{[-k]}^{(t+1)}}{}&\leq \pnorm{A \mathsf{F}_{t}(z^{(t)})- A_{[-k]} \mathsf{F}_{t}(z^{(t)}_{[-k]}) }{}+\pnorm{b_t}{\infty} \pnorm{\Delta z_{[-k]}^{(t-1)}}{}\\
&\leq (K\Lambda)^2 (1+\pnorm{A}{\op})\big(\pnorm{\Delta z_{[-k]}^{(t)}}{}\vee \pnorm{\Delta z_{[-k]}^{(t-1)}}{}\big)+\pnorm{\Delta A_{[-k]} \mathsf{F}_{t}(z^{(t)}_{[-k]}) }{},
\end{align*}
where
\begin{align*}
\pnorm{\Delta A_{[-k]} \mathsf{F}_{t}(z^{(t)}_{[-k]}) }{}&\leq \abs{A_k^\top \mathsf{F}_{t}(z^{(t)}_{[-k]})}+ 2\Lambda\pnorm{A_k}{}\big(1+\abs{z_{k,[-k]}^{(t)}}\big).
\end{align*}
On the other hand, using (\ref{def:AMP_loo}),
\begin{itemize}
	\item $\abs{z_{k,[-k]}^{(t)}}\leq (K\Lambda)^2(1+\abs{z_{k,[-k]}^{(t-2)}})\leq \cdots\leq (K\Lambda)^{ct}(1+\abs{z_k^{(0)}})$,
	\item $\pnorm{z_{[-k]}^{(t)}}{}\leq (K\Lambda)^2 (1+\pnorm{A_{[-k]}}{\op})\big(\sqrt{n}+\pnorm{z_{[-k]}^{(t-1)}}{}\vee \pnorm{z_{[-k]}^{(t-2)}}{}\big)\leq\cdots \leq \big(K\Lambda(1+\pnorm{A_{[-k]}}{\op})\big)^{ct}\big(\sqrt{n}+\pnorm{z^{(0)}}{}\big)$. 
\end{itemize}
Using the independence of $A_k$ and $\mathsf{F}_{t}(z^{(t)}_{[-k]})$ and the standard subgaussian tail estimate for $\pnorm{A}{\op},\pnorm{A_{[-k]}}{\op}$ (cf. Lemma \ref{lem:A_op_subgaussian_tail}), for $x\geq 1$, with probability at least $1-c e^{-(x^2\wedge n)/c}$, both $\sqrt{n}\abs{A_k^\top \mathsf{F}_t(z_{[-k]}^{(t)} )}\leq cKx\cdot \pnorm{\mathsf{F}_t(z_{[-k]}^{(t)})}{}$ and $\pnorm{A}{\op}\vee \pnorm{A_{[-k]}}{\op}\leq cK$ hold. So with the same probability, 
\begin{align*}
\pnorm{\Delta z_{[-k]}^{(t+1)}}{}&\leq c(K\Lambda)^2 \big(\pnorm{\Delta z_{[-k]}^{(t)}}{}\vee \pnorm{\Delta z_{[-k]}^{(t-1)}}{}\big)\\
&\qquad +(K\Lambda)^{ct}(1+\abs{z_k^{(0)}})+(K\Lambda)^{ct} x \cdot \big(1+n^{-1/2}\pnorm{z^{(0)}}{}\big)\\
&\leq c(K\Lambda)^2 \big(\pnorm{\Delta z_{[-k]}^{(t)}}{}\vee \pnorm{\Delta z_{[-k]}^{(t-1)}}{}\big)+ (K\Lambda)^{ct}x\cdot (1+\pnorm{z^{(0)}}{\infty}).
\end{align*}
Iterating the bound, with probability at least $1-ct e^{-(x^2\wedge n)/c}$,
\begin{align*}
\pnorm{\Delta z_{[-k]}^{(t+1)}}{}\leq (K\Lambda)^{ct} x\cdot (1+\pnorm{z^{(0)}}{\infty}),
\end{align*}
and therefore with the same probability,
\begin{align*}
\abs{z^{(t+1)}_k}\leq \pnorm{\Delta z_{[-k]}^{(t+1)}}{}+\abs{ z^{(t+1)}_{k,[-k]}  }\leq (K\Lambda)^{ct} x\cdot (1+\pnorm{z^{(0)}}{\infty}).
\end{align*}
Consequently, with probability at least $1-c\cdot t n e^{-(x^2\wedge n)/c}$,
\begin{align*}
\pnorm{z^{(t+1)}}{\infty}\leq (K\Lambda)^{ct} x\cdot (1+\pnorm{z^{(0)}}{\infty}).
\end{align*}
The moment estimate for $\pnorm{z^{(t+1)}}{\infty}$ follows by combining the above display and the simple apriori estimate $\pnorm{z^{(t+1)}}{\infty}\leq \pnorm{z^{(t+1)}}{}\leq  \big(\Lambda(1+\pnorm{A}{\op})\big)^{ct}\big(\sqrt{n}+\pnorm{z^{(0)}}{}\big)$. \qed

\subsection{Proof of Proposition \ref{prop:z_der_l2}}

\noindent (1). By the definition of the AMP recursion (\ref{def:AMP}), we have
\begin{align*}
\partial_{ij} z^{(t+1)}&=\Delta_{ij} \mathsf{F}_t(z^{(t)})+A\big(\mathsf{F}_t'(z^{(t)})\circ \partial_{ij} z^{(t)}\big)-\mathsf{B}_t \big(\mathsf{F}_{t-1}'(z^{(t-1)})\circ \partial_{ij} z^{(t-1)}\big).
\end{align*}
Consequently,
\begin{align*}
\pnorm{\partial_{ij} z^{(t+1)} }{}&\leq \abs{\mathsf{F}_{t,i}(z^{(t)}_i)}+\abs{\mathsf{F}_{t,j}(z^{(t)}_j)}+ \Lambda \pnorm{A}{\op} \pnorm{\partial_{ij} z^{(t)}}{}+(K\Lambda)^2 \pnorm{\partial_{ij} z^{(t-1)}}{}\\
&\leq (K\Lambda)^2(1+\pnorm{A}{\op})\big(\pnorm{\partial_{ij} z^{(t)}}{}\vee \pnorm{\partial_{ij} z^{(t-1)}}{} \big)+ 2\Lambda(1+\pnorm{z^{(t)}}{\infty}).
\end{align*}
Iterating the bound, for any $p\geq 1$, there exists some $C_p>0$ such that 
\begin{align*}
\E^{1/p}\pnorm{\partial_{ij} z^{(t+1)} }{}^p\leq C_p \E^{1/2p}\big(K \Lambda(1+\pnorm{A}{\op}) \big)^{C_p t} \max_{s \in [0:t]} \E^{1/2p}(1+\pnorm{z^{(s)}}{\infty})^{2p}.
\end{align*}
The claim follows from Proposition \ref{prop:AMP_iterate_infty}. 

\noindent (2).
For any $u \in [t]$, we let  $\mathfrak{A}^{(u)}_1 \equiv A$, $\mathfrak{A}^{(u)}_2 \equiv \mathsf{B}_{u+1}$.
Note that for $\ell\neq i,j$, $\partial_{ij} z^{(t+1)}_\ell$ consists of at most $(K\Lambda)^{ct}$ terms of the form
\begin{align}\label{eq:eiA_Aej}
\biggiprod{e_i}{\prod_{k \in [s]}\big(\mathfrak{A}^{(t_{k})}_{\tau_{t_k}} \mathsf{D}^{(t_{k})}\big) A e_j},
\end{align}
where $s \in [0:t]$, $1\leq t_s<t_{s-1}<\cdots<t_1\leq t$, $\tau_{t_k} \in \{1,2\}$ with $\tau_{t_1}=1$, and the product $\prod_{k \in [s]}$ is taken in the order from $1$ to $s$. Consequently, it suffices to derive the moment estimate for the above terms.  

Suppose $s\geq 1$ (the case $s=0$ is trivial). For $r \in [s]$, let 
\begin{align*}
\mathcal{A}_r&\equiv \max_{i\neq j \in [n]}\max_{w \in [r]} \max_{\substack{1\leq t_w<t_{w-1}<\cdots<t_1\leq t,\\ \tau_{t_k} \in \{1,2\},\tau_{t_1}=1 }} \biggabs{\biggiprod{e_i}{\prod_{k \in [w]}\big(\mathfrak{A}^{(t_{k})}_{\tau_{t_k}} \mathsf{D}^{(t_{k})}\big) A e_j}}.
\end{align*}
Let us fix $i\neq j \in [n]$, and let $\mathcal{P}\equiv \{i,j\}$. We first replace $\mathsf{D}^{(\cdot)}$ by $\mathsf{D}^{(\cdot)}_{[-\mathcal{P}]}$, where $\mathsf{D}^{(\cdot)}_{[-\mathcal{P}]}\equiv \mathfrak{D}_{\mathsf{F}'_{\cdot}(z^{(\cdot)}_{[-\mathcal{P}]})}$. Let $\Delta \mathsf{D}^{(\cdot)}_{[-\mathcal{P}]}=\mathsf{D}^{(\cdot)}-\mathsf{D}^{(\cdot)}_{[-\mathcal{P}]}$. Then for any sequence $1\leq t_s<t_{s-1}<\cdots<t_1\leq t$ and $\tau_{t_k} \in \{1,2\}$, 
\begin{align*}
&\bigabs{ \bigiprod{e_i}{A \mathsf{D}^{(t_1)}\mathfrak{A}^{(t_2)}_{\tau_{t_2}} \mathsf{D}^{(t_2)}\cdots \mathfrak{A}^{(t_s)}_{\tau_{t_s}} \mathsf{D}^{(t_{s})} A e_j}-\bigiprod{e_i}{A \mathsf{D}^{(t_1)}_{[-\mathcal{P}]}\mathfrak{A}^{(t_2)}_{\tau_{t_2}} \mathsf{D}^{(t_2)}\cdots \mathfrak{A}^{(t_s)}_{\tau_{t_s}} \mathsf{D}^{(t_{s})} A e_j}    }\\
&= \biggabs{\sum_{k \in [n]} A_{ik} \Delta \mathsf{D}^{(t_1)}_{[-\mathcal{P}],kk} \big(\mathfrak{A}^{(t_2)}_{\tau_{t_2}} \mathsf{D}^{(t_2)}\cdots \mathfrak{A}^{(t_s)}_{\tau_{t_s}} \mathsf{D}^{(t_{s})} A \big)_{kj}}\\
&\leq \pnorm{A_i}{}\cdot \Lambda \pnorm{z^{(t_1)}-z^{(t_1)}_{[-\mathcal{P}]}}{}\cdot  (K\Lambda)^{ct}\cdot  \mathcal{A}_{s-1}+\abs{A_{ij}} (K\Lambda \pnorm{A}{\op})^{ct}.
\end{align*}
Repeating the procedure of replacing one $\mathsf{D}^{(\cdot)}$ factor by a corresponding $\mathsf{D}_{[-\mathcal{P}]}^{(\cdot)}$ factor till the end, with $\pnorm{A}{\infty}\equiv \max_{i,j \in [n]} \abs{A_{ij}}$, we have 
\begin{align}\label{ineq:z_der_deloc_1}
&\bigabs{ \bigiprod{e_i}{A \mathsf{D}^{(t_1)}\mathfrak{A}^{(t_2)}_{\tau_{t_2}} \mathsf{D}^{(t_2)}\cdots \mathfrak{A}^{(t_s)}_{\tau_{t_s}} \mathsf{D}^{(t_{s})} A e_j}-\bigiprod{e_i}{A \mathsf{D}^{(t_1)}_{[-\mathcal{P}]}\mathfrak{A}^{(t_2)}_{\tau_{t_2}} \mathsf{D}^{(t_2)}_{[-\mathcal{P}]}\cdots \mathfrak{A}^{(t_s)}_{\tau_{t_s}} \mathsf{D}^{(t_{s})}_{[-\mathcal{P}]} A e_j}    }\nonumber\\
&\leq \big(K\Lambda (1+\pnorm{A}{\op})\big)^{c t}\cdot \Big( \max_{s \in [t]}\pnorm{z^{(s)}-z^{(s)}_{[-\mathcal{P}]}}{}\cdot  \mathcal{A}_{s-1}+\pnorm{A}{\infty}\Big).
\end{align}
Next we replace $\mathfrak{A}^{(u)}_{\tau_u}$ with $\mathfrak{A}^{(u)}_{\tau_u,[-\mathcal{P}]}$, where  $A_{[-\mathcal{P}]}\equiv (A_{k\ell}\bm{1}_{k,\ell \notin \mathcal{P}})$ and $\mathsf{B}_{\cdot,[-\mathcal{P}]}\equiv \mathsf{B}_\cdot$. Then it is easy to see
\begin{align*}
\Delta \mathfrak{A}^{(u)}_{\tau_u,[-\mathcal{P}]}&\equiv \mathfrak{A}^{(u)}_{\tau_u}-\mathfrak{A}^{(u)}_{\tau_u,[-\mathcal{P}]}= \bm{1}_{\tau_u=1}\sum_{(a,b)} \big(e_{a}e_a^\top A+Ae_ae_a^\top-A_{ab} e_ae_b^\top-A_{aa} e_ae_a^\top \big),
\end{align*}
where the summation is running over $(a,b)=\{(i,j),(j,i)\}$. Then	
\begin{align*}
&\bigabs{ \bigiprod{e_i}{A \mathsf{D}^{(t_1)}_{[-\mathcal{P}]}\mathfrak{A}^{(t_2)}_{\tau_{t_2}} \mathsf{D}^{(t_2)}_{[-\mathcal{P}]}\cdots \mathfrak{A}^{(t_s)}_{\tau_{t_s}} \mathsf{D}^{(t_{s})}_{[-\mathcal{P}]} A e_j}-\bigiprod{e_i}{A \mathsf{D}^{(t_1)}_{[-\mathcal{P}]}\mathfrak{A}^{(t_2)}_{\tau_{t_2},[-\mathcal{P}] } \mathsf{D}^{(t_2)}_{[-\mathcal{P}]}\cdots \mathfrak{A}^{(t_s)}_{\tau_{t_s}} \mathsf{D}^{(t_{s})}_{[-\mathcal{P}]} A e_j}}\\
&\leq  \big(K\Lambda(1+\pnorm{A}{\op})\big)^{ct}\cdot \bigg(\pnorm{A}{\infty}+\max_{w \in [s-1]} \max_{\substack{1\leq t_w<t_{w-1}<\cdots<t_1\leq t,\\ \tau_{t_k} \in \{1,2\},\tau_{t_1}=1 }} \biggabs{\biggiprod{e_i}{\prod_{k \in [w]}\big(\mathfrak{A}^{(t_{k})}_{\tau_{t_k}} \mathsf{D}^{(t_{k})}_{[-\mathcal{P}]}\big) A e_j}}\bigg)\\
&\leq \big(K\Lambda(1+\pnorm{A}{\op})\big)^{ct}\cdot \Big[ \big(1+\max_{s \in [t]}\pnorm{z^{(s)}-z^{(s)}_{[-\mathcal{P}]}}{}\big)\cdot  \mathcal{A}_{s-1}+\pnorm{A}{\infty}\Big].
\end{align*}
Here in the last inequality we used (\ref{ineq:z_der_deloc_1}) to reduce the estimate to $\mathcal{A}_{s-1}$. Repeating the procedure of replacing one $\mathfrak{A}^{(\cdot)}_{\cdot}$ factor by a corresponding $\mathfrak{A}^{(\cdot)}_{\cdot,[-\mathcal{P}]}$ factor till the end, we have 
\begin{align}\label{ineq:z_der_deloc_2}
&\bigabs{ \bigiprod{e_i}{A \mathsf{D}^{(t_1)}_{[-\mathcal{P}]}\mathfrak{A}^{(t_2)}_{\tau_{t_2}} \mathsf{D}^{(t_2)}_{[-\mathcal{P}]}\cdots \mathfrak{A}^{(t_s)}_{\tau_{t_s}} \mathsf{D}^{(t_{s})}_{[-\mathcal{P}]} A e_j} \nonumber\\
	&-\bigiprod{e_i}{A \mathsf{D}^{(t_1)}_{[-\mathcal{P}]}\mathfrak{A}^{(t_2)}_{\tau_{t_2},[-\mathcal{P}]} \mathsf{D}^{(t_2)}_{[-\mathcal{P}]}\cdots \mathfrak{A}^{(t_s)}_{\tau_{t_s},[-\mathcal{P}]} \mathsf{D}^{(t_{s})}_{[-\mathcal{P}]} A e_j} }\nonumber\\
&\leq  \big(K\Lambda(1+\pnorm{A}{\op})\big)^{c t}\cdot \Big[ \big(1+\max_{s \in [t]}\pnorm{z^{(s)}-z^{(s)}_{[-\mathcal{P}]}}{}\big)\cdot  \mathcal{A}_{s-1}+\pnorm{A}{\infty}\Big].
\end{align}	
Combining (\ref{ineq:z_der_deloc_1})-(\ref{ineq:z_der_deloc_2}), Proposition \ref{prop:AMP_iterate_infty}, and standard concentration estimates, for $t\leq n$, uniformly in $i\neq j \in [n]$, with probability at least $1-c n^{-D}$, 
\begin{align}\label{ineq:z_der_deloc_3}
&\bigabs{ \bigiprod{e_i}{A \mathsf{D}^{(t_1)}\mathfrak{A}^{(t_2)}_{\tau_{t_2}} \mathsf{D}^{(t_2)}\cdots \mathfrak{A}^{(t_s)}_{\tau_{t_s}} \mathsf{D}^{(t_{s})} A e_j} \nonumber\\
	&-\bigiprod{e_i}{A \mathsf{D}^{(t_1)}_{[-\mathcal{P}]}\mathfrak{A}^{(t_2)}_{\tau_{t_2},[-\mathcal{P}]} \mathsf{D}^{(t_2)}_{[-\mathcal{P}]}\cdots \mathfrak{A}^{(t_s)}_{\tau_{t_s},[-\mathcal{P}]} \mathsf{D}^{(t_{s})}_{[-\mathcal{P}]} A e_j} }\nonumber\\
&\leq (K\Lambda)^{ct}\sqrt{\log n}\cdot (1+\pnorm{z^{(0)}}{\infty})\cdot (\mathcal{A}_{s-1}+n^{-1/2}). 
\end{align}
On the other hand, as $A_i,A_j$ are independent of $\mathfrak{A}^{(\cdot)}_{\cdot,[-\mathcal{P}]}$ and $\mathsf{D}_{[-\mathcal{P}]}^{(\cdot)}$, using Hanson-Wright inequality, with probability at least $1-cn^{-D}$,
\begin{align}\label{ineq:z_der_deloc_4}
&\bigabs{\bigiprod{e_i}{A \mathsf{D}^{(t_1)}_{[-\mathcal{P}]}\mathfrak{A}^{(t_2)}_{\tau_{t_2},[-\mathcal{P}]} \mathsf{D}^{(t_2)}_{[-\mathcal{P}]}\cdots \mathfrak{A}^{(t_s)}_{\tau_{t_s},[-\mathcal{P}]} \mathsf{D}^{(t_{s})}_{[-\mathcal{P}]} A e_j} }\nonumber\\
& = \bigabs{A_i^\top \big(\mathsf{D}^{(t_1)}_{[-\mathcal{P}]}\mathfrak{A}^{(t_2)}_{\tau_{t_2},[-\mathcal{P}]} \mathsf{D}^{(t_2)}_{[-\mathcal{P}]}\cdots \mathfrak{A}^{(t_s)}_{\tau_{t_s},[-\mathcal{P}]} \mathsf{D}^{(t_{s})}_{[-\mathcal{P}]}\big)A_j} \leq (K\Lambda)^{ct} \log n\cdot n^{-1/2}. 
\end{align}
Combining (\ref{ineq:z_der_deloc_3}) and (\ref{ineq:z_der_deloc_4}), for $t\leq n$, uniformly in $i\neq j \in [n]$, with probability at least $1-c n^{-D}$,
\begin{align*}
\bigabs{\bigiprod{e_i}{A \mathsf{D}^{(t_1)}\mathfrak{A}^{(t_2)}_{\tau_{t_2}} \mathsf{D}^{(t_2)}\cdots \mathfrak{A}^{(t_s)}_{\tau_{t_s}} \mathsf{D}^{(t_{s})} A e_j}}\leq (K\Lambda)^{ct}\log n\cdot (1+\pnorm{z^{(0)}}{\infty})\cdot (\mathcal{A}_{s-1}+n^{-1/2}).
\end{align*}
Now by a union bound via cardinality calculations, we conclude that for $t\leq \log n/c$, with probability at least $1-c n^{-D}$,
\begin{align*}
\mathcal{A}_s\leq (K\Lambda)^{ct}\log n\cdot (1+\pnorm{z^{(0)}}{\infty})\cdot (\mathcal{A}_{s-1}+n^{-1/2}).
\end{align*}
Iterating the above bound shows that for $t\leq \log n/c$, with probability at least $1-c n^{-D}$,
\begin{align*}
\mathcal{A}_s\leq \big((K\Lambda)^{t}\log n\cdot (1+\pnorm{z^{(0)}}{\infty})\big)^{ct}\cdot n^{-1/2}.
\end{align*}
The moment estimate for $\E^{1/p}[ |(\ref{eq:eiA_Aej})|^p ]$ follows from the above high probability control, and the simple apriori estimate $\abs{(\ref{eq:eiA_Aej})}\leq (K\Lambda \pnorm{A}{\op})^{ct}$. \qed

\subsection{Proof of Lemma \ref{lem:M_N_moment}}

\noindent (1). In view of the recursions in (\ref{eqn:M_recursion}) and (\ref{eqn:N_recursion}), we have an easy estimate 
\begin{align*}
n^{-1/2}\big\{\pnorm{M^{(t)}_\cdot}{F}\vee \pnorm{N^{(s)}_\cdot}{F} \big\} + \big( \pnorm{M^{(t)}_\cdot}{\op}\vee \pnorm{N^{(s)}_\cdot}{\op}\big) \leq \big(K\Lambda (1+\pnorm{A}{\op})\big)^{ct}.
\end{align*}
The first claim follows by using the subgaussian tail of $\pnorm{A}{\op}$, cf. Lemma \ref{lem:A_op_subgaussian_tail}. 

\noindent (2). We shall only prove the case for $M^{(t)}_s$; the other cases follow from minor modifications. Using the recursion (\ref{eqn:M_recursion}), we have
\begin{align*}
&\pnorm{ M^{(t)}_s(A_1)-M^{(t)}_s(A_2)}{F}\leq \bigpnorm{M^{(t)}_{s-1} (A_1) A_1 \mathsf{D}^{(t-s)}(A_1)-M^{(t)}_{s-1} (A_2) A_2 \mathsf{D}^{(t-s)}(A_2)  }{F}\\
&\qquad + \bigpnorm{M^{(t)}_{s-2} (A_1) \mathsf{B}_{t-s+1} \mathsf{D}^{(t-s)}(A_1)-M^{(t)}_{s-2} (A_2) \mathsf{B}_{t-s+1} \mathsf{D}^{(t-s)}(A_2)  }{F}\equiv (I)+(II).
\end{align*}
Let $\Gamma(A_1,A_2;c)\equiv \big(K\Lambda (1+\pnorm{z^{(0)}}{\infty})(1+\pnorm{A_1}{\op}+ \pnorm{A_2}{\op})\big)^c$. For $(I)$, we have
\begin{align*}
(I)&\leq \pnorm{M^{(t)}_{s-1} (A_1) -M^{(t)}_{s-1} (A_2)  }{F}\cdot \pnorm{A_1 \mathsf{D}^{(t-s)}(A_1) }{\op}\\
&\qquad + \pnorm{M^{(t)}_{s-1}(A_2)}{\op}\cdot \pnorm{ A_1 \mathsf{D}^{(t-s)}(A_1)-A_2 \mathsf{D}^{(t-s)}(A_2) }{F}\\
&\leq \Gamma(A_1,A_2;ct) \cdot \Big( \pnorm{M^{(t)}_{s-1} (A_1) -M^{(t)}_{s-1} (A_2)  }{F} \\
&\qquad \qquad + \pnorm{A_1-A_2}{F}+ \pnorm{\mathsf{D}^{(t-s)}(A_1)-\mathsf{D}^{(t-s)}(A_2) }{F} \Big)\\
&\leq \Gamma(A_1,A_2;c_1t) \cdot \Big( \pnorm{M^{(t)}_{s-1} (A_1) -M^{(t)}_{s-1} (A_2)  }{F} + \sqrt{n}\pnorm{A_1-A_2}{\op}\Big).
\end{align*}
Here in the last inequality we used the estimate $
\pnorm{\mathsf{D}^{(t-s)}(A_1)-\mathsf{D}^{(t-s)}(A_2) }{F}\leq \Lambda\cdot \pnorm{z^{(t-s)}(A_1)-z^{(t-s)}(A_2) }{}\leq \Gamma(A_1,A_2;ct)\cdot \sqrt{n} \pnorm{A_1-A_2}{\op}$.  Using the same argument, we may estimate $(II)$:
\begin{align*}
(II)&\leq \Gamma(A_1,A_2;c_2t) \cdot \Big( \pnorm{M^{(t)}_{s-2} (A_1) -M^{(t)}_{s-2} (A_2)  }{F} + \sqrt{n}\pnorm{A_1-A_2}{\op}\Big).
\end{align*}
Combining the estimates, there exists some universal $c_0>0$ such that
\begin{align*}
&\max_{u \in [0:s]}\pnorm{ M^{(t)}_u(A_1)-M^{(t)}_u(A_2)}{F}\\
&\leq \Gamma(A_1,A_2;c_0 t) \cdot \Big( \max_{u \in [0:s-1]}\pnorm{M^{(t)}_{u} (A_1) -M^{(t)}_{u} (A_2)  }{F} + \sqrt{n}\pnorm{A_1-A_2}{\op}\Big).
\end{align*}
Iterating the bound and using the initial condition $\pnorm{M^{(t)}_{0} (A_1) -M^{(t)}_{0} (A_2)  }{F}= \pnorm{\mathsf{D}^{(t)}(A_1)-\mathsf{D}^{(t)}(A_2) }{F}\leq \Gamma(A_1,A_2;c't)\cdot \sqrt{n} \pnorm{A_1-A_2}{\op}$ to conclude. \qed

\subsection{Proof of Lemma \ref{lem:trace_cal_N_to_M}}

The proof of (\ref{ineq:trace_pop_claim_2}) is largely similar to that in (\ref{ineq:trace_pop_claim_1}) in Lemma \ref{lem:trace_cal_M_to_N}. We only sketch the key reduction. Let for $v \in [u-1]$,
\begin{align*}
Q_{v}^{(u)}&\equiv \frac{1}{n^2} \biggiprod{\E M_{u-1-v}^{(t-s+u)}\Delta_{\cdot\cdot}  \mathsf{D}^{(t-s+v)} N_{v-1}^{(s)} }{V\circ V}\\
&\qquad + \frac{1}{n^2}\E\biggiprod{ A \mathfrak{D}_{\mathsf{F}^{(2)}_{t-s+v}(z^{(t-s+v)})}\mathfrak{D}_{\partial_{\cdot\cdot} z^{(t-s+v)}} N_{v-1}^{(s)} }{V\circ V} \nonumber\\
&\qquad- \frac{1}{n^2}\E\biggiprod{\mathsf{B}_{t-s+v}  \mathfrak{D}_{\mathsf{F}^{(2)}_{t-s+v-1}(z^{(t-s+v-1)})}\mathfrak{D}_{\partial_{\cdot\cdot} z^{(t-s+v-1)}} N_{v-2}^{(s)}}{V\circ V}.
\end{align*}
By Gaussian integration by parts, 
\begin{align*}
\frac{1}{n}\E \tr(N_u^{(s)})&=\frac{1}{n}\sum_{i,j \in [n]} \E \mathsf{D}^{(t-s+u)}_{ii} N^{(s)}_{u-1,ij} A_{ij}- \frac{1}{n}\E \tr\Big(\mathsf{B}_{t-s+u} \mathsf{D}^{(t-s+u-1)} N_{u-2}^{(s)}\Big)\\
& = \frac{1}{n^2}\E \bigiprod{\mathsf{D}^{(t-s+u)} \partial_{\cdot\cdot} N_{u-1}^{(s)} }{V\circ V}- \frac{1}{n}\E \tr\Big(\mathsf{B}_{t-s+u}  \mathsf{D}^{(t-s+u-1)} N_{u-2}^{(s)}\Big)\\
&\qquad + \frac{1}{n^2}\sum_{i,j \in [n] } \E  \mathsf{F}_{t-s+u,i}^{(2)}(z_i^{(t-s+u)}) \partial_{ij}z_i^{(t-s+u)} N_{u-1,ij}^{(s)} V_{ij}^2.
\end{align*}
Using (\ref{eqn:N_der_recursion}) with $u$ replaced by $u-1$, the first term above equals
\begin{align*}
\frac{1}{n^2}\biggiprod{\E M_1^{(t-s+u)} \partial_{\cdot\cdot} N_{u-2}^{(s)}- \E M_0^{(t-s+u)} \mathsf{B}_{t-s+u-1}  \mathsf{D}^{(t-s+u-2)} \partial_{\cdot\cdot} N_{u-3}^{(s)} }{V\circ V}+Q^{(u)}_{u-1}.
\end{align*}
Iterating this procedure until (\ref{eqn:N_der_recursion}) is applied with $u$ replaced by $1$, we have 
\begin{align*}
\frac{1}{n}\E \tr(N_u^{(s)})&=Q^{(u)}_{u-1}+\cdots+Q^{(u)}_1- \frac{1}{n}\E \tr\Big(\mathsf{B}_{t-s+u}  \mathsf{D}^{(t-s+u-1)} N_{u-2}^{(s)}\Big)\\
&\qquad + \frac{1}{n^2}\sum_{i,j \in [n] } \E  \mathsf{F}_{t-s+u,i}^{(2)}(z_i^{(t-s+u)}) \partial_{ij}z_i^{(t-s+u)} N_{u-1,ij}^{(s)} V_{ij}^2.
\end{align*}
Note that compared to (\ref{ineq:trace_pop_1}) that has two remainder terms in the second and third lines therein, due to $\partial_{\cdot\cdot} N^{(s)}_0=0$ we only have one remainder term in the second line of the above display. From here we may use almost the same arguments as in the proof of Lemma \ref{lem:trace_cal_M_to_N} to prove the claim (\ref{ineq:trace_pop_claim_2}). We omit these repetitive details.  \qed

\subsection{Proof of Corollary \ref{cor:trace_M_loo}}

We write $\{M_{\cdot}^{(t)}(V_{[-k]})\}\equiv \{M_{\cdot,V_{[-k]}}^{(t)}\}$ in the proof. Then with $\Delta M_{s,[-k]}^{(t)}\equiv M_{s,[-k]}^{(t)}-M_{s,V_{[-k]}}^{(t)}$, we have
\begin{align*}
\pnorm{\Delta M_{s,[-k]}^{(t)}}{F}&\leq \Lambda \pnorm{A_{[-k]}}{\op} \pnorm{\Delta M_{s-1,[-k]}^{(t)}}{F}+(K\Lambda)^2 \pnorm{\Delta M_{s-2,[-k]}^{(t)}}{F}\\
&\qquad + \Lambda \pnorm{\mathsf{B}_{t-s+1}-\mathsf{B}_{t-s+1,[-k]}}{F} \pnorm{M_{s-2,[-k]}^{(t)}}{\op}.
\end{align*}
Using $\pnorm{M_{s,[-k]}^{(t)}}{\op}\leq \big(K\Lambda (1+\pnorm{A_{[-k]}}{\op})\big)^{ct}$ and
\begin{align*}
\pnorm{\mathsf{B}_{\cdot}-\mathsf{B}_{\cdot,[-k]}}{F}^2&\leq 2\sum_{j \in [n]}\biggabs{\frac{1}{n}\sum_{\ell \in [n]} V_{[-k],j\ell}^2 \Big(\E \mathsf{F}_{\cdot,\ell}'(z_{\ell}^{(\cdot)})-\E \mathsf{F}_{\cdot,\ell}'(z_{[-k],\ell}^{(\cdot)})\Big)}^2\\
&\qquad + 2\sum_{j \in [n]}\biggabs{\frac{1}{n}\sum_{\ell \in [n]} \big(V_{j\ell}^2-V_{[-k],j\ell}^2\big) \E \mathsf{F}_{\cdot,\ell}'(z_{\ell}^{(\cdot)})}^2 \\
&\leq (K\Lambda)^c\cdot \big(1+\pnorm{z^{(\cdot)}-z^{(\cdot)}_{[-k]}}{}^2\big),
\end{align*}
we obtain via the above recursion and Proposition \ref{prop:AMP_iterate_infty} that
\begin{align*}
\max_{s \in [t]}\E\pnorm{\Delta M_{s,[-k]}^{(t)}}{F} \leq (K\Lambda)^{ct}(1+\pnorm{z^{(0)}}{\infty})\sqrt{\log n}. 
\end{align*}
The claim follows by using the simple estimate $n^{-1}\max_{d_0 \in \{V_{[1:n]}^2,1_n\} }\abs{\E [\tr(\mathfrak{D}_{d_0}M_{s,[-k]}^{(t)})]- \E [\tr(\mathfrak{D}_{d_0}M_{s,V_{[-k]}}^{(t)})]}\leq K^2 n^{-1/2}\E\pnorm{\Delta M_{s,[-k]}^{(t)}}{F}$ and noting that Proposition \ref{prop:trace_M} applies to $n^{-1} \max_{d_0 \in \{V_{[1:n]}^2,1_n\} }\E [\tr(\mathfrak{D}_{d_0}M_{s,V_{[-k]}}^{(t)})]$. \qed

\subsection{Proof of Lemma \ref{lem:M_bar_loo_quad_diff}}

Recall the definition of $\mathfrak{A}^{(u)}_{\tau_u}$ in the proof of Proposition \ref{prop:z_der_l2}, where $\tau_u \in \{1,2\}$ and $\mathfrak{A}^{(u)}_1 \equiv A$, $\mathfrak{A}^{(u)}_2 \equiv \mathsf{B}_{u+1}$. Recall also $\mathfrak{A}^{(u)}_{\tau_u,[-k]}=A_{[-k]}\bm{1}_{\tau_u=1}+\mathsf{B}_{u+1}\bm{1}_{\tau_u=2}$.

Fix $s\in [t]$. Note that $\iprod{A_k}{\overline{M}_{s,[-k]}^{(t)}  A_k }$ consists of at most $c^t$ terms of the form $\bigiprod{A_k}{\prod_{w \in [1:r-1]}\big(\overline{\mathsf{D}}_{[-k]}^{(t_w)}\mathfrak{A}_{\tau_{t_w},[-k]}^{(t_w)}\big)  \overline{\mathsf{D}}_{[-k]}^{(t_{r})} A_k}$, where $r \in [s]$. 
Let $\Delta \overline{\mathsf{D}}_{[-k]}^{(\cdot)}\equiv \mathsf{D}_{[-k]}^{(\cdot)}-\overline{\mathsf{D}}_{[-k]}^{(\cdot)}$. Then we have $\pnorm{\Delta \overline{\mathsf{D}}_{[-k]}^{(\cdot)}}{F}\leq \Lambda \pnorm{z^{(\cdot)}-z^{(\cdot)}_{[-k]}}{}$. Consequently,
\begin{align}\label{ineq:M_bar_loo_quad_diff_1}
&\bigabs{ \bigiprod{A_k}{ \mathsf{D}_{[-k]}^{(t_1)}\mathfrak{A}_{\tau_{t_1},[-k]}^{(t_1)} \mathsf{D}_{[-k]}^{(t_2)} \mathfrak{A}_{\tau_{t_2},[-k]}^{(t_2)}\cdots  {\mathsf{D}}_{[-k]}^{(t_{r})}A_k} \nonumber \\
	&\qquad - \bigiprod{A_k}{\overline{\mathsf{D}}_{[-k]}^{(t_1)}\mathfrak{A}_{\tau_{t_1},[-k]}^{(t_1)} {\mathsf{D}}_{[-k]}^{(t_2)} \mathfrak{A}_{\tau_{t_2},[-k]}^{(t_2)}\cdots  {\mathsf{D}}_{[-k]}^{(t_{r})} A_k} }\nonumber\\
&= \biggabs{\sum_{\ell \in [n]\setminus \{k\}} A_{k\ell} \Delta \overline{\mathsf{D}}_{[-k],\ell\ell}^{(t_1)} \cdot \bigiprod{\big(\mathfrak{A}_{\tau_{t_1},[-k]}^{(t_1)}\big)_{\ell\cdot} }{ \mathsf{D}_{[-k]}^{(t_2)} \mathfrak{A}_{\tau_{t_2},[-k]}^{(t_2)} \cdots  \mathsf{D}_{[-k]}^{(t_{r})} A_k} }\nonumber\\
& \qquad +  \bigabs{ A_{kk} \Delta \overline{\mathsf{D}}_{[-k],kk}^{(t_1)} \cdot \bigiprod{\big(\mathfrak{A}_{\tau_{t_1},[-k]}^{(t_1)}\big)_{k\cdot} }{ \mathsf{D}_{[-k]}^{(t_2)} \mathfrak{A}_{\tau_{t_2},[-k]}^{(t_2)} \cdots  \mathsf{D}_{[-k]}^{(t_{r})} A_k}    }\nonumber\\
&\leq \pnorm{A_k}{}\cdot \Lambda \pnorm{z^{(t_1)}-z^{(t_1)}_{[-k]}}{}\cdot  \max_{\ell \in [n]\setminus \{k\}} \bigabs{\bigiprod{\big(\mathfrak{A}_{\tau_{t_1},[-k]}^{(t_1)}\big)_{\ell\cdot}}{ \mathsf{D}_{[-k]}^{(t_2)}\mathfrak{A}_{\tau_{t_2},[-k]}^{(t_2)}\cdots \mathsf{D}_{[-k]}^{(t_{r})} A_k}}\nonumber\\
&\qquad + \abs{A_{kk}}\cdot \big(K\Lambda(1+\pnorm{A}{\op})\big)^{ct}. 
\end{align}
Note that for $\ell \in [n]\setminus \{k\}$, $\big(\mathfrak{A}_{\tau_{t_1},[-k]}^{(t_1)}\big)_{\ell\cdot}$ and $\mathsf{D}_{[-k]}^{(t_2)} \mathfrak{A}_{\tau_{t_2},[-k]}^{(t_2)}\cdots  \mathsf{D}_{[-k]}^{(t_{r})}$ are independent of $A_k$, so by subgaussian inequality first conditional on $A_{[-k]}$ and then unconditionally, with probability at least $1-cn^{-D}$, 
\begin{align*}
\max_{\ell \in [n]\setminus \{k\}} \bigabs{\bigiprod{\big(\mathfrak{A}_{\tau_{t_1},[-k]}^{(t_1)}\big)_{\ell\cdot}}{ \mathsf{D}_{[-k]}^{(t_2)}\mathfrak{A}_{\tau_{t_2},[-k]}^{(t_2)}\cdots \mathsf{D}_{[-k]}^{(t_{r})} A_k}}\leq c_D (K\Lambda)^{ct} \sqrt{\log n}\cdot n^{-1/2}. 
\end{align*}
Combined with Proposition \ref{prop:AMP_iterate_infty}, for $t\leq \log n/c$, with probability at least $1-cn^{-D}$, 
\begin{align*}	
\hbox{LHS of }(\ref{ineq:M_bar_loo_quad_diff_1})&\leq c_D (K\Lambda)^{ct}\log^c n\cdot (1+\pnorm{z^{(0)}}{\infty})\cdot n^{-1/2}. 
\end{align*}
Iterating the bound proves the claim. \qed

\section{Proofs of remaining results in Section \ref{section:main_results}}\label{section:proof_remaining_main}

\subsection{Proof of Theorem \ref{thm:AMP_sym}}\label{subsection:proof_AMP_sym}
By apriori estimates for $(z^{(s+1)}_k)_{s \in [0:t]}$ (cf. Proposition \ref{prop:AMP_iterate_infty}) and $ \big(\iprod{A_k}{\mathsf{F}_s (z_{[-k]}^{(s)})}\big)_{s \in [0:t]}$, (\ref{ineq:AMP_sym_proof_loo_represent}) also holds for moment estimates. Consequently, for $t\leq \log n/c$,
\begin{align}\label{ineq:AMP_sym_proof_1}
&\bigabs{\E \psi\big[ \big(z^{(s+1)}_k\big)_{s \in [0:t]}\big]-\E \psi\big[\big(\bigiprod{A_k}{\mathsf{F}_s (z_{[-k]}^{(s)})}\big)_{s \in [0:t]}\big]}\nonumber\\
&\leq \Lambda\cdot \E^{1/2} \Big(1+\bigpnorm{ (z^{(s+1)}_k)_{s \in [0:t]} }{}+\bigpnorm{ \big(\bigiprod{A_k}{\mathsf{F}_s (z_{[-k]}^{(s)})}\big)_{s \in [0:t]} }{}\Big)^{2\mathfrak{p}}\nonumber\\
&\qquad\qquad \times \E^{1/2} \bigpnorm{ \Big(z^{(s+1)}_k-\bigiprod{A_k}{\mathsf{F}_s (z_{[-k]}^{(s)})}\Big)_{s\in [0:t]}  }{}^2\nonumber\\
&\leq  \big(K\Lambda\log n\cdot (1+\pnorm{z^{(0)}}{\infty})\big)^{c_{\mathfrak{p}} t^3}\cdot n^{-1/2}. 
\end{align}
Below we shall replace $\E \psi\big[\big(\bigiprod{A_k}{\mathsf{F}_s (z_{[-k]}^{(s)})}\big)_{s\in [0:t]}\big]$ in the left hand side of the above display by the quantity $\E \psi\big[(Z^{(s+1)}_k)_{s\in [0:t]}\big]$ and its variant.

\noindent (\textbf{Step 1}).  For $s_1,s_2 \in [0:t]$, let for $k \in [n]$
\begin{align}\label{ineq:AMP_sym_proof_alpha}
\begin{cases}
\alpha_k^{(s_1+1,s_2+1)}\equiv \frac{1}{n}\sum_{\ell \in [n]} V_{k\ell}^2 \E \mathsf{F}_{s_1,\ell}(z^{(s_1)}_\ell)\mathsf{F}_{s_2,\ell}(z^{(s_2)}_\ell),\\
\alpha_{[-k],k}^{(s_1+1,s_2+1)}\equiv \frac{1}{n}\sum_{\ell \in [n]} V_{k\ell}^2  \mathsf{F}_{s_1,\ell}(z^{(s_1)}_{[-k],\ell})\mathsf{F}_{s_2,\ell}(z^{(s_2)}_{[-k],\ell}).
\end{cases}
\end{align}
Note that $\alpha_{[-k],k}^{(s_1+1,s_2+1)}$ is random (without expectation). Let $Z_{k;\alpha}' \sim \mathcal{N}(0,I_{t+1})$ be a standard Gaussian vector, independent of all other random variables. Let
\begin{align*}
\Sigma_{k;\alpha}\equiv \big(\alpha_{k}^{(s_1+1,s_2+1)}\big)_{s_1,s_2 \in [0:t]},\quad \Sigma_{[-k],k;\alpha}\equiv \big(\alpha_{[-k],k}^{(s_1+1,s_2+1)}\big)_{s_1,s_2 \in [0:t]}.
\end{align*}
It is easy to see that $\Sigma_{k;\alpha},\Sigma_{[-k],k;\alpha} \in \R^{(t+1)\times (t+1)}$ are covariance matrices, so we may further define
\begin{align*}
\big(Z_{k;\alpha}^{(s+1)}\big)_{s \in [0:t]}\equiv \Sigma_{k;\alpha}^{1/2} Z_{k;\alpha}',\quad \big(Z_{[-k],k;\alpha}^{(s+1)}\big)_{s \in [0:t]}\equiv \Sigma_{[-k],k;\alpha}^{1/2} Z_{k;\alpha}'\in \R^{t+1}.
\end{align*}	
Using $\E \psi\big[\big(\bigiprod{A_k}{\mathsf{F}_s (z_{[-k]}^{(s)})}\big)_{s \in [0:t]}\big]= \E \psi\big[\big(Z^{(s+1)}_{[-k],k;\alpha}\big)_{s \in [0:t]}\big]$, we then have
\begin{align}\label{ineq:AMP_sym_proof_2}
&\bigabs{\E \psi\big[\big(\bigiprod{A_k}{\mathsf{F}_s (z_{[-k]}^{(s)})}\big)_{s \in [0:t]}\big]- \E \psi \big[  \big(Z^{(s+1)}_{k;\alpha}\big)_{s \in [0:t]} \big]}\nonumber\\
&= \bigabs{\E \psi \big[  \big(Z^{(s+1)}_{k;\alpha}\big)_{s \in [0:t]} \big]- \E \psi \big[  \big(Z^{(s+1)}_{[-k],k;\alpha}\big)_{s \in [0:t]} \big]}   \nonumber\\ 
&\leq (K\Lambda)^{c_{\mathfrak{p}}t}\cdot \E^{1/2} \bigpnorm{\Sigma_{k;\alpha}^{1/2}-\Sigma_{[-k],k;\alpha}^{1/2} }{F}^2 \nonumber\\
&\stackrel{(\ast)}{\leq} (K\Lambda)^{c_{\mathfrak{p}}t }\cdot  \big(\log n\cdot (1+\pnorm{z^{(0)}}{\infty})\big)^c\cdot n^{-1/4}.
\end{align}
In $(\ast)$ above, we used the following estimate: By Lemma \ref{lem:cov_sqrt_diff},
\begin{align*}
&\E \bigpnorm{\Sigma_{k;\alpha}^{1/2}-\Sigma_{[-k],k;\alpha}^{1/2} }{F}^2\leq (t+1)^{1/2}\cdot \E^{1/2} \pnorm{\Sigma_{k;\alpha}-\Sigma_{[-k],k;\alpha} }{F}^2\\
&\leq (K\Lambda)^{ct} \cdot \max_{s_1,s_2 \in [0:t]}\bigg\{\max_{u,v \in \{s_1,s_2\}}\E^{1/2} \bigg(\frac{1}{n}\sum_{\ell \in [n]}  \big(1+\abs{z_\ell^{(u)}}+\abs{z_{[-k],\ell}^{(u)}}\big)\abs{ z_\ell^{(v)}-z_{[-k],\ell}^{(v)}  }\bigg)^2  \\
&\qquad\qquad + \E^{1/2} \bigg(\frac{1}{n} \sum_{\ell \in [n]} V_{k\ell}^2\cdot (\mathrm{id}-\E)\mathsf{F}_{s_1,\ell}(z^{(s_1)}_\ell)\mathsf{F}_{s_2,\ell}(z^{(s_2)}_\ell)  \bigg)^2\bigg\}\\
&\leq (K\Lambda)^{ct} \cdot \big(\log n\cdot (1+\pnorm{z^{(0)}}{\infty})\big)^c\cdot n^{-1/2}. 
\end{align*}
Here in the last inequality of the above display, we bound the second term in the bracket by applying the Gaussian concentration Lemma \ref{lem:gaussian_conc} to $H_0(A)\equiv n^{-1}\sum_{\ell \in [n]} V_{k\ell}^2 \mathsf{F}_{s_1,\ell}(z^{(s_1)}_\ell)\mathsf{F}_{s_2,\ell}(z^{(s_2)}_\ell)$, upon computing, for any symmetric matrices $A,A_1,A_2$, the following estimates: (i) $\abs{H_0(A_1)-H_0(A_2)}\leq \big(K\Lambda (1+\pnorm{A_1}{\op}+\pnorm{A_2}{\op})\big)^{ct}(1+\pnorm{z^{(0)}}{\infty})^c\cdot \pnorm{A_1-A_2}{\op}$, and (ii) $\abs{H_0(A)}\leq \big(K\Lambda (1+\pnorm{A}{\op}) \big)^{ct} (1+\pnorm{z^{(0)}}{\infty})^c$.

Combining (\ref{ineq:AMP_sym_proof_1}) and (\ref{ineq:AMP_sym_proof_2}), we conclude 
\begin{align}\label{ineq:AMP_sym_proof_3}
&\max_{k \in [n]}\bigabs{\E \psi\big[ \big(z^{(s+1)}_k\big)_{s \in [0:t]}\big]-  \E \psi \big[\big(Z_{k;\alpha}^{(s+1)}\big)_{s \in [0:t]}\big] }\nonumber\\
&\leq  \big(K\Lambda\log n\cdot (1+\pnorm{z^{(0)}}{\infty})\big)^{c_{\mathfrak{p}} t^3}\cdot n^{-1/4}\equiv \delta_n^{(t+1)}. 
\end{align}
\noindent (\textbf{Step 2}). Recall the definition of $Z^{(1)},Z^{(2)},\ldots$ with correlation structures specified in Definition \ref{def:AMP_se}. For $k\in [n]$, let $\Sigma_k\equiv \big(\cov(Z_k^{(s_1+1)},Z_k^{(s_2+1)})\big)_{s_1,s_2 \in [0:t]}$. Then using Lemma \ref{lem:cov_sqrt_diff} again,
\begin{align*}
&\bigabs{\E \psi \big[\big(Z_{k;\alpha}^{(s+1)}\big)_{s \in [0:t]}\big]-\E \psi \big[\big(Z_{k}^{(s+1)}\big)_{s \in [0:t]}\big]}\\
&\leq (K\Lambda)^{c_{\mathfrak{p}} t}\cdot  \pnorm{\Sigma_{k;\alpha}^{1/2}-\Sigma_{k}^{1/2} }{F}\leq (K\Lambda)^{c_{\mathfrak{p}} t}\cdot  \pnorm{\Sigma_{k;\alpha}-\Sigma_{k} }{F}^{1/2}\\
&\leq  (K\Lambda)^{c_{\mathfrak{p}}' t}\cdot\max_{s_1,s_2 \in [0:t]} \bigg\{\frac{1}{n}\sum_{\ell \in [n]} \bigabs{ \E \mathsf{F}_{s_1,\ell}(z^{(s_1)}_\ell)\mathsf{F}_{s_2,\ell}(z^{(s_2)}_\ell) - \E \mathsf{F}_{s_1,\ell}(Z^{(s_1)}_\ell)\mathsf{F}_{s_2,\ell}(Z^{(s_2)}_\ell)     } \bigg\}^{1/2}.
\end{align*}
Here we adopt the convention that $Z^{(0)}_\cdot = z^{(0)}_\cdot$. Now with $\mathscr{F}_{t+1}(L,\mathfrak{p})\equiv \{\psi: \R^{t+1}\to \R, \abs{\psi(x)-\psi(y)}\leq L\cdot (1+\pnorm{x}{}+\pnorm{y}{})^{\mathfrak{p}}\cdot \pnorm{x-y}{}\hbox{ for all }x, y \in \R^{t+1}\}$ and 
\begin{align*}
\gamma_{t+1}(L,\mathfrak{p})\equiv \max_{k \in [n]} \sup_{\psi \in \mathscr{F}_{t+1}(L,\mathfrak{p})}\bigabs{\E \psi \big[\big(Z_{k;\alpha}^{(s+1)}\big)_{s \in [0:t]}\big]-\E \psi \big[\big(Z_{k}^{(s+1)}\big)_{s \in [0:t]}\big]},
\end{align*}
using (\ref{ineq:AMP_sym_proof_3}) at iteration $t$, we obtain the recursion: 
\begin{align*}
\gamma_{t+1}(\Lambda,\mathfrak{p})&\leq  (K\Lambda)^{c_{\mathfrak{p}} t}\cdot \big[ \big(\delta_n^{(t)}\big)^{1/2}+\gamma_t^{1/2}(2\Lambda,\mathfrak{p})\big].
\end{align*}	
Now iterating the bound and using the initial condition $\gamma_1\equiv 0$ (equivalently, $\E \psi \big[Z_{k;\alpha}^{(1)}\big]=\E \psi \big[Z_{k}^{(1)}\big]$ for all $k \in [n]$), we have 
\begin{align}\label{ineq:AMP_sym_proof_4}
\gamma_{t+1}(\Lambda,\mathfrak{p})\leq  \big(K\Lambda\log n\cdot (1+\pnorm{z^{(0)}}{\infty})\big)^{c_{\mathfrak{p}} t^3}\cdot n^{-1/c_0^t},
\end{align}
where $c_0>0$ is a universal constant. The claimed error estimate  now follows from (\ref{ineq:AMP_sym_proof_3}) and (\ref{ineq:AMP_sym_proof_4}), and noting that $t\leq \log n/c$ can be removed for free as otherwise we may use the (trivial) estimate in Proposition  \ref{prop:AMP_iterate_infty} instead. 

\noindent (\textbf{Step 3}). Consider the case $\psi\equiv \psi_1:\R\to \R$, and we assume without loss of generality that $\mathfrak{p}\geq 2$. Then (\ref{ineq:AMP_sym_proof_1}) reduces to 
\begin{align}\label{ineq:AMP_sym_proof_single_1}
\bigabs{\E \psi_1 \big(z^{(t+1)}_k\big)-\E \psi_1\big(\bigiprod{A_k}{\mathsf{F}_s (z_{[-k]}^{(s)})}\big) }\leq  \big(K\Lambda\log n\cdot (1+\pnorm{z^{(0)}}{\infty})\big)^{c_{\mathfrak{p}} t^3}\cdot n^{-1/2}. 
\end{align}
Moreover, mimicking the proof of (\ref{ineq:AMP_sym_proof_2}), with $\sigma_{\alpha}^{(s)}\equiv  1\wedge \min_{k \in [n]} \var^{1/2}(Z_{k;\alpha}^{(s)})$,
\begin{align}\label{ineq:AMP_sym_proof_single_2}
&\bigabs{\E \psi_1\big(\bigiprod{A_k}{\mathsf{F}_t (z_{[-k]}^{(t)})}\big)- \E \psi_1 \big(Z^{(t+1)}_{k;\alpha}\big) }\nonumber\\
&= \bigabs{\E \psi_1   \big(Z^{(t+1)}_{k;\alpha}\big)- \E \psi_1  \big(Z^{(t+1)}_{[-k],k;\alpha}\big) }   \nonumber\\ 
&\leq (K\Lambda)^{c_{\mathfrak{p}}t}\cdot \E^{1/2} \bigabs{e_{t+1}^\top \big(\Sigma_{k;\alpha}^{1/2}-\Sigma_{[-k],k;\alpha}^{1/2}\big) e_{t+1} }^2\nonumber\\
&\leq (K\Lambda)^{c_{\mathfrak{p}}t}\cdot (\sigma_{\alpha}^{(t+1)})^{-1}\cdot \E^{1/2} \bigabs{e_{t+1}^\top \big(\Sigma_{k;\alpha}-\Sigma_{[-k],k;\alpha}\big) e_{t+1} }^2\nonumber\\
&\leq (K\Lambda)^{c_{\mathfrak{p}}t}\cdot (\sigma_{\alpha}^{(t+1)})^{-1} \cdot \big(\log n\cdot (1+\pnorm{z^{(0)}}{\infty})\big)^c\cdot n^{-1/2}. 
\end{align}
Combining (\ref{ineq:AMP_sym_proof_single_1})-(\ref{ineq:AMP_sym_proof_single_2}), we arrive the following version of (\ref{ineq:AMP_sym_proof_3}):
\begin{align}\label{ineq:AMP_sym_proof_single_3}
&\max_{k \in [n]} \bigabs{\E \psi_1 \big(z^{(t+1)}_k\big)-\E \psi_1 \big(Z^{(t+1)}_{k;\alpha}\big) }\nonumber\\
&\leq  \big(K\Lambda\log n\cdot (1+\pnorm{z^{(0)}}{\infty})\big)^{c_{\mathfrak{p}} t^3}\cdot (\sigma_{\alpha}^{(t+1)})^{-1} \cdot  n^{-1/2}\equiv \bar{\delta}_n^{(t+1)}. 
\end{align}
Using a similar argument as in the beginning of Step 2, we have 
\begin{align*}
&\bigabs{\E \psi \big(Z_{k;\alpha}^{(t+1)}\big)-\E \psi \big(Z_{k}^{(t+1)}\big)}\\
&\leq (K\Lambda)^{c_{\mathfrak{p}} t}\cdot  \bigabs{e_{t+1}^\top\big(\Sigma_{k;\alpha}^{1/2}-\Sigma_{k}^{1/2}\big) e_{t+1} } \leq (K\Lambda)^{c_{\mathfrak{p}} t}\cdot (\sigma_{\alpha}^{(t+1)})^{-1}\cdot  \bigabs{e_{t+1}^\top\big(\Sigma_{k;\alpha}-\Sigma_{k}\big) e_{t+1} }\\
&\leq  (K\Lambda)^{c_{\mathfrak{p}}' t}\cdot (\sigma_{\alpha}^{(t+1)})^{-1} \cdot \frac{1}{n}\sum_{\ell \in [n]} \bigabs{ \E \mathsf{F}_{t,\ell}^2(z^{(t)}_\ell)- \E \mathsf{F}_{t,\ell}^2(Z^{(t)}_\ell)     } .
\end{align*}
Consequently, by letting
\begin{align*}
\bar{\gamma}_{t+1}(L,\mathfrak{p})\equiv \max_{k \in [n]} \sup_{\psi_1 \in \mathscr{F}_{1}(L,\mathfrak{p})}\bigabs{\E \psi_1 \big(Z_{k;\alpha}^{(t+1)}\big)-\E \psi \big(Z_{k}^{(t+1)}\big)},
\end{align*}
and using (\ref{ineq:AMP_sym_proof_single_3}) at iteration $t$, we arrive at the recursion
\begin{align*}
\bar{\gamma}_{t+1}(\Lambda,\mathfrak{p})&\leq  (K\Lambda)^{c_{\mathfrak{p}} t}\cdot (\sigma_{\alpha}^{(t+1)})^{-1}\cdot \big[ \bar{\delta}_n^{(t)}+\gamma_t(2\Lambda,\mathfrak{p})\big].
\end{align*}	
Now iterating the bound to obtain 
\begin{align}\label{ineq:AMP_sym_proof_single_4}
\bar{\gamma}_{t+1}(\Lambda,\mathfrak{p})\leq  \Big(K\Lambda\min_{s \in [t+1]} (\sigma_{\alpha}^{(s)})^{-1}\cdot\log n\cdot (1+\pnorm{z^{(0)}}{\infty})\Big)^{c_{\mathfrak{p}} t^3} \cdot n^{-1/2}.
\end{align}
Combining (\ref{ineq:AMP_sym_proof_single_3}) and (\ref{ineq:AMP_sym_proof_single_4}), we conclude that
\begin{align}\label{ineq:AMP_sym_proof_single_5}
&\max_{k \in [n]} \bigabs{\E \psi_1 \big(z^{(t+1)}_k\big)-\E \psi_1 \big(Z^{(t+1)}_{k}\big) }\nonumber\\
&\leq  \Big(K\Lambda\min_{s \in [t+1]} (\sigma_{\alpha}^{(s)})^{-1}\cdot \log n\cdot (1+\pnorm{z^{(0)}}{\infty})\Big)^{c_{\mathfrak{p}} t^3} \cdot n^{-1/2}.
\end{align}
On the other hand, for the special case $\psi_1(x)=x^2$, all the above estimates remain valid without the term involving $\{(\sigma_{\alpha}^{(s+1)})^{-1}\}_{s \in [t]}$. This means, with $\sigma^{(s)}\equiv  1\wedge \min_{k \in [n]} \var^{1/2}(Z_{k}^{(s)})$, we have
\begin{align*}
\max_{s \in [t+1]}\abs{(\sigma_{\alpha}^{(s)})^2-(\sigma^{(s)})^2}\leq \big(K\Lambda\log n\cdot (1+\pnorm{z^{(0)}}{\infty})\big)^{c_{\mathfrak{p}} t^3}\cdot  n^{-1/2}\equiv \err_{n,t}.
\end{align*}
Consequently, as $\sigma_\ast^{[t+1]}=\min_{s \in [t+1]} \sigma^{(s)}$, if $\err_{n,t}\leq \sigma_\ast^{[t+1]}/2$, we have $\min_{s \in [t+1]} (\sigma_{\alpha}^{(s)})^{-1}\leq C_0 \min_{s \in [t+1]} (\sigma^{(s)})^{-1}$ for some universal $C_0>0$. In other words, if $\err_{n,t}\leq \sigma_\ast^{[t+1]}/2$, (\ref{ineq:AMP_sym_proof_single_5}) entails that 
\begin{align*}
&\max_{k \in [n]} \bigabs{\E \psi_1 \big(z^{(t+1)}_k\big)-\E \psi_1 \big(Z^{(t+1)}_{k}\big) }\leq  \big(K\Lambda \sigma_\ast^{[t+1],-1}\log n\cdot (1+\pnorm{z^{(0)}}{\infty})\big)^{c_{\mathfrak{p}}'' t^3} \cdot n^{-1/2}.
\end{align*}
The case for $\err_{n,t}> \sigma_\ast^{[t+1]}/2$ can be assimilated into the above estimate by possibly enlarging $c_{\mathfrak{p}}'' >0$ (upon using apriori bounds due to the delocalization estimates in Proposition \ref{prop:AMP_iterate_infty}).
\qed

\subsection{Proof of Theorem \ref{thm:AMP_sym_avg}}\label{subsection:proof_AMP_sym_avg}

Recall the state evolution and $Z^{(1)},Z^{(2)},\ldots$ in Definition \ref{def:AMP_se}. For $k \in [n]$, let $\sigma_{t,k}^2\equiv \var(Z^{(t)}_k)$ and recall $
\overline{b}_{t,k}= n^{-1}\sum_{\ell \in [n]} V_{k\ell}^2 \E [\mathsf{F}_{t,\ell}'(Z_\ell^{(t)})|z^{(0)}]$. 
The modified AMP iterate associated with $\{\overline{b}_t\}$ is denoted $\{\overline{z}^{(t)}\}$. For simplicity of notation, we work with the case $\psi_1\equiv \cdots\equiv \psi_n\equiv \psi$; the general case follows by minor, cosmetic modifications.

\begin{lemma}\label{lem:smoothed_AMP_moment}
	Assume the same conditions as in Theorem \ref{thm:AMP_sym_avg}. Fix any $\Lambda$-pseudo-Lipschitz function $\psi: \R^{t+1}\to \R$ of order 2. Then there exists some universal constant $c_0>0$, such that
	\begin{align*}
	&\biggabs{ \frac{1}{n}\sum_{k \in [n]} \Big(\E \big[\psi\big( \overline{z}^{(t+1)}_{k},\overline{z}^{(t)}_{k},\ldots,\overline{z}^{(1)}_{k}\big)|z^{(0)}\big]- \E \big[\psi\big( Z^{(t+1)}_{k},Z^{(t)}_{k},\ldots,Z^{(1)}_{k}\big)|z^{(0)}\big]\Big) }\\
	&\leq \big(K\Lambda\sigma_{*,\psi}^{-1}\cdot \log n\cdot (1+\pnorm{z^{(0)}}{\infty})\big)^{c_0 t^3}\cdot n^{-1/c_0^t}.
	\end{align*}
\end{lemma}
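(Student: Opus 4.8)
\emph{Proof plan for Lemma \ref{lem:smoothed_AMP_moment}.}
The statement involves a merely Lipschitz nonlinearity together with the state-evolution--based correction vectors $\{\overline{b}_t\}$, whereas Theorem \ref{thm:AMP_sym} is available only for $C^2$ nonlinearities and the self-consistent correction $b_{t,k}=n^{-1}\sum_{\ell\in[n]}V_{k\ell}^2\E[\mathsf{F}_{t,\ell}'(z^{(t)}_\ell)|z^{(0)}]$. The plan is to interpolate through mollified auxiliary iterates and to optimize a mollification scale at the end. Fix $\epsilon\in(0,1)$ and put $\mathsf{F}^\epsilon_{s,\ell}\equiv\mathsf{F}_{s,\ell}\ast\mathcal{N}(0,\epsilon^2)$; these are $C^\infty$, satisfy $\pnorm{\mathsf{F}^\epsilon_{s,\ell}-\mathsf{F}_{s,\ell}}{\infty}\lesssim\Lambda\epsilon$, $\pnorm{(\mathsf{F}^\epsilon_{s,\ell})'}{\infty}\le\Lambda$, $\pnorm{(\mathsf{F}^\epsilon_{s,\ell})''}{\infty}\lesssim\Lambda/\epsilon$, so that $(\mathrm{S1})$--$(\mathrm{S2})$ hold for $\{\mathsf{F}^\epsilon_s\}$ with $\Lambda$ replaced by $c\Lambda/\epsilon$, and---crucially---the Gaussian kernel yields the identity $\E[(\mathsf{F}^\epsilon_{s,\ell})'(\mathcal{N}(0,\sigma^2))]=\E[\mathsf{F}_{s,\ell}'(\mathcal{N}(0,\sigma^2+\epsilon^2))]$. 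Let $\{z^{(t),\epsilon}\}$ be the AMP driven by $\{\mathsf{F}^\epsilon_s\}$ with its \emph{own} self-consistent correction $b^\epsilon_t$, let $\{Z^{(\cdot),\epsilon}\}$ be the associated state evolution of Definition \ref{def:AMP_se}, and let $\{\overline{z}^{(t),\epsilon}\}$ be the AMP driven by $\{\mathsf{F}^\epsilon_s\}$ with the SE-based correction $\overline{b}^\epsilon_{t,k}=n^{-1}\sum_{\ell}V_{k\ell}^2\E[(\mathsf{F}^\epsilon_{t,\ell})'(Z^{(t),\epsilon}_\ell)|z^{(0)}]$. Writing $\mathbf{w}_k\equiv(w^{(s+1)}_k)_{s\in[0:t]}$, one splits
\begin{align*}
\Bigl|\tfrac1n{\textstyle\sum_{k\in[n]}}\bigl(\E[\psi(\overline{\mathbf{z}}_k)|z^{(0)}]-\E[\psi(\mathbf{Z}_k)|z^{(0)}]\bigr)\Bigr|\le \mathrm{(I)}+\mathrm{(II)}+\mathrm{(III)}+\mathrm{(IV)},
\end{align*}
where $\mathrm{(I)}$ is the same expression with $\mathbf{Z}_k$ replaced by $\overline{\mathbf{z}}^\epsilon_k$, $\mathrm{(II)}$ compares $\overline{\mathbf{z}}^\epsilon_k$ with $\mathbf{z}^\epsilon_k$, $\mathrm{(III)}$ compares $\mathbf{z}^\epsilon_k$ with $\mathbf{Z}^\epsilon_k$, and $\mathrm{(IV)}$ compares $\mathbf{Z}^\epsilon_k$ with $\mathbf{Z}_k$.

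Term $\mathrm{(III)}$ is exactly Theorem \ref{thm:AMP_sym} applied to $\{z^{(\cdot),\epsilon}\}$ with $\mathfrak{p}=2$, giving $\mathrm{(III)}\le(K(\Lambda/\epsilon)\log n(1+\pnorm{z^{(0)}}{\infty}))^{ct^3}n^{-1/c_0^t}$. Term $\mathrm{(IV)}$ is handled by induction on the iteration index: the covariance recursions of $\{Z^{(\cdot),\epsilon}\}$ and $\{Z^{(\cdot)}\}$ differ only by replacing $\mathsf{F}^\epsilon_s$ with $\mathsf{F}_s$ (which is $O(\Lambda\epsilon)$ in sup norm, hence in the relevant weighted-$L^2$ sense since all iterates have controlled second moments by Proposition \ref{prop:AMP_iterate_infty}) and by evaluating the order-$2$ pseudo-Lipschitz map $(w_1,w_2)\mapsto\mathsf{F}_{s_1,\ell}(w_1)\mathsf{F}_{s_2,\ell}(w_2)$ at $(Z^{(s_1),\epsilon}_\ell,Z^{(s_2),\epsilon}_\ell)$ rather than $(Z^{(s_1)}_\ell,Z^{(s_2)}_\ell)$, which costs the square root of the previous covariance gap via the matrix-square-root estimate Lemma \ref{lem:cov_sqrt_diff}; iterating over the $t$ steps yields a covariance gap of size $(K\Lambda)^{ct}\epsilon^{c^{-t}}$, and then $\mathrm{(IV)}\le(K\Lambda)^{ct}\epsilon^{c^{-t}}$ again by Lemma \ref{lem:cov_sqrt_diff}. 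Terms $\mathrm{(I)}$ and $\mathrm{(II)}$ follow by propagating, through the common-$A$ AMP recursions, an $\ell_2$-bound on $\overline{z}^{(\cdot)}-\overline{z}^{(\cdot),\epsilon}$, respectively $\overline{z}^{(\cdot),\epsilon}-z^{(\cdot),\epsilon}$: one uses $\pnorm{A}{\op}=\bigo(K)$ with high probability, the delocalization of Proposition \ref{prop:AMP_iterate_infty} (which applies verbatim to the modified iterates), $\pnorm{\mathsf{F}^\epsilon-\mathsf{F}}{\infty}\lesssim\Lambda\epsilon$, and finally the pseudo-Lipschitzness of $\psi$ with a Cauchy--Schwarz over $k\in[n]$. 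The only non-routine ingredient here is a sup-norm bound on the correction gaps $\pnorm{\overline{b}_t-\overline{b}^\epsilon_t}{\infty}$ and $\pnorm{\overline{b}^\epsilon_t-b^\epsilon_t}{\infty}$, each entering multiplied by a factor of order $\sqrt n$.

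This correction-gap control is the main obstacle, because $\mathsf{F}'$ and $(\mathsf{F}^\epsilon)'$ are only bounded, not continuous, so $\E[(\cdot)'(W)]$ cannot be estimated through $W_1$-closeness of the argument $W$. For $\pnorm{\overline{b}^\epsilon_t-b^\epsilon_t}{\infty}$ one exploits that the \emph{smooth} derivative $(\mathsf{F}^\epsilon_{t,\ell})'$ is $(c\Lambda/\epsilon)$-Lipschitz and that the laws of $z^{(t),\epsilon}_\ell$ and $Z^{(t),\epsilon}_\ell$ are Wasserstein-close by Theorem \ref{thm:AMP_sym} applied to $z^\epsilon$ with Lipschitz test functions, the resulting $1/\epsilon$ factor being absorbed later. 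For $\pnorm{\overline{b}_t-\overline{b}^\epsilon_t}{\infty}$ one uses the Gaussian-kernel identity to rewrite $\E[(\mathsf{F}^\epsilon_{t,\ell})'(Z^{(t),\epsilon}_\ell)|z^{(0)}]=\E[\mathsf{F}_{t,\ell}'(\mathcal{N}(0,\var(Z^{(t),\epsilon}_\ell)+\epsilon^2))|z^{(0)}]$, reducing matters to comparing two expectations of the bounded function $\mathsf{F}_{t,\ell}'$ at centered Gaussians whose variances differ by $O(\epsilon^{c^{-t}})$, via $\pnorm{\mathcal{N}(0,v_1)-\mathcal{N}(0,v_2)}{\mathrm{TV}}\lesssim|v_1-v_2|/(v_1\wedge v_2)$; this step requires the state-evolution variances $\var(Z^{(s)}_\ell)$ to stay bounded away from $0$ over the finite iteration, a non-degeneracy property that may be assumed without loss of generality under $(\mathrm{SA2})$ (after a vanishing regularization of $\{\mathsf{F}_s\}$ or of $z^{(0)}$ the variances are strictly positive, and the final bound passes to the limit). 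Collecting the four estimates and choosing $\epsilon=n^{-\delta}$ with $\delta=\delta(t)$ of order $c^{-t}$ balances the $n^{-1/c_0^t}$-type gains of $\mathrm{(I)}$--$\mathrm{(III)}$ against the polynomial $n^{\delta ct^3}$-type losses from the $1/\epsilon$ inflation and the $\epsilon^{c^{-t}}$ error of $\mathrm{(IV)}$, all of which can be absorbed into a bound of the claimed form $(K\Lambda\log n(1+\pnorm{z^{(0)}}{\infty}))^{c_0t^3}n^{-1/c_0^t}$ after enlarging the universal constant $c_0$.
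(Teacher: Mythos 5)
Your overall architecture coincides with the paper's proof: mollify the nonlinearities, split into four comparisons (smoothed vs.\ original state evolution by iterating the covariance recursion with Lemma \ref{lem:cov_sqrt_diff}; smoothed AMP vs.\ its state evolution via Theorem \ref{thm:AMP_sym}; smoothed AMP with self-consistent vs.\ SE-based corrections, controlled by applying Theorem \ref{thm:AMP_sym} with the $(\Lambda/\epsilon)$-Lipschitz smoothed derivative as test function; and the two modified AMP trajectories with smoothed vs.\ original nonlinearities, by propagating $\ell_2$-differences through the common-$A$ recursion), and finally choose $\epsilon=n^{-c^{-t}}$. The only genuinely different device is your treatment of $\pnorm{\overline{b}_t-\overline{b}^\epsilon_t}{\infty}$: you invoke the Gaussian-kernel identity $\E[(\mathsf{F}^\epsilon)'(\mathcal{N}(0,\sigma^2))]=\E[\mathsf{F}'(\mathcal{N}(0,\sigma^2+\epsilon^2))]$ and a total-variation bound between centered Gaussians, whereas the paper rewrites both expectations via Stein's identity $\E \mathsf{F}'(Z)=\sigma^{-2}\E[Z\mathsf{F}(Z)]$ and uses the variance proximity established in its first step. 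Both devices cost an inverse power of the state-evolution variances, so they are essentially interchangeable.

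The genuine gap is your claim that the required variance floor "may be assumed without loss of generality" via a vanishing regularization of $\{\mathsf{F}_s\}$ or $z^{(0)}$, with the final bound "passing to the limit." This does not work as stated: your TV estimate carries the factor $\abs{v_1-v_2}/(v_1\wedge v_2)$, so the intermediate error bound degrades as the regularization parameter tends to zero, and the limit does not recover an estimate of the claimed $\sigma_\ast$-free form; worse, when $\var(Z^{(t)}_\ell)$ genuinely degenerates, $\E[\mathsf{F}'_{t,\ell}(Z^{(t)}_\ell)]$ is the integral of a weak derivative against an atomic law (e.g.\ $\mathsf{F}_{t,\ell}(x)=\abs{x}$ at a point mass), so the correction $\overline{b}_t$ itself is not canonically defined and some non-degeneracy is needed for the statement, not merely for the proof. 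The paper resolves this by making the dependence on $\sigma_\ast$ explicit---its Stein-identity step is carried out under the constraint $\err_{n,t}(\delta)\leq\min\{1,\sigma_\ast^2/2\}$ and produces $\sigma_\ast^{-1}$ factors that are then absorbed---and by surfacing the restriction in Theorem \ref{thm:AMP_sym_avg} through the condition that the right-hand side be at most $(1\wedge\sigma_{\ast,\psi})^{c_0}$. If you replace your limiting argument by an explicit, quantified dependence on $\min_{s,\ell}\var(Z^{(s)}_\ell\,|\,z^{(0)})$, to be absorbed under that threshold condition, the rest of your plan goes through.
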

\begin{proof}
	We assume for notational simplicity that $\psi$ is full dimensional (i.e., $\psi$ depends on all of its arguments), and write $\sigma_{*,\psi}$ as $\sigma_{*}$. Let $\varphi \in C^\infty(\R)$ be a mollifier supported in $[-1,1]$ such that $\varphi\geq 0$ and $\int \varphi =1$, and $\varphi_\delta(\cdot)\equiv \delta^{-1}\varphi(\cdot/\delta)$. Let $\mathsf{F}_{\delta;s,\ell}\equiv \mathsf{F}_{s,\ell}*\varphi_\delta$. Then:
	\begin{itemize}
		\item For $q=1$, $
		\abs{\mathsf{F}_{\delta;s,\ell}'(x)} = \bigabs{  \int \mathsf{F}_{s,\ell}'(x-z) \varphi_\delta(z)\,\d{z}  }\leq \Lambda$. 
		\item For $q\geq 2, q \in \N$ and $\delta \in (0,1]$,
		\begin{align*}
		\bigabs{\mathsf{F}_{\delta;s,\ell}^{(q)}(x)}= \frac{1}{\delta^q} \biggabs{\int_{[x\pm \delta]} \mathsf{F}_{s,\ell}'(z) \varphi^{(q-1)}\big((x-z)/\delta\big)\,\d{z}}\leq 2\Lambda \pnorm{\varphi^{(q-1)}}{\infty}  \delta^{-(q-1)}.
		\end{align*}
		\item $
		\abs{\mathsf{F}_{\delta;s,\ell}(x)-\mathsf{F}_{s,\ell}(x)}=\bigabs{\int_{-1}^{1} \big(\mathsf{F}_{s,\ell}(x-\delta z)- \mathsf{F}_{s,\ell}(x)\big)\varphi(z)\,\d{z}  }\leq \Lambda \pnorm{\varphi}{\infty} \delta$. 
	\end{itemize}

	\noindent (\textbf{Step 1}). Let $\{z_\delta^{(\cdot)}\}$ be the AMP iterate associated with the non-linearities $\{\mathsf{F}_{\delta;s}\}_{s\in [0:t]}$ and the same initialization $z^{(0)}$. We define the state evolution associated with $z^{(\cdot)}_\delta$ as follows. Let $Z^{(1)}_\delta,Z^{(2)}_\delta,\ldots \in \R^n$ be a sequence of correlated $n$-dimensional centered Gaussian vectors with independent entries, whose correlation structure along the iteration path is determined with the initial condition $\var(Z^{(1)}_{\delta;k})\equiv n^{-1}\sum_{\ell \in [n]} V_{k\ell}^2 \mathsf{F}_{\delta;0,\ell}^2(z^{(0)}_\ell)$ for $k \in [n]$, and the subsequent recursive updates: 
	\begin{align}\label{ineq:smooth_AMP_step1_0}
	\cov\big(Z^{(s_1+1)}_{\delta;k},Z^{(s_2+1)}_{\delta;k}\big)\equiv \frac{1}{n}\sum_{\ell \in [n]} V_{k\ell}^2 \E \big[\mathsf{F}_{\delta;s_1,\ell}\big(Z^{(s_1)}_{\delta;\ell}\big) \mathsf{F}_{\delta;s_2,\ell}\big(Z^{(s_2)}_{\delta;\ell}\big)\big],\quad k \in [n]. 
	\end{align}
	For notational convenience, we shall identify $Z_{0;\cdot}^{(\cdot)}=Z_{\cdot}^{(\cdot)}$ as defined in Definition \ref{def:AMP_se} for the (unsmoothed) AMP iterate. Using almost the same proof as in Step 2 of Theorem \ref{thm:AMP_sym}, there exists some universal constant $c_0>0$ such that
	\begin{align}\label{ineq:smooth_AMP_step1_1}
	&\max_{k \in [n]}\bigabs{ \E \psi\big( Z^{(t+1)}_k,\ldots,Z^{(1)}_k\big)- \E \psi\big( Z^{(t+1)}_{\delta;k},\ldots,Z^{(1)}_{\delta;k}\big) }\leq  \big(K\Lambda\log n\big)^{c_0 t^2} \delta^{1/c_0^t}.
	\end{align}
	In particular, with $\sigma_{\delta;t,k}^2\equiv \var(Z_{\delta;k}^{(t)})$, we have 
	\begin{align}\label{ineq:smooth_AMP_step1_2}
	\max_{k \in [n]} \bigabs{\sigma_{\delta;t,k}^2-\sigma_{t,k}^2  }\leq
	\big(K\Lambda\log n\big)^{c_0 t^2}\cdot \delta^{1/c_0^t}\equiv \err_{n,t}(\delta).
	\end{align}

	\noindent (\textbf{Step 2}). Let $\{\overline{z}_\delta^{(\cdot)}\}$ be the modified AMP iterate using the Onsager correction vector $\{\overline{b}_{\delta;t}\}$ with the above state evolution, i.e., $\overline{b}_{\delta;t,k}= n^{-1}\sum_{\ell \in [n]} V_{k\ell}^2 \E [\mathsf{F}_{\delta;t,\ell}'(Z_{\delta;\ell}^{(t)})|z^{(0)}]$. In this step, we prove that for some universal constant $c_0>0$, if $t\leq \log n/c_0$, 
	\begin{align}\label{ineq:smooth_AMP_step2}
	&\biggabs{\frac{1}{n}\sum_{k \in [n]} \Big(\E \psi\big( z^{(t+1)}_{\delta;k},z^{(t)}_{\delta;k},\ldots,z^{(1)}_{\delta;k}\big)-  \E \psi\big( \overline{z}^{(t+1)}_{\delta;k},\overline{z}^{(t)}_{\delta;k},\ldots,\overline{z}^{(1)}_{\delta;k}\big) \Big) }\nonumber\\
	&\leq \big(K\Lambda \delta^{-1}\log n\cdot (1+\pnorm{z^{(0)}}{\infty})\big)^{c_0 t^3}\cdot n^{-1/c_0^t}.
	\end{align}
	To this end, note that by applying Theorem \ref{thm:AMP_sym} to the smoothed AMP iterate $\{z^{(\cdot)}_\delta\}$, 
	\begin{align*}
	&\pnorm{z_\delta^{(t+1)}-\overline{z}^{(t+1)}_\delta}{}\\
	&\leq \pnorm{A\mathsf{F}_{\delta;t}(z_\delta^{(t)})- A \mathsf{F}_{\delta;t}(\overline{z}^{(t)}_\delta )}{}+K^2\Lambda\cdot  \pnorm{\mathsf{F}_{\delta;t-1}(z_\delta^{(t-1)})- \mathsf{F}_{\delta;t-1}(\overline{z}^{(t-1)}_\delta )}{}\\
	&\qquad + K^2\cdot \frac{1}{n} \sum_{\ell \in [n]} \bigabs{\E \mathsf{F}_{\delta;t,\ell}'(z^{(t)}_{\delta;\ell})- \E \mathsf{F}_{\delta;t,\ell}'(Z^{(t)}_{\delta;\ell})}  \cdot \pnorm{\mathsf{F}_{\delta;t-1}(\overline{z}^{(t-1)}_\delta)}{}\\
	&\leq \big(\pnorm{A}{\op}+K^2\Lambda\big)\cdot\Big(\max_{s=t,t-1}\pnorm{\mathsf{F}_{\delta;s}(z_\delta^{(s)})- \mathsf{F}_{\delta;s}(\overline{z}^{(s)}_\delta)}{}\\
	&\qquad + \big(K\Lambda \delta^{-1}\log n\cdot (1+\pnorm{z^{(0)}}{\infty})\big)^{c t^3}\cdot n^{-1/c^t}\cdot (\sqrt{n}+\pnorm{\overline{z}^{(t-1)}_\delta}{}) \Big).
	\end{align*}
	Consequently,
	\begin{align*}
	&\max_{s \in [0:t+1]}\pnorm{z_\delta^{(s)}-\overline{z}^{(s)}_\delta}{}\leq (K\Lambda)^2 (1+\pnorm{A}{\op})\cdot \max_{s \in [0:t]}\pnorm{z_\delta^{(s)}-\overline{z}^{(s)}_\delta}{}\\
	&\qquad + (1+\pnorm{A}{\op})\big(K\Lambda \delta^{-1}\log n\cdot (1+\pnorm{z^{(0)}}{\infty})\big)^{c t^3}\cdot n^{-1/c^t}\cdot (\sqrt{n}+\max_{s \in [0:t]}\pnorm{\overline{z}^{(s)}_\delta}{}). 
	\end{align*}
	Iterating the bound, we arrive at
	\begin{align}\label{ineq:smooth_AMP_step2_1}
	\max_{s \in [0:t+1]} n^{-1/2}\pnorm{z_\delta^{(s)}-\overline{z}^{(s)}_\delta}{} &\leq  (1+\pnorm{A}{\op})^{c t}\cdot \big(K\Lambda \delta^{-1}\log n\cdot (1+\pnorm{z^{(0)}}{\infty})\big)^{c t^3}\nonumber\\
	&\qquad \times n^{-1/c^t}\cdot \Big(1+\max_{s \in [0:t]}\pnorm{\overline{z}^{(s)}_\delta}{\infty}\Big). 
	\end{align}
	On the other hand, using the same leave-one-out argument as in the proof of Proposition \ref{prop:AMP_iterate_infty} (where for the current modified AMP iterate, we only need to require $\mathsf{F}_{\cdot,\cdot}$ to be Lipschitz), with probability at least $1-(nt)^{c_0} e^{-(x^2\wedge n)/c_0}$,
	\begin{align}\label{ineq:smooth_AMP_step2_2}
	\max_{s \in [t+1]} \big(\pnorm{\overline{z}^{(s)}_\delta}{\infty}\vee \pnorm{\overline{z}^{(s)}}{\infty}\big)\leq (K\Lambda)^{c_0 t}x\cdot (1+\pnorm{z^{(0)}}{\infty}).
	\end{align}
	Finally note that with $\psi_{0}(z^{(\cdot)}_\delta)\equiv n^{-1}\sum_{k \in [n]} \psi\big(z_{\delta;k}^{(t+1)},\ldots,z_{\delta;k}^{(1)}\big) $ and similarly $\psi_{0}(\overline{z}^{(\cdot)}_\delta)\equiv n^{-1}\sum_{k \in [n]} \psi\big(\overline{z}_{\delta;k}^{(t+1)},\ldots,\overline{z}_{\delta;k}^{(1)}\big) $, we have 
	\begin{align}\label{ineq:smooth_AMP_step2_3}
	&\bigabs{ \psi_0(z^{(\cdot)}_\delta)-\psi_0(\overline{z}^{(\cdot)}_\delta)}\nonumber\\
	&\leq \Lambda t^{c_0}\cdot \max_{s \in [0:t+1]} \big(1+\pnorm{z^{(s)}_\delta}{\infty}+\pnorm{\overline{z}^{(s)}_\delta}{\infty}\big)^{c_0 }\cdot n^{-1/2} \pnorm{z_\delta^{(s)}-\overline{z}^{(s)}_\delta}{}.
	\end{align}
	Combining (\ref{ineq:smooth_AMP_step2_1})-(\ref{ineq:smooth_AMP_step2_3}), for any $D>0$, there exists some $c_1=c_1(D)>0$, such that for $t\leq  \log n/c_0$, on an event with probability at least $1-c_1 n^{-D}$,
	\begin{align*}
	\bigabs{ \psi_0(z^{(\cdot)}_\delta)-\psi_0(\overline{z}^{(\cdot)}_\delta)}\leq \big(c_1 K\Lambda \delta^{-1}\log n\cdot (1+\pnorm{z^{(0)}}{\infty})\big)^{c t^3}\cdot n^{-1/c^t}.
	\end{align*}
	The moment estimate in (\ref{ineq:smooth_AMP_step2}) follows by the following simple apriori estimates:  by assuming $\psi(0)=0$ without loss of generality, using Proposition \ref{prop:AMP_iterate_infty} we have
	\begin{align*}
	\E^{1/2}\abs{\psi_0(z^{(\cdot)}_\delta)}^2&\leq \Lambda \E^{1/2}\max_{s \in [t+1]}(1\vee \pnorm{z^{(s)}_\delta}{\infty})^{c}\cdot \big(n^{-1/2} \pnorm{z^{(s)}_\delta}{}\big)^2\\
	& \leq \big(K\Lambda \log n\cdot  (1+\pnorm{z^{(0)}}{\infty})\big)^{c t}.
	\end{align*}
	A completely similar estimate holds for $\E^{1/2}\abs{\psi_0(\overline{z}^{(\cdot)}_\delta)}^2$. 
	
	\noindent (\textbf{Step 3}). In this step, we prove that for $t\leq \log n/c_0$,
	\begin{align}\label{ineq:smooth_AMP_step3}
	&\biggabs{\frac{1}{n}\sum_{k \in [n]} \Big(\E \psi\big( \overline{z}^{(t+1)}_k,\overline{z}^{(t)}_k,\ldots,\overline{z}^{(1)}_k\big)-  \E \psi\big( \overline{z}^{(t+1)}_{\delta;k},\overline{z}^{(t)}_{\delta;k},\ldots,\overline{z}^{(1)}_{\delta;k}\big) \Big) }\nonumber\\
	&\leq \big(K\Lambda \sigma_{*}^{-1} \log n\cdot  (1+\pnorm{z^{(0)}}{\infty})\big)^{c_0 t^2}\cdot \big(\err_{n,t}(\delta)+\delta+n^{-100}\big).
	\end{align}
	The proof has certain similarities to Step 2. For $\err_{n,t}(\delta)\leq \sigma_{*}^2/2$, 
	\begin{align*}
	&\pnorm{\overline{z}_\delta^{(t+1)}-\overline{z}^{(t+1)}}{}\\
	&\leq \pnorm{A\mathsf{F}_{\delta;t}(\overline{z}_\delta^{(t)})- A \mathsf{F}_t(\overline{z}^{(t)})}{}+K^2\Lambda\cdot  \pnorm{\mathsf{F}_{\delta;t-1}(\overline{z}_\delta^{(t-1)})- \mathsf{F}_{t-1}(\overline{z}^{(t-1)})}{}\\
	&\qquad + K^2\cdot \frac{1}{n} \sum_{\ell \in [n]} \bigabs{\E \mathsf{F}_{\delta;t,\ell}'(Z^{(t)}_{\delta;\ell})- \E \mathsf{F}_{t,\ell}'(Z^{(t)}_{\ell})}  \cdot \pnorm{\mathsf{F}_{t-1}(\overline{z}^{(t-1)})}{}\\
	&\stackrel{(*)}{\leq} \big(\pnorm{A}{\op}+K^2\Lambda \big)\cdot \Big(\max_{s=t,t-1}\pnorm{\mathsf{F}_{\delta;s}(\overline{z}_\delta^{(s)})- \mathsf{F}_{\delta;s}(\overline{z}^{(s)})}{}+\max_{s=t,t-1} \pnorm{\mathsf{F}_{\delta;s}(\overline{z}^{(s)})- \mathsf{F}_{s}(\overline{z}^{(s)})}{}\\
	&\qquad + (K\Lambda)^{ct}\sigma_{*}^{-c}\cdot  \big(\err_{n,t}(\delta)+ \delta\big)\cdot \big(\sqrt{n}+\pnorm{\overline{z}^{(t-1)}}{}\big) \Big)\\
	&\leq \big(\pnorm{A}{\op}+1\big) \big(K\Lambda {\sigma}_{*}^{-1}\big)^{ct} \cdot \Big(\max_{s=t,t-1} \pnorm{\overline{z}_\delta^{(s)}-\overline{z}^{(s)}}{}+ \big(\err_{n,t}(\delta)+ \delta\big)\cdot \big(\sqrt{n}+\pnorm{\overline{z}^{(t-1)}}{}\big) \Big). 
	\end{align*}
	Here in $(\ast)$ we used the following estimate: for $\err_{n,t}(\delta)\leq \sigma_{*}^2/2$, by (\ref{ineq:smooth_AMP_step1_2}), 
	\begin{align*}
	&\frac{1}{n} \sum_{\ell \in [n]} \bigabs{\E \mathsf{F}_{\delta;t,\ell}'(Z^{(t)}_{\delta;\ell})- \E \mathsf{F}_{t,\ell}'(Z^{(t)}_{\ell})}  \\
	& = \frac{1}{n} \sum_{\ell \in [n]} \bigabs{\sigma_{\delta;t,k}^{-2}\E Z^{(t)}_{\delta;\ell}\mathsf{F}_{\delta;t,\ell}(Z^{(t)}_{\delta;\ell})- \sigma_{t,k}^{-2}\E Z^{(t)}_{\ell}\mathsf{F}_{t,\ell}(Z^{(t)}_{\ell})}  \\
	&\leq \frac{(K\Lambda)^{ct} }{\sigma_{*}^2 (\sigma_{*}^2 -\err_{n,t}(\delta))_+ }\cdot  \err_{n,t}(\delta)+\frac{1}{\sigma_{*}^{2} }\cdot  \frac{1}{n} \sum_{\ell \in [n]} \bigabs{\E Z^{(t)}_{\delta;\ell}\mathsf{F}_{\delta;t,\ell}(Z^{(t)}_{\delta;\ell})- \E Z^{(t)}_{\ell}\mathsf{F}_{t,\ell}(Z^{(t)}_{\ell})}\\
	&\leq (K\Lambda)^{ct}\sigma_{*}^{-c}\cdot  \big(\err_{n,t}(\delta)+ \delta\big). 
	\end{align*}
	Consequently,
	\begin{align*}
	&\max_{s \in [0:t+1]}\pnorm{\overline{z}_\delta^{(s)}-\overline{z}^{(s)}}{}\leq (\pnorm{A}{\op}+1) \big(K\Lambda \sigma_{*}^{-1}\big)^{ct}\\
	&\qquad \times \Big(\max_{s \in [0:t]} \pnorm{\overline{z}_\delta^{(s)}-\overline{z}^{(s)}}{}+ \big(\err_{n,t}(\delta)+ \delta\big)\cdot \big(\sqrt{n}+\max_{s \in [0:t]}\pnorm{\overline{z}^{(s)}}{}\big) \Big).
	\end{align*}
	Iterating the bound, we obtain
	\begin{align*}
	&\max_{s \in [0:t+1]}n^{-1/2}\pnorm{\overline{z}_\delta^{(s)}-\overline{z}^{(s)}}{}\nonumber\\
	&\leq \big(K\Lambda\sigma_{*}^{-1}\cdot (1\vee \pnorm{A}{\op})\big)^{c_0 t^2}\cdot \big(\err_{n,t}(\delta)+ \delta\big)\cdot \big(1+\max_{s \in [0:t]}\pnorm{\overline{z}^{(s)}}{\infty}\big).  
	\end{align*}
	From here the proof proceeds as in Step 2. The case $\err_{n,t}(\delta)> \sigma_{*}^2/2$ follows by the delocalization estimates in (\ref{ineq:smooth_AMP_step2_2}).

	\noindent (\textbf{Step 4}). Applying Theorem \ref{thm:AMP_sym} to the smoothed AMP iterate $\{z_\delta^{(\cdot)}\}$, we obtain 
	\begin{align*}
	&\biggabs{ \frac{1}{n}\sum_{k \in [n]} \Big(\E \psi\big( z^{(t+1)}_{\delta;k},z^{(t)}_{\delta;k},\ldots,z^{(1)}_{\delta;k}\big)- \E \psi\big( Z^{(t+1)}_{\delta;k},Z^{(t)}_{\delta;k},\ldots,Z^{(1)}_{\delta;k}\big)\Big) }\\
	&\leq \big(K\Lambda \delta^{-1}\log n\cdot (1+\pnorm{z^{(0)}}{\infty})\big)^{c_0 t^3}\cdot n^{-1/c_0^t}. 
	\end{align*}
	Combined with (the averaged version of) (\ref{ineq:smooth_AMP_step1_1}), (\ref{ineq:smooth_AMP_step2}) and (\ref{ineq:smooth_AMP_step3}), we have for $t\leq \log n/c_0$,
	\begin{align*}
	&\biggabs{ \frac{1}{n}\sum_{k \in [n]} \Big(\E \psi\big( \overline{z}^{(t+1)}_{k},\overline{z}^{(t)}_{k},\ldots,\overline{z}^{(1)}_{k}\big)- \E \psi\big( Z^{(t+1)}_{k},Z^{(t)}_{k},\ldots,Z^{(1)}_{k}\big)\Big)  }\\
	&\leq \big(K\Lambda\sigma_{*}^{-1}\cdot \log n\cdot (1+\pnorm{z^{(0)}}{\infty})\big)^{c_0 t^3}\cdot \big(\delta^{-c_0 t^3}n^{-1/c_0^t}+\delta^{1/c_0^t}\big).
	\end{align*}
	Choosing $\delta\equiv n^{-1/c_\ast^t}$ for sufficiently large $c_\ast$ to conclude.
\end{proof}

\begin{proof}[Proof of Theorem \ref{thm:AMP_sym_avg}]
	Without loss of generality we assume $\psi(0)=0$ (otherwise consider $\psi-\psi(0)$). By Lemma \ref{lem:smoothed_AMP_moment}, we only need to prove concentration of $H(A)\equiv n^{-1}\sum_{k \in [n]} \psi\big( \overline{z}^{(t+1)}_{k}(A),\overline{z}^{(t)}_{k}(A),\ldots,\overline{z}^{(1)}_{k}(A)\big)$. It is easy to verify that
	\begin{align*}
	\abs{H(A)}\leq c t\Lambda\cdot \Big(1+\max_{s\in [t+1]}n^{-1}\pnorm{\overline{z}^{(s)}}{}^2\Big)\leq \big(K\Lambda(1+\pnorm{A}{\op})\big)^{ct} \big(1+\pnorm{z^{(0)}}{\infty}\big)^c,
	\end{align*}
	and that for any two symmetric matrices $A_1,A_2 \in \R^{n\times n}$,
	\begin{align*}
	\abs{H(A_1)-H(A_2)}&\leq \big(K\Lambda(1+\pnorm{A_1}{\op}+\pnorm{A_2}{\op})\big)^{ct} \big(1+\pnorm{z^{(0)}}{\infty}\big)^c\cdot \pnorm{A_1-A_2}{\op}.
	\end{align*}
	Now we may apply the Gaussian concentration Lemma \ref{lem:gaussian_conc} to conclude.
\end{proof}

\subsection{Proofs of Theorems \ref{thm:AMP_asym_loo}, \ref{thm:AMP_asym} and \ref{thm:AMP_asym_avg}}\label{subsection:proof_AMP_asym}

We will reduce the asymmetric AMP iterate (\ref{def:AMP_asym}) into the symmetric AMP iterate (\ref{def:AMP}). The reduction scheme is well understood, cf. \cite[Section 6]{berthier2020state}; we spell out some details for the convenience of the reader. Let $z^{(-1)}\equiv \binom{u^{(0)}}{0_n}, z^{(0)}\equiv \binom{0_m}{v^{(0)}}\in \R^{m+n}$ (recall $z^{(-1)}$ is a dummy variable; the algorithm is initialized at $z^{(0)}$), and for $t=1,2,\cdots$, let 
\begin{align*}
z^{(2t-1)}\equiv \binom{u^{(t)}}{0_n},\quad   z^{(2t)}\equiv \binom{0_m}{v^{(t)}}\in \R^{m+n}.
\end{align*}
Let $\overline{\mathsf{F}}_{2t-1},\overline{\mathsf{F}}_{2t}: \R^{m+n}\to \R^{m+n}$ be defined via
\begin{align*}
\overline{\mathsf{F}}_{2t-1}(z^{(2t-1)})&\equiv \binom{\mathsf{G}_{t}(u^{(t)})}{0_n},\quad \overline{\mathsf{F}}_{2t}(z^{(2t)})\equiv \binom{0_m}{ \mathsf{F}_t(v^{(t)}) }.
\end{align*}
Furthermore, recall with $\phi=m/n$, we let
\begin{align*}
\overline{V}\equiv  (\phi^{-1}+1)^{1/2}
\begin{pmatrix}
0_{m\times m} & V\\
V^\top & 0_{n\times n}
\end{pmatrix}
\in \R^{(m+n)\times (m+n)}. 
\end{align*}
Using these notation, we may rewrite the asymmetric AMP iterate (\ref{def:AMP_asym}) into the standard symmetric AMP iterate (\ref{def:AMP}):
\begin{align*}
z^{(2t+1)}& = (\overline{V}\circ G_{m+n}) \overline{\mathsf{F}}_{2t}(z^{(2t)})- b_{2t}(\overline{V})\circ  \overline{\mathsf{F}}_{2t-1}(z^{(2t-1)}),\\
z^{(2t+2)}& = (\overline{V}\circ G_{m+n}) \overline{\mathsf{F}}_{2t+1}(z^{(2t+1)})- b_{2t+1}(\overline{V})\circ \overline{\mathsf{F}}_{2t}(z^{(2t)}). 
\end{align*}
Here for $k \in [m+n]$ and $s \in \N$, $
b_{s,k}(\overline{V})=\frac{1}{m+n}\sum_{\ell \in [m+n]} \overline{V}_{k\ell}^2 \E \overline{\mathsf{F}}_{s,\ell}'(z^{(s)}_\ell)$. It is easy to check that, when $s=2t$, $b_s(\overline{V})=\binom{b_t^{\mathsf{F}}}{0_n}$, and when $s=2t+1$, $b_s(\overline{V})=\binom{0_m}{b_{t+1}^{\mathsf{G}}}$. The state evolution in Definition \ref{def:AMP_asym_se} follows from Definition \ref{def:AMP_se} and the above identification. We may now apply Theorems \ref{thm:AMP_sym_loo}, \ref{thm:AMP_sym} and \ref{thm:AMP_sym_avg} to conclude. \qed

\subsection{Proof of Theorem \ref{thm:AMP_oracle_data_diff}}\label{subsection:proof_AMP_oracle_data_diff}

\subsubsection{Some further notation} 
Let
\begin{align*}
\Delta \hat{z}^{(t)}&\equiv \hat{z}^{(t)}-z^{(t)},\quad \hat{z}^{(t)}(u) \equiv (1-u) z^{(t)}+u  \hat{z}^{(t)},\quad u \in [0,1].
\end{align*}
Recall for any vector $x \in \R^n$, $\mathfrak{D}_x=(x_i\bm{1}_{i=j})_{i,j\in [n]}$. We let
\begin{align*}
(\hat{\mathsf{F}}')^{(t)}\equiv \E_U \mathsf{F}'_t\big(\hat{z}^{(t)}(U)\big) \in \R^n,\quad \hat{\mathsf{D}}^{(t)}\equiv \mathfrak{D}_{(\hat{\mathsf{F}}')^{(t)}}\in \R^{n\times n},
\end{align*}
with $\E_U$ being the expectation taken over $U \sim \text{Unif}[0,1]$ independent of $A$.
For notational convenience, we also write $\hat{\mathsf{B}}_t\equiv \mathfrak{D}_{\hat{b}_t}\in \R^{n\times n}$, $\Delta \hat{\mathsf{B}}_t\equiv \hat{\mathsf{B}}_t-{\mathsf{B}}_t$, and $\Delta\hat{b}_{t,k} \equiv \hat{b}_{t,k} - b_{t,k}$ for $k \in [n]$. Using these notation, a first-order Taylor expansion yields that for $r=1,2,\ldots$,
\begin{align}\label{eqn:delta_z_hatz_recursion}
\Delta \hat{z}^{(r)}& = A \hat{\mathsf{D}}^{(r-1)} \Delta \hat{z}^{(r-1)}-\mathsf{B}_{r-1} \hat{\mathsf{D}}^{(r-2)}\Delta \hat{z}^{(r-2)}-\Delta \hat{\mathsf{B}}_{r-1} \mathsf{F}_{r-2}(\hat{z}^{(r-2)}).
\end{align}
For any given $t$, we define a sequence of matrices $\hat{M}_{0}^{(t)} ,\hat{M}_{1}^{(t)},\ldots, \hat{M}_{t}^{(t)}  \in \R^{n\times n}$ recursively as follows: let  $\hat{M}_{-1}^{(t)} =0_{n\times n}$, $\hat{M}_{0}^{(t)} \equiv \hat{\mathsf{D}}^{(t)}$, and for $s \in [t]$, 
\begin{align}\label{def:M_hat_recursion}
\hat{M}_{s}^{(t)} \equiv \big(\hat{M}_{s-1}^{(t)}  A - \hat{M}_{s-2}^{(t)}  \mathsf{B}_{t-s+1}\big) \hat{\mathsf{D}}^{(t-s)}. 
\end{align}
For a fixed $s \in [t]$, we consider another sequence of matrices $\hat{N}_0^{(s)},\ldots, \hat{N}_s^{(s)} \in \R^{n\times n}$, defined recursively as follows: $\hat{N}_{-1}^{(s)}\equiv 0_{n\times n}, \hat{N}_0^{(s)}\equiv I_n$, and for $u \in [s]$,
\begin{align}\label{def:N_hat_recursion}
\hat{N}_u^{(s)}\equiv  A  \mathsf{\hat{D}}^{(t-s+u)} \hat{N}_{u-1}^{(s)} - \mathsf{B}_{t-s+u} \mathsf{\hat{D}}^{(t-s+u-1)} \hat{N}_{u-2}^{(s)}.
\end{align}

\subsubsection{The error decomposition}
The following lemma provides an analogue of Proposition \ref{prop:AMP_loo_decom}.
\begin{lemma}\label{lem:AMP_oracle_data_diff_loo}
	Suppose the conditions in Theorem \ref{thm:AMP_oracle_data_diff} hold.
	Then there exists some universal constant $c_0>0$ such that 
	\begin{align*}
	\abs{\Delta \hat{z}^{(t+1)}_k}\leq (K\Lambda)^{c_0 t} \cdot \bigg[\max_{1\leq r\leq s\leq t}\bigabs{ \widehat{\rem}_{r;k}^{(s)}}+\max_{s \in [0:t]}\big(1+\pnorm{\hat{z}^{(s)}}{\infty}\big)\cdot \max_{s \in [t]}\abs{\Delta \hat{b}_{s,k}}\bigg].
	\end{align*}
	Here $\widehat{\rem}_{s;k}^{(t)}\equiv \iprod{A_k}{ \hat{M}_{t-s}^{(t)} \Delta \hat{\mathsf{B}}_{s-1} \mathsf{F}_{s-2}(\hat{z}^{(s-2)})}$.
\end{lemma}
\begin{proof}
	For any $k \in [n]$, (\ref{eqn:delta_z_hatz_recursion}) with $r=t+1$ yields that
	\begin{align*}
	\Delta \hat{z}^{(t+1)}_k&=\iprod{A_k}{ \hat{\mathsf{D}}^{(t)} \Delta \hat{z}^{(t)} }- b_{t,k}\cdot \hat{\mathsf{D}}^{(t-1)}_{kk} \Delta \hat{z}^{(t-1)}_k- \Delta \hat{b}_{t,k}\cdot \mathsf{F}_{t-1,k}(\hat{z}^{(t-1)}_k).
	\end{align*}
	Next we apply (\ref{eqn:delta_z_hatz_recursion}) with $r=t$ to the first term above to obtain
	\begin{align*}
	\iprod{A_k}{ \hat{\mathsf{D}}^{(t)} \Delta \hat{z}^{(t)} }&= \bigiprod{A_k}{\hat{M}_1^{(t)} \Delta \hat{z}^{(t-1)}- \hat{M}_0^{(t)}\mathsf{B}_{t-1} \hat{\mathsf{D}}^{(t-2)}\Delta \hat{z}^{(t-2)} }-\widehat{\rem}_{t;k}^{(t)}. 
	\end{align*}
	Now iterating this procedure until (\ref{eqn:delta_z_hatz_recursion}) with $r=1$, using the initial condition $\Delta \hat{z}^{(-1)}=\Delta \hat{z}^{(0)}=0$, we have $\iprod{A_k}{ \hat{\mathsf{D}}^{(t)} \Delta \hat{z}^{(t)} }=-\sum_{s \in [t]} \widehat{\rem}_{s;k}^{(t)}$. Consequently, 
	\begin{align*}
	\abs{\Delta \hat{z}^{(t+1)}_k}&\leq \biggabs{\sum_{s \in [t]} \widehat{\rem}_{s;k}^{(t)}}+(K\Lambda)^2 \cdot \Big[\abs{\Delta \hat{z}^{(t)}_k}+ (1+\pnorm{\hat{z}^{(t-1)}}{\infty})\cdot \abs{\Delta \hat{b}_{t,k}}\Big].
	\end{align*}
	Iterating the bound to conclude. 
\end{proof}

Below we shall control each term on the right hand side of the claimed inequality in the above lemma. First we provide controls for $\pnorm{\Delta \hat{b}_t}{\infty}$.

\subsubsection{Some apriori estimates}
\begin{lemma}\label{lem:AMP_oracle_data_diff_Delta_b}
	Suppose the conditions in Theorem \ref{thm:AMP_oracle_data_diff} hold. For any $D >0$, there exist some universal constant $c_0 >0$ and another constant $c_1 = c_1(D)$, such that with $\Prob(\cdot |z^{(0)})$-probability at least $1-c_1 n^{-D}$, 
	\begin{align*}
	\pnorm{\Delta \hat{b}_t}{\infty} \leq  \big(c_1 K\Lambda \log n \cdot ( 1+ \pnorm{z^{(0)}}{\infty} )\big)^{c_0t^3} \cdot  n^{-1/2}. 
	\end{align*}
	The moment estimate $\E^{1/p}\pnorm{\Delta \hat{b}_t}{\infty}^p$ holds similarly with the constants depending further on $p$.
\end{lemma}
\begin{proof}
	Note that for $k \in [n]$, 
	\begin{align*}
	\abs{\Delta\hat{b}_{t,k}} \leq \biggabs{\frac{1}{n}\sum_{\ell \in [n]} V_{k\ell}^2 (\mathrm{id} - \E ) \mathsf{F}_{t,\ell}'(z_{\ell}^{(t)}) } + (K\Lambda)^2 \cdot n^{-1/2} \pnorm{\Delta \hat{z}^{(t)}}{ } \equiv I_1 + I_2.
	\end{align*}
	For $I_2$, using the relation (\ref{eqn:delta_z_hatz_recursion}), we have
	\begin{align*}
	\pnorm{\Delta \hat{z}^{(t)}}{}&\leq (K\Lambda)^2(1+\pnorm{A}{\op})\cdot \Big[\max_{s \in \{t-1,t-2\}} \pnorm{\Delta \hat{z}^{(s)}}{}\\
	&\qquad\qquad + \max_{s \in [t-1]} n^{1/2}\pnorm{\Delta \hat{b}_s}{\infty}\cdot (1+n^{-1/2}\pnorm{\hat{z}^{(t-2)}}{})\Big].
	\end{align*}
	Using the simple apriori estimate $\pnorm{\hat{z}^{(t)}}{}\leq \big(K\Lambda (1+\pnorm{A}{\op})\big)^{c_0 t}( \sqrt{n} + \pnorm{z^{(0)}}{} )$, we then have 
	\begin{align*}
	\pnorm{\Delta \hat{z}^{(t)}}{}&\leq \big(K \Lambda (1+\pnorm{A}{\op})\big)^{c_0 t}(1 + \pnorm{z^{(0)}}{\infty})\cdot \Big[\max_{s \in \{t-1,t-2\}} \pnorm{\Delta \hat{z}^{(s)}}{}+ \max_{s \in [t-1]} n^{1/2}\pnorm{\Delta \hat{b}_s}{\infty} \Big].
	\end{align*}
	Iterating the bound and using the initial condition $\Delta \hat{z}^{(t)}=0$, 
	\begin{align*}
	I_2=(K\Lambda)^2\cdot n^{-1/2}\pnorm{\Delta \hat{z}^{(t)}}{}&\leq \big(K\Lambda (1+\pnorm{A}{\op})(1 + \pnorm{z^{(0)}}{\infty})\big)^{c_0 t^2}\cdot  \max_{s \in [t-1]} \pnorm{\Delta \hat{b}_s}{\infty}.
	\end{align*}
	For $I_1$, let $H_k(A)\equiv n^{-1}\sum_{\ell \in [n]} V_{k\ell}^2\mathsf{F}_{t,\ell}' \big(z^{(t)}_\ell(A)\big)$. Then it is easy to estimate
	\begin{align*}
	\abs{H_k(A)}&\leq (K\Lambda)^2\big(1+n^{-1/2}\pnorm{z^{(t)}(A)}{}\big)\leq \big(K\Lambda (1+\pnorm{A}{\op})\big)^{c_0 t}( 1+ \pnorm{z^{(0)}}{\infty} ).
	\end{align*}
	Moreover, for any two symmetric matrices $A_1, A_2 \in \R^{n\times n}$,
	\begin{align*}
	\abs{H_k(A_1)-H_k(A_2)}&\leq (K\Lambda)^2\cdot n^{-1/2}\pnorm{z^{(t)}(A_1)-z^{(t)}(A_2)}{}\\
	&\leq \big(K\Lambda(1+\pnorm{A_1}{\op} +\pnorm{A_2}{\op}  )\big)^{c_0t} \cdot \pnorm{A_1 - A_2}{\op}.
	\end{align*}
	Now applying the Gaussian concentration in Lemma \ref{lem:gaussian_conc}, we have for $t\leq \log n /c_0 $, with  $\Prob(\cdot |z^{(0)})$-probability at least $1-c_1 n^{-D}$,
	\begin{align*}
	I_1 \leq \big(c_1 K\Lambda \log n \cdot ( 1+ \pnorm{z^{(0)}}{\infty} )\big)^{c_0t} \cdot n^{-1/2}.
	\end{align*}
	Combining the above estimate, by adjusting constants, with probability at least $1-c_1 n^{-D}$, 
	\begin{align*}
	\pnorm{\Delta \hat{b}_{t}}{\infty}&\leq  \big(c_1 K\Lambda \log n \cdot ( 1+ \pnorm{z^{(0)}}{\infty} )\big)^{c_0t^2} \cdot \Big(n^{-1/2}  + \max_{s \in [t-1]} \pnorm{\Delta \hat{b}_s}{\infty} \Big).
	\end{align*}
	From (\ref{eqn:delta_z_hatz_recursion}), we have $\Delta \hat{z}^{(1)} = 0$. Then it is easy to  establish that with the same probability, the initial estimate $\pnorm{\Delta \hat{b}_{1}}{\infty} \leq (c_1 K\Lambda \log n \cdot ( 1+ \pnorm{z^{(0)}}{\infty} ))^{c_0} \cdot n^{-1/2}$, and therefore the claim follows by iterating the bound. 
\end{proof}

Next we provide delocalization estimates for $\hat{z}^{(t)}$ and an $\ell_2$ control for $\pnorm{\partial_{ij} \hat{z}^{(t)}}{}$, in similar spirit to those presented in Propositions \ref{prop:AMP_iterate_infty} and \ref{prop:z_der_l2}.

\begin{lemma}\label{lem:AMP_oracle_data_diff_deloc}
	Suppose the conditions in Theorem \ref{thm:AMP_oracle_data_diff} hold.  Then there exists some universal constant $c_0>0$ such that for $x\geq 1$, 
	\begin{align*}
	\max_{k \in [n]}\Prob\Big(\pnorm{\hat{z}^{(t)}}{\infty}\vee \pnorm{\partial_{ij} \hat{z}^{(t)} }{} \geq (K\Lambda\cdot (1+\pnorm{z^{(0)}}{\infty}))^{c_0 t^2} \cdot x\big| z^{(0)}\Big)\leq c_0\cdot tn e^{-(x^2\wedge n)/c_0}. 
	\end{align*}
	Moreover, for any $p\geq 1$, there exists some $C_p>0$ such that for $t\leq n/(C_p\log n)$,
	\begin{align*}
	\E^{1/p}\big[\pnorm{\hat{z}^{(t)}}{\infty}^p|z^{(0)}\big]\vee \max_{k \in [n]}\E^{1/p}\big[\pnorm{\partial_{ij} \hat{z}^{(t)} }{}^p|z^{(0)}\big]\leq (K\Lambda\cdot (1+\pnorm{z^{(0)}}{\infty}))^{C_p t}\sqrt{\log n} .
	\end{align*}
	
\end{lemma}
\begin{proof}
	\noindent (1). The proof adopts a similar strategy to that of Proposition \ref{prop:AMP_iterate_infty}. Consider the following leave-one-out version of $\hat{z}^{(t+1)}$: for any $k \in [n]$,
	\begin{align}\label{def:AMP_loo_data}
	\hat{z}_{[-k]}^{(t+1)}& \equiv  A_{[-k]}\mathsf{F}_t(\hat{z}_{[-k]}^{(t+1)})-\hat{b}_{t,[-k]}\circ \mathsf{F}_{t-1}(\hat{z}^{(t-1)}_{[-k]}),
	\end{align}
	where 
	\begin{itemize}
		\item $\hat{z}^{(-1)}_{[-k]}=0_n$, $\hat{z}^{(0)}_{[-k]}\equiv z^{(0)}$, and 
		\item for $t\geq 1$, $\hat{b}_{t,[-k]}$ is defined by replacing $\hat{z}^{(t)}_\ell$ with $\hat{z}^{(t)}_{[-k],\ell}$ in (\ref{def:hat_b}).
	\end{itemize}
	With $\Delta \hat{z}_{[-k]}^{(t)}\equiv \hat{z}^{(t)}-\hat{z}^{(t)}_{[-k]}$, $\Delta \hat{b}_{t,[-k]}\equiv \hat{b}_{t}-\hat{b}_{t,[-k]}$, and recall $\Delta A_{[-k]}=A-A_{[-k]}$, we have the estimate
	\begin{align*}
	\pnorm{\Delta\hat{z}^{(t+1)}_{[-k]}}{}&\leq  \pnorm{\Delta A_{[-k]} \mathsf{F}_t(\hat{z}_{[-k]}^{(t)}) }{}+ \Lambda\cdot  \pnorm{A}{\op} \pnorm{\Delta\hat{z}^{(t)}_{[-k]}}{}\nonumber\\
	&\quad +\Lambda\cdot \sqrt{n}\abs{\Delta \hat{b}_{t,[-k]}}\cdot \big(1+n^{-1/2}\pnorm{\hat{z}^{(t-1)} }{}\big)+\Lambda\cdot \abs{\hat{b}_{t,[-k]}}\cdot \pnorm{\Delta\hat{z}^{(t-1)}_{[-k]}}{}.
	\end{align*}
	By noting that  $\pnorm{\hat{z}^{(t-1)}}{}\leq \big(K\Lambda (1+\pnorm{A}{\op})\big)^{c t}( \sqrt{n} + \pnorm{z^{(0)}}{} )$ and $\abs{\Delta \hat{b}_{t,[-k]}}\leq K^2\Lambda\cdot n^{-1/2}\pnorm{\Delta \hat{z}^{(t)}_{[-k]}}{}$,
	\begin{align*}
	\pnorm{\Delta\hat{z}^{(t+1)}_{[-k]}}{}&\leq  \pnorm{A_k^\top \mathsf{F}_t(\hat{z}_{[-k]}^{(t)}) }{}  +  2\Lambda \pnorm{A_k  }{} \cdot (1 + \hat{z}_{k,[-k]}^{(t)})  \\
	&\quad + \big(K\Lambda (1 +\pnorm{A}{\op}) \big)^{c_0t}(1+\pnorm{z^{(0)}}{\infty})\cdot  \max_{s \in [0:t]} \pnorm{\Delta\hat{z}^{(s)}_{[-k]}}{}.
	\end{align*}
	On the other hand, using (\ref{def:AMP_loo_data}),
	\begin{itemize}
		\item $\abs{\hat{z}_{k,[-k]}^{(t)}}\leq (K\Lambda)^2(1+\abs{\hat{z}_{k,[-k]}^{(t-2)}})\leq \cdots\leq (K\Lambda)^{c_0 t}(1+\abs{{z}_k^{(0)}})$,
		\item $\pnorm{\hat{z}_{[-k]}^{(t)}}{}\leq (K\Lambda)^2 (1+\pnorm{A_{[-k]}}{\op})\big(\sqrt{n}+\pnorm{\hat{z}_{[-k]}^{(t-1)}}{}\vee \pnorm{\hat{z}_{[-k]}^{(t-2)}}{}\big)\leq\cdots \leq \big(K\Lambda(1+\pnorm{A_{[-k]}}{\op})\big)^{c_0 t}\big(\sqrt{n}+\pnorm{z^{(0)}}{}\big)$. 
	\end{itemize}
	Using the independence of $A_k$ and $\mathsf{F}_{t}(\hat{z}^{(t)}_{[-k]})$ and the standard subgaussian tail estimate for $\pnorm{A}{\op},\pnorm{A_{[-k]}}{\op}$ (cf. Lemma \ref{lem:A_op_subgaussian_tail}), for $x\geq 1$, with probability at least $1-c_0 e^{-(x^2\wedge n)/c_0}$, both $\sqrt{n}\abs{A_k^\top \mathsf{F}_t(\hat{z}_{[-k]}^{(t)} )}\leq c_0Kx\cdot \pnorm{\mathsf{F}_t(\hat{z}_{[-k]}^{(t)})}{}$ and $\pnorm{A}{\op}\vee \pnorm{A_{[-k]}}{\op}\leq c_0K$ hold. So with the same probability,
	\begin{align*}
	\pnorm{\Delta\hat{z}^{(t+1)}_{[-k]}}{}&\leq (K\Lambda)^{c_0t}(1+\pnorm{z^{(0)}}{\infty})\cdot \Big(\max_{s \in [0:t]} \pnorm{\Delta\hat{z}^{(s)}_{[-k]}}{} + x \Big).
	\end{align*}
	Iterating the bound yields, with probability at least $1-c_0 te^{-(x^2\wedge n)/c_0}$,
	\begin{align*}
	\pnorm{\Delta\hat{z}^{(t+1)}_{[-k]}}{}&\leq (K\Lambda \cdot (1 + \pnorm{z^{(0)}}{\infty}) )^{c_0 t^2} \cdot x. 
	\end{align*}
	High probability control for $\hat{z}^{(t+1)}_k$ now follows from the simple estimate $\abs{\hat{z}^{(t+1)}_k}\leq \pnorm{\Delta\hat{z}^{(t+1)}_{[-k]}}{}+\abs{\hat{z}^{(t+1)}_{[-k],k}}$. Moment controls follows from the trivial apriori estimate. 
	
	\noindent (2). The proof shares some common structure to that of Proposition \ref{prop:z_der_l2}. Note that for $t\geq 1$, we have $
	\pnorm{\partial_{ij} \hat{b}_{t}}{\infty}\leq  K^2\Lambda\cdot n^{-1/2}\pnorm{\partial_{ij} \hat{z}^{(t)}}{}$. So we may derive
	\begin{align*}
	&\pnorm{\partial_{ij} \hat{z}^{(t+1)}}{}\leq \abs{\mathsf{F}_{t,i}(\hat{z}^{(t)}_i)}+ \abs{\mathsf{F}_{t,j}(\hat{z}^{(t)}_j)} +\Lambda \pnorm{A}{\op}\pnorm{\partial_{ij} \hat{z}^{(t)}}{} + (K\Lambda)^2\pnorm{\partial_{ij} \hat{z}^{(t-1)}}{} \\
	&\qquad\qquad\qquad  + (K\Lambda)^2 (1 + n^{-1/2} \pnorm{\hat{z}^{(t-1)}}{} )\cdot \pnorm{\partial_{ij} \hat{z}^{(t)}}{} \\
	&\leq \big(K\Lambda (1 + \pnorm{A}{\op}) \big)^{2}(1 +  n^{-1/2} \pnorm{\hat{z}^{(t-1)}}{}) \cdot (\pnorm{\partial_{ij} \hat{z}^{(t)}}{} + \pnorm{\partial_{ij} \hat{z}^{(t-1)}}{} )+2\Lambda (1+\pnorm{\hat{z}^{(t)}}{\infty}).
	\end{align*}
	Iterating the bound and using the initial condition that $\pnorm{\partial_{ij} \hat{z}^{(0)}}{} = 0$, we have
	\begin{align*}
	\pnorm{\partial_{ij} \hat{z}^{(t+1)}}{} &\leq \Big[K\Lambda (1 + \pnorm{A}{\op}) \cdot \Big(1 + n^{-1/2} \max_{s \in [t-1]}\pnorm{\hat{z}^{(s)}}{}  \Big) \Big]^{c_0t} \cdot \Big(1+ \max_{s\in [t]}\pnorm{\hat{z}^{(s)}}{\infty}\Big)\\
	&\leq \big(K\Lambda (1 + \pnorm{A}{\op}) (1 + \pnorm{z^{(0)}}{\infty} ) \big)^{c_0t^2}\cdot \Big(1+ \max_{s\in [t]}\pnorm{\hat{z}^{(s)}}{\infty}\Big). 
	\end{align*}
	The high probability control then follows from (1). The moment estimate follows easily. 
\end{proof}

\subsubsection{Error controls of $\widehat{\rem}_{s;k}^{(t)}$}
Finally, we provide the most difficult control for $\widehat{\rem}_{s;k}^{(t)}$.
\begin{lemma}\label{lem:AMP_oracle_data_diff_rem}
	Suppose the conditions in Theorem \ref{thm:AMP_oracle_data_diff} hold. Then for any $p \in \N $, there exists some universal constant $c_0>0$ such that for any $s \in [t], k \in [n]$,
	\begin{align*}
	\E^{1/2p} \big(\widehat{\rem}_{s;k}^{(t)}\big)^{2p}&\leq (K\Lambda )^{c_0 t} \cdot \hat{\Theta}_p^{(t)}\cdot n^{-1/4}.
	\end{align*}
	Here for some universal constant $c>0$, $\hat{\Theta}_p^{(t)}\equiv \E^{1/cp}\big[\hat{\mathfrak{Z}}^{(t)} \hat{\mathfrak{B}}^{(t)}\hat{\mathfrak{M}}^{(t)}\big]^{c^2 p}$ with the involved quantities defined as
	\begin{itemize}
		\item $\hat{\mathfrak{B}}^{(t)}\equiv \max\limits_{s \in [t]}n^{1/2}\pnorm{\Delta \hat{\mathsf{B}}_s }{\op}$,
		\item $\hat{\mathfrak{Z}}^{(t)}\equiv \max\limits_{s \in [t]}\max\limits_{i,j \in [n]} \big(1+\pnorm{z^{(s)}}{\infty}+\pnorm{ \hat{z}^{(s)}}{\infty}+\pnorm{\partial_{ij} z^{(s)} }{}+\pnorm{\partial_{ij} \hat{z}^{(s)}}{}\big)$,
		\item $\hat{\mathfrak{M}}^{(t)}\equiv \max\limits_{s \in [t]} \max\limits_{r \in [s]} \big(1+\pnorm{\hat{M}^{(s)}_r}{\op}+\pnorm{\hat{N}^{(s)}_r}{\op}\big)$. 
	\end{itemize}
\end{lemma}
\begin{proof}
	For notational simplicity, we write $\hat{\mathscr{R}}\equiv  \widehat{\rem}_{s;k}^{(t)}$, ${\mathsf{F}}_{s}\equiv \mathsf{F}_{s}(\hat{z}^{(s)})$, and ${\mathsf{F}}_{s}'\equiv \mathsf{F}_{s}'(\hat{z}^{(s)})$. Below the universal constant $c>0$ may change from line to line. Using Gaussian integration by parts, 
	\begin{align*}
	\E \hat{\mathscr{R}}^{2p} & = \E \iprod{A_k}{ \hat{M}_{t-s}^{(t)} \Delta \hat{\mathsf{B}}_{s-1} \mathsf{F}_{s-2} }\cdot  \hat{\mathscr{R}}^{2p-1}\\
	& = \frac{1}{n}\E\sum_{i\in [n]}V_{ki}^2 \cdot \partial_{ki}\big[\iprod{e_i}{\hat{M}_{t-s}^{(t)} \Delta \hat{\mathsf{B}}_{s-1} \mathsf{F}_{s-2}} \cdot \hat{\mathscr{R}}^{2p-1} \big]\equiv \sum_{\ell \in [4]} I_\ell. 
	\end{align*}
	
	\noindent (\textbf{\emph{Term $I_1$}}): We have the derivative formula: for $r \in [t]$,
	\begin{align}\label{eqn:AMP_practical_M_der}
	\partial_{ki} \hat{M}_r^{(t)}& = \big(\partial_{ki}\hat{M}_{r-1}^{(t)}  A - \partial_{ki}\hat{M}_{r-2}^{(t)}  \mathsf{B}_{t-r+1}\big) \hat{\mathsf{D}}^{(t-r)}+\hat{M}_{r-1}^{(t)}\Delta_{ki} \hat{\mathsf{D}}^{(t-r)}\nonumber\\
	&\qquad + \big(\hat{M}_{r-1}^{(t)}  A - \hat{M}_{r-2}^{(t)}  \mathsf{B}_{t-r+1}\big) \partial_{ki}\hat{\mathsf{D}}^{(t-r)}.
	\end{align}
	Let for $r \in [t-s]$ 
	\begin{align*}
	\hat{P}^{(t)}_r&\equiv \frac{1}{n}\E\sum_{i \in [n]} V_{ki}^2 e_i^\top  \hat{M}_{r-1}^{(t)}\Delta_{ki} \hat{\mathsf{D}}^{(t-r)} \hat{N}^{(t-s+1)}_{t-s-r} \Delta\hat{\mathsf{B}}_{s-1}\mathsf{F}_{s-2}     \cdot \hat{\mathscr{R}}^{2p-1},\\
	\hat{R}^{(t)}_r&\equiv \frac{1}{n}\E\sum_{i \in [n]} V_{ki}^2 e_i^\top \big(\hat{M}_{r-1}^{(t)}  A - \hat{M}_{r-2}^{(t)}  \mathsf{B}_{t-r+1}\big) \big(\partial_{ki}\hat{\mathsf{D}}^{(t-r)}\big) \hat{N}^{(t-s+1)}_{t-s-r} \Delta\hat{\mathsf{B}}_{s-1}\mathsf{F}_{s-2} \cdot \hat{\mathscr{R}}^{2p-1}.
	\end{align*}
	Then applying (\ref{eqn:AMP_practical_M_der}) recursively from  $r=t-s$ down to $r=1$, we have 
	\begin{align*}
	I_1& = \frac{1}{n}\E\sum_{i \in [n]} V_{ki}^2 \big(\partial_{ki} \hat{M}^{(t)}_{0} \hat{N}^{(t-s+1)}_{t-s} \Delta \hat{\mathsf{B}}_{s-1} \mathsf{F}_{s-2}\big)_{i} \cdot \hat{\mathscr{R}}^{2q-1}\\
	&\qquad + \sum_{r \in [t-s]} \hat{P}^{(t)}_r+\sum_{r \in [t-s]} \hat{R}^{(t)}_r\equiv I_{1,a}+I_{1,b}+I_{1,c}. 
	\end{align*}
	By noting that $\partial_{ki} \hat{M}^{(t)}_{0}$ is diagonal for $I_{1,a}$ and using trivial bounds for $I_{1,b},I_{1,c}$, we may produce the bound
	\begin{align*}
	\abs{I_1}&\leq (K\Lambda)^c t\cdot n^{-1/2} \E (1+\pnorm{A}{\op}) (\hat{\mathfrak{Z}}^{(t)})^2  (\hat{\mathfrak{M}}^{(t)})^2  \hat{\mathfrak{B}}^{(t)}\cdot \abs{\hat{\mathscr{R}}}^{2p-1}.
	\end{align*}
	\noindent (\textbf{\emph{Term $I_2$}}): As $\partial_{ki} \hat{b}_s = n^{-1} (V \circ V)(\mathsf{F}_s''\circ \partial_{ki} \hat{z}^{(s)})$, a simple calculation shows $\pnorm{\partial_{ki} \hat{b}_s}{}\leq K^2\Lambda\cdot \pnorm{\partial_{ki} \hat{z}^{(s)}}{}$, and therefore
	\begin{align*}
	\abs{I_2}& = \biggabs{\frac{1}{n}\E \sum_{i \in [n]} V_{ki}^2 \bigiprod{e_i}{ \hat{M}^{(t)}_{t-s} \mathfrak{D}_{\partial_{ki} \hat{b}_{s-1} } \mathsf{F}_{s-2} } \cdot \hat{\mathscr{R}}^{2p-1}}\\
	&\leq (K\Lambda)^c \cdot n^{-1/2} \E (\hat{\mathfrak{Z}}^{(t)})^2 \hat{\mathfrak{M}}^{(t)} \cdot \abs{\hat{\mathscr{R}}}^{2q-1}.
	\end{align*}
	\noindent (\textbf{\emph{Term $I_3$}}): We have
	\begin{align*}
	\abs{I_3}&= \biggabs{\frac{1}{n} \E \sum_{i \in [n]} V_{ki}^2 \bigiprod{e_i}{ \hat{M}^{(t)}_{t-s}\Delta \hat{\mathsf{B}}_{s-1} (\mathsf{F}_{s-2}'\circ \partial_{ki} \hat{z}^{s-2})  } \cdot  \hat{\mathscr{R}}^{2p-1}  }\\
	&\leq (K\Lambda)^c \cdot n^{-1/2} \E  \hat{\mathfrak{Z}}^{(t)} \hat{\mathfrak{M}}^{(t)} \hat{\mathfrak{B}}^{(t)}\cdot \abs{\hat{\mathscr{R}}}^{2p-1}.
	\end{align*}
	
	\noindent (\textbf{\emph{Term $I_4$}}): 
	Note that
	\begin{align*}
	\partial_{ki} \hat{\mathscr{R}}& = \iprod{e_i}{ \hat{M}_{t-s}^{(t)} \Delta \hat{\mathsf{B}}_{s-1} \mathsf{F}_{s-2}}+\iprod{A_k}{ (\partial_{ki}\hat{M}_{t-s}^{(t)}) \Delta \hat{\mathsf{B}}_{s-1} \mathsf{F}_{s-2}}\\
	&\quad + \iprod{A_k}{ \hat{M}_{t-s}^{(t)} (\partial_{ki} \hat{\mathsf{B}}_{s-1}) \mathsf{F}_{s-2}}+ \iprod{A_k}{ \hat{M}_{t-s}^{(t)} \Delta \hat{\mathsf{B}}_{s-1} (\mathsf{F}_{s-2}'\circ \partial_{ki} \hat{z}^{(s-2)})},
	\end{align*}
	we may write $I_4\equiv (2p-1)\big(I_{4,a}+I_{4,b}+I_{4,c}+I_{4,d}\big)$ according to the terms above. For $I_{4,a}$, we have
	\begin{align*}
	\abs{I_{4,a}}& = \biggabs{\frac{1}{n}\E \sum_{i \in [n]} V_{ki}^2 \big(\hat{M}_{t-s}^{(t)} \Delta \hat{\mathsf{B}}_{s-1} \mathsf{F}_{s-2}\big)_i^2\cdot   \hat{\mathscr{R}}^{2p-2} }\\
	&\leq (K\Lambda)^c\cdot n^{-1} \E \big(\hat{\mathfrak{Z}}^{(t)}\hat{\mathfrak{M}}^{(t)} \hat{\mathfrak{B}}^{(t)}\big)^2\cdot  \hat{\mathscr{R}}^{2p-2}. 
	\end{align*}
	For $I_{4,b}$, let for $r \in [t-s]$ 
	\begin{align*}
	\hat{P}^{(t),\ast}_r&\equiv \frac{1}{n}\E\sum_{i \in [n]} V_{ki}^2 \big(\hat{M}_{t-s}^{(t)} \Delta \hat{\mathsf{B}}_{s-1} \mathsf{F}_{s-2}\big)_i \bigiprod{A_k}{\big(\hat{M}_{r-1}^{(t)}\Delta_{ki} \hat{\mathsf{D}}^{(t-r)}\big) \hat{N}^{(t-s+1)}_{t-s-r} \Delta \hat{\mathsf{B}}_{s-1} \mathsf{F}_{s-2} }\cdot  \hat{\mathscr{R}}^{2p-2},\\
	\hat{R}^{(t),\ast}_r&\equiv \frac{1}{n}\E\sum_{i \in [n]} V_{ki}^2 \big(\hat{M}_{t-s}^{(t)} \Delta \hat{\mathsf{B}}_{s-1} \mathsf{F}_{s-2}\big)_i\\
	&\quad \times \bigiprod{A_k}{ \big(\hat{M}_{r-1}^{(t)}  A - \hat{M}_{r-2}^{(t)}  \mathsf{B}_{t-r+1}\big) \big(\partial_{ki}\hat{\mathsf{D}}^{(t-r)}\big) \hat{N}^{(t-s+1)}_{t-s-r} \Delta \hat{\mathsf{B}}_{s-1} \mathsf{F}_{s-2}}\cdot  \hat{\mathscr{R}}^{2p-2}.
	\end{align*}
	Then applying the above recursive method, 
	\begin{align*}
	I_{4,b}& = \frac{1}{n}\E\sum_{i} V_{ki}^2 \big(\hat{M}_{t-s}^{(t)} \Delta \hat{\mathsf{B}}_{s-1} \mathsf{F}_{s-2}\big)_i \bigiprod{A_k}{\big(\partial_{ki} \hat{M}_{0}^{(t)}\big)\hat{N}_{t-s}^{(t-s+1)} \Delta \hat{\mathsf{B}}_{s-1} \mathsf{F}_{s-2} }\cdot \hat{\mathscr{R}}^{2p-2}\\
	&\qquad + \sum_{r \in [t-s]} \hat{P}^{(t),\ast}_r+\sum_{r \in [t-s]} \hat{R}^{(t),\ast}_r\equiv I_{4,b;1}+I_{4,b;2}+I_{4,b;3}. 
	\end{align*}
	Using the trivial Cauchy-Schwarz for $\sum_{i\in[n]}$ in $I_{4,b;1}$ and recalling $\partial_{ki} \hat{M}_{0}^{(t)}=\partial_{ki} \hat{\mathsf{D}}^{(t)}$,
	\begin{align*}
	\abs{I_{4,b;1}}&\leq (K\Lambda)^c \cdot n^{-1} \E\Big[ \pnorm{\hat{M}_{t-s}^{(t)} \Delta \hat{\mathsf{B}}_{s-1} \mathsf{F}_{s-2}}{}\cdot \pnorm{A_k}{} \\
	&\qquad \qquad \times \max_{\mathsf{z} \in z^{(t)},\hat{z}^{(t)}}\Big(\sum_i \bigpnorm{ \mathfrak{D}_{\partial_{ki} \mathsf{z}^{(t)}} \hat{N}_{t-s}^{(t-s+1)} \Delta \hat{\mathsf{B}}_{s-1} \mathsf{F}_{s-2} }{}^2\Big)^{1/2}\cdot \hat{\mathscr{R}}^{2q-2}\Big]\\
	&\leq (K\Lambda)^c\cdot n^{-1/2} \E \pnorm{A_k}{} (\hat{\mathfrak{Z}}^{(t)})^3 (\hat{\mathfrak{B}}^{(t)})^2 (\hat{\mathfrak{M}}^{(t)})^2\cdot \hat{\mathscr{R}}^{2p-2}.
	\end{align*}
	For $I_{4,b;2}$, some calculations lead to
	\begin{align*}
	\abs{\hat{P}^{(t),\ast}_r}&\leq (K\Lambda)^c\cdot n^{-1/2} \E \pnorm{A_k}{}\big(\hat{\mathfrak{Z}}^{(t)}\hat{\mathfrak{M}}^{(t)} \hat{\mathfrak{B}}^{(t)}\big)^2\cdot\hat{\mathscr{R}}^{2p-2},
	\end{align*}
	and therefore  $I_{4,b;2}$ is bounded by a $t$ multiple of the above display. For $I_{4,b;3}$,
	\begin{align*}
	\abs{\hat{R}^{(t),\ast}_r}&\leq (K\Lambda)^c\cdot n^{-1} \E\Big[ (1+\pnorm{A}{\op})^2\cdot \hat{\mathfrak{Z}}_0^{(t)} (\hat{\mathfrak{M}}^{(t)})^2 \hat{\mathfrak{B}}^{(t)}\\
	&\qquad\qquad \times  \max_{\mathsf{z} \in z^{(t)},\hat{z}^{(t)}}\Big(\sum_i \bigpnorm{ \mathfrak{D}_{\partial_{ki} \mathsf{z}^{(t)}}  \Delta \hat{\mathsf{B}}_{s-1} \mathsf{F}_{s-2} }{}^2\Big)^{1/2}\cdot \hat{\mathscr{R}}^{2q-2}\bigg]\\
	&\leq (K\Lambda)^c\cdot n^{-1/2} \E(1+\pnorm{A}{\op})^2 (\hat{\mathfrak{Z}}^{(t)})^3 (\hat{\mathfrak{B}}^{(t)})^2 (\hat{\mathfrak{M}}^{(t)})^2 \cdot \hat{\mathscr{R}}^{2p-2}.
	\end{align*} 
	Therefore,
	\begin{align*}
	\abs{I_{4,b}}&\leq  (K\Lambda)^{c}t\cdot n^{-1/2} \E(1+\pnorm{A}{\op})^2 (\hat{\mathfrak{Z}}^{(t)})^3 (\hat{\mathfrak{B}}^{(t)}\hat{\mathfrak{M}}^{(t)})^2 \cdot \hat{\mathscr{R}}^{2p-2}.
	\end{align*}
	For $I_{4,c}$, using $\pnorm{\partial_{ki} \hat{b}_{s-1}}{\infty} \leq (K\Lambda)^c \cdot n^{-1/2} \pnorm{\partial_{ki} \hat{z}^{(s-1)}}{}$,
	\begin{align*}
		\abs{I_{4,c}} &\leq \biggabs{\frac{1}{n} \E \sum_{i \in [n]} V_{ki}^2\big(\hat{M}_{t-s}^{(t)} \Delta \hat{\mathsf{B}}_{s-1} \mathsf{F}_{s-2}\big)_i \iprod{A_k}{ \hat{M}_{t-s}^{(t)} (\partial_{ki} \hat{\mathsf{B}}_{s-1}) \mathsf{F}_{s-2}}\cdot \hat{\mathscr{R}}^{2p-2}  }\\
		&\leq (K\Lambda)^c\cdot n^{-1/2} \E (1 + \pnorm{A}{\op})(\hat{\mathfrak{Z}}^{(t)})^3 \hat{\mathfrak{B}}^{(t)}  (\hat{\mathfrak{M}}^{(t)})^2\cdot \hat{\mathscr{R}}^{2p-2}  .
	\end{align*}
	For $I_{4,d}$, some calculations lead to
	\begin{align*}
		\abs{I_{4,d}} 
		&\leq (K\Lambda)^c \cdot n^{-1/2} \E (1 + \pnorm{A}{\op})(\hat{\mathfrak{Z}}^{(t)} \hat{\mathfrak{B}}^{(t)}  \hat{\mathfrak{M}}^{(t)})^2\cdot \hat{\mathscr{R}}^{2p-2}.
	\end{align*}
	Combining, we find that
	\begin{align*}
		\abs{I_4}&\leq  (K\Lambda)^{c}t\cdot n^{-1/2} \E(1+\pnorm{A}{\op})^2 (\hat{\mathfrak{Z}}^{(t)})^3 (\hat{\mathfrak{B}}^{(t)}\hat{\mathfrak{M}}^{(t)})^2 \cdot \hat{\mathscr{R}}^{2p-2}.
	\end{align*}
	Combining all estimates and using Young's inequality (cf. Lemma \ref{lem:young_ineq}) to conclude. 
\end{proof}

\subsubsection{Proof of Theorem \ref{thm:AMP_oracle_data_diff}}

	The claimed estimate follows from the decomposition in Lemma \ref{lem:AMP_oracle_data_diff_loo} with all the estimates obtained in Lemmas \ref{lem:AMP_oracle_data_diff_Delta_b}-\ref{lem:AMP_oracle_data_diff_rem}, along with the simple apriori estimate $\max_{s \in [t]}\max_{r \in [s]}\big(\pnorm{\hat{M}_r^{(s)}}{\op}\vee \pnorm{\hat{N}_r^{(s)}}{\op}\big)\leq \big(K\Lambda (1+\pnorm{A}{\op})\big)^{c_0 t}$.\qed

\subsubsection{Sketch of the proof of (\ref{ineq:AMP_oracles_diff})}
We shall use notation $\bar{\#}$ for all quantities defined with $\hat{\#}$ in the beginning of this subsection. Note that now $\Delta \bar{\mathsf{B}}_t$, $\Delta \bar{b}_{t,k}=\bar{b}_{t,k}-\bar{b}_{t,k}$ are deterministic, and that Lemma \ref{lem:AMP_oracle_data_diff_loo} continues to hold with formal changes (with all quantities $\hat{\#}$ replaced by $\bar{\#}$). Lemma \ref{lem:AMP_oracle_data_diff_Delta_b} requires a modification. Indeed, as
\begin{align*}
	\Delta\bar{b}_{t,k} = b_{t,k}-\bar{b}_{t,k}= \frac{1}{n}\sum_{\ell \in [n]}V_{k\ell}^2 \Big( \E[\mathsf{F}_{t,\ell}'(z_\ell^{(t)})|z^{(0)}] - \E [\mathsf{F}_{t,\ell}'(Z_\ell^{(t)})|z^{(0)}] \Big),
\end{align*}
we may now use the second estimate (\ref{ineq:AMP_sym_2}) in Theorem \ref{thm:AMP_sym} to obtain a deterministic estimate:
\begin{align*}
\pnorm{\Delta \bar{b}_t}{\infty} \leq  (c_1 K\Lambda \sigma_\ast^{[t+1],-1}\log n \cdot ( 1+ \pnorm{z^{(0)}}{\infty} ))^{c_0t^3} \cdot n^{-1/2}. 
\end{align*}
Lemmas \ref{lem:AMP_oracle_data_diff_deloc} and \ref{lem:AMP_oracle_data_diff_rem} also hold upon formal changes with the same estimates (which are easier to prove because $\Delta \bar{\mathsf{B}}_t$, $\Delta \bar{b}_{t,k}=\bar{b}_{t,k}-\bar{b}_{t,k}$ are deterministic). Specifically:
\begin{itemize}
	\item In the proof of the revised version of Lemma \ref{lem:AMP_oracle_data_diff_deloc}, we define the leave-one-out AMP iterate 
	\begin{align*}
	\bar{z}_{[-k]}^{(t+1)} \equiv  A_{[-k]}\mathsf{F}_t(\bar{z}_{[-k]}^{(t+1)})-\bar{b}_{t}\circ \mathsf{F}_{t-1}(\bar{z}^{(t-1)}_{[-k]})
	\end{align*}
	using the same $\bar{b}_{t}$. Indeed, the proof then proceeds exactly in the same way as that in Proposition \ref{prop:AMP_iterate_infty}.
	\item In the proof of the revised version of Lemma \ref{lem:AMP_oracle_data_diff_rem}, we no longer need to handle the terms $I_2$ and $I_{4,c}$.
\end{itemize} 
The estimate (\ref{ineq:AMP_oracles_diff}) follows from the above revised version of Lemmas \ref{lem:AMP_oracle_data_diff_Delta_b}-\ref{lem:AMP_oracle_data_diff_rem}.

\section{Proofs for Section \ref{section:regularized_ls}}\label{section:proof_ridge}

\subsection{Proof of Proposition \ref{prop:ridge_fpe_exist_uniq}}\label{subsection:proof_ridge_fpe_exist_uniq}

The following lemma is needed.

\begin{lemma}\label{lem:metriDpro}
	Let $\mathfrak{d}_{\mathbb{R}}:\R_{>0}\times \R_{>0}\to \R_{>0}$ defined via $\mathfrak{d}_{\mathbb{R}}(x,y)=\abs{x-y}^2/(xy)$. The following properties hold for $\mathfrak{d}_{\mathbb{R}}$:
	\begin{enumerate}
		\item  For any $x, y >0$, $\mathfrak{d}_{\mathbb{R}}( 1/ x, 1/y ) = \mathfrak{d}_{\mathbb{R}}(  x, y )$.
		\item For any $x, y,\beta >0$, $\mathfrak{d}_{\mathbb{R}}(x + \beta , y + \beta ) = (1 + \beta / x)^{-1}(1 + \beta / y)^{-1} \mathfrak{d}_{\mathbb{R}}(  x, y )$.
		\item For any $x, y,\alpha \in \mathbb{R}^m_{> 0}$, $\mathfrak{d}_{\mathbb{R}}\big(\sum_{k \in [m]}\alpha_kx_k,\sum_{k \in [m]}\alpha_ky_k\big) \leq \max_{k \in [m]}\mathfrak{d}_{\mathbb{R}}(x_k,y_k) $.
	\end{enumerate}
\end{lemma}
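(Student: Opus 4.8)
The plan is to verify each of the three properties of $\mathfrak{d}_{\mathbb{R}}(x,y) = |x-y|^2/(xy)$ by direct computation, since each is a short algebraic identity or inequality.

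\textbf{Property (1).} First I would compute $\mathfrak{d}_{\mathbb{R}}(1/x, 1/y) = |1/x - 1/y|^2/((1/x)(1/y)) = \big(\frac{|y-x|}{xy}\big)^2 \cdot xy = \frac{|x-y|^2}{(xy)^2}\cdot xy = \frac{|x-y|^2}{xy} = \mathfrak{d}_{\mathbb{R}}(x,y)$. This is immediate.

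\textbf{Property (2).} Next, since $(x+\beta)-(y+\beta) = x-y$, the numerator is unchanged, so
$\mathfrak{d}_{\mathbb{R}}(x+\beta, y+\beta) = \frac{|x-y|^2}{(x+\beta)(y+\beta)} = \frac{|x-y|^2}{xy}\cdot \frac{xy}{(x+\beta)(y+\beta)} = \mathfrak{d}_{\mathbb{R}}(x,y)\cdot (1+\beta/x)^{-1}(1+\beta/y)^{-1}$, using $x+\beta = x(1+\beta/x)$ and similarly for $y$.

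\textbf{Property (3).} This is the only step requiring a genuine inequality, and I expect it to be the main (though still modest) obstacle. Write $M \equiv \max_{k\in[m]} \mathfrak{d}_{\mathbb{R}}(x_k, y_k)$. The key observation is that $\mathfrak{d}_{\mathbb{R}}(x,y)\le M$ is equivalent to $|x-y|^2 \le M xy$, i.e. to $x^2 - (2+M)xy + y^2 \le 0$, which in turn (dividing by $y^2$ and viewing it as a quadratic in $x/y$) is equivalent to $x/y$ lying between the two roots $\rho_\pm$ of $\rho^2 - (2+M)\rho + 1 = 0$, where $\rho_+ = 1/\rho_-$ and $0 < \rho_- \le 1 \le \rho_+$. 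So $\mathfrak{d}_{\mathbb{R}}(x_k,y_k)\le M$ for all $k$ means $\rho_- y_k \le x_k \le \rho_+ y_k$ for all $k$. Multiplying by $\alpha_k > 0$ and summing over $k$ gives $\rho_- \sum_k \alpha_k y_k \le \sum_k \alpha_k x_k \le \rho_+ \sum_k \alpha_k y_k$, hence $\sum_k \alpha_k x_k / \sum_k \alpha_k y_k \in [\rho_-, \rho_+]$, which by the same equivalence yields $\mathfrak{d}_{\mathbb{R}}\big(\sum_k \alpha_k x_k, \sum_k \alpha_k y_k\big) \le M$. One should note $\sum_k \alpha_k y_k > 0$ so the quotient is well-defined, and handle the degenerate case $M = 0$ (then all $x_k = y_k$, so both sides are zero) separately or simply observe the root argument still works with $\rho_\pm = 1$. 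An alternative, slightly slicker route is to use the substitution $x_k = e^{a_k}$, $y_k = e^{b_k}$ so that $\mathfrak{d}_{\mathbb{R}}(x_k,y_k) = 4\sinh^2\big(\tfrac{a_k - b_k}{2}\big)$, but the root-bracketing argument above is elementary and self-contained, so that is the one I would write out.
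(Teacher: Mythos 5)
Your proofs of (1) and (2) are the same direct computations as in the paper. For (3) your argument is correct but takes a genuinely different route. You characterize the sublevel set $\{\mathfrak{d}_{\mathbb{R}}(x,y)\le M\}$ as the condition that the ratio $x/y$ lies in the interval $[\rho_-,\rho_+]$ determined by the roots of $\rho^2-(2+M)\rho+1=0$ (with $\rho_+\rho_-=1$), and then observe that the bracketing $\rho_- y_k\le x_k\le \rho_+ y_k$ is preserved under positive weighted sums; this is airtight, including your handling of $M=0$ and the positivity of $\sum_k\alpha_k y_k$. The paper instead proves (3) by a direct estimate: it expands $\bigabs{\sum_i\alpha_i x_i-\sum_j\alpha_j y_j}^2\le \sum_{i,j}\alpha_i\alpha_j\abs{x_i-y_i}\abs{x_j-y_j}$, writes $\abs{x_i-y_i}=\big(x_iy_i\,\mathfrak{d}_{\mathbb{R}}(x_i,y_i)\big)^{1/2}$, and applies AM--GM to each cross term to bound the double sum by $\max_k\mathfrak{d}_{\mathbb{R}}(x_k,y_k)\cdot\big(\sum_i\alpha_i x_i\big)\big(\sum_j\alpha_j y_j\big)$. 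The two proofs deliver exactly the same inequality; yours is more structural (it exhibits the $\mathfrak{d}_{\mathbb{R}}$-ball as a multiplicative ratio interval, which is invariant under positive linear combinations, and connects naturally to the hyperbolic/$\arcosh$ picture the paper later exploits for the metric $\mathfrak{e}_{\mathfrak{B}_\eta}$), while the paper's AM--GM computation is a one-display estimate requiring no case analysis or root discussion. Either would serve; there is no gap in your proposal.
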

\begin{proof}
	(1) and (2) follow by direct calculations; we provide the proof of (3).
	Let us fix $\alpha, x, y \in \mathbb{R}^m_{> 0}$. First note that by AM-GM inequality, we have ${\big( x_{i} y_{i} x_{j} y_{j}\cdot\mathfrak{d}_{\mathbb{R}}(x_{i}, y_{i} )\mathfrak{d}_{\mathbb{R}}(x_{j}, y_{j} )  \big)^{1/2}}\leq {2^{-1}(  x_{i}y_{j}\mathfrak{d}_{\mathbb{R}}(x_{i}, y_{i} ) +  x_{j} y_{i}\mathfrak{d}_{\mathbb{R}}(x_{j}, y_{j} ) ) }$. This means 
	\begin{align*}
	&\biggabs{\sum_{i \in [m]}\alpha_i x_i  - \sum_{j \in [m]}\alpha_j y_j }^2 \leq \sum_{i, j \in [m]} \alpha_{i}\alpha_{j}  \cdot |x_{i} - y_{i} |\cdot|x_{j} - y_{j} |  \\
	&= \sum_{i, j \in [m]} \alpha_{i}\alpha_{j} \cdot \Big(x_{i} y_{i} x_{j} y_{j}\cdot\mathfrak{d}_{\mathbb{R}}(x_{i}, y_{i} )\mathfrak{d}_{\mathbb{R}}(x_{j}, y_{j} )  \Big)^{1/2}\\
	& \leq \frac{1}{2}\bigg(\sum_{i, j \in [m]} \alpha_{i}\alpha_{j}   x_{i} y_{j}\mathfrak{d}_{\mathbb{R}}(x_{i}, y_{i})+\sum_{i, j \in [m]} \alpha_{i}\alpha_{j}  x_{j} y_{i}\mathfrak{d}_{\mathbb{R}}(x_{j}, y_{j} )\bigg)\\
	&\leq \max_{k \in [m]} \mathfrak{d}_{\mathbb{R}}(x_{k}, y_{k} )\cdot  \bigg(\sum_{i \in [m]} \alpha_{i}x_i\bigg)\cdot\bigg(\sum_{j \in [m]}\alpha_j y_j\bigg). 
	\end{align*}
	The claim follows by the definition of $\mathfrak{d}_{\mathbb{R}}\big(\sum_{k \in [m]}\alpha_kx_k,\sum_{k \in [m]}\alpha_ky_k\big)$.
\end{proof}

\begin{proof}[Proof of Proposition \ref{prop:ridge_fpe_exist_uniq}: Existence and uniqueness of $b_\ast$] 
	We shall first define several notations used in the proof. Let $\mathfrak{u}\equiv \lambda^{-1}(1_m-b)$. Then viewed as a function of $\lambda>0$, $\mathfrak{u}(\lambda) \in \R^m$ satisfies the system of equations
	\begin{align}\label{eq:recQVE}
	\frac{1}{\mathfrak{u}_{k}(\lambda)} = \lambda + \sum_{\ell \in [n]} \frac{V_{k\ell}^2}{m} \Bigg( 1 + \sum_{k' \in [m]}\frac{V_{k'\ell}^2}{m}\cdot \mathfrak{u}_{k'}(\lambda) \Bigg)^{-1}, \quad k \in [m].
	\end{align}	
	The above system of equations can be rewritten in a vector form as a quadratic vector equation of $\mathfrak{u}(\lambda)$. The existence and uniqueness of the solution to this type of equation with $\lambda$ in the upper half complex plane has been discussed in \cite{alt2017local,ajanki2019quadratic}. Below we prove analogous results when $\lambda$ ranges over the half real line for the above system of equations.
	
	Let us fix some $\eta \in (0,1)$, and let $\mathbb{R}_\eta \equiv [\eta,1/\eta]$. Fix any pair $0<\underline{L}_\eta\leq \overline{L}_\eta<\infty$ such that $\underline{L}_\eta\leq {\eta}/\big(1 + m^{-1}\eta\max_{k \in [m]}\pnorm{V_{k\cdot}}{}^2 \big)$ and $\overline{L}_\eta\geq 1/\eta$.  Let the function space $\mathfrak{B}_\eta(\underline{L}_\eta,\overline{L}_\eta)$ be defined by
	\begin{align*}
	\mathfrak{B}_\eta(\underline{L}_\eta,\overline{L}_\eta) \equiv \Big\{\mathfrak{u}:\mathbb{R}_\eta\to  \mathbb{R}_{\geq 0}^m\, \hbox{ s.t. } \inf_{\lambda \in \mathbb{R}_\eta, k \in [m]} \mathfrak{u}_k(\lambda) \geq \underline{L}_\eta,\,\sup_{\lambda \in \mathbb{R}_\eta} \pnorm{ \mathfrak{u}(\lambda)}{\infty} \leq \overline{L}_\eta \Big\}.
	\end{align*}
	We will write $\mathfrak{B}_\eta\equiv\mathfrak{B}_\eta(\underline{L}_\eta,\overline{L}_\eta)$ below for notational simplicity.
	
	Recall $\mathfrak{d}_{\mathbb{R}}$ defined in Lemma \ref{lem:metriDpro}. Let $\mathfrak{d}_{\mathfrak{B}_\eta}:\mathfrak{B}_\eta\times \mathfrak{B}_\eta \to \R_{\geq 0}$ be a symmetric, non-negative map defined by 
	\begin{align*}
	\mathfrak{d}_{\mathfrak{B}_\eta}(\mathfrak{u}, \mathfrak{w}) \equiv \sup_{\lambda \in \mathbb{R}_\eta} \max_{k \in [m]} \mathfrak{d}_{\mathbb{R}}(\mathfrak{u}_k(\lambda), \mathfrak{w}_k(\lambda) ) \equiv \sup_{\lambda \in \mathbb{R}_\eta} \max_{k \in [m]} \frac{|\mathfrak{u}_k(\lambda)-\mathfrak{w}_k(\lambda) |^2}{ \mathfrak{u}_k(\lambda)\cdot\mathfrak{w}_k(\lambda) }.
	\end{align*}
	The idea of the above construction essentially dates back to \cite{earle1970fixed}.
	Furthermore, let the map $\Phi: \mathfrak{B}_\eta\to (\R_\eta \to \R_{\geq 0}^m)$ be defined via
	\begin{align*}
	\Phi(\mathfrak{u})(\lambda) \equiv \frac{1}{\lambda 1_m + \mathscr{V}_m(1_n+ \mathscr{V}_m^\top \mathfrak{u}(\lambda))^{-1}}, \quad \mathfrak{u} \in \mathfrak{B}_\eta, \; \lambda \in \mathbb{R}_\eta.
	\end{align*}
	Then the system of nonlinear equations (\ref{eq:recQVE}) takes the form $\mathfrak{u} = \Phi(\mathfrak{u})$. 
	
	\noindent (\textbf{Step 1}). We first verify that $\Phi(\mathfrak{B}_\eta)\subset \mathfrak{B}_\eta$, so is well-defined as a map from $\mathfrak{B}_\eta$ to itself.  Note that for any $k \in [m]$, $\lambda \in \mathbb{R}_\eta$ and $\mathfrak{u} \in \mathfrak{B}_\eta$, we have $
	\eta\leq \lambda + e_k^{\top} \mathscr{V}_m(1_n+ \mathscr{V}_m^\top \mathfrak{u}(\lambda))^{-1} \leq \lambda + e_k^\top\mathscr{V}_m  \leq \eta^{-1} + \pnorm{V_{k\cdot}}{}^2/m$, so
	\begin{align*}
	\frac{1}{\eta}\geq \Phi(\mathfrak{u})_k(\lambda) &= \frac{1}{\lambda + e_k^\top\mathscr{V}_m(1_n+ \mathscr{V}_m^\top \mathfrak{u}(\lambda))^{-1}} \geq \frac{\eta}{1 + m^{-1}\eta\max_{k \in [m]}\pnorm{V_{k\cdot}}{}^2 }.
	\end{align*}
	This verifies $\Phi(\mathfrak{B}_\eta)\subset \mathfrak{B}_\eta$.

	\noindent (\textbf{Step 2}).  Next we prove that for any $\mathfrak{u}, \mathfrak{w} \in \mathfrak{B}_\eta$, the map $\Phi : \mathfrak{B}_\eta \to \mathfrak{B}_\eta$ has the following contraction property with respect to $\mathfrak{d}_{\mathfrak{B}_\eta}$: for any $\mathfrak{u},\mathfrak{w} \in \mathfrak{B}_\eta$, 
	\begin{align}\label{eq:contractionPhi}
	\mathfrak{d}_{\mathfrak{B}_\eta}(\Phi(\mathfrak{u}), \Phi(\mathfrak{w})) \le \Big(1 + \frac{ 1/\overline{L}_\eta}{m^{-1}\max\limits_{k \in [m],\ell \in [n]}\pnorm{V_{k\cdot}}{}^2\vee \pnorm{V_{\cdot\ell }}{}^2 } \Big)^{-4} \cdot \mathfrak{d}_{\mathfrak{B}_\eta}(\mathfrak{u}, \mathfrak{w}). 
	\end{align}	
	To this end, for any $k \in [m], \ell \in [n], \lambda \in \R_\eta$ and $\mathfrak{u} \in \mathfrak{B}_\eta$, let
	\begin{align*}
	\mathsf{c}_{k;\mathfrak{u}(\lambda)} &\equiv \bigg(1+\frac{\lambda}{ e_k^{\top} \mathscr{V}_m(1_n+ \mathscr{V}_m^\top \mathfrak{u}(\lambda))^{-1}   }\bigg)^{-1}\leq \bigg(1+\frac{\eta}{ \max_{k \in [m]}\pnorm{V_{k\cdot}}{}^2/m }\bigg)^{-1},\\
	\mathsf{d}_{\ell;\mathfrak{u}(\lambda)} &\equiv \bigg(1+\frac{1}{e_\ell^\top \mathscr{V}_m^\top \mathfrak{u}(\lambda)}\bigg)^{-1}\leq \bigg(1+\frac{1/\overline{L}_\eta }{ \max_{\ell \in [n]}\pnorm{V_{\cdot\ell}}{}^2/m }\bigg)^{-1}.
	\end{align*}
	Then, for $\mathfrak{u}, \mathfrak{w} \in \mathfrak{B}_\eta$, by applying Lemma \ref{lem:metriDpro} (where (i)-(iii) correspond to (1)-(3) therein respectively), we have for any $k \in [m]$,
	\begin{align*}
	&\mathfrak{d}_{\mathbb{R}}(\Phi(\mathfrak{u})_k(\lambda), \Phi(\mathfrak{w})_k(\lambda) ) \\
	&\overset{(\mathrm{i})}{=} \mathfrak{d}_{\mathbb{R}}\Big(\lambda + e_k^{\top} \mathscr{V}_m(1_n+ \mathscr{V}_m^\top \mathfrak{u}(\lambda))^{-1}, \lambda + e_k^{\top} \mathscr{V}_m(1_n+ \mathscr{V}_m^\top \mathfrak{w}(\lambda))^{-1}\Big)\\
	&\overset{(\mathrm{ii})}{=}  \mathsf{c}_{k;\mathfrak{u}(\lambda)}  \mathsf{c}_{k;\mathfrak{w}(\lambda)}\cdot \mathfrak{d}_{\mathbb{R}}\Big( e_k^{\top} \mathscr{V}_m(1_n+ \mathscr{V}_m^\top \mathfrak{u}(\lambda))^{-1}, e_k^{\top} \mathscr{V}_m(1_n+ \mathscr{V}_m^\top \mathfrak{w}(\lambda))^{-1} \Big)\\
	&\overset{(\mathrm{iii})}{\le} \mathsf{c}_{k;\mathfrak{u}(\lambda)}  \mathsf{c}_{k;\mathfrak{w}(\lambda)}\cdot \max_{\ell \in [n]} \mathfrak{d}_{\mathbb{R}}\Big(e_\ell^\top(1_n+ \mathscr{V}_m^\top \mathfrak{u}(\lambda))^{-1}, e_\ell^\top(1_n+ \mathscr{V}_m^\top \mathfrak{w}(\lambda))^{-1} \Big)\\
	&\overset{(\mathrm{i})}{=}\mathsf{c}_{k;\mathfrak{u}(\lambda)}  \mathsf{c}_{k;\mathfrak{w}(\lambda)}\cdot \max_{\ell \in [n]} \mathfrak{d}_{\mathbb{R}}\Big(e_\ell^\top(1_n+ \mathscr{V}_m^\top \mathfrak{u}(\lambda)),  e_\ell^\top(1_n+ \mathscr{V}_m^\top \mathfrak{w}(\lambda)) \Big)\\
	&\overset{(\mathrm{ii})}{=}\mathsf{c}_{k;\mathfrak{u}(\lambda)}  \mathsf{c}_{k;\mathfrak{w}(\lambda)}\cdot \max_{\ell \in [n]}  \mathsf{d}_{\ell;\mathfrak{u}(\lambda)} \mathsf{d}_{\ell;\mathfrak{w}(\lambda)}\cdot \mathfrak{d}_{\mathbb{R}}\Big(e_\ell^\top \mathscr{V}_m^\top \mathfrak{u}(\lambda),  e_\ell^\top\mathscr{V}_m^\top \mathfrak{w}(\lambda) \Big)\\
	&\overset{(\mathrm{iii})}{\le} \mathsf{c}_{k;\mathfrak{u}(\lambda)}  \mathsf{c}_{k;\mathfrak{w}(\lambda)}\cdot \max_{\ell \in [n]}  \mathsf{d}_{\ell;\mathfrak{u}(\lambda)} \mathsf{d}_{\ell;\mathfrak{w}(\lambda)}\cdot \max_{k \in [m]} \mathfrak{d}_{\mathbb{R}}\big( \mathfrak{u}_k(\lambda),  \mathfrak{w}_k(\lambda) \big).
	\end{align*}
	Taking maxima over $k \in [m]$ and $\lambda \in \R_{\eta}$ completes the proof of (\ref{eq:contractionPhi}).

	\noindent (\textbf{Step 3}). We now wish to use the contraction property proved in Step 2 to conclude the existence and uniqueness.

	We start with the existence proof. For any continuous $\mathfrak{u}^{(0)} \in \mathfrak{B}_\eta$, we iteratively define $\mathfrak{u}^{(t+1)}\equiv \Phi(\mathfrak{u}^{(t)})$. Then the contraction property in (\ref{eq:contractionPhi}) yields that, for some constants $\epsilon_0\equiv \epsilon_0(V)\in (0,1)$ and $C_0=C_0(V,\eta)>1$, 
	\begin{align}\label{ineq:ridge_fpe_exist_uniq_b_1}
	\mathfrak{d}_{\mathfrak{B}_\eta}(\mathfrak{u}^{(t+1)}, \mathfrak{u}^{(t)})\leq (1-\epsilon_0)^t\cdot \mathfrak{d}_{\mathfrak{B}_\eta}(\mathfrak{u}^{(1)}, \mathfrak{u}^{(0)})\leq C_0(1-\epsilon_0)^t.
	\end{align}
	Note that $\mathfrak{d}_{\mathfrak{B}_\eta}$ does \emph{not} define a metric due to the failure of the triangle inequality, so the contraction property with respect to $\mathfrak{d}_{\mathfrak{B}_\eta}$ does not immediately lead to the existence of the solution. To this end, with the inverse hyperbolic function $\arcosh(x)=\log(x+\sqrt{x^2-1})$ defined for $x\geq 1$, let
	$\mathfrak{e}_{\mathfrak{B}_\eta}:\mathfrak{B}_\eta\times \mathfrak{B}_\eta \to \R_{\geq 0}$ be defined by 
	\begin{align*}
	\mathfrak{e}_{\mathfrak{B}_\eta}(\mathfrak{u}, \mathfrak{w}) &\equiv \sup_{\lambda \in \mathbb{R}_\eta} \max_{k \in [m]} \arcosh\Big(\frac{1}{2}\mathfrak{d}_{\mathbb{R}}(\mathfrak{u}_k(\lambda), \mathfrak{w}_k(\lambda) )+1 \Big)\equiv \sup_{\lambda \in \mathbb{R}_\eta}  \bigpnorm{\log \mathfrak{u}(\lambda)-\log \mathfrak{w}(\lambda)}{\infty}.
	\end{align*} 
	Then $\mathfrak{e}_{\mathfrak{B}_\eta}(\cdot,\cdot)$ is a metric on $\mathfrak{B}_\eta\times \mathfrak{B}_\eta$ that in particular satisfies the triangle inequality. Moreover, $(\mathfrak{B}_\eta,\mathfrak{e}_{\mathfrak{B}_\eta})$ is a complete metric space, as $\mathfrak{e}_{\mathfrak{B}_\eta}$ induces the same topology as that of $L_\infty$ on $\mathfrak{B}_\eta$. On the other hand, using the simple inequality that for any $z>0$, $\arcosh(1+z)=\log\big(1+z+\sqrt{z^2+2z}\big)\leq z+\sqrt{z^2+2z}\leq 2z+\sqrt{2z}$, we have for $\mathfrak{u},\mathfrak{w} \in \mathfrak{B}_\eta$, 
	\begin{align}\label{ineq:ridge_fpe_exist_uniq_b_2}
	\mathfrak{e}_{\mathfrak{B}_\eta}(\mathfrak{u}, \mathfrak{w})&\leq \mathfrak{d}_{\mathfrak{B}_\eta}(\mathfrak{u}, \mathfrak{w})+ \mathfrak{d}_{\mathfrak{B}_\eta}^{1/2}(\mathfrak{u}, \mathfrak{w}).
	\end{align}
	Consequently, for any $t< s$, using triangle inequality for $\mathfrak{e}_{\mathfrak{B}_\eta}(\cdot,\cdot)$ and (\ref{ineq:ridge_fpe_exist_uniq_b_1})-(\ref{ineq:ridge_fpe_exist_uniq_b_2}),
	\begin{align*}
	&\mathfrak{e}_{\mathfrak{B}_\eta}(\mathfrak{u}^{(s)}, \mathfrak{u}^{(t)})\leq \sum_{r \in [t:s)}  \mathfrak{e}_{\mathfrak{B}_\eta}(\mathfrak{u}^{(r+1)}, \mathfrak{u}^{(r)})\\
	&\leq \sum_{r \in [t:s)}  \mathfrak{d}_{\mathfrak{B}_\eta}(\mathfrak{u}^{(r+1)}, \mathfrak{u}^{(r)})+\sum_{r \in [t:s)}  \mathfrak{d}_{\mathfrak{B}_\eta}^{1/2}(\mathfrak{u}^{(r+1)}, \mathfrak{u}^{(r)})\leq C_1(1-\epsilon_0)^{t/2}.
	\end{align*}
	Here $C_1=C_1(V,\eta)>1$. This means $\{\mathfrak{u}^{(t)}\}$ is a Cauchy sequence in the complete metric space $(\mathfrak{B}_\eta,\mathfrak{e}_{\mathfrak{B}_\eta})$, and therefore the limit $\mathfrak{u}^{(\infty)}\equiv \lim_{t \to \infty} \mathfrak{u}^{(t)}$ is well defined in $(\mathfrak{B}_\eta,\mathfrak{e}_{\mathfrak{B}_\eta})$. By continuity of $\Phi$ with respect to $\mathfrak{e}_{\mathfrak{B}_\eta}$ on $\mathfrak{B}_\eta$, we may take limit $t \to \infty$ in the equation $\mathfrak{u}^{(t+1)}=\Phi(\mathfrak{u}^{(t)})$ to obtain $\mathfrak{u}^{(\infty)}=\Phi(\mathfrak{u}^{(\infty)})$. As $\mathfrak{u}^{(0)}$ is continuous, and $\Phi$ preserves continuity, the limit $\mathfrak{u}^{(\infty)}$ induced by $L_\infty$-topology must also be continuous. From (\ref{eq:recQVE}), it is easy to see that $\max_{k \in [m]}\abs{\mathfrak{u}_k^{(\infty)}(\lambda)}<1/\lambda$, and therefore $b^{(\infty)}\equiv 1_m-\lambda \mathfrak{u}^{(\infty)}(\lambda) \in [0,1)^m$ is well-defined. This completes the existence proof.
	
	Next we provide the uniqueness proof. The uniqueness proof does not require the triangle inequality property for $\mathfrak{d}_{\mathfrak{B}_\eta}(\cdot,\cdot)$, so is much easier. Let us take $\mathfrak{u}, \mathfrak{u}' \in \mathfrak{B}_\eta$ with $\mathfrak{u}=\Phi(\mathfrak{u})$ and $\mathfrak{u}'=\Phi(\mathfrak{u}')$. Then the contraction property (\ref{eq:contractionPhi}) shows that there exists some $\epsilon_0=\epsilon_0(V) \in (0,1)$ such that
	\begin{align*}
	0\leq \mathfrak{d}_{\mathfrak{B}_\eta}(\mathfrak{u}, \mathfrak{u}')=\mathfrak{d}_{\mathfrak{B}_\eta}(\Phi(\mathfrak{u}), \Phi(\mathfrak{u}'))\leq (1-\epsilon_0)\cdot \mathfrak{d}_{\mathfrak{B}_\eta}(\mathfrak{u}, \mathfrak{u}'). 
	\end{align*}
	This means $\mathfrak{d}_{\mathfrak{B}_\eta}(\mathfrak{u}, \mathfrak{u}')=0$. Using the definition of $\mathfrak{d}_{\mathfrak{B}_\eta}$ and the fact that $\mathfrak{u}, \mathfrak{u}' \in \mathfrak{B}_\eta$, 
	\begin{align*}
	\underline{L}_\eta^{-2} \sup_{\lambda \in \R_\eta} \pnorm{ \mathfrak{u}(\lambda)-\mathfrak{u}'(\lambda) }{\infty}\leq \mathfrak{d}_{\mathfrak{B}_\eta}(\mathfrak{u}, \mathfrak{u}')=0.
	\end{align*}
	This implies the uniqueness claim $\mathfrak{u}\equiv \mathfrak{u}'$ as elements in $\mathfrak{B}_\eta$. 
	
	Combining both claims, we have proven that for any $\eta \in (0,1)$, there exists a unique solution $\mathfrak{u}^\ast\in \mathfrak{B}_\eta\cap C(\R_\eta\to \R_{\geq 0}^m)$. 
	The claim now follows as $\eta \in (0,1)$ is arbitrary. 
\end{proof}

\begin{proof}[Proof of Proposition \ref{prop:ridge_fpe_exist_uniq}: Existence and uniqueness of $\gamma_\ast$]
	Let $b^\ast=b^\ast(\lambda)$ be the unique solution of the second equation of (\ref{eqn:fpe_ridge}) as proved above. It is easy to obtain the estimate 
	\begin{align*}
	\max_{k \in [m]} b^\ast_k(\lambda)&\leq \Big(1+m\lambda/\max_{k \in [m]} \pnorm{V_{k\cdot}}{}^2\Big)^{-1},\nonumber\\
	\max_{\ell \in [n]} \tau_{b^\ast(\lambda),\ell}&\leq \Big(1-\max_{k \in [m]} b^\ast_k(\lambda)\Big)^{-1}\cdot \Big(\min_{\ell \in [n]}\pnorm{V_{\cdot \ell
	}}{}^2/m\Big)^{-1},\nonumber\\
	\min_{\ell \in [n]} \tau_{b^\ast(\lambda),\ell}&\geq \Big(\max_{k \in [m]}\pnorm{V_{k\cdot}}{}^2/m\Big)^{-1}.
	\end{align*}	
	In particular, for any $\eta \in (0,1)$, with $\R_\eta=[\eta,1/\eta]$ as before, we may find some $\epsilon_0=\epsilon_0(V,\eta) \in (0,1)$ such that
	\begin{align}\label{ineq:ridge_fpe_exist_uniq_gamma_estimates}
	\sup_{\lambda \in \R_\eta}\max_{k \in [m]} b^\ast_k(\lambda)\leq 1-\epsilon_0,\quad \epsilon_0\leq \inf_{\lambda \in \R_\eta }\min_{\ell \in [n]} \tau_{b^\ast(\lambda),\ell}\leq \sup_{\lambda \in \R_\eta}\max_{\ell \in [n]} \tau_{b^\ast(\lambda),\ell}\leq 1/\epsilon_0. 
	\end{align}	
	The first equation of (\ref{eqn:fpe_ridge}) of interest can be written as
	\begin{align}\label{ineq:ridge_fpe_exist_uniq_gamma_eqn}
	\gamma^2(\lambda) = q_{b^\ast(\lambda)}+ \mathfrak{D}_{\tau_{b^\ast}(\lambda)}^2\mathscr{V}_m^\top \mathfrak{D}_{1_m-b^\ast(\lambda)}^2 \mathscr{V}_m \mathfrak{D}_{1_n+\lambda \tau_{b^\ast(\lambda)}}^{-2} \gamma^2(\lambda),
	\end{align}
	where $q_{b^\ast}: \R_{>0}\to \R^n_{\geq 0}$ is locally bounded. In particular, for any $\eta \in (0,1)$, there exists $\epsilon_1 = \epsilon_1(V,\eta) \in (0,1)$,
	\begin{align}\label{ineq:ridge_fpe_exist_uniq_gamma_q_bound}
	\sup_{\lambda \in \R_\eta} \pnorm{q_{b^\ast}(\lambda)}{\infty}\leq 1/\epsilon_1.
	\end{align}
	We remark here that simply inverting (\ref{ineq:ridge_fpe_exist_uniq_gamma_eqn}) to solve $\gamma^2(\lambda)$ will not guarantee that the solution is in $\R_{\geq 0}^n$. 
	Equivalently, we may write (\ref{ineq:ridge_fpe_exist_uniq_gamma_eqn}) as
	\begin{align}\label{ineq:ridge_fpe_exist_uniq_gamma_zeta_eqn}
	\zeta(\lambda) &= \mathfrak{D}_{\tau_{b^\ast(\lambda)}}^{-1}q_{b^\ast(\lambda)}+ \mathfrak{D}_{\tau_{b^\ast(\lambda)} }\mathscr{V}_m^\top \mathfrak{D}_{1_m-b^\ast(\lambda)}^2 \mathscr{V}_m \mathfrak{D}_{\tau_{b^\ast(\lambda)}\circ (1_n+\lambda \tau_{b^\ast(\lambda)})^{-2}} \zeta(\lambda)\nonumber\\
	&\equiv \Psi(\zeta)(\lambda),
	\end{align}
	where $\zeta \equiv  \mathfrak{D}_{\tau_{b^\ast}}^{-1}\gamma^2 : \R_{>0}\to \R_{\geq 0}^n$. The essential merit of working with $\zeta$ instead of $\gamma$ is that $\Psi$ will be a strict contraction with respect to a natural $L_\infty$ metric. In particular, consider the function class
	\begin{align*}
	\mathfrak{Z}_\eta\equiv \Big\{\zeta: \R_\eta \to \R_{\geq 0}^n\,\hbox{ s.t. }\sup_{\lambda \in \R_\eta} \pnorm{\zeta(\lambda)}{\infty}\leq \frac{1}{\epsilon_0^2\epsilon_1} \Big\}, 
	\end{align*}
	and the metric (with slight abuse of notation) $\mathfrak{d}_{\mathfrak{Z}_\eta}: \mathfrak{Z}_\eta\times \mathfrak{Z}_\eta\to \R_{\geq 0}$ defined via
	\begin{align*}
	\mathfrak{d}_{\mathfrak{Z}_\eta}(\zeta_1,\zeta_2)\equiv \sup_{\lambda \in \R_\eta} \pnorm{\zeta_{1}(\lambda)-\zeta_{2}(\lambda)}{\infty}.
	\end{align*}
	In the sequel, we shall adopt a similar 3-step strategy as in the proof of existence and uniqueness for $b^\ast$. 
	
	\noindent (\textbf{Step 1}). First, we shall verify that $\Psi(\mathfrak{Z}_\eta)\subset \mathfrak{Z}_\eta$. Using the definition of $\Psi$ in (\ref{ineq:ridge_fpe_exist_uniq_gamma_zeta_eqn}), and the estimates in (\ref{ineq:ridge_fpe_exist_uniq_gamma_estimates}) and (\ref{ineq:ridge_fpe_exist_uniq_gamma_q_bound}), for any $\zeta \in \mathfrak{Z}_\eta$, 
	\begin{align*}
	&\sup_{\lambda \in \R_\eta} \pnorm{\Psi(\zeta)(\lambda)}{\infty}\\
	&\leq \frac{1}{\epsilon_0\epsilon_1}+ \sup_{\lambda \in \R_\eta}\max_{\ell \in [n]}\biggabs{\tau_{b^\ast,\ell} \sum_{k \in [m]} \frac{V_{k\ell}^2}{m}(1-b_k^\ast)^2 \sum_{\ell' \in [n]} \frac{V_{k\ell'}^2}{m}\cdot \frac{\tau_{b^\ast,\ell'}}{(1+\lambda \tau_{b^\ast,\ell'})^2} \zeta_{\ell'}(\lambda)}\\
	&\stackrel{(\ast)}{\leq} \frac{1}{\epsilon_0\epsilon_1}+\sup_{\lambda \in \R_\eta} \max_{\ell \in [n]}\tau_{b^\ast,\ell} \sum_{k \in [m]} \frac{V_{k\ell}^2}{m}(1-b_k^\ast)^2 \cdot \frac{b_k^\ast}{1-b_k^\ast}\cdot \sup_{\lambda \in \R_\eta}\pnorm{\zeta(\lambda)}{\infty}\\  
	&\leq \frac{1}{\epsilon_0\epsilon_1}+ \sup_{\lambda \in \R_\eta}\max_{k \in [m]}b^\ast_k(\lambda)\cdot\max_{\ell \in [n]} \bigg(\tau_{b^\ast,\ell} \sum_{k \in [m]} \frac{V_{k\ell}^2}{m}(1-b_k^\ast)\bigg)\cdot  \sup_{\lambda \in \R_\eta}\pnorm{\zeta(\lambda)}{\infty}\\
	& \stackrel{(\ast\ast)}{=} \frac{1}{\epsilon_0\epsilon_1}+\sup_{\lambda \in \R_\eta}\max_{k \in [m]} b^\ast_k(\lambda)\cdot  \sup_{\lambda \in \R_\eta}\pnorm{\zeta(\lambda)}{\infty}\\
	&\leq \frac{1}{\epsilon_0\epsilon_1}+(1-\epsilon_0) \cdot \sup_{\lambda \in \R_\eta}\pnorm{\zeta(\lambda)}{\infty}\leq \frac{1}{\epsilon_0^2\epsilon_1}.
	\end{align*}
	Here in $(\ast)$ we used the characterizing equation for $b^\ast$, and in $(\ast\ast)$ we used the definition of $\tau_{b^\ast}$. This verifies $\Psi(\mathfrak{Z}_\eta)\subset \mathfrak{Z}_\eta$.

	\noindent (\textbf{Step 2}). Next, we shall verify the following contraction property for $\Psi:\mathfrak{Z}_\eta\to \mathfrak{Z}_\eta$ with respect to $\mathfrak{d}_{\mathfrak{Z}_\eta}$: for any $\zeta_1,\zeta_2 \in \mathfrak{Z}_\eta$,
	\begin{align*}
	\mathfrak{d}_{\mathfrak{Z}_\eta}\big(\Psi(\zeta_1),\Psi(\zeta_2)\big)\leq (1-\epsilon_0)\cdot \mathfrak{d}_{\mathfrak{Z}_\eta}\big(\zeta_1,\zeta_2\big).
	\end{align*}
	The above display follows by essentially the same calculations as in the Step 1, but without the additive term $1/(\epsilon_0\epsilon_1)$ and replacing $\zeta$ therein with $\zeta_1-\zeta_2$:
	\begin{align*}
	&\sup_{\lambda \in \R_\eta}\bigpnorm{\Psi(\zeta_1)(\lambda)-\Psi(\zeta_2)(\lambda)}{\infty}\\
	&= \sup_{\lambda \in \R_\eta}\max_{\ell \in [n]}\biggabs{\tau_{b^\ast,\ell} \sum_{k \in [m]} \frac{V_{k\ell}^2}{m}(1-b_k^\ast)^2 \sum_{\ell' \in [n]} \frac{V_{k\ell'}^2}{m}\cdot \frac{\tau_{b^\ast,\ell'}}{(1+\lambda \tau_{b^\ast,\ell'})^2} (\zeta_{1,\ell'}(\lambda)-\zeta_{2,\ell'}(\lambda))}\\
	& \leq (1-\epsilon_0)\cdot \sup_{\lambda \in \R_\eta} \pnorm{\zeta_1(\lambda)-\zeta_2(\lambda)}{\infty}.
	\end{align*}

	\noindent (\textbf{Step 3}). As $(\mathfrak{Z}_\eta,\mathfrak{d}_{\mathfrak{Z}_\eta})$ is a complete metric space,  $\Psi(\zeta)=\zeta$ admits a unique solution $\zeta^\ast \in \mathfrak{Z}_\eta$ by the contraction mapping theorem. Clearly $\zeta^\ast$ can be taken as continuous. The claim follows as $\eta \in (0,1)$ is arbitrary. 	
\end{proof}

\subsection{Proof of Theorem \ref{thm:ridge_dist}}\label{subsection:proof_ridge_dist}

We shall first prove Proposition \ref{prop:ridge_theta_error}. To this end, we need two lemmas. 

\begin{lemma}\label{lem:ridge_AMP_diff_l2}
	Suppose for some $K\geq 2$, $m^{-1/2}\big(\pnorm{V_{k\cdot}}{}\wedge  \pnorm{V_{\cdot\ell}}{}\big)\geq 1/K$, $V_{k\ell}\leq K$ and hold for all $k \in [m],\ell \in [n]$, and $1/K\leq \lambda \leq K$ holds. Then there exists some universal constant $c_0>1$ such that 
	\begin{align*}
	&\pnorm{\theta^{(t)}-\hat{\theta}}{}\leq K^{c_0}\cdot(1\vee \pnorm{A}{\op})\cdot \big(\pnorm{u^{(t+1)}-u^{(t)}}{}+ \pnorm{v^{(t+1)}-v^{(t)} }{}\big),
	\end{align*}
	and
	\begin{align*}
	&\pnorm{r^{(t)}-\hat{r}}{}\leq K^{c_0}  (1\vee \pnorm{A}{\op})^2 \sum_{s \in [t-1]}(1-K^{-c_0})^{t-1-s} \big(\pnorm{u^{(s+1)}-u^{(s)}}{}+ \pnorm{v^{(s+1)}-v^{(s)} }{}\big).
	\end{align*}
\end{lemma}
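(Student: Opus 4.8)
The key observation is that by Proposition \ref{prop:ridge_AMP}(2), the iterate $(u^{(t+1)},v^{(t+1)})$ and the AMP nonlinearities $\mathsf{F}_t,\mathsf{G}_t$ encode the quantities $r^{(t+1)},\theta^{(t)}$ in a linear, invertible way; in particular $\mathsf{G}_t(u^{(t)})=-r^{(t)}$ and $\mathsf{F}_t(v^{(t)})=\theta^{(t)}-\theta_{0;b^\ast}$ for $t\geq 1$. So differences of the AMP iterates translate directly into differences of $(r^{(t)},\theta^{(t)})$, and the task reduces to a deterministic stability analysis of the fixed-point map (\ref{def:AMP_ls}). I would first record, from (\ref{def:AMP_standard_form_ls}), the identities
\begin{align*}
u^{(t+1)}-u^{(t)}=-(r^{(t+1)}-r^{(t)}),\quad v^{(t+1)}-v^{(t)}=-(\theta^{(t)}-\theta^{(t-1)})-A_{b^\ast}^\top(r^{(t+1)}-r^{(t)}),
\end{align*}
valid for $t\geq 1$ (with the obvious modification at $t=0$), together with the operator-norm bounds $\pnorm{A_{b^\ast}}{\op}\leq K^{c}(1\vee\pnorm{A}{\op})$ and $\pnorm{\mathfrak D_{\tau_{b^\ast}}^{\pm 1/2}}{\op}\vee\pnorm{\mathfrak D_{1_m-b^\ast}^{\pm 1/2}}{\op}\leq K^{c}$ coming from the a priori estimates (\ref{ineq:ridge_fpe_exist_uniq_gamma_estimates}) on $b^\ast,\tau_{b^\ast}$ under the stated conditions on $V,\lambda$.

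For the first inequality, I would subtract the stationarity equations for $(\hat r,\hat\theta)$ from (\ref{def:AMP_ls}). Writing $w^{(t)}\equiv\theta^{(t)}-\hat\theta$ and $\rho^{(t)}\equiv r^{(t)}-\hat r$, the $\theta$-update and $1$-Lipschitzness of the proximal operator $\prox_{\mathsf f_{\tau_{b^\ast}}}$ give $\pnorm{w^{(t+1)}}{}\leq\pnorm{w^{(t)}+A_{b^\ast}^\top\rho^{(t+1)}}{}$; and $\rho^{(t+1)}=-A_{b^\ast}w^{(t)}+b^\ast\circ\rho^{(t)}$. Rather than trying to get geometric decay from these two lines directly (which would need a spectral gap argument), I would exploit that $\theta^{(t)}$ is already close to $\hat\theta$ \emph{through} the AMP difference: from the first display above, $\pnorm{w^{(t)}-w^{(t-1)}}{}=\pnorm{\theta^{(t)}-\theta^{(t-1)}}{}\leq\pnorm{v^{(t+1)}-v^{(t)}}{}+\pnorm{A_{b^\ast}}{\op}\pnorm{u^{(t+1)}-u^{(t)}}{}$, and similarly $\pnorm{\rho^{(t)}-\rho^{(t-1)}}{}\leq\pnorm{u^{(t+1)}-u^{(t)}}{}$. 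Combining with the contraction structure of the map $(\theta,r)\mapsto$ (RHS of (\ref{def:AMP_ls})) — which is a non-expansion composed with the linear part — one shows $(w^{(t)},\rho^{(t)})$ is driven entirely by these consecutive differences, yielding $\pnorm{w^{(t)}}{}\lesssim K^{c}(1\vee\pnorm{A}{\op})(\pnorm{u^{(t+1)}-u^{(t)}}{}+\pnorm{v^{(t+1)}-v^{(t)}}{})$. The point is that $(\hat r,\hat\theta)$ is the unique fixed point, so $(w^{(t)},\rho^{(t)})$ at step $t$ is a bounded linear image of the one-step residual at step $t$, with the operator norm of that image controlled by $K^{c}(1\vee\pnorm{A}{\op})$.

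For the second inequality, the extra factor $(1\vee\pnorm{A}{\op})^2$ and the geometric sum $\sum_{s}(1-K^{-c_0})^{t-1-s}$ indicate that $\rho^{(t)}$ must be unrolled back through the $r$-recursion $\rho^{(t+1)}=-A_{b^\ast}w^{(t)}+b^\ast\circ\rho^{(t)}$: iterating gives $\rho^{(t)}=-\sum_{s\in[t-1]}\mathfrak D_{b^\ast}^{\,t-1-s}A_{b^\ast}w^{(s)}+\mathfrak D_{b^\ast}^{\,t-1}\rho^{(1)}$, and since $\max_k b^\ast_k\leq 1-\epsilon_0\leq 1-K^{-c_0}$ by (\ref{ineq:ridge_fpe_exist_uniq_gamma_estimates}), $\pnorm{\mathfrak D_{b^\ast}^{\,j}}{\op}\leq(1-K^{-c_0})^j$. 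Substituting the bound on $\pnorm{w^{(s)}}{}$ from the first part (which contributes one factor of $1\vee\pnorm{A}{\op}$) and the factor $\pnorm{A_{b^\ast}}{\op}\leq K^c(1\vee\pnorm{A}{\op})$ (the second), while noting $\rho^{(1)}$ is itself of the same form, produces exactly the stated bound. The main obstacle I anticipate is bookkeeping: making precise the claim in the first part that the $t$-th residual $(w^{(t)},\rho^{(t)})$ is a bounded linear function of the $t$-th one-step increment, uniformly in $t$ — this needs the non-expansiveness of the proximal map together with a careful argument that the linear part $(\theta,r)\mapsto(\theta+A_{b^\ast}^\top r,\;-A_{b^\ast}\theta+b^\ast\circ r)$ does not amplify, which is where the structure $b^\ast\in[0,1)^m$ and the specific coupling in (\ref{def:AMP_ls}) enter. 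Everything else is routine norm estimation using (\ref{ineq:ridge_fpe_exist_uniq_gamma_estimates}).
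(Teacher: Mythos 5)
The second half of your argument (unrolling $\rho^{(t+1)}=-A_{b^\ast}w^{(t)}+b^\ast\circ\rho^{(t)}$, using $\max_k b^\ast_k\leq 1-K^{-c}$ to get the geometric weights, and substituting the bound on $\pnorm{w^{(s)}}{}$) is exactly the paper's argument for the bound on $\pnorm{r^{(t)}-\hat r}{}$, and your translation identities between $(u^{(\cdot)},v^{(\cdot)})$ and $(r^{(\cdot)},\theta^{(\cdot)})$ are correct. The problem is the first inequality, which is the crux of the lemma and which you do not actually prove. Your central claim --- that because $(\hat r,\hat\theta)$ is the unique fixed point, the residual $(w^{(t)},\rho^{(t)})$ is a bounded linear image of the one-step increment, with norm $K^{c}(1\vee\pnorm{A}{\op})$ --- is precisely the statement that needs an argument, and the mechanism you offer (``non-expansion composed with the linear part'', ``the linear part does not amplify'') fails: the linear part of the iteration involves $A_{b^\ast}$ and $A_{b^\ast}^\top$, whose operator norms are of order $1\vee\pnorm{A}{\op}$ and are in general larger than $1$, so the iteration map of (\ref{def:AMP_ls}) is not non-expansive, and passing from ``$x$ is an approximate fixed point'' to ``$x$ is close to the fixed point'' requires a quantitative invertibility of $I-T$ that neither $b^\ast\in[0,1)^m$ nor the proximal non-expansiveness supplies by itself.

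The missing idea is the one the paper uses: rewrite the approximate fixed-point relation satisfied by $\theta^{(t)}$ (with the perturbation $\delta^{(t)}$ built from $r^{(t+1)}-r^{(t)}$ and $\theta^{(t+1)}-\theta^{(t)}$) as an approximate first-order optimality condition for the Ridge objective $\mathsf{L}(\mu)=\tfrac12(\pnorm{Y-A\mu}{}^2+\lambda\pnorm{\mu}{}^2)$, namely $\nabla\mathsf{L}(\mathfrak{D}_{\tau_{b^\ast}}^{1/2}\theta^{(t)})=\mathfrak{D}_{1_n+\lambda\tau_{b^\ast}}\mathfrak{D}_{\tau_{b^\ast}}^{-1/2}\delta^{(t)}$, and then exploit strong convexity $\nabla^2\mathsf{L}\succeq\lambda I_n$ with $\lambda\geq 1/K$ together with optimality of $\hat\mu$ to get $\pnorm{\hat\mu-\mathfrak{D}_{\tau_{b^\ast}}^{1/2}\theta^{(t)}}{}\leq 2\lambda^{-1}\pnorm{\nabla\mathsf{L}(\mathfrak{D}_{\tau_{b^\ast}}^{1/2}\theta^{(t)})}{}\leq K^{c}\pnorm{\delta^{(t)}}{}$. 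This is where the factor $(1\vee\pnorm{A}{\op})$ and the bound by one-step increments come from; it replaces the spectral-gap/contraction argument you correctly anticipated would be needed but did not supply. Without this (or an equivalent quantitative inverse-stability argument for the affine iteration), the first display of the lemma is not established, and the second display, which feeds on it, is left hanging as well.
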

\begin{proof} 
	By the first equation of (\ref{def:AMP_ls}), with $\Xi_{r}^{(t)}\equiv \mathfrak{D}_{b^\ast\circ (1_m-b^\ast)^{-1}} (r^{(t)}-r^{(t+1)})$, 
	\begin{align*}
	r^{(t+1)}& = \mathfrak{D}_{1_m-b^\ast}^{-1} \big(A_{b^\ast}(\theta_{0;b^\ast}- \theta^{(t)})+\xi_{b^\ast}\big)+\Xi_{r}^{(t)}.
	\end{align*}
	Plugging into the second equation of (\ref{def:AMP_ls}), we have 
	\begin{align*}
	\theta^{(t+1)}= \prox_{\mathsf{f}_{\tau_{b^\ast}}} \Big[ \theta^{(t)}+A_{b^\ast}^\top \mathfrak{D}_{1_m-b^\ast}^{-1} \Big(A_{b^\ast}\big( \theta_{0;b^\ast}- \theta^{(t)}\big)+\xi_{b^\ast}\Big)+ A_{b^\ast}^\top \Xi_{r}^{(t)}\Big].
	\end{align*}
	Consequently, with
	\begin{align*}
	\delta^{(t)}&\equiv (1_n+\tau_{b^\ast}\lambda)^{-1}A_{b^\ast}^\top \mathfrak{D}_{b^\ast\circ (1_m-b^\ast)^{-1}} (r^{(t)}-r^{(t+1)})+(\theta^{(t)}-\theta^{(t+1)}),
	\end{align*}
	we have
	\begin{align*}
	&\theta^{(t)}= \prox_{\mathsf{f}_{\tau_{b^\ast}}} \Big[ \theta^{(t)}+A_{b^\ast}^\top \mathfrak{D}_{1_m-b^\ast}^{-1} \Big(A_{b^\ast}\big( \theta_{0;b^\ast}- \theta^{(t)}\big)+\xi_{b^\ast}\Big)\Big]+ \delta^{(t)},\\
	\Leftrightarrow\quad & \mathfrak{D}_{\tau_{b^\ast}}^{1/2} \theta^{(t)} = \prox_{\mathsf{f}_{\tau_{b^\ast} } }  \Big[ \mathfrak{D}_{\tau_{b^\ast}}^{1/2} \theta^{(t)}+\mathfrak{D}_{\tau_{b^\ast}} A^\top \Big(A\big( \mu_{0}-\mathfrak{D}_{\tau_{b^\ast}}^{1/2} \theta^{(t)}\big)+\xi\Big)\Big]+ \mathfrak{D}_{\tau_{b^\ast}}^{1/2} \delta^{(t)},\\
	\Leftrightarrow\quad & \lambda \big(\mathfrak{D}_{\tau_{b^\ast}}^{1/2} \theta^{(t)} \big) + A^\top \big(A (\mathfrak{D}_{\tau_{b^\ast}}^{1/2} \theta^{(t)} )-Y\big) = \mathfrak{D}_{1_n+\lambda\tau_{b^\ast}}\mathfrak{D}_{\tau_{b^\ast}}^{-1/2} \delta^{(t)}.
	\end{align*}
	On the other hand, with $\mathsf{L}(\mu)\equiv 2^{-1}\big(\pnorm{Y-A\mu}{}^2+\lambda \pnorm{\mu}{}^2\big)$, $\nabla \mathsf{L}(\mu)= A^\top (A\mu-Y)+\lambda \mu$, and $\nabla^2 \mathsf{L}\geq \lambda I_n$. Moreover, the above display is equivalent to $\nabla \mathsf{L}(\mathfrak{D}_{\tau_{b^\ast}}^{1/2} \theta^{(t)}) = \mathfrak{D}_{1_n+\lambda\tau_{b^\ast}}\mathfrak{D}_{\tau_{b^\ast}}^{-1/2} \delta^{(t)}$. So by the cost optimality,
	\begin{align*}
	\mathsf{L}\big(\mathfrak{D}_{\tau_{b^\ast}}^{1/2} \theta^{(t)}\big)\geq \mathsf{L}(\hat{\mu})\geq \mathsf{L}\big(\mathfrak{D}_{\tau_{b^\ast}}^{1/2} \theta^{(t)}\big)+\nabla \mathsf{L}\big(\mathfrak{D}_{\tau_{b^\ast}}^{1/2} \theta^{(t)}\big)\cdot \big(\hat{\mu}-\mathfrak{D}_{\tau_{b^\ast}}^{1/2} \theta^{(t)}\big)+\frac{\lambda}{2}\bigpnorm{\hat{\mu}-\mathfrak{D}_{\tau_{b^\ast}}^{1/2} \theta^{(t)}}{}^2.
	\end{align*}
	Under the assumed conditions on $V$, $\pnorm{\tau_{b^\ast}^{-1}}{\infty}\vee \pnorm{\tau_{b^\ast}}{\infty}\vee \pnorm{(1_m-b^\ast)^{-1}}{\infty}\leq K^c $, so  
	\begin{align*}
	\bigpnorm{\hat{\mu}-\mathfrak{D}_{\tau_{b^\ast}}^{1/2} \theta^{(t)}}{}&\leq 2\lambda^{-1} \bigpnorm{\nabla \mathsf{L}\big(\mathfrak{D}_{\tau_{b^\ast}}^{1/2} \theta^{(t)}\big) }{}\leq K^c\cdot  \pnorm{\delta^{(t)}}{}\\
	&\leq K^c \cdot \big(\pnorm{A}{\op}\cdot \pnorm{r^{(t+1)}-r^{(t)}}{}+ \pnorm{\theta^{(t+1)}-\theta^{(t)} }{}\big).
	\end{align*}
	Now using that $\pnorm{r^{(t+1)}-r^{(t)}}{}=\pnorm{u^{(t+1)}-u^{(t)}}{}$ and that $\pnorm{\theta^{(t+1)}-\theta^{(t)} }{}\leq \pnorm{v^{(t+1)}-v^{(t)} }{}+\pnorm{A_{b^\ast}}{\op} \pnorm{r^{(t+1)}-r^{(t)} }{}$, we have
	\begin{align*}
	\pnorm{\theta^{(t)}-\hat{\theta}}{}\leq K^c \cdot(1\vee \pnorm{A}{\op})\cdot \big(\pnorm{u^{(t+1)}-u^{(t)}}{}+ \pnorm{v^{(t+1)}-v^{(t)} }{}\big).
	\end{align*}
	For the bound for $\pnorm{r^{(t)}-\hat{r}}{}$, using the first equation of (\ref{def:AMP_ls}),
	\begin{align*}
	\pnorm{r^{(t)}-\hat{r}}{}&\leq \pnorm{A_{b^\ast}}{\op} \pnorm{\theta^{(t-1)}-\hat{\theta}}{}+\pnorm{b^\ast}{\infty}\pnorm{r^{(t-1)}-\hat{r}}{}.
	\end{align*}
	Iterating the bound, we arrive at
	\begin{align*}
	&\pnorm{r^{(t)}-\hat{r}}{}\leq \pnorm{A_{b^\ast}}{\op} \sum_{s \in [t-1]} \pnorm{b^\ast}{\infty}^{t-1-s}\pnorm{\theta^{(s)}-\hat{\theta}}{}\\
	&\leq  K^c  (1\vee \pnorm{A}{\op})^2 \sum_{s \in [t-1]}(1-K^{-c})^{t-1-s} \big(\pnorm{u^{(s+1)}-u^{(s)}}{}+ \pnorm{v^{(s+1)}-v^{(s)} }{}\big),\nonumber
	\end{align*}
	as desired.
\end{proof}

\begin{lemma}\label{lem:ridge_u_v_error}
	Suppose $1/K\leq m/n,\lambda\leq K$ for some $K\geq 2$. Further suppose $m^{-1/2}\big(\pnorm{V_{k\cdot}}{}\wedge  \pnorm{V_{\cdot\ell}}{}\big)\geq 1/K$, and  $V_{k\ell}\leq K$ holds for all $k \in [m],\ell \in [n]$. Then for any $D>1$, there exist some universal constant $c_0>0$ and another constant $c_1=c_1(D)>0$, such that for all $1\leq t\leq \log n/c_0$, with $\Prob^\xi(\cdot|\theta^{(0)})$-probability at least $1-c_1 n^{-D}$,
	\begin{align*}
	&n^{-1} \max\big\{\pnorm{u^{(t+1)}-u^{(t)}}{}^2,\pnorm{v^{(t+1)}-v^{(t)}}{}^2\big\}\\
	&\leq K^{c_0} \cdot n^{-1}\big(\pnorm{\mu_0}{}^2\vee \pnorm{\xi}{}^2\big)\cdot (1-K^{-c_0})^t+\big(K \log n\cdot (1+\pnorm{z^{(0)}}{\infty})\big)^{c_0 t^3}\cdot n^{-1/c_0^t}.
	\end{align*}
\end{lemma}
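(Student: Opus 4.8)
The plan is to analyze the iterate $(u^{(t)},v^{(t)})$ from \eqref{def:AMP_standard_form_ls} through the asymmetric AMP state evolution of Section~\ref{section:main_results}, and then to turn the resulting deterministic formula for the consecutive increments into a geometric decay by means of the fixed point equation \eqref{eqn:fpe_ridge}. Two structural features of this AMP are crucial: the nonlinearities $\mathsf{F}_t,\mathsf{G}_t$ are \emph{affine} and, up to the factor $\bm{1}_{t\geq 1}$, \emph{$t$-independent}, so that (i) by the computation in the proof of Proposition~\ref{prop:ridge_AMP}(2) the iterate $\{u^{(t)},v^{(t)}\}$ coincides with the modified iterate $\{\overline u^{(t)},\overline v^{(t)}\}$ (with the constant correction vectors $(b^\ast,1_n)$), and (ii) since $\mathsf F_t,\mathsf G_t\in C^\infty$, Theorem~\ref{thm:AMP_asym} applies. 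First I would apply Theorem~\ref{thm:AMP_asym} (rather than Theorem~\ref{thm:AMP_asym_avg}, whose non-degeneracy caveat is inapplicable here because $\var(U^{(1)})=0$) with the order-$2$ pseudo-Lipschitz test function $\psi(x)\equiv(x_1-x_2)^2$; this yields, uniformly over $k\in[m]$ and $\ell\in[n]$, that $\bigabs{\E[(u_k^{(t+1)}-u_k^{(t)})^2|v^{(0)}]-\E[(U_k^{(t+1)}-U_k^{(t)})^2|v^{(0)}]}$ and its $v$-analogue are at most the stated AMP error, where $U^{(\cdot)},V^{(\cdot)}$ are the state-evolution Gaussians of Definition~\ref{def:AMP_asym_se}. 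Averaging over $k$ (resp.\ $\ell$) gives the same bound for $\bigabs{\E[n^{-1}\pnorm{u^{(t+1)}-u^{(t)}}{}^2|v^{(0)}]-\mathcal D_U^{(t)}}$, where $\mathcal D_U^{(t)}\equiv n^{-1}\sum_{k\in[m]}\E[(U_k^{(t+1)}-U_k^{(t)})^2|v^{(0)}]$ and likewise $\mathcal D_V^{(t)}$. To pass from this conditional-mean control to a high-probability bound, I would run the Gaussian-concentration argument already used in the proof of Theorem~\ref{thm:AMP_sym_avg}: on the event $\{\pnorm{A}{\op}\lesssim_K 1\}$ (probability $\geq 1-c_1 n^{-D}$ by Lemma~\ref{lem:A_op_subgaussian_tail}) the map $A\mapsto n^{-1}\pnorm{u^{(t+1)}(A)-u^{(t)}(A)}{}^2$ is Lipschitz with constant $\leq(K\Lambda\log n)^{c_0t}$, so Lemma~\ref{lem:gaussian_conc} gives $n^{-1}\pnorm{u^{(t+1)}-u^{(t)}}{}^2=\mathcal D_U^{(t)}+\mathrm{err}$ (and similarly for $v$), with $\mathrm{err}$ of the claimed form $(c_1 K\log n\,(1+\pnorm{z^{(0)}}{\infty}))^{c_0t^3}n^{-1/c_0^t}$ (here $z^{(0)}$ is the initialization data $\theta^{(0)}=\theta_{0;b^\ast}$, whose $\infty$-norm absorbs the $\mu_0$-dependence of the nonlinearities). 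This reduces the lemma to the purely deterministic estimate $\mathcal D_U^{(t)}\vee\mathcal D_V^{(t)}\leq K^{c_0}n^{-1}(\pnorm{\mu_0}{}^2\vee\pnorm{\xi}{}^2)(1-K^{-c_0})^t$.

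For the deterministic estimate, write $\mathscr W\equiv\E(A_{b^\ast}\circ A_{b^\ast})=\mathfrak{D}_{1_m-b^\ast}\mathscr V_m\mathfrak D_{\tau_{b^\ast}}$ for the variance profile of $A_{b^\ast}$, and unfold Definition~\ref{def:AMP_asym_se} for the affine nonlinearities $\mathsf G_{t,k}(u)=u-\xi_{b^\ast,k}$, $\mathsf F_{t,\ell}(v)=-(1+\lambda\tau_{b^\ast,\ell})^{-1}v+\mathsf F_{t,\ell}(0)$ ($t\geq1$): the additive terms cancel on differencing, so, using also $U^{(1)}\equiv0$, the increment vectors $w_U^{(t)}\equiv(\E[(U_k^{(t+1)}-U_k^{(t)})^2])_{k}$ and $w_V^{(t)}\equiv(\E[(V_\ell^{(t+1)}-V_\ell^{(t)})^2])_{\ell}$ satisfy, for all $t\geq1$, the \emph{linear} recursions $w_V^{(t)}=\mathscr W^\top w_U^{(t)}$ and $w_U^{(t+1)}=\mathscr W\,\mathfrak D_{1_n+\lambda\tau_{b^\ast}}^{-2}\,\mathscr W^\top w_U^{(t)}=:\mathsf L\,w_U^{(t)}$. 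Composing on the $v$-side reproduces precisely the homogeneous part of the map $\Psi$ of \eqref{ineq:ridge_fpe_exist_uniq_gamma_zeta_eqn}, i.e.\ the operator $\mathfrak D_{\tau_{b^\ast}}\mathscr V_m^\top\mathfrak D_{1_m-b^\ast}^2\mathscr V_m\mathfrak D_{\tau_{b^\ast}\circ(1_n+\lambda\tau_{b^\ast})^{-2}}$ appearing in the first equation of \eqref{eqn:fpe_ridge}; this is the promised link between \eqref{eqn:fpe_ridge} and the state evolution. A short computation using \emph{both} equations of \eqref{eqn:fpe_ridge} --- $\mathscr W^\top 1_m=1_n$ (from $\tau_{b^\ast}=(\mathscr V_m^\top(1_m-b^\ast))^{-1}$) and $\mathscr V_m(\tau_{b^\ast}\circ(1_n+\lambda\tau_{b^\ast})^{-1})=b^\ast\circ(1_m-b^\ast)^{-1}$ --- shows $\mathsf L\,1_m\leq(\max_k b^\ast_k)\,1_m$ entrywise, hence the entrywise-nonnegative $\mathsf L$ has $\ell_\infty\to\ell_\infty$ operator norm $\leq\max_k b^\ast_k$; since $\max_k b^\ast_k\leq(1+m\lambda/\max_k\pnorm{V_{k\cdot}}{}^2)^{-1}\leq 1-K^{-c_0}$ by \eqref{ineq:ridge_fpe_exist_uniq_gamma_estimates} (which is essentially the contraction property established in the proof of Proposition~\ref{prop:ridge_fpe_exist_uniq}), iterating gives $\pnorm{w_U^{(t)}}{\infty}\leq(1-K^{-c_0})^{t-1}\pnorm{w_U^{(1)}}{\infty}$ and, since $\pnorm{\mathscr W^\top}{\ell_\infty\to\ell_\infty}=1$, also $\pnorm{w_V^{(t)}}{\infty}\leq\pnorm{w_U^{(t)}}{\infty}$. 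The base case is controlled by the $m^{-1}$ normalization in $\mathscr W$: using $w_{U,k}^{(1)}=\sum_\ell\mathscr W_{k\ell}\bigl(\mathsf F_{1,\ell}(0)^2+(1+\lambda\tau_{b^\ast,\ell})^{-2}\var(V_\ell^{(1)})\bigr)$ with $\var(V_\ell^{(1)})=\sum_k\mathscr W_{k\ell}\,\xi_{b^\ast,k}^2$, together with $\max_{k,\ell}m\mathscr W_{k\ell}\lesssim_K1$, $|\mathsf F_{1,\ell}(0)|\lesssim_K|\mu_{0,\ell}|$, and the regularity bounds of \eqref{ineq:ridge_fpe_exist_uniq_gamma_estimates}, one gets $\pnorm{w_U^{(1)}}{\infty}\vee\pnorm{w_V^{(1)}}{\infty}\lesssim_Kn^{-1}(\pnorm{\mu_0}{}^2\vee\pnorm{\xi}{}^2)$. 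Since $m\asymp n$ this gives $\mathcal D_U^{(t)}\vee\mathcal D_V^{(t)}\leq\frac mn(\pnorm{w_U^{(t)}}{\infty}\vee\pnorm{w_V^{(t)}}{\infty})\lesssim_K(1-K^{-c_0})^tn^{-1}(\pnorm{\mu_0}{}^2\vee\pnorm{\xi}{}^2)$, completing the reduction (the case $t=1$ is included above; it can also be done directly from $u^{(1)}=0$, $v^{(1)}=-A_{b^\ast}^\top\xi_{b^\ast}$ and $\pnorm{A_{b^\ast}}{\op}\lesssim_K1$).

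The main obstacle is this second step: identifying the state-evolution increment operator $\mathsf L$ with the linear part of the fixed point map of \eqref{eqn:fpe_ridge} and extracting \emph{genuine} geometric decay from it. The delicacy is twofold. First, the contraction must live in the $\ell_\infty$ norm: $\mathsf L$ does not contract in $\ell_1$ or $\ell_2$ (its $\ell_1\to\ell_1$ norm is only $\mathcal O_K(1)$, in general $\gg1$), which is precisely the reason the weighted metrics in the proof of Proposition~\ref{prop:ridge_fpe_exist_uniq} are needed there; its clean incarnation here is the inequality $\mathsf L\,1_m\leq(\max_k b^\ast_k)1_m$, obtained from the two equations of \eqref{eqn:fpe_ridge}. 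Second, one must observe that the $m^{-1}$ averaging built into the state-evolution recursion is exactly what keeps the base-case increments $\pnorm{w_U^{(1)}}{\infty},\pnorm{w_V^{(1)}}{\infty}$ at the scale $n^{-1}(\pnorm{\mu_0}{}^2\vee\pnorm{\xi}{}^2)$ --- a crude entrywise bound through $\max_\ell\mathsf F_{1,\ell}(0)^2\sim\max_\ell\mu_{0,\ell}^2$ would be off by a factor of order $n$. The remaining ingredient, the upgrade from the conditional-mean estimate of Theorem~\ref{thm:AMP_asym} to a high-probability bound via Gaussian concentration, is routine given the template in the proof of Theorem~\ref{thm:AMP_sym_avg}.
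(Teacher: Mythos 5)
Your proposal is correct, and it rests on the same two pillars as the paper's proof: the state evolution of Definition~\ref{def:AMP_asym_se} specialized to the affine nonlinearities of \eqref{def:AMP_standard_form_ls}, and the identification of the resulting recursion with the linear part of the fixed-point map of \eqref{eqn:fpe_ridge}, whose $\ell_\infty$-contraction with factor $\max_k b^\ast_k\leq 1-K^{-c}$ (exactly the mechanism in Steps 1--2 of the $\gamma^\ast$ part of Proposition~\ref{prop:ridge_fpe_exist_uniq}) delivers the geometric decay. The differences are in bookkeeping and in the probabilistic transfer. For the deterministic part, the paper tracks the variance vectors $(\sigma_u^{(t)})^2,(\sigma_v^{(t)})^2$ and the lag-one correlation vectors $\mathfrak{r}_u^{(t)},\mathfrak{r}_v^{(t)}$ separately, shows each converges geometrically to the fixed points $\zeta_u^\ast,\zeta_v^\ast$, and obtains the increment second moment as $(\sigma^{(t+1)})^2+(\sigma^{(t)})^2-2\mathfrak{r}^{(t+1)}$; you instead close a homogeneous linear recursion directly on the increment vectors $w_U^{(t)},w_V^{(t)}$, with the operator $\mathsf{L}=\mathscr{W}\mathfrak{D}_{(1_n+\lambda\tau_{b^\ast})^{-2}}\mathscr{W}^\top$ and the identities $\mathscr{W}^\top 1_m=1_n$, $\mathsf{L}1_m\leq(\max_k b^\ast_k)1_m$ doing the work. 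Your version is a clean repackaging and avoids the correlation computations, though note the paper's route yields the convergence of $(\sigma_v^{(t)})^2$ to $\zeta_v^\ast$ (Eqn.~\eqref{ineq:ridge_u_v_error_1}) as a by-product, which is reused later in the proof of Theorem~\ref{thm:ridge_dist} to identify the limit with $\gamma^\ast$. For the transfer to the empirical iterates, the paper invokes Theorem~\ref{thm:AMP_asym_avg} directly, while you apply the coordinatewise Theorem~\ref{thm:AMP_asym} (legitimate here since the nonlinearities are affine, hence $C^2$), average, and then concentrate via Lemma~\ref{lem:gaussian_conc}; this is essentially re-deriving the averaged statement for the smooth case, and it has the mild advantage of sidestepping the $\sigma_{\ast,\psi}$ caveat of Theorem~\ref{thm:AMP_asym_avg}, which is delicate here because $\var(U^{(1)}_k)=0$. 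Your base-case computation ($\|w_U^{(1)}\|_\infty\vee\|w_V^{(1)}\|_\infty\lesssim_K n^{-1}(\|\mu_0\|^2\vee\|\xi\|^2)$ via the $m^{-1}$ normalization of $\mathscr{W}$) and the entrywise nonnegativity argument for $\|\mathsf{L}\|_{\ell_\infty\to\ell_\infty}$ are both sound, so I see no gap; only keep in mind that, as in the paper, the constant $\Lambda$ of Theorem~\ref{thm:AMP_asym} absorbs $\|\mu_0\|_\infty$ and $\|\xi\|_\infty$ through $\mathsf{F}_t(0)$ and $\mathsf{G}_t(0)$, which is what the factor $(1+\|z^{(0)}\|_\infty)$ in the statement is implicitly standing in for.
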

\begin{proof}
	In this proof the expectation is all taken conditional on $\xi$. First, with $(\sigma_u^{(s)})^2 \equiv \big(\var(U^{(s)}_k)\big)_{k \in [m]} \in \R^m$ and $(\sigma_v^{(s)})^2 \equiv \big(\var(V^{(s)}_\ell)\big)_{\ell \in [n]} \in \R^n$, using the state evolution in Definition \ref{def:AMP_asym_se}, the first equation therein reduces to 
	\begin{align*}
	(\sigma_u^{(t+1)})^2&=\big(\mathfrak{D}_{1_m-b^\ast}\mathscr{V}_m\mathfrak{D}_{\tau_{b^\ast}}\big) \E\mathsf{F}_t^2(\sigma_v^{(t)} Z_n)\\
	&=\big(\mathfrak{D}_{1_m-b^\ast}\mathscr{V}_m\mathfrak{D}_{\tau_{b^\ast}}\big) \mathfrak{D}_{1+\lambda \tau_{b^\ast}}^{-2}\big(\mathfrak{D}_{\lambda \tau_{b^\ast}}^2 \theta_{0;b^\ast}^2+(\sigma_v^{(t)})^2\big),
	\end{align*}
	and the second equation reduces to
	\begin{align*}
	(\sigma_v^{(t+1)})^2&= \big(\mathfrak{D}_{1_m-b^\ast}\mathscr{V}_m\mathfrak{D}_{\tau_{b^\ast}}\big)^\top \E \mathsf{G}_{t+1}^2(\sigma_u^{(t+1)}Z_m) \\
	& = \mathfrak{D}_{\tau_{b^\ast}}\mathscr{V}_m^\top \mathfrak{D}_{1_m-b^\ast} \big[ (\sigma_u^{(t+1)})^2+ \xi_{b^\ast}^2 \big]\\
	& = \mathfrak{D}_{\tau_{b^\ast}}\mathscr{V}_m^\top \mathfrak{D}_{1_m-b^\ast}^2 \big( \mathscr{V}_m \mathfrak{D}_{\lambda \tau_{b^\ast}\circ (1+\lambda \tau_{b^\ast})^{-1}}^2 \mu_{0}^2+\xi^2\big)\\
	&\qquad + \mathfrak{D}_{\tau_{b^\ast}}\mathscr{V}_m^\top \mathfrak{D}_{1_m-b^\ast}^2 \mathscr{V}_m \mathfrak{D}_{1+\lambda \tau_{b^\ast}}^{-2} \mathfrak{D}_{\tau_{b^\ast}}(\sigma_v^{(t)})^2.
	\end{align*}
	Comparing the above display to (\ref{ineq:ridge_fpe_exist_uniq_gamma_zeta_eqn}) and the proofs therein, 
	\begin{align*}
	\pnorm{(\sigma_v^{(t+1)})^2-\zeta_v^\ast}{\infty}\leq  K^c (1-K^{-c})^t \cdot \pnorm{(\sigma_v^{(1)})^2-(\sigma_v^{(0)})^2}{\infty}.
	\end{align*}
	Here (with slight abuse of notation) $\zeta_v^\ast=\zeta_v^\ast(\lambda) \in \R_{\geq 0}^n$ is the unique fixed point for the equation (\ref{ineq:ridge_fpe_exist_uniq_gamma_zeta_eqn}). Under the assumed conditions on $V$, 
	\begin{align}\label{ineq:ridge_u_v_error_1}
	\pnorm{(\sigma_v^{(t+1)})^2-\zeta_v^\ast}{\infty}&\leq K^c \cdot m^{-1}(\pnorm{\mu_0}{}^2\vee \pnorm{\xi}{}^2)\cdot (1-K^{-c})^t.
	\end{align}
	Next, let us define the correlation vectors $\mathfrak{r}_u^{(s+1)} \equiv  \big(\cov(U^{(s+1)}_k,U^{(s)}_k)\big)_{k \in [m]} \in \R^m$ and
	$\mathfrak{r}_v^{(s+1)} = \big(\cov(V^{(s+1)}_\ell,V^{(s)}_\ell)\big)_{\ell \in [n]} \in \R^n$. Using the state evolution in Definition \ref{def:AMP_asym_se} again, with some calculations along the above lines we obtain the exact same recursion:
	\begin{align*}
	\mathfrak{r}_v^{(t+1)}
	& = \mathfrak{D}_{\tau_{b^\ast}}\mathscr{V}_m^\top \mathfrak{D}_{1_m-b^\ast}^2 \big( \mathscr{V}_m \mathfrak{D}_{\lambda \tau_{b^\ast}\circ (1+\lambda \tau_{b^\ast})^{-1}}^2 \mu_{0}^2+\xi^2\big)\\
	&\qquad + \mathfrak{D}_{\tau_{b^\ast}}\mathscr{V}_m^\top \mathfrak{D}_{1_m-b^\ast}^2 \mathscr{V}_m \mathfrak{D}_{1+\lambda \tau_{b^\ast}}^{-2} \mathfrak{D}_{\tau_{b^\ast}}\mathfrak{r}_v^{(t)}.
	\end{align*}
	While the difference lies in the initialization, the following bound remains valid:
	\begin{align}\label{ineq:ridge_u_v_error_2}
	\pnorm{\mathfrak{r}_v^{(t+1)}-\zeta_v^\ast}{\infty}&\leq K^c \cdot m^{-1}(\pnorm{\mu_0}{}^2\vee \pnorm{\xi}{}^2)\cdot (1-K^{-c})^t.
	\end{align} 
	Similarly, with $
	\zeta_u^\ast\equiv \big(\mathfrak{D}_{1_m-b^\ast}\mathscr{V}_m\mathfrak{D}_{\tau_{b^\ast}}\big) \mathfrak{D}_{1+\lambda \tau_{b^\ast}}^{-2}\big(\mathfrak{D}_{\lambda \tau_{b^\ast}}^2 \theta_{0;b^\ast}^2+\zeta_v^\ast\big)$, we have the estimate
	\begin{align}\label{ineq:ridge_u_v_error_3}
	\pnorm{(\sigma_u^{(t+1)})^2-\zeta_u^\ast}{\infty}\vee \pnorm{\mathfrak{r}_u^{(t+1)}-\zeta_u^\ast}{\infty}\leq K^c \cdot m^{-1}(\pnorm{\mu_0}{}^2\vee \pnorm{\xi}{}^2)\cdot (1-K^{-c})^t.
	\end{align}
	Now using the above display (\ref{ineq:ridge_u_v_error_3}) and Theorem \ref{thm:AMP_asym_avg}, with $c_0>0$ a proper universal constant, $c_1=c_1(D)>0$ depending on $D>1$ only, and  $\err_n^{(t)}\equiv \big(c_1 K \log n_\phi\cdot (1+\pnorm{z^{(0)}}{\infty})\big)^{c_0 t^3}\cdot n^{-1/c_0^t}$, we have with probability $1-c_1 n^{-D}$, 
	\begin{align*}
	n_\phi^{-1}\pnorm{u^{(t+1)}-u^{(t)}}{}^2&\leq n_\phi^{-1}\E\pnorm{U^{(t+1)}-U^{(t)}}{}^2+ \err_n^{(t)}\\
	&\leq K^c \cdot \max_{k \in [m]} \bigabs{(\sigma_{u,k}^{(t+1)})^2+(\sigma_{u,k}^{(t)})^2-2 \mathfrak{r}_{u,k}^{(t+1)}}+ \err_n^{(t)}\\
	&\leq K^c \cdot m^{-1}(\pnorm{\mu_0}{}^2\vee \pnorm{\xi}{}^2)\cdot (1-K^{-c})^t+\err_n^{(t)}.
	\end{align*}
	A completely similar argument via the help of (\ref{ineq:ridge_u_v_error_1})-(\ref{ineq:ridge_u_v_error_2}) establishes the same estimate for $n_\phi^{-1}\pnorm{v^{(t+1)}-v^{(t)}}{}^2$.
\end{proof}

\begin{proof}[Proof of Proposition \ref{prop:ridge_theta_error}]
	The claim follows from Lemmas \ref{lem:ridge_AMP_diff_l2} and \ref{lem:ridge_u_v_error} and the standard subgaussian tail estimate for $\pnorm{A}{\op}$, cf. Lemma \ref{lem:A_op_subgaussian_tail}.
\end{proof}

Now we may prove Theorem \ref{thm:ridge_dist}.

\begin{proof}[Proof of Theorem \ref{thm:ridge_dist} for $\hat{\mu}$]
	In this proof expectation is all taken conditional on $\xi$. Constants unspecified in the proof may depend on $K,\Lambda$. Recall $\hat{\mu}=(A^\top A+\lambda I)^{-1}A^\top Y$, so $\pnorm{\hat{\mu}}{}\leq \lambda^{-1} (1+\pnorm{A}{\op})^2(\pnorm{\mu_0}{}\vee \pnorm{\xi}{})$. By subgaussian tail of $\pnorm{A}{\op}$ (cf. Lemma \ref{lem:A_op_subgaussian_tail}), with probability at least $1-e^{-n/c}$, 
	\begin{align*}
	n^{-1/2}\pnorm{\hat{\mu}}{} \leq c\cdot n^{-1/2} (\pnorm{\mu_0}{}\vee \pnorm{\xi}{}).
	\end{align*}
	So combined with Proposition \ref{prop:ridge_theta_error}, there exists some $\epsilon_0=\epsilon_0(K) \in (0,1)$ such that with probability $1-e^{-n/c}$,
	\begin{align}\label{ineq:ridge_dist_1}
	&\biggabs{\frac{1}{n}\sum_{j \in [n]} \psi\big(\hat{\mu}_j, \mu_{0,j}\big)-\frac{1}{n}\sum_{j \in [n]} \psi\big(\mu_j^{(t)},\mu_{0,j}\big)}\nonumber\\
	&\leq c \cdot \big(1+n^{-1/2}(\pnorm{\hat{\mu}}{}\vee \pnorm{\mu^{(t)}}{}\vee \pnorm{\mu_0}{})\big)\cdot n^{-1/2}\pnorm{\mu^{(t)}-\hat{\mu}}{}\nonumber\\
	&\leq c \cdot \Big(1+n^{-1/2}(\pnorm{\hat{\mu}}{}\vee \pnorm{\mu_0}{})+ n^{-1/2}\pnorm{\mu^{(t)}-\hat{\mu}}{} \Big)\cdot n^{-1/2}\pnorm{\mu^{(t)}-\hat{\mu}}{}\nonumber\\
	&\leq c \cdot \big(L_{\mu_0,\xi} +\err_n^{(t)}\big)\cdot \big(L_{\mu_0,\xi} (1-\epsilon_0)^t+\err_n^{(t)}\big).
	\end{align}
	Here $\err_n^{(t)}(c_1)\equiv (c_1\log n)^{c_0 t^3}\cdot n^{-1/c_0^t}$, $\err_n^{(t)}\equiv \err_n^{(t)}(1)$.
	
	On the other hand, by Proposition \ref{prop:ridge_AMP}, $\mu^{(t)}=\prox_{\lambda\pnorm{\cdot}{}^2/2}(\mu_0-\mathfrak{D}_{\tau_{b^\ast}}^{1/2}v^{(t)};\tau_{b^\ast}) = (1_n+\lambda \tau_{b^\ast})^{-1}\circ \big(\mu_0-\mathfrak{D}_{\tau_{b^\ast}}^{1/2}v^{(t)}\big)$. Now applying Theorem \ref{thm:AMP_asym_avg} to $\{v^{(t)}\}$ and recalling (\ref{ineq:ridge_u_v_error_1}) with $(\zeta_v^\ast)^{1/2}=\mathfrak{D}_{\tau_{b^\ast}}^{-1/2}\gamma^\ast$ therein, for $t\leq \log n/c$, with probability at least $1-c_1 n^{-D}$ where $c_1=c_1(D)>0$,
	\begin{align}\label{ineq:ridge_dist_2}
	&\biggabs{\frac{1}{n}\sum_{j \in [n]} \psi\big(\mu_j^{(t)},\mu_{0,j}\big)-\frac{1}{n}\sum_{j \in [n]} \E\psi\Big(\prox_{\lambda\abs{\cdot}^2/2}\big(\mu_{0,j}+ \gamma_j^\ast Z;\tau_{b^\ast,j}\big),\mu_{0,j}\Big)}\nonumber\\
	&\leq c\cdot \big(\err_n^{(t)}(c_1) + L_{\mu_0,\xi}  \cdot (1-\epsilon_0)^t\big).
	\end{align}
	By noting $\prox_{\lambda\abs{\cdot}^2/2}\big(\mu_{0,j}+ \gamma_j^\ast Z;\tau_{b^\ast,j}\big)=\hat{\mu}^{\seq}_j(\gamma^\ast;\tau_{b^\ast})$, combining (\ref{ineq:ridge_dist_1}) and (\ref{ineq:ridge_dist_2}), for $t\leq \log n/c$, with probability at least $1-c_1 n^{-D}$ where $c_1=c_1(D)>0$,
	\begin{align*}
	&\biggabs{\frac{1}{n}\sum_{j \in [n]} \psi\big(\hat{\mu}_j,\mu_{0,j}\big)-\frac{1}{n}\sum_{j \in [n]} \E \psi\big(\hat{\mu}^{\seq}_j(\gamma^\ast;\tau_{b^\ast}),\mu_{0,j}\big)}\\
	&\leq c\cdot \big(L_{\mu_0,\xi} +\err_n^{(t)}\big)\cdot \big(L_{\mu_0,\xi} (1-\epsilon_0)^t+\err_n^{(t)}(c_1)\big). 
	\end{align*}
	The claim follows by choosing similarly $t=(\log \log n)^{0.99}$. 
\end{proof}

\begin{proof}[Proof of Theorem \ref{thm:ridge_dist} for $\hat{R}$]
	Similar to (\ref{ineq:ridge_dist_1}), we may use Proposition \ref{prop:ridge_theta_error} together with $\pnorm{(1_m - b^\ast)^{-1}}{\infty} \leq K^c$ to derive that there exists some $\epsilon_0=\epsilon_0(K) \in (0,1)$ such that with probability $1-e^{-n/c}$,
	\begin{align}\label{ineq:ridge_dist_res_1}
	&\biggabs{\frac{1}{m}\sum_{i \in [m]} \psi_0\big(\hat{R}_i\big)-\frac{1}{n}\sum_{i \in [m]} \psi_0\big(R_i^{(t)}\big)} \nonumber\\
	&\leq c \cdot \big(L_{\mu_0,\xi} +\err_n^{(t)}\big)\cdot \big(L_{\mu_0,\xi} (1-\epsilon_0)^t+\err_n^{(t)}\big).
	\end{align}
	On the other hand, by Proposition \ref{prop:ridge_AMP}, $R^{(t)}=-\mathfrak{D}_{1_m-b^\ast}^{1/2}u^{(t)}+\mathfrak{D}_{1_m-b^\ast}\xi$. Now applying Theorem \ref{thm:AMP_asym_avg} to $\{u^{(t)}\}$ and recalling (\ref{ineq:ridge_u_v_error_3}) with $\zeta_v^\ast=\mathfrak{D}_{\tau_{b^\ast}}^{-1}(\gamma^\ast)^2$ and $
	\zeta_u^\ast\equiv \big(\mathfrak{D}_{1_m-b^\ast}\mathscr{V}_m\mathfrak{D}_{\tau_{b^\ast}}\big) \mathfrak{D}_{1+\lambda \tau_{b^\ast}}^{-2}\big(\mathfrak{D}_{\lambda \tau_{b^\ast}}^2 \theta_{0;b^\ast}^2+\zeta_v^\ast\big)$ therein, for $t\leq \log n/c$, with probability at least $1-c_1 n^{-D}$ where $c_1=c_1(D)>0$,
	\begin{align}\label{ineq:ridge_dist_res_2}
	&\biggabs{\frac{1}{m}\sum_{i \in [m]} \psi_0\big(R_i^{(t)}\big)-\frac{1}{m}\sum_{i \in [m]} \E \psi_0\big( e_i^\top \mathfrak{D}^{1/2}_{1_m-b^\ast}(\zeta_u^\ast)^{1/2} \cdot Z + (1-b^\ast_i) \cdot\xi_i \big)} \nonumber\\
	&\leq c\cdot \big(\err_n^{(t)}(c_1) + L_{\mu_0,\xi}  \cdot (1-\epsilon_0)^t\big).
	\end{align}
	By noting $ e_i^\top \mathfrak{D}^{1/2}_{1_m-b^\ast}(\zeta_u^\ast)^{1/2} \cdot Z + (1-b^\ast_i) \cdot\xi_i \overset{d}{=} \hat{R}^\ast_i$, combining (\ref{ineq:ridge_dist_res_1}) and (\ref{ineq:ridge_dist_res_2}), for $t\leq \log n/c$, with probability at least $1-c_1 n^{-D}$ where $c_1=c_1(D)>0$,
	\begin{align*}
	&\biggabs{\frac{1}{m}\sum_{i \in [m]} \psi_0\big(\hat{R}_i\big)-\frac{1}{m}\sum_{i \in [m]} \E \psi_0\big(R^\ast_j\big)}\leq c\cdot \big(L_{\mu_0,\xi} +\err_n^{(t)}\big)\cdot \big(L_{\mu_0,\xi} (1-\epsilon_0)^t+\err_n^{(t)}(c_1)\big). 
	\end{align*}
	The claim follows by choosing, e.g., $t=(\log \log n)^{0.99}$. 
\end{proof}

\appendix

\section{Auxiliary results}

Let $\mathscr{M}_n(\R)$ be the space of $n\times n$ real symmetric matrices. The following result is frequently referred to `subgaussian tail of $\pnorm{A}{\op}$' in the proof.

\begin{lemma}\label{lem:A_op_subgaussian_tail}
	Let the upper triangular elements of $G_n \in \mathscr{M}_n(\R)$ be i.i.d. $\mathcal{N}(0,1/n)$, and $V \in \mathscr{M}_n(\R)$ be a deterministic matrix such that $0\leq V_{ij}\leq K$ for some $K>1$. Let $A=V\circ G_n$. Then there exists some universal constant $c_0>0$ such that the following hold:
	\begin{enumerate}
		\item For $x\geq 0$, $\Prob\big(\pnorm{A}{\op}/K\geq c_0+\sqrt{x/n}\big)\leq c_0 e^{-x/c_0}$.
		\item For $t\leq n/(c_0 \log n)$, $\E (\pnorm{A}{\op}/K)^t\leq c_0^t$. 
	\end{enumerate}
\end{lemma}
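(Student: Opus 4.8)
\textbf{Proof proposal for Lemma \ref{lem:A_op_subgaussian_tail}.}

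The plan is to reduce the variance-profile matrix $A = V \circ G_n$ to the plain Gaussian Wigner matrix $G_n$ via a pointwise domination argument, then invoke standard concentration and moment bounds for $\pnorm{G_n}{\op}$. First I would observe that $A = V\circ G_n$ is itself a Gaussian random matrix, and that the map $M \mapsto \pnorm{M}{\op}$ is $1$-Lipschitz with respect to the Frobenius norm on $\mathscr{M}_n(\R)$. Viewing $\pnorm{A}{\op}$ as a function of the i.i.d. $\mathcal{N}(0,1/n)$ entries $\{G_{n,ij}\}_{i\le j}$, the chain rule shows that as a function of these entries it is Lipschitz with constant at most $K$ (each entry $A_{ij}=V_{ij}G_{n,ij}$ with $|V_{ij}|\le K$), so by the Gaussian concentration inequality (e.g. the Gaussian Poincar\'e / Borell--TIS inequality), for $x\ge 0$,
\begin{align*}
\Prob\big(\pnorm{A}{\op}\ge \E\pnorm{A}{\op} + t\big)\le e^{-nt^2/(c K^2)}.
\end{align*}
It therefore remains to bound $\E\pnorm{A}{\op}$ by $c_0 K$ for a universal $c_0$. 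For this I would use the comparison $\E\pnorm{V\circ G_n}{\op}\le K\cdot \E\pnorm{G_n}{\op}$, which follows from Slepian-type / Gaussian comparison inequalities for the max of Gaussian processes $\pnorm{M}{\op}=\sup_{u,v\in S^{n-1}} u^\top M v$ after checking that the increments of the process $u,v\mapsto u^\top(V\circ G_n)v$ are dominated by those of $K$ times the process for $G_n$; alternatively one can cite directly the known bound $\E\pnorm{V\circ G_n}{\op}\lesssim K$ for variance profiles bounded by $K$ (cf. the references in the excerpt, e.g. \cite{bandeira2016sharp,latala2018dimension}). Since $\E\pnorm{G_n}{\op}\le 2+o(1)\le c$ is the classical semicircle bound, this gives $\E\pnorm{A}{\op}\le c_0 K$. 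Substituting into the concentration bound and replacing $t = K\sqrt{x/n}$ yields part (1): $\Prob(\pnorm{A}{\op}/K\ge c_0+\sqrt{x/n})\le e^{-x/c_0}$, after adjusting $c_0$.

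For part (2), I would integrate the tail bound in (1). Writing $W\equiv \pnorm{A}{\op}/K$, part (1) gives $\Prob(W\ge c_0 + s)\le c_0 e^{-n s^2/c_0}$ for $s\ge 0$ (taking $x=ns^2$), so $W$ is a sub-Gaussian-type random variable with $\E W^t \le (c_0 + \sqrt{c_0 t/n})^{t}\cdot C$ for a universal $C$, by the standard formula $\E(W-c_0)_+^t = t\int_0^\infty s^{t-1}\Prob(W-c_0\ge s)\,\d s$ together with $\E W^t \le 2^{t-1}(c_0^t + \E(W-c_0)_+^t)$. When $t\le n/(c_0\log n)$ we have $\sqrt{c_0 t/n}\le \sqrt{1/\log n}\le 1$, so $(c_0+\sqrt{c_0t/n})^t\le (c_0+1)^t$, and absorbing the polynomial factor $C^{1/t}\le 2$ into the constant gives $\E W^t\le c_0^t$ after renaming $c_0$.

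The only mild obstacle is justifying the bound $\E\pnorm{V\circ G_n}{\op}\le c_0 K$ cleanly: the Gaussian comparison route requires a short computation verifying the increment domination
\begin{align*}
\E\big(u_1^\top (V\circ G_n) v_1 - u_2^\top(V\circ G_n)v_2\big)^2 \le K^2\cdot \E\big(u_1^\top G_n v_1 - u_2^\top G_n v_2\big)^2,
\end{align*}
which does hold entrywise since $\var(V_{ij}G_{n,ij}) = V_{ij}^2/n \le K^2/n = K^2\var(G_{n,ij})$, and one then applies Slepian's lemma (or its consequence for suprema, after a standard symmetrization to handle that $\pnorm{\cdot}{\op}$ is an absolute supremum rather than a one-sided one). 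This is entirely routine but is the one place where a little care is needed; everything else is bookkeeping with Gaussian concentration and tail integration. Alternatively, one simply cites the sharp non-asymptotic operator-norm bounds for Gaussian matrices with bounded variance profiles already available in the literature, which makes this step a one-line reference.
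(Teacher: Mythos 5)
Your proposal is correct, but it takes a different route from the paper on both parts. For part (1) the paper simply cites a standard operator-norm bound for matrices with independent sub-Gaussian entries (an $\epsilon$-net argument), which has the side benefit of not using Gaussianity at all; you instead rederive the bound via Borell--TIS concentration (Lipschitz constant $\lesssim K/\sqrt n$ in the underlying standard Gaussians) plus a comparison of expected suprema to reduce $\E\pnorm{V\circ G_n}{\op}$ to $K\,\E\pnorm{G_n}{\op}$. That comparison step is sound as you set it up, with two small touch-ups: the tool you actually need is Sudakov--Fernique (increment domination alone suffices for comparing expected suprema), not Slepian proper, which requires matched variances — your explicit verification $\var(V_{ij}G_{n,ij})\le K^2\var(G_{n,ij})$ entrywise is exactly the increment condition needed; and no symmetrization is required, since $\sup_{u,v\in S^{n-1}}u^\top M v$ already equals $\pnorm{M}{\op}$ (replace $v$ by $-v$), so the "absolute supremum" worry is moot. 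Also, the references you gesture at for the variance-profile norm bound are not in this paper's bibliography, so if you cite, cite the net-argument result the paper uses or state the Sudakov--Fernique computation as you did. For part (2) the paper truncates on the event $\{\pnorm{A}{\op}/K\le c_1\}$ and kills the complement with a crude $(c_2 n^{1/2})^t e^{-n/c_2}$ bound — this is where the restriction $t\le n/(c_0\log n)$ enters; your tail-integration of the sub-Gaussian bound from (1) gives $\E(W-c_0)_+^t\le (c\sqrt{t/n})^t$ up to a benign prefactor and hence the same conclusion, in fact for an even wider range of $t$ than stated. Both routes are routine; yours buys a slightly sharper moment bound and a self-contained Gaussian argument, while the paper's buys brevity and universality beyond Gaussian entries.
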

\begin{proof}
	(1) follows from, e.g., \cite[Theorem 4.4.5]{vershynin2018high}. For (2), let $E_0\equiv \{ \pnorm{A}{\op}/K\leq c_1\}$. Then $\Prob(E_0^c)\leq c_1 e^{-n/c_1}$, and therefore for the prescribed range of $t$,
	\begin{align*}
	\E (\pnorm{A}{\op}/K)^t \leq c_1^t+ \E (\pnorm{A}{\op}/K)^t\bm{1}_{E_0^c}\leq c_1^t+ (c_2 n^{1/2})^t e^{-n/c_2}\leq c_3^t,    
	\end{align*}
	as desired. 
\end{proof}

The following version of the Gaussian concentration inequality, allowing for `high probability bounded Lipschitz constant', is useful. 
\begin{lemma}\label{lem:gaussian_conc}
	Let $H:\mathscr{M}_n(\R)\to \R$ be a measurable map. Suppose there exist $t,L\geq 2$ such that the following hold:
	\begin{enumerate}
		\item For all $A \in \mathscr{M}_n(\R)$, $\abs{H(A)}\leq L (1+\pnorm{A}{\op})^t$. 
		\item For all $A_1,A_2 \in \mathscr{M}_n(\R)$,
		\begin{align*}
		\abs{H(A_1)-H(A_2)}\leq L\big(1+\pnorm{A_1}{\op}+\pnorm{A_2}{\op}\big)^t\cdot \pnorm{A_1-A_2}{\op}.
		\end{align*}
	\end{enumerate}
	Let the upper triangular elements of $G_n \in \mathscr{M}_n(\R)$ be i.i.d. $\mathcal{N}(0,1/n)$, and $V \in \mathscr{M}_n(\R)$ be a deterministic matrix such that $0\leq V_{ij}\leq K$ for some $K\geq 2$. Then there exists some universal constant $c_0>0$ such that if $t\leq n/(c_0\log n)$ and $x\geq c_0 L K^{c_0 t} e^{-n/c_0}$,
	\begin{align*}
	\Prob\big(\abs{H(V\circ G_n)-\E H(V\circ G_n)}\geq x\big)\leq c_0 e^{-n\cdot \min\{x^2/L^2K^{c_0t},1/c_0\}}.
	\end{align*}
	Moreover, there exists some universal $c_1>0$ such that for $t\leq n/(c_1 \log n)$,
	\begin{align*}
	\var\big(H(V\circ G_n)\big)\leq L^2 K^{c_1 t}\log n\cdot n^{-1}.
	\end{align*}
\end{lemma}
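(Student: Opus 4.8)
The plan is to deduce both bounds from the standard Gaussian concentration and Poincar\'e inequalities---which require a \emph{globally} Lipschitz function---by localizing to the event on which $\pnorm{V\circ G_n}{\op}$ is of the optimal order $\asymp K$. On that event, hypothesis~(2) becomes a genuine global Lipschitz estimate with constant $\lesssim LK^{ct}$; Lemma~\ref{lem:A_op_subgaussian_tail}(1) shows its complement has probability $\lesssim e^{-n/c}$, and Lemma~\ref{lem:A_op_subgaussian_tail}(2) supplies the polynomial moment bounds for $\pnorm{V\circ G_n}{\op}$ needed to absorb the truncation error.

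Concretely, first I would fix $R_0\equiv c_\ast K$ via Lemma~\ref{lem:A_op_subgaussian_tail}(1) with $x=n$, so that $E_0\equiv\{\pnorm{V\circ G_n}{\op}\leq R_0\}$ satisfies $\Prob(E_0^c)\leq c_\ast e^{-n/c_\ast}$, and let $\phi_{R_0}\colon\mathscr{M}_n(\R)\to\mathscr{M}_n(\R)$ clip the eigenvalues into $[-R_0,R_0]$; being the Frobenius-orthogonal projection onto the convex set $\{A:\pnorm{A}{\op}\leq R_0\}$, it is $1$-Lipschitz in $\pnorm{\cdot}{F}$. With $\tilde H\equiv H\circ\phi_{R_0}$, hypothesis~(2) together with $\pnorm{\phi_{R_0}(A)}{\op}\leq R_0$ and $\pnorm{\cdot}{\op}\leq\pnorm{\cdot}{F}$ gives that $\tilde H$ is globally $\pnorm{\cdot}{F}$-Lipschitz with constant $\leq L(1+2R_0)^t\leq LK^{c_1 t}$ and satisfies $\abs{\tilde H}\leq LK^{c_1 t}$ pointwise. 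Writing $G_n=n^{-1/2}W(w)$, where $w$ collects the i.i.d.\ $\mathcal N(0,1)$ upper-triangular entries, the map $w\mapsto G_n$ is $\sqrt{2/n}$-Lipschitz from $\pnorm{\cdot}{2}$ to $\pnorm{\cdot}{F}$ and $G\mapsto V\circ G$ is $K$-Lipschitz in $\pnorm{\cdot}{F}$ (by $V_{ij}\leq K$), so $w\mapsto\tilde H(V\circ G_n)$ is Lipschitz with constant $\ell_n\leq n^{-1/2}LK^{c_2 t}$. The Gaussian concentration and Poincar\'e inequalities then give $\Prob(\abs{\tilde H(V\circ G_n)-\E\tilde H(V\circ G_n)}\geq u)\leq 2e^{-u^2/(2\ell_n^2)}$ and $\var(\tilde H(V\circ G_n))\leq\ell_n^2\leq L^2K^{2c_2 t}/n$. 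To return to $H$, I would use $\tilde H=H$ on $E_0$, the moment bound $\E[(1+\pnorm{V\circ G_n}{\op})^{4t}]\leq K^{c_3 t}$ from hypothesis~(1) and Lemma~\ref{lem:A_op_subgaussian_tail}(2) (this forces $t\lesssim n/\log n$), and Cauchy--Schwarz against $\Prob(E_0^c)^{1/2}$ to obtain $\abs{\E H-\E\tilde H}+\E^{1/2}[(H-\tilde H)^2]\leq LK^{c_4 t}e^{-n/c_4}$. For $x\geq c_0 LK^{c_0 t}e^{-n/c_0}$ this yields $x\geq 2\abs{\E H-\E\tilde H}$, hence $\{\abs{H-\E H}\geq x\}\subseteq E_0^c\cup\{\abs{\tilde H-\E\tilde H}\geq x/2\}$, and combining the two tail estimates (treating separately the cases $x^2/(L^2K^{ct})\leq 1/c$ and its reverse) gives the stated probability bound; the variance bound follows from $\var(H)\leq 2\var(\tilde H)+2\E[(H-\tilde H)^2]$.

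I do not expect any individual estimate to be hard; the delicate part is the constant-tracking in the last reduction. One must choose $R_0\asymp K$ and then check that \emph{every} error created by the truncation---the mean shift $\abs{\E H-\E\tilde H}$, the exceptional event $E_0^c$, and the $L^2$ discrepancy $\E[(H-\tilde H)^2]$---is uniformly dominated by the surviving Gaussian term $e^{-\Omega(x^2 n/(L^2K^{ct}))}$ over the whole admissible range of $x$ and $t$, so that the final inequality and the variance bound hold with a single universal constant $c_0$ (resp.\ $c_1$) rather than one depending on $K$ or $t$.
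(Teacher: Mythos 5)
Your proposal is correct, and it takes a genuinely different (and somewhat cleaner) route to the same end. The paper builds a global Lipschitz surrogate by an explicit weighted McShane/inf-convolution extension, $\mathscr{H}(A)=\inf_{\pnorm{A'}{\op}\leq M}\{H(A')+L\,\Gamma(A,A')(\pnorm{A-A'}{\op}\wedge 2M)\}$, then checks by hand that $\mathscr{H}=H$ on the operator-norm ball and that $\mathscr{H}$ is globally Frobenius-Lipschitz with constant of order $t(2M+1)^t$, before applying Gaussian concentration and absorbing the mean shift $\abs{\E\mathscr{H}-\E H}$ using the growth condition and the subgaussian tail of $\pnorm{A}{\op}$. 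You instead compose $H$ with the Frobenius projection $\phi_{R_0}$ onto the convex set $\{\pnorm{A}{\op}\leq R_0\}$ (eigenvalue clipping); nonexpansiveness of the projection plus hypothesis (2) immediately gives a global Lipschitz constant $\leq L(1+2R_0)^t$ and a pointwise bound, and $\tilde H=H$ on the good event for free. This spares the construction and verification of the extension, at the mild cost of invoking the Hilbert-space projection fact (which is standard here since the constraint set is convex and spectral). Your handling of the truncation errors (Cauchy--Schwarz against $\Prob(E_0^c)^{1/2}$, moments from Lemma \ref{lem:A_op_subgaussian_tail}(2), the requirement $x\geq c_0LK^{c_0t}e^{-n/c_0}$ to dominate the mean shift, and the case split producing the $\min\{x^2/L^2K^{c_0t},1/c_0\}$ exponent) mirrors the paper's bookkeeping. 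For the variance you use the Gaussian Poincar\'e inequality on $\tilde H$ plus the $L^2$ discrepancy, whereas the paper integrates its own tail bound on a high-probability event; both give $L^2K^{c_1t}\log n\cdot n^{-1}$ (your route in fact gives $\log n$-free $\var(\tilde H)\leq \ell_n^2$, with the error term comfortably below the stated bound once $c_1$ is taken large, exactly the constant-tracking you flagged). No gaps.
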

\begin{proof}
	The proof below is inspired by that of \cite[Lemma 3.5]{chen2021universality}. Fix $M>0$ to be chosen later. Let
	\begin{align*}
	\mathscr{H}(A)&\equiv \inf_{\substack{A' \in \mathscr{M}_n(\R), \pnorm{A'}{\op}\leq M}}\Big\{H(A')+L\cdot \Gamma(A,A')\big(\pnorm{A-A'}{\op}\wedge (2M)\big)\Big\},
	\end{align*}
	where 
	\begin{align*}
	\Gamma(A,A')\equiv\big( (\pnorm{A}{\op}\wedge M)+\pnorm{A'}{\op}+1\big)^t.
	\end{align*}
	Note that for any $A \in \mathscr{M}_n(\R)$ with $\pnorm{A}{\op}\leq M$, the definition of $\mathscr{H}$ entails that $\mathscr{H}(A)\leq H(A)$. On the other hand, the condition (2) implies that $H(A)\leq \mathscr{H}(A)$. In summary, 
	\begin{align}\label{ineq:gaussian_conc_1}
	\mathscr{H}(A)=H(A) \hbox{ for all } A \in \mathscr{M}_n(\R) \hbox{ such that }\pnorm{A}{\op}\leq M.
	\end{align}
	Next, for $A_1,A_2 \in \mathscr{M}_n(\R)$,
	\begin{align*}
	&\abs{\mathscr{H}(A_1)-\mathscr{H}(A_2)}/L\\
	&\leq   \sup_{\substack{A' \in \mathscr{M}_n(\R), \pnorm{A'}{\op}\leq M}}\bigabs{\Gamma(A_1,A') \big(\pnorm{A_1-A'}{\op}\wedge 2M\big) - \Gamma(A_2,A') \big(\pnorm{A_2-A'}{\op}\wedge 2M\big)  }\\
	&\leq 2M\cdot \sup_{A' \in \mathscr{M}_n(\R), \pnorm{A'}{\op}\leq M}\bigabs{\Gamma(A_1,A')-\Gamma(A_2,A')}+(2M+1)^t\cdot \pnorm{A_1-A_2}{\op}\\
	&\leq 2t(2M+1)^t\cdot \pnorm{A_1-A_2}{\op}\leq 2t(2M+1)^t\cdot \pnorm{A_1-A_2}{F}. 
	\end{align*}
	For the random matrix model $A=V\circ G_n$, we then have
	\begin{align*}
	\abs{\mathscr{H}(V\circ G_{n,1})-\mathscr{H}(V\circ G_{n,2})}&\leq KLM^{c_0t} \cdot \pnorm{G_{n,1}-G_{n,2}}{F}.
	\end{align*}
	Using Gaussian concentration inequality, for any $x>0, M>c' K$,
	\begin{align}\label{ineq:gaussian_conc_2}
	2\exp\bigg(-\frac{nx^2}{(KL)^2M^{ct}}\bigg)&\geq \Prob\big(\abs{\mathscr{H}(A)-\E \mathscr{H}(A)}\geq x\big)\nonumber\\
	&\geq \Prob\big(\abs{\mathscr{H}(A)-\E \mathscr{H}(A)}\geq x, \pnorm{A}{\op}\leq M\big)\nonumber\\
	&\geq \Prob\big(\abs{H(A)-\E \mathscr{H}(A)}\geq x\big)-c e^{-n M^2/cK^2}. 
	\end{align}
	Here in the last inequality we used (\ref{ineq:gaussian_conc_1}) and the subgaussian tail estimate for $\pnorm{A}{\op}$, cf. Lemma \ref{lem:A_op_subgaussian_tail}. On the other hand, as $\E \mathscr{H}(A)=\E H(A)\bm{1}_{\pnorm{A}{\op}\leq M}+\E \mathscr{H}(A)\bm{1}_{\pnorm{A}{\op}> M}$, using the condition (1), we have for $t\leq n/(c\log n)$,
	\begin{align*}
	&\bigabs{\E \mathscr{H}(A)-\E H(A)}\leq \E\big(\abs{\mathscr{H}(A)}+\abs{H(A)}\big)\bm{1}_{\pnorm{A}{\op}\geq M}\\
	&\leq \E^{1/2}\big(\abs{H(0)}+L(1+\pnorm{A}{\op})^{t+1}+\abs{H(A)}\big)^2\cdot \Prob^{1/2}(\pnorm{A}{\op}\geq M)\\
	&\leq c L \E(1+\pnorm{A}{\op})^{ct}\cdot e^{-nM^2/cK^2}\leq L K^{c_1 t} e^{-n M^2/c_1K^2}.
	\end{align*}
	So for any $x>0, M>c_1' K$ such that $x\geq L K^{c_1 t} e^{-n M^2/c_1 K^2}$, and $t\leq n/(c_2\log n)$,
	\begin{align*}
	\Prob\big(\abs{H(A)-\E H(A)}\geq 2x\big)\leq c_2 \exp\bigg(-n\cdot \bigg\{\frac{x^2}{c_2 (KL)^2 M^{c_2 t}}\wedge \frac{M^2}{c_2 K^2}\bigg\} \bigg),
	\end{align*}
	proving the subgaussian estimate by choosing $M=c_2'K$  for a large universal $c_2'>1$, and adjusting the constant. 
	
	For the variance bound, note that for a large enough, universal $c_3>0$, the above inequality yields that the event $E_0\equiv \{\abs{H(A)-\E H(A)}\leq L K^{c_3 t} \sqrt{\log n}\cdot n^{-1/2}\}$ occurs with $\Prob(E_0)\geq 1-c_3 n^{-100}$. Consequently, for $t\leq n/(c_3\log n)$,
	\begin{align*}
	&\var(H(A))\leq L^2 K^{ct}\log n\cdot  n^{-1}+\E \abs{H(A)-\E H(A)}^2\bm{1}_{E_0^c}\\
	&\leq L^2 K^{c t}\log n\cdot  n^{-1}+ L^2\E^{1/2}(1+\pnorm{A}{\op})^{ct}\cdot \Prob^{1/2}(E_0^c)\leq L^2 K^{c_4 t}\log n\cdot n^{-1},
	\end{align*}
	as desired. 
\end{proof}

We also need the following elementary lemma on the matrix square root.

\begin{lemma}\label{lem:cov_sqrt_diff}
	Let $\Sigma_1,\Sigma_2 \in \R^{t\times t}$ be two positive semi-definite matrices. Then it holds that $\pnorm{\Sigma_1^{1/2}-\Sigma_2^{1/2}}{F}^2\leq \sqrt{t} \cdot \pnorm{\Sigma_1-\Sigma_2}{F}$.
\end{lemma}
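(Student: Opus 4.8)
The plan is to reduce the statement to one trace identity followed by two elementary inequalities. Write $A \equiv \Sigma_1^{1/2}$, $B \equiv \Sigma_2^{1/2}$ and $D \equiv A - B$; all three are symmetric and $A,B$ are positive semi-definite. The first step is the identity $\iprod{\Sigma_1-\Sigma_2}{D} = \tr\big((A+B)D^2\big)$, which follows from $\Sigma_1 - \Sigma_2 = A^2 - B^2 = AD + DB$ together with the cyclic invariance of the trace (so $\tr((AD+DB)D) = \tr(AD^2) + \tr(BD^2)$).

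Next I would diagonalize $D = \sum_{i \in [t]} \mu_i\, u_i u_i^\top$ with $\{u_i\}$ orthonormal and $\mu_i \in \R$, so that $D^2 = \sum_i \mu_i^2\, u_i u_i^\top$ and hence $\tr\big((A+B)D^2\big) = \sum_{i \in [t]} \mu_i^2 \cdot u_i^\top (A+B) u_i$. The one genuinely non-routine point is the pointwise bound $u_i^\top (A+B) u_i \geq |\mu_i|$: setting $a_i \equiv u_i^\top A u_i \geq 0$ and $b_i \equiv u_i^\top B u_i \geq 0$ (nonnegativity uses $A,B \succeq 0$), we have $a_i - b_i = u_i^\top D u_i = \mu_i$, whence $a_i + b_i \geq |a_i - b_i| = |\mu_i|$. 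This gives $\iprod{\Sigma_1-\Sigma_2}{D} \geq \sum_{i \in [t]} |\mu_i|^3$. It is worth stressing that this step uses that $A,B$ are the \emph{PSD} square roots, not merely some matrices squaring to $\Sigma_1,\Sigma_2$.

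Finally I would combine this with Cauchy--Schwarz on the left, $\iprod{\Sigma_1-\Sigma_2}{D} \leq \pnorm{\Sigma_1-\Sigma_2}{F}\pnorm{D}{F}$, and with the Hölder inequality (exponents $3/2$ and $3$) $\sum_{i \in [t]} \mu_i^2 \leq \big(\sum_{i \in [t]} |\mu_i|^3\big)^{2/3} t^{1/3}$, which rearranges to $\sum_{i \in [t]} |\mu_i|^3 \geq t^{-1/2}\pnorm{D}{F}^3$. Chaining the three bounds yields $t^{-1/2}\pnorm{D}{F}^3 \leq \pnorm{\Sigma_1-\Sigma_2}{F}\pnorm{D}{F}$; dividing through by $\pnorm{D}{F}$ (the case $D = 0$ being trivial) gives $\pnorm{\Sigma_1^{1/2}-\Sigma_2^{1/2}}{F}^2 \leq \sqrt{t}\,\pnorm{\Sigma_1-\Sigma_2}{F}$, as claimed. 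The main (and only) obstacle is spotting the spectral argument for $u_i^\top(A+B)u_i \geq |\mu_i|$; everything else is bookkeeping.
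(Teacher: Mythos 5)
Your proof is correct: the identity $\Sigma_1-\Sigma_2=AD+DB$ with $D=\Sigma_1^{1/2}-\Sigma_2^{1/2}$ gives $\iprod{\Sigma_1-\Sigma_2}{D}=\tr\big((A+B)D^2\big)$, the eigenbasis of $D$ together with $A,B\succeq 0$ yields $u_i^\top(A+B)u_i\geq \abs{\mu_i}$ and hence a lower bound $\sum_i\abs{\mu_i}^3$, and the chain Cauchy--Schwarz plus H\"older (exponents $3/2$, $3$) closes the argument, including the correct handling of $D=0$. This is, however, a genuinely different route from the paper's. The paper diagonalizes \emph{both} matrices, $\Sigma_\ell=U_\ell\Lambda_\ell U_\ell^\top$, rewrites $\pnorm{\Sigma_1^{1/2}-\Sigma_2^{1/2}}{F}^2=\sum_{i,j}\big(\lambda_{1,i}^{1/2}-\lambda_{2,j}^{1/2}\big)^2(U_1^\top U_2)_{ij}^2$ in the overlap basis, applies the scalar inequality $(\sqrt a-\sqrt b)^2\leq\abs{a-b}$ entrywise, and extracts the dimension factor from a single Cauchy--Schwarz via $\pnorm{U_1^\top U_2}{F}=\sqrt t$; it is shorter and needs no trace gymnastics. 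Your argument instead never touches the eigen-bases of $\Sigma_1,\Sigma_2$: it diagonalizes only the difference $D$, isolates the structural mechanism ($A^2-B^2=AD+DB$ plus positivity of the PSD roots) that underlies Powers--St{\o}rmer-type inequalities, and produces the $\sqrt t$ from the $\ell_2$-versus-$\ell_3$ comparison of the eigenvalues of $D$. Both are elementary and give the same constant; yours generalizes more naturally to operator-inequality refinements, while the paper's is the quicker bookkeeping proof.
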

\begin{proof}
	The result is well-known, see e.g., \cite[Eqn. (3.2)]{wihler2009holder} for a more general formulation. For the convenience of the reader, we provide a direct and short proof below. Let $\Sigma_\ell=U_\ell \Lambda_\ell U_\ell^\top$ ($\ell=1,2$) be the spectral decomposition of $\Sigma_\ell$. By writing $\Lambda_\ell=\mathrm{diag}(\{\lambda_{\ell,i}\}_{i \in [t]})$ where $\lambda_{\ell,i}\geq 0$, we have
	\begin{align*}
	\bigpnorm{\Sigma_1^{1/2}-\Sigma_2^{1/2}}{F}^2&= \bigpnorm{U_1 \Lambda_1^{1/2}U_1^\top -U_2\Lambda_2^{1/2} U_2^\top}{F}^2 = \bigpnorm{ \Lambda_1^{1/2}U_1^\top U_2 -U_1^\top U_2\Lambda_2^{1/2}}{F}^2\\
	& = \sum_{i,j \in [t]} \big(\lambda_{1,i}^{1/2}-\lambda_{2,j}^{1/2}\big)^2 (U_1^\top U_2)_{ij}^2 \stackrel{(\ast)}{\leq} \sum_{i,j \in [t]} \abs{\lambda_{1,i}-\lambda_{2,j}} (U_1^\top U_2)_{ij}^2\\
	&\leq \bigg(\sum_{i,j \in [t]} \big(\lambda_{1,i}-\lambda_{2,j}\big)^2 (U_1^\top U_2)_{ij}^2\bigg)^{1/2}\cdot \pnorm{U_1^\top U_2}{F} = \sqrt{t}\cdot \pnorm{\Sigma_1-\Sigma_2}{F}.
	\end{align*}
	Here in $(\ast)$ we used the fact that for any $a,b\geq 0$, $(\sqrt{a}-\sqrt{b})^2\leq \abs{a-b}$.
\end{proof}

The following well-known Young's inequality is used in our analysis.
\begin{lemma}\label{lem:young_ineq}
	If $a, b \geq 0$ and $p, q > 1$ such that $p^{-1} + q^{-1} = 1$, then
	\begin{align*}
		ab \leq \frac{a^{p}}{p} + \frac{b^{q}}{q},
	\end{align*}
	with equality holds if and only if $a^{p} = b^{q}$.
\end{lemma}

\section*{Acknowledgments}
The authors would like to thank two referees and an Associate Editor for their helpful comments and suggestions that significantly improved the quality of the paper.

The research of Z. Bao is partially supported by Hong Kong RGC Grant 16303922, NSFC12222121 and NSFC12271475. The research of Q. Han is partially supported by NSF Grant DMS-2143468. The research of X. Xu is partially supported by Hong Kong RGC Grant 16305421.

\bibliographystyle{alpha}
\bibliography{mybib}

\end{document}